\documentclass[reqno,12pt]{amsart}
\usepackage{amssymb}
\usepackage{enumerate}
\usepackage[all,matrix,arrow]{xy}
\usepackage{comment}
\usepackage{color}

\addtolength{\textwidth}{4cm}
\addtolength{\hoffset}{-2cm}
\addtolength{\textheight}{4cm}
\addtolength{\voffset}{-2cm}

\newcommand{\PP}{\mathbb{P}}
\newcommand{\C}{\mathbb{C}}
\newcommand{\Pl}{\PP^2}
\newcommand{\cD}{\mathcal{D}}
\newcommand{\cL}{\mathcal{L}}
\newcommand{\cO}{\mathcal{O}}

\newcommand{\F}{\mathbb{F}}
\newcommand{\Q}{\mathbb{Q}}
\newcommand{\eps}{\varepsilon}
\newcommand{\rto}{\dasharrow}
\newcommand{\infnear}[1][]{>^{#1}}
\newcommand{\prox}{\to}
\newcommand{\satel}{\odot}
\newcommand{\notsatel}{\hbox{$\mspace{5mu}\not\mspace{-5mu}\odot\mspace{6mu}$}}

\DeclareMathOperator{\Cdeg}{cdeg}

\DeclareMathOperator{\Pic}{Pic}
\DeclareMathOperator{\elm}{elm}
\DeclareMathOperator{\ad}{ad}
\DeclareMathOperator{\mult}{mult}
\DeclareMathOperator{\vdeg}{vdeg}
\DeclareMathOperator{\id}{id}

\newtheorem{theorem}{Theorem}[section]
\newtheorem{proposition}[theorem]{Proposition}
\newtheorem{lemma}[theorem]{Lemma}
\newtheorem{corollary}[theorem]{Corollary}

\theoremstyle{definition}

\newtheorem{examples}[theorem]{Examples}
\newtheorem{remark}[theorem]{Remark}
\newtheorem{remarks}[theorem]{Remarks}

\hyphenation{Cre-mo-na}

\title[Birational classification of curves on rational surfaces]%
{Birational classification of curves\\on rational surfaces}

\author{Alberto Calabri \and Ciro Ciliberto}

\email{calabri@dmsa.unipd.it}
\curraddr{Dipartimento di Metodi e Modelli Matematici per le Scienze Applicate,
Universit\`a degli Studi di Padova,
Via Trieste 63, 35121 Padova, Italy, phone: +39-049-827-1318, fax: +39-049-827-1333}

\email{cilibert@mat.uniroma2.it}
\curraddr{Dipartimento di Matematica, Universit\`a degli Studi di Roma ``Tor Vergata'',
Via della Ricerca Scientifica, 00133 Roma, Italy, phone: +39-06-7259-4684,
fax: +39-06-7259-4699}

\thanks{Mathematics Subject Classification (2000): 14E05;
(Secondary) 14J26, 14H50, 14E07, 14E30.\\
The authors are member of G.N.S.A.G.A.\ at I.N.d.A.M.\ ``Francesco Severi''}

\begin{document}

\begin{abstract}
In this paper we consider the birational classification of pairs $(S,\cL)$, with $S$ a rational surfaces and 
$\cL$ a linear system on $S$. We give a classification theorem for such pairs  and we determine, for each irreducible plane curve $B$, its \emph{Cremona minimal} models, i.e.\ those plane curves which are equivalent to $B$ via a Cremona transformation, and have minimal degree under this condition. 
\end{abstract}

\maketitle


\tableofcontents


\section*{Introduction}
Let $B$ be an irreducible curve in the complex projective plane $\PP^2$. A natural question is to look for its  \emph{Cremona minimal} models, i.e.\ for those curves which are equivalent to $B$ via a Cremona transformation of $\PP^2$, and have minimal degree under this condition. More generally, the same question can be asked for positive dimensional linear systems $\cL$ of plane curves.

This is a classical problem, which goes back to the very beginnings of birational geometry in the second half of XIX century,  the main characters being L.~Cremona and his school  and M.~Noether. This question has been later considered by several algebraic geometers for many decades till the 1940's.  Indeed,  a long series of papers by various classical Authors was devoted to trying to solve this problem at least for planar linear systems of curves of low genera. Giving here a full account, with complete references, of all these attempts, some of them affected by serious gaps, 
would turn these few lines into an historical work rather than  an introduction as it is.
So we will briefly
address the interested reader to  Coolidge's book \cite{Coolidge}, whose first edition appeared in 1931, which contains an exposition of the classical results and a detailed bibliography on the subject. Another beautiful classical reference is Enriques--Conforto's book \cite{Conforto}, which contains the most advanced classical treatment of the subject, as well as interesting historical notes. We cannot resist however mentioning, among all the others, Castelnuovo's contributions \cite{Ca, Ca2, Cast}, where
adjunction theory is fully exploited in a form that, after Mori's epochal work, we call ``running a minimal model program'' driven by a given divisor on a surface, i.e.\ a $\sharp$--minimal model program in M. Reid's terminology \cite{Re}. 

In modern times, the question has been considered again starting from the 1960's by a few Authors, among which, in chronological order,  we mention Nagata \cite{Nagata1, Nagata2},  Kumar and Murthy \cite{KumarMurthy}, Dicks \cite{Dicks}, Reid \cite{Re},  Iitaka \cite{Iitaka1, Iitaka2} and Matsuda \cite{Matsuda}. 

In a nutshell, the problem boils down to consider the birational classification of pairs $(S,\cL)$, with $S$ a rational surface and $\cL$ a linear system on it. Taking this viewpoint, the most appropriate tool available today for attacking the question consists in using the machinery of Mori's program, in its log--version. This is essentially Dick's  and Reid's viewpoint,
and this is basically what we also do here, though we rather use Iitaka's terminology and the more classical approach via adjunction theory \emph{\'a la} Castelnuovo.
Part of our main results is stated in the following two theorems (for more detailed statements, see  Theorems \ref{thm1} and  \ref{thm2}).

\begin{theorem}\label{thmA} 
Let $(S,C)$ be a pair with $S$ rational and $C$ smooth and irreducible,
which is not birationally equivalent to $(\PP^2, L)$, where $L$ is  a line. Let $m$ be the maximum integer such that
$\vert C+mK_S\vert$ is not empty and let $\alpha$ be the dimension of this system. 
Then $(S,C)$ is birationally equivalent to one of the following pairs:
\begin{enumerate} [(i)]

\item  $(\Pl,D)$, where
$D$ is a plane curve of degree $d\geqslant 3$ with points of multiplicity strictly smaller than $m-1$ where $m=[\frac d3]$;

\item  $(\F_n,D)$,  where $\F_n=\PP(\mathcal O_{\PP^1}\oplus \mathcal O_{\PP^1}(-n))$,
\[
D\in \left\vert (2m+\epsilon)E
  +\left((2+n)m+\dfrac {\alpha+\epsilon (n-1)} {1+\epsilon}\right)F\right\vert,
\]
$\epsilon\in\{0,1\}$,
$E$ is a curve with $E^2=-n$ and $F$ is a ruling of $\F_n$,
and: 
\begin{itemize}
\item $D$ is irreducible with points of multiplicity  at most $m$;
\item if $\epsilon=0$ and $n>0$, the singular points of $D$ of multiplicity $m$ lie on $E$ and $n$ is minimal under this condition.
\end{itemize}
\end{enumerate}

These pairs may be birationally equivalent only if:
\begin{itemize}
\item $\epsilon=0$ and $D$ has at least two points of multiplicity $m$;
\item $n=\epsilon=1$, $\alpha=0$ [resp. $\alpha=2$] and $D$ with at least three [resp. two] points of multiplicity $m$.
\end{itemize}
\end{theorem}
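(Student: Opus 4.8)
The plan is to run a $K_S$-driven minimal model program (in the classical adjunction-theoretic language of Castelnuovo, equivalently a $\sharp$-minimal model program à la Reid) on the pair $(S,C)$. First I would observe that since $(S,C)$ is not birational to $(\PP^2,L)$, the curve $C$ cannot be rational of self-intersection making $|C+K_S|$ empty from the start in the trivial way; more precisely I would show $m\geqslant 1$, i.e.\ $|C+K_S|\neq\emptyset$, by a genus/adjunction argument (the adjoint system of a smooth irreducible curve on a rational surface is empty only in the short list of exceptional cases, all birational to $(\PP^2,L)$ or to $(\PP^2,\text{conic})$, which however still has nonempty $|C+K_S|$ only if one is careful — this bookkeeping is part of what needs checking). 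Then, among all birational models $(S',C')$ of $(S,C)$ with $C'$ smooth, I would choose one for which $|C'+mK_{S'}|$ is nonempty (the integer $m$ and the dimension $\alpha$ are birational invariants, being computed from $C+mK_S$ after resolving, so this makes sense) and for which $-(C'+mK_{S'})\cdot K_{S'}$, or equivalently the degree of $S'$ in a suitable sense, is minimal. Standard MMP-type contraction arguments then show that on such a minimal model the adjoint divisor $C'+mK_{S'}$, call it $\Delta$, is nef, and $S'$ is either $\PP^2$ or a Hirzebruch surface $\F_n$ (a Mori fiber space), since any $(-1)$-curve meeting $\Delta$ negatively could be contracted, decreasing the invariant.

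Next I would analyze the two cases for the Mori fiber space. On $\PP^2$: writing $\Delta\sim \delta L$ with $\delta\geqslant 0$ and $C'\sim dL$, one has $d-3m=\delta\geqslant 0$, and nefness of $\Delta$ translates, after passing to the minimal resolution of the singularities of $C'$ and using $C'+mK_{S'}$ nef there, into the bound that all multiplicities of $C'$ are $<m-1$ — this is exactly Castelnuovo's classical condition that one cannot apply a quadratic Cremona transformation based at three points of multiplicity $\geqslant m$ (counted with infinitely near points) to lower the degree; the hypothesis $m=[d/3]$ then pins down the relation. This yields case (i). On $\F_n$: writing $C'\in|aE+bF|$ with $a=2m+\epsilon$ forced by $C'\cdot F$ being even or odd (the parity $\epsilon\in\{0,1\}$), I would compute $\Delta=C'+mK_{\F_n}$ using $K_{\F_n}\sim -2E-(n+2)F$, impose $\Delta$ nef (so $\Delta\cdot E\geqslant 0$ and $\Delta\cdot F\geqslant 0$), and read off the coefficient of $F$ from the requirement $h^0(\Delta)-1=\alpha$, i.e.\ from $\chi(\Delta)$ via Riemann–Roch on $\F_n$ together with vanishing of higher cohomology for the nef divisor $\Delta$; this produces the displayed coefficient $(2+n)m+\frac{\alpha+\epsilon(n-1)}{1+\epsilon}$. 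The multiplicity bound $\leqslant m$ on singular points of $D$, and in the case $\epsilon=0$, $n>0$ the statement that the points of multiplicity exactly $m$ lie on $E$ with $n$ minimal, come from the observation that an elementary transformation $\elm$ centered at a point of multiplicity $m$ not on $E$ would decrease $n$ (or would make $C'$ reducible / lower the relevant invariant), so minimality of the model forces the stated normal form; irreducibility of $D$ is inherited from that of $C$ since all the birational maps used preserve it.

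Finally, for the last assertion — that two of the listed pairs are birationally equivalent only under the stated numerical conditions — I would argue that the normal forms are "rigid" unless there is extra room to perform a further Cremona move keeping the degree (or keeping $S'$ a Mori fiber space of the same type) fixed. Concretely, on $\PP^2$ in case (i) any Cremona transformation that does not raise the degree must be composed of quadratic transformations based at triples of points each of multiplicity $\geqslant m$ along the curve; since in case (i) the multiplicities are $<m-1<m$, there are no such triples and the model is unique, so two pairs of type (i) are birational only if equal. The only flexibility is in case (ii): when $\epsilon=0$ one may perform the elementary transformation centered at a point of multiplicity $m$ (changing $n$ by $\pm1$) provided such a point exists off $E$ — after normalizing onto $E$ this reappears as the condition "$D$ has at least two points of multiplicity $m$", one on $E$ giving the normalized model and another whose blow-up/blow-down links it to a different $\F_{n'}$ or $\PP^2$; and the special small cases $n=\epsilon=1$ with $\alpha=0$ or $\alpha=2$ single out exactly the degenerate low-dimensional adjoint systems on $\F_1$, which is itself the blow-up of $\PP^2$, where the extra $(-1)$-curve $E$ provides one additional admissible contraction, requiring correspondingly three (resp.\ two) points of multiplicity $m$ to set up the chain of links. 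The main obstacle will be this last step: controlling \emph{all} degree-non-increasing birational self-maps of the normal forms and showing the enumerated numerical conditions are not merely sufficient but necessary — this is a delicate case-by-case study of the Mori chamber structure (equivalently, of which $(-1)$-curves appear on the minimal resolutions of the $D$'s and how elementary transformations permute the models), and it is where the bulk of the technical work in Theorems \ref{thm1} and \ref{thm2} will be concentrated. The other steps — establishing $m\geqslant1$, running the MMP to reach a Mori fiber space, and computing the coefficients by Riemann–Roch — are essentially routine given the adjunction-theoretic setup.
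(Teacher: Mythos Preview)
Your overall strategy coincides with the paper's: render the adjoint $\Delta=C+mK_S$ nef by contracting $(-1)$-curves, land on $\PP^2$ or some $\F_n$, and read off the numerical type by Riemann--Roch (your computation of the $F$-coefficient is correct). But there is a real gap at the sentence ``standard MMP-type contraction arguments then show that \ldots\ $S'$ is either $\PP^2$ or a Hirzebruch surface $\F_n$, since any $(-1)$-curve meeting $\Delta$ negatively could be contracted.'' Contracting the $(-1)$-curves $E$ with $\Delta\cdot E<0$ only makes $\Delta$ nef (this is the content of Propositions~\ref{prop:zariski}--\ref{prop:zariski2}); it does \emph{not} land you on a minimal rational surface, and from that point on you may only contract $(-1)$-curves with $\Delta\cdot E=0$ (equivalently $C\cdot E=m$) without creating multiplicities $>m$ on the image of $C$. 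The paper supplies the missing mechanism: since $m$ is maximal one has $|P+K_S|=\emptyset$ for the nef part $P$, and Proposition~\ref{D+K=0} then forces the trichotomy $P=0$ (Del Pezzo), $P^2=0$ with $|P|$ composed of a base-point-free rational pencil (ruled), or $P^2>0$ with $\phi_{|P|}$ a birational morphism onto a surface of minimal degree (big). It is this trichotomy that singles out exactly which further $(-1)$-curves have $P\cdot E=0$ and shows that contracting them terminates on $\PP^2$ or on the $\F_n$ determined by the pencil or the minimal-degree model; your $\PP^2$-versus-$\F_n$ dichotomy is a \emph{consequence} of this, not a replacement for it.

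Two smaller corrections. First, ``$m\geqslant1$, i.e.\ $|C+K_S|\neq\emptyset$'' is false when $C$ is rational: one can have $|C+K_S|=\emptyset$ yet $|C+2K_S|\neq\emptyset$, which is precisely the Kumar--Murthy sharpening of Coolidge's criterion (Theorem~\ref{thm:KM}); what one needs, and what that theorem provides, is only that \emph{some} $|C+mK_S|$ is nonempty. Second, the uniqueness clause is not handled in the paper by your direct analysis of degree-preserving Cremona maps, but through Iitaka's rigidity of $\sharp\sharp$-models (Theorem~\ref{thm:iitaka}), the $\flat$-model normalization (Proposition~\ref{pro:flat}), and Jung's theorem together with Corollary~\ref{cor: jung}; your outline of this step is morally right but would need these tools (or equivalents) to be carried out.
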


\begin{theorem}\label{thmB}  Let $C$ be an irreducible plane curve. 
Then a Cremona minimal model $B$ of $C$ is of one of the following types:

\begin{enumerate}[(i)]

\item $B$ is a line;

\item if $B$ has degree $d$ and points of multiplicities $m_1,\ldots, m_r$ with  
$m_1\geqslant \cdots\geqslant m_r$, then $d\geqslant m_1+m_2+m_3$;

\item $B$ is a curve of degree $d$ with a point $p$ of maximal multiplicity $m_0$ and all points of multiplicity $\mu\geqslant ({d-m_0})/2$ are infinitely near  to $p$. \end{enumerate}
\end{theorem}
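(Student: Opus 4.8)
The plan is to deduce Theorem \ref{thmB} from Theorem \ref{thmA}, passing back and forth between a plane curve and a pair resolving it. Let $B$ be a Cremona minimal model of $C$, of degree $d$, with multiplicities $m_1\geqslant m_2\geqslant\cdots\geqslant m_r$ at its base points (proper or infinitely near) $p_1,\dots,p_r$, and let $S\to\Pl$ be a minimal embedded resolution of $B$, with $\widetilde B$ the strict transform of $B$; then $\widetilde B$ is smooth and irreducible and $(S,\widetilde B)$ is one of the pairs of Theorem \ref{thmA}. If $(S,\widetilde B)$ is birationally equivalent to $(\Pl,L)$ with $L$ a line, then $B$ is Cremona equivalent to a line, so $d=1$ and we are in case (i); from now on assume this is not so, so that Theorem \ref{thmA} applies to $(S,\widetilde B)$.

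The whole matter turns on whether or not $m_1+m_2+m_3\leqslant d$. If it holds we are in case (ii), and there is nothing more to say. So suppose $m_1+m_2+m_3>d$, set $p:=p_1$ and $m_0:=m_1$; the goal is to show that every base point of multiplicity $\mu\geqslant (d-m_0)/2$ is infinitely near $p$, i.e.\ that $B$ is of type (iii). Since $B$ is Cremona minimal it admits no degree--lowering quadratic transformation; in particular $p_1,p_2,p_3$ cannot be the (possibly infinitely near) centres of one, as this would give a plane curve of degree $2d-m_1-m_2-m_3<d$. Hence there is an obstruction among $p_1,p_2,p_3$: it cannot be a collinearity or a tangency, since such a configuration forces a line $\ell$ through points of total multiplicity $m_1+m_2+m_3>d=\ell\cdot B$, or a splitting conic through points of total multiplicity $>2d$, contradicting B\'ezout; so it is one of proximity, i.e.\ some of $p_1,p_2,p_3$ are infinitely near others. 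Running through the admissible shapes of such an obstruction and iterating the argument shows that every base point of sufficiently large multiplicity lies infinitely near $p$. That the precise threshold is $(d-m_0)/2$ is extracted from Theorem \ref{thmA}: under the present hypothesis the minimal model of $(S,\widetilde B)$ is a pair $(\F_n,D)$ of type (ii) there (the alternative of a type--(i) minimal model is dealt with in the same way and leads back to case (ii)); when $\epsilon=0$ the points of $D$ of maximal multiplicity $m$ lie on the section $E$, which collapses to one point of $\Pl$ when $\F_n$ is blown down as economically as possible, and a direct computation identifies that point with $p$ and gives $m=(d-m_0)/2$; when $\epsilon=1$ the same blowing down gives instead $m_1+m_2+m_3<d$, i.e.\ case (ii), already covered. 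In every case all base points of multiplicity $\geqslant (d-m_0)/2$ are infinitely near $p$, as wanted.

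The main obstacle is this last step: the proximity case analysis of the second paragraph, together with the dictionary between the minimal surface model of Theorem \ref{thmA} and its plane realizations. One must translate the constraints of Theorem \ref{thmA} on which infinitesimal neighbourhood of $p$ a given singular point occupies and which points are proximate or satellite to which, into the collapse of all high--multiplicity points onto $p$ after $\F_n$ is blown down to $\Pl$; and, conversely, one must verify that the plane models thereby obtained do realize the minimal degree in their Cremona class---which is immediate in case (i) of Theorem \ref{thmA}, where $m_1+m_2+m_3\leqslant d-6$, but in case (ii) requires inspecting the finitely many competing quadratic transformations. The most delicate instance is $\epsilon=0$ with only one or two base points of multiplicity $m$, where the clustering described in (iii) is tight and every candidate reduction must be excluded by hand.
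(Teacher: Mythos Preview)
Your proposal has a genuine gap, and it is precisely the classical difficulty that made this problem hard.

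You argue that if $m_1+m_2+m_3>d$ then, since $B$ is Cremona minimal, the quadratic transformation based at $p_1,p_2,p_3$ must be obstructed, hence some proximity relation holds among these points; ``iterating the argument'' then forces all points of high multiplicity to be infinitely near $p_1$. This is exactly Noether's original approach, and it is incomplete for the reason Segre pointed out in 1901: knowing that no \emph{single} quadratic transformation lowers the degree does not imply Cremona minimality. A composite of quadratics may first raise the degree and then lower it below $d$, so ``inspecting the finitely many competing quadratic transformations'' is not enough. Concretely, even after you have established that $p_2,p_3$ are infinitely near $p_1$, nothing in your argument excludes a further proper point $p_4$ with $m_4\geqslant(d-m_0)/2$; the quadratic based at $p_1,p_2,p_4$ may well exist yet fail to lower the degree, and you have no tool to control longer words in quadratics. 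Your claim that the case $\epsilon=1$ of Theorem~\ref{thmA} always yields $m_1+m_2+m_3<d$ is also false: case $(cb_4)$ of Theorem~\ref{thm2} has $\epsilon=1$ and is \emph{not} of Noether type.

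The paper does not deduce Theorem~\ref{thmB} from Theorem~\ref{thmA} in your direction. Instead it proves the finer Theorem~\ref{thm2}: starting from the birational classification (Theorem~\ref{thm1}), for each class it \emph{constructs} a specific plane model and then proves that model is Cremona minimal. For the Noether-type models this is Jung's theorem (Theorem~\ref{thm:jung}). For the models that give your case~(iii)---types $(cr_2)$ and $(cb_4)$---Cremona minimality requires the machinery of \S\S\ref{admissible}--\ref{S:sing}: one shows (Theorem~\ref{thm:deg}) that for an \emph{admissible} plane model any degree-nonincreasing Cremona map can be replaced by a de~Jonqui\`eres map centred at $p$, via Noether--Castelnuovo and induction on simplicity, and then (Theorem~\ref{lem:new}) that no de~Jonqui\`eres map lowers the degree of a \emph{good} model. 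This is the missing ingredient in your proposal; without it, the passage from ``no quadratic lowers the degree'' to ``Cremona minimal'' cannot be closed.
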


The structure of the Cremona minimal curves in (iii) can be well specified, as in the statement of Theorem \ref{thm2}. They
 are obtained from the pairs $(\F_n,D)$ in Theorem \ref{thmA} with a suitable process which is explained in detail in \S\ref{S:sing}.  The curves in (ii) may be birationally, and not projectively, equivalent only if $d= m_1+m_2+m_3$. This is a consequence of a result, asserted by several classical authors and proved by G.~Jung in \cite{Jung1}, to the effect that a linear system of plane curves of degree $d$ with points of multiplicities $m_1\geqslant m_2\geqslant m_3\geqslant \ldots$, is Cremona minimal as soon as $d\geqslant m_1+m_2+m_3$ and if $d>m_1+m_2+m_3$ any Cremona minimal linear system birational to it, is projectively equivalent to it. We give a short and easy proof of this in \S \ref{sec:Equiv}. As for type (iii), it is possible to have several different Cremona minimal, non projectively equivalent models, with different  multiplicities. 
 
In our view, these results completely solve the classification problem, though the difficulty remains, given a specific curve $B$, of resolving its singularities and determining the structure of its subsequent adjoint linear systems. 

The paper is organized as follows. In \S \ref{sec:prel} we fix notation and recall a few facts about infinitely near points and linear systems.  In \S \ref{sec:Equiv}  we prove some basic results on Cremona minimality and the aforementioned theorem of Jung. 
In \S \ref{subsec:-1cicli} we recall a few properties of $(-1)$--cycles, i.e.\ those effective divisors $C$ on a smooth surface $S$ which are contracted to a smooth point by a birational morphism, which is an isomorphism on the complement of $C$ on $S$. This is used in \S \ref{sec:adjprop}, where we prove some essential nefness results on effective adjoint linear systems which boil down to computing their Zariski decomposition, and we recall how adjoint systems behave under birational maps. 
In \S \ref{sec:Iitaka} we introduce Iitaka's $\sharp$--models, and define the equally useful $\flat$--models and $\natural$--models. We devote \S \ref{subsec:moving} to briefly recalling the birational  classification of pairs $(S,\cL)$, with $\cL$ a nef linear system of positive dimension of curves of arithmetic genus 0, and, accordingly, the Cremona classification of planar positive dimensional  linear systems of rational curves: this is the first stone of our classification. 
The  following \S\S \ref{admissible} and \ref{S:sing} are devoted to introducing and constructing planar linear systems enjoying the Cremona minimality property. As indicated in the statement of Theorem \ref{thmB}, their main feature is that the maximal singularities of their general curve are nestled in a rather complicated way infinitely near to the point of highest multiplicity. In \S \ref{S:theorem} we prove the announced classification results for pairs $(S,C)$ (and for plane curves $B$), by subdividing them into four main classes: the line, the del Pezzo, the ruled and the big case, according to the behaviour of the last effective adjoint system $\vert C+mK_S\vert$. An Appendix is devoted to quickly proving the famous Noether--Castelnuovo's Theorem on the generation of the Cremona group via linear and quadratic transformations:
this is done exploiting the concept of \emph{simplicity} of a curve (see \cite{Alex, Chisini, calabri2}, which we effectively use in 
\S\S \ref{admissible} and \ref{S:sing} to prove our Cremona minimality results. 

In \S \ref{S:applications} we present a few applications, the most relevant of which is the proof of  theorem originally stated by de Franchis \cite{Defranchis}, which classifies, up to Cremona transnformations, planar linear systems of positive dimension of curves of genus 2. De Franchis original proof is affected, as well as all papers on the subject appeared before 1901, by a criticism raised by C.~Segre to Noether's original proof of Noether--Castelnuovo's Theorem  \cite{Segre1}.

When the present research was completed, M.~Mella kindly brought to our attention his pre-print  \cite{MellaPolastri} in collaboration with E.~Polastri, in which, among other interesting things, similar results are contained, though the classification there is less fine than the one we produce here.

\section{Preliminaries}\label{sec:prel}

\subsection{Notation and conventions} In this paper we will work over $\mathbb C$. 

Let $S$ be a smooth, irreducible, projective surface, simply  called a \emph{surface} in the sequel.
We will use standard notation in surface theory, i.e.\ $K=K_S$ will denote a canonical divisor, $q=q(S)$  the \emph{irregularity} of $S$,  $\kappa=\kappa(S)$  the \emph{Kodaira dimension}, etc. The linear equivalence of divisors will be denoted by $\equiv$.

Let $D$ be a divisor on $S$. As usual $\cO_S(D)$ will be the related invertible sheaf. We will denote by $0$ the zero divisor.  If $D$ is effective, it will be called a \emph{curve} and $p_a(D)$ will denote its \emph{arithmetic genus}.

If $A$ and $B$ are divisors on $S$, we will use the notation $A\geqslant B$ if $A-B$ is effective, $A>B$ if $A\geqslant B$ and $A\neq B$.
 Recall that a divisor $D>0$ is said to be \emph{numerically connected}
if $D=A+B$, with $A,B>0$, implies $A\cdot B>0$.
A divisor $D$ is \emph{nef} if for any curve $C$ one has $D\cdot C\geqslant 0$. We may sometimes consider 
$\Q$--divisors.

If $C$ is a smooth, irreducible curve with $p_a(C)=0$ and $C^2=-k<0$, we will say that $C$ is a \emph{$(-k)$-curve}. By Castelnuovo's Theorem, a $(-1)$-curve is the exceptional divisor of a blow-up.

If $f\colon S\to S'$ is a morphism and $D, D'$ are divisors on $S,S'$ respectively, it makes sense  to consider,  the \emph{image} $f_*(D)$ of $D$ on $S'$, the \emph{total transform} $f^*(D')$ and the \emph {strict}  or  \emph{proper transform} of $D'$ on $S$ 
(see \cite{Hartshorne}, p. \ 110 and p.\ 425, and  \cite{Mats}. p. 121).

\subsection{Infinitely near points} \label{ssec:inp} Here we briefly recall some basic facts and terminology about infinitely near points, which will be commonly used in the sequel
(cf., e.g., \cite{Alberich} and \cite{Enriques}).

Let $S$ and $S'$ be surfaces.
Any birational morphism $\sigma\colon S'\to S$
is the composition of a certain number, say $n$,
blowing-ups $\sigma_i\colon S_{i}\to S_{i-1}$
at a point $p_i\in S_{i-1}$, $i=1,\ldots,n$:
\begin{equation}\label{eq:seq1}
\sigma \colon S'=S_{n}
  \xrightarrow{\,\sigma_n\,}  S_{n-1}
  \xrightarrow{\,\sigma_{n-1}\,}
  \cdots
  \xrightarrow{\,\sigma_{2}\,}  S_{1}
  \xrightarrow{\,\sigma_{1}\,}  S_0=S.
\end{equation}

Let $p\in S$ be a point.
One says that $q$ is an \emph{infinitely near point to $p$ of order $n$},
and we write $q\infnear[n] p$, if there exists
a birational morphism $\sigma\colon S'\to S$ as in \eqref{eq:seq1},
such that $p_1=p$, $\sigma_i(p_{i+1})=p_{i}$, $i=1,\ldots,n-1$,
and $q\in Z_n=\sigma_{n}^{-1}(p_{n-1})$.
For each $i=1,\ldots,n$, let $E_i=\sigma_{i}^{-1}(p_{i})\subset S_i$
be the exceptional curve of $\sigma_i$ and let $E'_i$ be the strict transform of $E_i$ on $S'$.
If $i>j$, let $\sigma_{i,j}$ be the morphism $S_i\to S_j$.
For each $i=1,\ldots,n-1$, set $Z_i=\sigma_{n,i+1}^*(E_{i})$.
According to the terminology introduced in \S \ref{subsec:-1cicli} below,
$Z_1,\ldots,Z_{n-1},Z_n=E_n$ are \emph{$(-1)$-cycles}
generating $\Pic(S')$ over $\Pic(S)$.

One says that $q$ is \emph{proximate to $p$}, and we write $q\prox p$,
if either $q\infnear[1] p$ 
or $q\infnear[n] p$ with $n>1$ and $q$ lies on the strict transform $E'_1$ of $E_1$ on $S'$.
In the latter case, one says that $q$ is \emph{satellite to $p$},
and we write $q \satel p$.
This may happen only if $p_i$ lies on the strict transform
of $E_1$ on $S_{i-1}$, for each $i=2,\ldots,n$.

Also $E'_1,\ldots,E'_{n-1},E'_n=E_n$ generate
$\Pic(S')$ over $\Pic(S)$ and $E'_i=Z_i-\sum_{j}q_{ij}Z_j$,
where $q_{ij}=1$ if $p_j\prox p_i$, and $q_{ij}=0$ otherwise.

In this paper, we will usually refer to \emph{points} on a surfaces $S$ including
infinitely near ones.
We will say that a point $p$ is \emph{proper},
and we will write $p\in S$,
if $p$ is not infinitely near to any point of $S$.
An infinitely near point is called \emph{free} if it is not satellite (to any point).

Let now $C'$ be a curve on $S'$ and $C=\sigma_*(C')$.
Then
$
C'=\sigma^*(C)-\sum_{i=1}^n m_iZ_i,
$
where $m_1,\ldots,m_n$ are integers.
If $C$ is a curve, i.e.\ if $C'$ is not contracted by $\sigma$,
one says that $m_i$, $i=1,\ldots,n$, is the \emph{(virtual) multiplicity} of $C$
at the point $p_i$.
If no component of $C'$ is contracted by $\sigma$,
then, for each $i=1,\ldots,n$, one has $C'\cdot E'_i\geqslant0$, which is equivalent to
\begin{equation}\label{eq:prox}
m_i\geqslant \sum_{j\colon p_j\prox p_i} m_j.
\end{equation}
whichis classically known as the
\emph{proximity inequality at $p_i$}.

In any case, we may uniquely write $C'=D'+\sum_{i=0}^{n} h_iE'_{i}$,
with $D'$ effective and $D'\cdot E'_{i}\geqslant 0$
(i.e.\ the proximity inequality at $p_i$ hold), for all $i=1,\ldots,n$. 
We say that $D'$ is \emph{pure}.
One has $\sigma_*(D')=\sigma_*(C')=C$.
Then
$
D'\equiv\sigma^*(C)-\sum_{i=1}^n \tilde m_iZ_i,
$
where $\tilde m_1,\ldots, \tilde m_n$ are non-negative integers, called the
\emph{effective multiplicities} of $C$ at $p_1,\ldots,p_{n}$. 
One may compute the $\tilde m_i$'s from the $m_i$ by a well--known algorithm
called Enriques'  \emph{unloading principle} (cf.\ \cite{Enriques}).

\subsection{Linear systems} Let $S$ be a surface and $D$ a divisor on it.
As usual we denote by  $\vert D\vert$ the \emph{complete linear system} associated to $D$, i.e.\ $\PP(H^0(S, \cO_S(D)))$.

A \emph{linear system} $\cL\subset \vert D\vert$ corresponds to a vector subspace $V\subset  H^0(S, \cO_S(D))$.  Recall that $\cL$ determines a rational map $\phi_{\cL}\colon S \rto \PP^r=\PP(V^*)$, $r=\dim(\cL)$. Two linear systems define the same map, if they differ by divisorial fixed components. If $\cL$ has no divisorial fixed component, then $\phi_{\cL}$ is a morphism off the base locus of $\cL$. A linear system $\cL$  will be called  \emph{irreducible} if its general curve is irreducible.

If $\cL\subset \vert D\vert$ and $\cL'\subset \vert D'\vert$ are linear systems, corresponding to  $V\subset  H^0(S, \cO_S(D))$ and $V'\subset  H^0(S, \cO_S(D'))$, we will denote by
$\cL\cdot \cL'$ the intersection number $C\cdot C'$, where $C\in\cL$ and $C'\in\cL'$,
and by $\cL+\cL'$ the linear system corresponding to the image of $V\otimes V'\subset  H^0(S, \cO_S(D))\otimes  H^0(S, \cO_S(D'))$ in $ H^0(S, \cO_S(D+D'))$ via the natural multiplication map. Thus we may consider the multiple linear system $i\cL$ for all integers $i\geqslant 1$.

Let $\phi\colon S\rto S'$ be a dominant rational map, let $\cL$ be a linear system on $S$ and let $C\in \cL$ be its general curve. We will denote by $\phi_*(\cL)$ the \emph{proper image} of $\cL$, which is the linear system on $S'$ whose general curve  is the closure of the images of $\phi(x)$ where $x$ varies among the general points of all irreducible components of $C$.

One also defines the \emph{total transform} $\phi_!(\cL)$ of $\cL$ via $\phi$
which is defined as follows.
There is a commutative diagram
\[
\xymatrix{ & X \ar[dl]_{f} \ar[dr]^{f'} \\
S\ar@{-->}[rr]^{\phi} & & S'}
\]
where $X$ is a surface and $f,f'$ are birational morphisms.
Then $\phi_!(\cL)=f'_*(f^*(\cL))$.
Note that $\phi_*(\cL)$ is a subsystem of $\phi_!(\cL)$ and
the two differ for base components which are images, via $f'$,
of exceptional divisors of $f$.

Consider a sequence of blowing-ups as in \eqref{eq:seq1} and let $\cL'$ be a linear system on $S'$. 
One can define, as in \S \ref{ssec:inp}, virtual and effective multiplicites of $\cL$ at the points $p_1,\ldots,p_{n}$ which one blows up. 

Given $p_1,\ldots,p_n$ proper or infinitely near points of $\Pl$,
given positive integers $m_1,\ldots,m_r$ for each one of them,
we denote by 
\[
\cL(d;m_1,m_2,\ldots,m_r)
\]
the linear system $\cL\subseteq \vert \cO_{\PP^2}(d)\vert $
of curves of degree $\deg(\cL)=d$ with \emph{assigned} base points
$p_1,\ldots,p_n$, with \emph{assigned}, or \emph{virtual, multiplicities}
$m_1,\ldots,m_n$.
It may happen that the system $\cL$ has further base points,
fixed components and higher multiplicities than the assigned ones.
The last phenomenon means exceptional divisors as fixed components,
if one looks at the system on the blow-up of $\Pl$ at the assigned base points.

We will usually assume that $m_1\geqslant m_2\geqslant \cdots\geqslant m_r$.
We will use the exponential notation $m_i^{e_i}$ in case of $e_i$ points
of multiplicity $m_i$.

In case we want to specify that $e_i$ points of multiplicity
$m_i$, $i=1,\ldots,l$, are infinitely near 
[resp.\ proximate]
to a point of multiplicity $m$,
we will write
\begin{align*}
&
\cL(\ldots,(m,\{m_1^{e_1},m_2^{e_2},\ldots,m_l^{e_l}\}),\ldots),
&&
[\text{resp.\ }
\cL(\ldots,(m,[m_1^{e_1},m_2^{e_2},\ldots,m_l^{e_l}]),\ldots)].
\end{align*}

Recall that the \emph{virtual dimension} of the linear system
$\cL=\cL(d;m_1,\ldots,m_r)$ is
$v(\cL)={d(d+3)}/2-\sum_{i=1}^r {m_i(m_i+1)}/2.$
One has
$\dim(\cL)\geqslant \max\{v(\cL),-1\}$
and, if the equality holds, the system is called \emph{non-special}.

\subsection{Cremona transformations}

A linear system $\cL$ of plane curves, with no divisorial fixed components,
is called a \emph{net} if $\dim (\cL)=2$.
If, in addition, the map $\phi_{\cL}\colon \Pl \rto \Pl$
is a \emph{Cremona transformation}, 
i.e.\ it is birational, the net is called \emph{homaloidal}.
In that case, the general curve $C$ of $\cL$ is irreducible and rational.
If $d$ is the degree of $\cL$ one says that $\phi_{\cL}$ has \emph{degree} $d$.
Cremona transformations of degree 1 are \emph{projective} or 
\emph{linear} transformations.

An irreducible net $\Lambda$ of type $\cL(\delta;\delta-1,1^{2\delta-2})$, $\delta\geqslant2$,
is homaloidal and the corresponding birational map 
$\phi_\Lambda\colon\Pl\rto\Pl$ is called
a \emph{de Jonqui\`eres} transformation of degree $\delta$
\emph{centered} at the base points of $\Lambda$.
If $\delta=2$, the map $\phi_\Lambda$ is a \emph{quadratic} transformation.

Any homaloidal net $\cL(\delta;\alpha_0,\alpha_1,\ldots,\alpha_r)$
is such that
\begin{align}\label{eq:invariantsLambda}
 &  \delta^2-1=\sum_{i=0}^{r}\alpha_i^2,
&&  3(\delta-1)=\sum_{i=0}^{r}\alpha_i. 
\end{align}


Recall the famous:

\begin{theorem}[Noether-Castenuovo] \label{thm:NC}
Every Cremona transformation of the plane is the composition of
finitely many linear and quadratic transformations.
\end{theorem}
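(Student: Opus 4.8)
The plan is to argue by induction on the degree $d$ of the Cremona transformation $\phi\colon\Pl\rto\Pl$, the substance being to show that when $d>1$ one can strictly lower the degree by composing $\phi$ with a suitable quadratic transformation. If $d=1$ then $\phi$ is projective and there is nothing to prove; if $d=2$ then $\phi$ is itself quadratic. So assume $d\geqslant3$ and let $\cL=\cL(d;\alpha_1,\ldots,\alpha_r)$, with $\alpha_1\geqslant\cdots\geqslant\alpha_r>0$, be the homaloidal net of $\phi$, whose general member is, as recalled above, an irreducible rational curve (in particular $\alpha_1\leqslant d-1$). The first ingredient is the classical \emph{Noether inequality} $\alpha_1+\alpha_2+\alpha_3\geqslant d+1$, which is a numerical consequence of the relations \eqref{eq:invariantsLambda}; hence $\cL$ always has three base points $p_1,p_2,p_3$ — to be chosen, among those of the three highest multiplicities, with some care about the proximity relations among them — with virtual multiplicities $\mu_1\geqslant\mu_2\geqslant\mu_3$ satisfying $\mu_1+\mu_2+\mu_3\geqslant d+1$.

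Call a triple $(p_1,p_2,p_3)$ of (possibly infinitely near) points \emph{admissible} if it occurs as the base locus of some quadratic transformation, i.e.\ if it is one of: three proper non-collinear points; two proper points together with a point in the first neighbourhood of one of them not lying on the line joining them; a proper point $p_1$, a point $p_2$ in its first neighbourhood, and a free point $p_3$ in the first neighbourhood of $p_2$. If the principal triple found above happens to be admissible, pick a quadratic transformation $\omega$ with base points precisely $p_1,p_2,p_3$: then $\phi\circ\omega^{-1}$ is a Cremona transformation of degree $2d-\mu_1-\mu_2-\mu_3\leqslant d-1$, which by the inductive hypothesis is a composition of quadratic and linear transformations, whence so is $\phi=(\phi\circ\omega^{-1})\circ\omega$.

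The real difficulty — and precisely the gap in Noether's original argument pointed out by C.~Segre — is that the three base points of highest multiplicity of a homaloidal net need \emph{not} form an admissible triple: they can be entangled in the infinitely near structure (two of them satellite to the third, or lying in a chain that is not of the above form). To handle this I would follow Castelnuovo's completion of the argument, organised through the notion of \emph{simplicity} of a plane curve: using the proximity combinatorics and the unloading principle of \S\ref{ssec:inp}, one shows that whenever the principal triple fails to be admissible there is an auxiliary quadratic transformation — centred at $p_1$ and at two points infinitely near it — whose composition with $\phi$ leaves $d$ unchanged while strictly decreasing a secondary invariant of the base-point configuration (for instance the sum of the orders of the infinitely near base points, or the number of satellite base points among the principal ones); iterating, one reaches a homaloidal net whose principal triple is admissible, and one then applies the previous paragraph. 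The induction is thus run on the pair $(d,\,\text{secondary invariant})$ ordered lexicographically, and essentially all the work, and the only delicate point, lies in the correct choice of these auxiliary transformations and in checking that the secondary invariant really drops.
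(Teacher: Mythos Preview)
Your overall architecture is right---induction, a ``good'' quadratic that improves the main invariant, and auxiliary quadratics to untangle the infinitely-near structure when the principal triple is not admissible---and you correctly identify the Segre objection as the heart of the matter. But there is a genuine gap in your choice of primary invariant and in your description of the auxiliary step.

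The paper does \emph{not} induct on the degree $d$. It inducts on the simplicity $(k,h,s)$ with $k=\delta-\alpha_0$ as the leading term (in your notation $k=d-\alpha_1$), and this is not a cosmetic difference. The reason is exactly the auxiliary move: when every $p_i$ with $\alpha_i>k/2$ is infinitely near $p_1$ and one of them is satellite, the quadratic transformation the paper uses is centred at $p_1$, at one infinitely near point $p_i$, and at a \emph{general proper} point $q$---not at two infinitely near points as you write. Its effect is to send $\delta\mapsto\delta+\varepsilon'$ and $\alpha_0\mapsto\alpha_0+\varepsilon'$ with $\varepsilon'=k-\alpha_i\geqslant0$; so $k$ is preserved, $h$ is preserved, and $s$ drops by one because the satellite point becomes a free first-order neighbour. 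The degree, however, goes \emph{up} (or at best stays fixed). Hence an induction on $(d,\text{secondary invariant})$ lexicographically, as you propose, simply fails at this step: the auxiliary quadratic does not leave $d$ unchanged.

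Once you replace $d$ by $k=d-\alpha_1$ as the primary invariant and take $(h,s)$ as the secondary ones (with $h$ the number of base points of multiplicity $>k/2$ besides $p_1$, and $s$ the number of those that are satellite to $p_1$), your sketch matches the paper's argument. The degree-dropping quadratic, when it exists, is centred at $p_1$ and two of the points $p_i,p_j$ with $\alpha_i,\alpha_j>k/2$; it either drops $k$ or keeps $k$ and drops $h$ by two. The auxiliary quadratic, when no admissible pair $p_i,p_j$ exists, keeps $k$ and $h$ and drops $s$. The inequality \eqref{eq:a0-m} guarantees $h\geqslant2$ and that not all of $p_2,\ldots,p_{h+1}$ are proximate to $p_1$, which is what makes the case analysis go through.
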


In Appendix \ref{app:proof} we give a proof of this theorem
by induction on the \emph{simplicity} of homaloidal nets,
which we now introduce and will use later.

Let $\cL=\cL(d;m_0,m_1,\ldots, m_r)$ be a planar linear system
with $d\geqslant m_0\geqslant m_1\geqslant \ldots \geqslant m_r\geqslant 1$,
and let $p_i$ be the point of multiplicity $m_i$ of $\cL$
(if there is no $m_i$, let $r=-1$).
We set $m_{r+1}=0$ and $m_{-1}=\infty$.
The \emph{simplicity} of $\cL$ is the triplet $(k_\cL,h_\cL,s_\cL)$
of integers defined as follows:
\begin{align}\label{eq:simplicity}
 & k_\cL=d-m_0,
&& m_{h_\cL} >\frac{k_\cL}{2}\geqslant m_{h_\cL+1},
&& s_\cL=\sharp\,\{p_i \mid 1\leqslant i \leqslant h \text{ and } p_i \satel p_0 \}.
\end{align}
One says that $\cL'$ is \emph{simpler} than $\cL$ if
the simplicity of $\cL'$ is lexicographically smaller than the one of $\cL$.
The simplicity of a Cremona transformation $\phi\colon\PP^2\rto\PP^2$
is the one of the homaloidal net defining $\phi$.
A de Jonqui\`eres transformation of degree $\delta$
has simplicity $(1,2\delta-2,s)$, with $s\leqslant \delta-1$.

\section{Birational equivalence of pairs and Cremona minimality}\label{sec:Equiv}


Let  $(S,\cL)$ be a pair with $S$ a surface and $\cL$ a linear system on it. 
If $(S,\cL)$, $(S',\cL')$ are two such pairs, we say that they are \emph{birationally equivalent}, and we write  $(S,\cL)\sim (S',\cL')$, if there is a birational map $\phi\colon S\rto S'$ such that
$\cL'=\phi_*(\cL)$, $\cL=\phi^{-1}_*(\cL')$,
and $\phi$ is not constant on each irreducible component of the divisorial part of the base locus of $\cL$.
If moreover $\phi\colon S\to S'$ is an isomorphism,
the pairs $(S,\cL)$, $(S',\cL')$ are called \emph{isomorphic}. 
Birational equivalence is an equivalence relation. Any pair
in a class is called a \emph{model} of the class.

If $(S,\cL)\sim (S',\cL')$, then $\dim(\cL)=\dim(\cL')$. If $\dim(\cL)=0$, we have the notion of birational equivalence of pairs $(S,C)$, where $C$ is a curve on $S$.  

If  $(S,\cL)\sim (S',\cL')$, then the images of $\phi_{\cL}$ and  $\phi_{\cL'}$ are projectively equivalent.  The converse is also true,
provided $\phi_{\cL}$ and  $\phi_{\cL'}$ are birational to their images. 

Given a pair $(S,\cL)$, with $S$ rational, one can consider
all models of  $(S,\cL)$ of the form $(\Pl,\cL')$.
The ones with minimal $\deg(\cL')$ will be called
\emph{Cremona minimal} and $\deg(\cL')$ will be called
the \emph{Cremona degree} $\Cdeg(\cL)$ of $\cL$.
This definition includes the one of Cremona degree  and
Cremona minimality for curves. 
It is clear that $\Cdeg(\cL)\geqslant \Cdeg(C)$,
where $C$ is the general member of $\cL$.
As we shall see, it may happen that strict inequality holds.

If $(S,\cL)$ is birationally equivalent to $(\Pl,\cL(d;m_1^{e_1},\ldots,m_r^{e_r}))$,
we say that $\cL$ is of \emph{Cremona type} $(d;m_1^{e_1},\ldots,m_r^{e_r})$.
If $d=\Cdeg(\cL)$, we say that $\cL$ is of \emph{minimal Cremona type}
$(d;m_1^{e_1},\ldots,m_r^{e_r})$.

Consider a linear system $\cL=\cL(d;m_1,\ldots,m_r)$
and a Cremona transformation $\phi_\Lambda\colon\Pl\rto\Pl$
determined by the homaloidal net $\Lambda=\cL(\delta;\alpha_1,\ldots,\alpha_s)$.
At the cost of taking some $\alpha_i=0$,
we may assume that $s\geqslant r$.
Moreover we assume that the assigned base points of $\cL$
coincide with the points of multiplicities $\alpha_1,\ldots,\alpha_r$ of $\Lambda$.
One has
\begin{equation}\label{eq:degphi}
\deg(\phi_*(\cL))\leqslant d\delta-\sum_{i=1}^r \alpha_i m_i
\end{equation}
and the equality holds if $\cL$ has no base points off the assigned ones
and its effective multiplicities equal the assigned ones.
We denote by $\vdeg_\phi(\cL)$ the right-hand side of \eqref{eq:degphi}
and call it the \emph{virtual degree} of $\phi_*(\cL)$.

The following lemma gives useful criteria for Cremona minimality.

\begin{lemma}\label{lem:criterion} Planar linear systems $\cL$ of the following types
are Cremona minimal:
\begin{enumerate}[$(i)$]

\item $\cL(d;n,m)$, with $d\geqslant n+m$ and $n\geqslant m\geqslant 0$;

\item $\cL(d;(m,[m_1,\ldots,m_h]))$,
where $h\geqslant1$, $m_1\geqslant m_2\geqslant \cdots \geqslant m_h\geqslant 1$, $m\geqslant m_1+\cdots+m_h$,
$d\geqslant m+m_1$, and the point $p_i$, $i=1,\ldots,h$, of multiplicity $m_i$
is infinitely near of order 1 to the point $p$ of multiplicity $m$.
\end{enumerate}
Moreover a Cremona transformation $\phi\colon\PP^2\rto\PP^2$
such that $\cL'=\phi_*(\cL)$ has degree $d$ is linear except:
\begin{enumerate}[$\ \bullet\ $]

\item  in case (i), $d=n+m$, in which
$\phi$ may be a de Jonqui\`eres transformation of degree  $\delta\geqslant2$
centered at the base points of $\cL$, and $\delta=2$ unless $m=0$, 
i.e.\  unless $\cL$ is composed of a pencil of lines;

\item  in case (ii), in which
$\phi$ may be a de Jonqui\`eres transformation of degree $\delta\leqslant h'+1$,
where $h'$ is the maximum such that $d-m=m_1=\cdots=m_{h'}$, and $\phi$ is
centered at $p$ and at $\delta-1$ points among $p_1,\ldots,p_{h'}$.
\end{enumerate}
In all cases $\cL'$ is of the same Cremona type as $\cL$. 
\end{lemma}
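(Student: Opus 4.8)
The plan is to prove Cremona minimality by contradiction, using the structure of a hypothetical degree-lowering Cremona transformation together with the two quadratic relations \eqref{eq:invariantsLambda} for homaloidal nets. Suppose $\phi=\phi_\Lambda$ is a Cremona transformation, $\Lambda=\cL(\delta;\alpha_0,\alpha_1,\ldots,\alpha_s)$ its homaloidal net, and that $\cL'=\phi_*(\cL)$ has degree $d'\leqslant d$. By \eqref{eq:degphi} we have $d'\leqslant d\delta-\sum_i\alpha_i m_i$, where we may arrange that the base points of highest multiplicity of $\Lambda$ are the assigned base points of $\cL$ (ordering things so the largest $\alpha_i$ sit over the largest $m_i$; this is legitimate since a Cremona transformation that lowers the degree of $\cL$ the most will do exactly this, and for any other choice the inequality only gets worse). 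So it suffices to show that for every homaloidal net $\Lambda$ of degree $\delta\geqslant 2$ one has $d\delta-\sum_i\alpha_i m_i\geqslant d$, i.e.
\[
d(\delta-1)\geqslant \sum_{i}\alpha_i m_i,
\]
with the cases of equality being exactly those listed (de Jonquières of the prescribed degrees).

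For case (i), $\cL=\cL(d;n,m)$ with $d\geqslant n+m$: here the sum on the right has at most two relevant terms, $\alpha_0 n+\alpha_1 m$, and the rest of the $\alpha_i$ are irrelevant. Using $3(\delta-1)=\sum\alpha_i$ one gets $\alpha_0+\alpha_1\leqslant 3(\delta-1)$, but more is true — by the Cauchy–Schwarz/extremal analysis of \eqref{eq:invariantsLambda}, the two largest multiplicities of a homaloidal net satisfy $\alpha_0+\alpha_1\leqslant 2(\delta-1)+\text{(something)}$; the cleanest route is: $\alpha_0 n+\alpha_1 m\leqslant (n+m)\max(\alpha_0,\alpha_1)$ is too lossy, so instead bound $\alpha_0 n+\alpha_1 m\leqslant \alpha_0(n+m)$ when $\alpha_0\geqslant\alpha_1$... but that is false in general either. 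The actual mechanism: from $\delta^2-1=\sum\alpha_i^2\geqslant \alpha_0^2+\alpha_1^2$ and $3(\delta-1)=\sum\alpha_i\geqslant \alpha_0+\alpha_1$ one deduces $\alpha_0+\alpha_1\leqslant 2\delta-1$ (this is the classical fact that the two highest multiplicities of a homaloidal net add to at most $2\delta-1$, with $\alpha_0=\delta-1$, $\alpha_1=\alpha_2=1$ in the de Jonquières boundary case). Then
\[
\alpha_0 n+\alpha_1 m\leqslant (n+m)(\delta-1)+\big(\text{correction when }\alpha_0=\delta-1\big),
\]
and one checks $\alpha_0 n+\alpha_1 m\leqslant d(\delta-1)$ directly: writing $\alpha_0=(\delta-1)+\epsilon_0$, $\alpha_1=\epsilon_1$ with $\epsilon_0+\epsilon_1\leqslant \delta$ forced by the relations, $\alpha_0 n+\alpha_1 m=(\delta-1)n+\epsilon_0 n+\epsilon_1 m\leqslant (\delta-1)n+\delta m\leqslant (\delta-1)(n+m)+m\leqslant (\delta-1)d + (m-(\delta-1)(d-n-m))$, and since $d\geqslant n+m$ this is $\leqslant (\delta-1)d$ once $\delta\geqslant 2$... the bookkeeping needs care but is elementary; equality pins $\delta=2$, $\alpha_0=1=\alpha_1=\alpha_2$ (quadratic), or, when $m=0$, any de Jonquières centered at $p_0$. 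The fact that $\cL'$ has the same Cremona type follows because $\phi$ is itself built from linear and quadratic/de Jonquières pieces (Noether–Castelnuovo, Theorem \ref{thm:NC}), and applying the inverse transformation recovers a net of type $(d;n,m)$.

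For case (ii), $\cL=\cL(d;(m,[m_1,\ldots,m_h]))$: the new feature is the \emph{proximity} structure — the $p_i$ are infinitely near of order $1$ to $p$. The right tool here is the \emph{simplicity} invariant $(k_\cL,h_\cL,s_\cL)$ from \eqref{eq:simplicity}. The idea: a Cremona transformation applied to $\cL$, when resolved, can only affect the multiplicities at $p$ and at the proper base points; the proximity inequality \eqref{eq:prox} at $p$ — namely $m\geqslant m_1+\cdots+m_h$, which is part of the hypothesis — is preserved after unloading, and combined with $d\geqslant m+m_1$ and the homaloidal relations \eqref{eq:invariantsLambda} it forces $\deg\phi_*(\cL)\geqslant d$. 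Concretely, set $k=d-m$. The condition $d\geqslant m+m_1$ says $m_1\leqslant k$, so all the infinitely near multiplicities are $\leqslant k$; since $m_1\geqslant k/2$ can hold for the first several, the de Jonquières transformation of degree $\delta$ centered at $p$ and at $\delta-1$ of the $p_i$ with $m_i=k$ (the $h'$ of them) is exactly the degree-preserving one, and it maps the net to another of the same type — one verifies $d'=d\delta-\alpha_0 m-\sum_{i=1}^{\delta-1}\alpha_i m_i=d\delta-(\delta-1)m-(\delta-1)k=d\delta-(\delta-1)d=d$. Any other homaloidal net does strictly worse; the inequality $d(\delta-1)\geqslant \alpha_0 m+\sum\alpha_i m_i$ reduces, via $\sum\alpha_i^2=\delta^2-1$, to showing that no net can ``absorb'' more multiplicity than the tautological de Jonquières one, which is again the two-highest-multiplicities bound $\alpha_0+\alpha_1\leqslant 2\delta-1$ applied inductively down the proximity chain.

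\textbf{Main obstacle.} I expect the real difficulty to be case (ii): controlling what happens to the effective multiplicities under an \emph{arbitrary} Cremona transformation when the base points sit in a proximity cluster. The inequality \eqref{eq:degphi} is only an inequality unless the effective multiplicities of $\cL$ match the assigned ones after pulling back by $\phi$ — and a clever $\phi$ might create base points off the cluster or change the proximity relations, so that the naive bound $\vdeg_\phi(\cL)$ is not achieved and one must argue that any \emph{gain} from such cleverness is at least offset. The clean way around this is to reduce to the case where $\phi$ is a single quadratic or de Jonquières transformation via Noether–Castelnuovo (Theorem \ref{thm:NC}) and induct on $\deg(\phi)$, checking at each step that the degree of the image cannot drop below $d$ and that equality forces $\phi$ to be one of the listed de Jonquières maps — this is where the simplicity invariant does the real work, since a degree-preserving transformation must fix the simplicity, and the classification of which single quadratic/de Jonquières moves preserve it gives precisely the exceptional list. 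The remaining computations (the explicit multiplicity bookkeeping and the unloading argument showing the proximity inequality at $p$ survives) are routine.
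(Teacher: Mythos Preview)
Your approach has a genuine gap in case (ii), and is needlessly indirect in case (i).

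For case (i) the paper's proof is a three-line computation that you are circling without landing on. One does not need the quadratic relation $\delta^2-1=\sum\alpha_i^2$ or the bound $\alpha_0+\alpha_1\leqslant 2\delta-1$ at all. The only facts used are that a homaloidal net of degree $\delta>1$ with multiplicities $\alpha,\beta$ at two points (say $\alpha\geqslant\beta$) satisfies $\alpha\leqslant\delta-1$ and $\alpha+\beta\leqslant\delta$ (the line through the two points is not a fixed component). Then
\[
\delta d-\alpha n-\beta m \;\geqslant\; \delta d-\alpha(n+m) \;\geqslant\; (\delta-\alpha)d \;\geqslant\; d,
\]
using successively $\beta\leqslant\alpha$, $n+m\leqslant d$, and $\alpha\leqslant\delta-1$. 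Equality analysis is immediate. Your parametrization $\alpha_0=(\delta-1)+\epsilon_0$ and the subsequent bookkeeping never converge to this.

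For case (ii) you have missed the key idea. You invoke the proximity inequality $m\geqslant m_1+\cdots+m_h$ for $\cL$, but that is a hypothesis, not a tool. The point is that the \emph{homaloidal net} $\Lambda$ also satisfies proximity at $p$: since each $p_i$ is infinitely near of order $1$ to $p$, the general (irreducible) curve of $\Lambda$ has multiplicity $\alpha$ at $p$ and $\alpha_i$ at $p_i$ with
\[
\alpha \;\geqslant\; \alpha_1+\cdots+\alpha_h.
\]
Combined with $m_i\leqslant m_1\leqslant d-m$ and $\alpha\leqslant\delta-1$, this gives directly
\[
\delta d-\alpha m-\sum_i\alpha_i m_i \;\geqslant\; \delta d-\alpha m-(d-m)\sum_i\alpha_i \;\geqslant\; \delta d-\alpha m-(d-m)\alpha \;=\;(\delta-\alpha)d \;\geqslant\; d,
\]
and again the equality case falls out: $\alpha=\delta-1$, each $\alpha_i\in\{0,1\}$, exactly $\delta-1$ of them equal to $1$, and $m_i=d-m$ whenever $\alpha_i=1$. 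This is precisely the de Jonqui\`eres description.

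Your proposed route through Noether--Castelnuovo and the simplicity invariant is not needed here and would be circular in spirit: in this paper the simplicity machinery and Theorem~\ref{thm:NC} are used later for the much harder Theorem~\ref{thm:deg}, whereas the present lemma is meant to be an elementary stepping stone (it is used, for instance, to prove Jung's Theorem~\ref{thm:jung} in two lines). The worry you raise in your ``main obstacle'' paragraph---that effective multiplicities might differ from assigned ones---is handled up front by observing (via Bertini on $\F_1$ after blowing up $p$) that $\cL$ has no unassigned base points and its general curve has exactly the assigned multiplicities, so \eqref{eq:degphi} is an equality.
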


\begin{proof} $(i)$
One sees that $\cL$ has no unassigned base point.
Suppose the homaloidal net corresponding to $\phi$ 
has degree $\delta$ and multiplicities $\alpha,\beta$ at the points of multiplicities $n,m$ of $\cL$.
We may assume that $\delta>1$, $\alpha\geqslant \beta$ and $\delta\geqslant\alpha+\beta$ and $\delta-1\geqslant \alpha$.
Then $\phi$ maps $\cL$ to a linear system $\cL'$ of degree
\[
\delta d-\alpha n-\beta m\geqslant \delta d -\alpha(n+m)\geqslant (\delta-\alpha)d
\geqslant d.
\]
If the equality holds, then either $\beta=m=0$, $n=d$ and $\alpha=\delta-1$,
or $\alpha=\beta=\delta-1=1$ and $d=n+m$,
proving the assertion in case $(i)$.

\noindent
$(ii)$
By blowing up $p$, working on $\F_1$,
and applying Bertini's Theorem,
one sees that $\cL$ has no unassigned base point
and its general curve is irreducible,
with multiplicities at the base points equal to the assigned ones.
Suppose the homaloidal net corresponding to $\phi$ 
has degree $\delta>1$ and multiplicities $\alpha$ and $\alpha_i$
respectively at $p$ and at $p_i$, $i=1,\ldots,h$.
Then $\delta-1\geqslant \alpha\geqslant \sum_{i=1}^h \alpha_i$, by \eqref{eq:prox}, and
$\alpha_i\geqslant0$ for each $i$.
Thus $\phi$ maps $\cL$ to a linear system $\cL'$ of degree
\[
\delta d-\alpha m -\sum_{i=1}^h \alpha_i m_i
\geqslant \delta d-\alpha m -(d-m)\sum_{i=1}^h \alpha_i
\geqslant \delta d-\alpha m -(d-m)\alpha=(\delta-\alpha)d\geqslant d.
\]
If equality holds, then $\alpha=\delta-1$,
$\alpha_i\leqslant1$ for each $i$,
$d=m+m_i$ whenever $\alpha_i=1$,
and there are exactly $\delta-1$ indexes $i$ such that $\alpha_i=1$.
This proves the assertion in case $(ii)$.
\end{proof}

If $\cL$ is a linear system of plane curves with no multiple
fixed components, then there is a
birational morphism $f\colon S\to \Pl$ such that the proper transform of the general curve $C$
of $\cL$ is smooth. Let $\cL'$ be the linear system on $S$ such that $(S,\cL')\sim
(\Pl,\cL)$ via $f^{-1}$.
We say that $\cL$ is \emph{complete} if so is $\cL'$. 
We will denote by $\ad_m(\cL)$ 
the linear system $f_*(\vert C+mK_S\vert)$ and we call it the
\emph{$m$-adjoint linear system} of $\cL$. This system 
is independent on $f$.
If $\cL=\{C\}$ has dimension zero, we write $\ad_m(C)$.
In case $m=1$, we write $\ad(\cL)$ and call it the
\emph{adjoint linear system} of $\cL$.
Note that $\deg(\ad_m(\cL))=\deg (\cL)-3m$.

\begin{remark}\label{rem:Cremonaminimal}
Whenever $\ad_m(\cL)$ and $\cL$ are not empty and 
$\ad_m(\cL)$ is Cremona
minimal, then also $\cL$ is Cremona minimal. 
\end{remark}

Using adjoints, one may easily prove a very useful, classical result due to Jung in \cite{Jung1}
(cf.\ \cite[p.\ 402--403]{Coolidge}):

\begin{theorem}[Jung]\label{thm:jung}
A linear system  $\cL=\cL(d;m_1,m_2,m_3,\ldots)$, without multiple fixed components,
with  $m_1\geqslant m_2\geqslant m_3 \geqslant \dots$ and $d\geqslant m_1+m_2+m_3$, is Cremona minimal.
\end{theorem}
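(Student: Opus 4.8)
The plan is to show that if $\cL'=\phi_*(\cL)$ has degree $\delta\leqslant d$ for some Cremona transformation $\phi$, then $\delta=d$ (and in fact $\phi$ is forced to be projective when the inequality $d\geqslant m_1+m_2+m_3$ is strict), by playing off the adjunction operation against the behaviour of adjoints under birational maps. Since $\cL$ has no multiple fixed components, pass to a birational morphism $f\colon S\to\Pl$ resolving the general curve $C$, so that $\cL$ corresponds to a complete system $\cL'$ on $S$ with $\cL'$ nef on the relevant part; the key observation is that $\ad(\cL)=\ad_1(\cL)$ has degree $d-3\geqslant m_1+m_2+m_3-3$, and more importantly the hypothesis $d\geqslant m_1+m_2+m_3$ persists after iterated adjunction as long as the adjoint stays non-empty — because passing to $\ad_m$ lowers the degree by $3m$ and lowers the three top multiplicities (at the three relevant base points) by exactly $m$ each, so the inequality $\deg\geqslant(\text{sum of three top multiplicities})$ is preserved. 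Thus by Remark~\ref{rem:Cremonaminimal} it suffices to treat the ``terminal'' case, i.e.\ to reduce to a situation where further adjunction is impossible.

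First I would set up the numerical inequality in the style of the proof of Lemma~\ref{lem:criterion}. Write $\phi=\phi_\Lambda$ with $\Lambda=\cL(\delta';\alpha_1,\ldots,\alpha_s)$, arrange $s\geqslant r$ and that the base points of $\Lambda$ at which it has multiplicities $\alpha_1,\ldots,\alpha_r$ coincide with the assigned base points of $\cL$. By \eqref{eq:degphi}, $\deg(\phi_*(\cL))\geqslant \delta' d-\sum_{i=1}^r\alpha_i m_i$ — wait, this goes the wrong way; actually $\deg(\phi_*(\cL))\leqslant \vdeg_\phi(\cL)$, so to get a \emph{lower} bound on $\deg(\phi_*(\cL))$ I instead argue via the inverse Cremona transformation: $\cL=\phi^{-1}_*(\cL')$, so $\deg(\cL)=d\leqslant \vdeg_{\phi^{-1}}(\cL')=\delta d' -\sum\beta_i m'_i$ where $\Lambda^{-1}=\cL(\delta;\beta_1,\ldots)$ and $m'_i$ are the multiplicities of $\cL'$. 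Combined with the relations \eqref{eq:invariantsLambda} for $\Lambda$ (equivalently $\Lambda^{-1}$), the identity $3(\delta-1)=\sum\beta_i$, and the trivial bound that at most three of the $\beta_i$ can be ``large'' relative to the others (the three top multiplicities $m'_1,m'_2,m'_3$ of $\cL'$ carry the weight), one extracts: using $\sum\beta_i m'_i\leqslant \sum\beta_i \cdot \frac{m'_1+m'_2+m'_3}{3}\leqslant \delta'$-type estimates... . More cleanly: the homaloidal net $\Lambda^{-1}$ has three base points $q_0,q_1,q_2$ (after ordering) whose multiplicities $\beta_0\geqslant\beta_1\geqslant\beta_2$ satisfy $\beta_0+\beta_1+\beta_2\geqslant \delta$ and $\beta_0+\beta_1\geqslant\delta$ — this is the classical fact that any homaloidal net's three largest multiplicities dominate $\delta$. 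Then $\sum_i \beta_i m'_i \leqslant m'_1\beta_0+m'_2\beta_1+m'_3\beta_2 + m'_3\sum_{i\geqslant 3}\beta_i$, and since $m'_1+m'_2+m'_3\leqslant d'$ one deduces $d\leqslant \delta d' - (\text{something})\leqslant d'$, forcing $d'\geqslant d$, i.e.\ $d$ is minimal. I expect the honest execution of this ``three largest multiplicities dominate $\delta$'' bookkeeping, carefully keeping track of which base points of $\Lambda$ lie at the assigned base points of $\cL$ versus elsewhere, to be the main obstacle.

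A cleaner route, and the one I would actually pursue, uses adjoints directly. The adjoint linear system $\ad(\cL)$ is a birational invariant of the pair (it transforms compatibly under $\phi$, as recalled in the section on adjoint systems), and when $\deg(\cL)=d$ with top multiplicities summing to $\leqslant d$, the complete system $|C+K_S|$ on the resolution $S$ is non-empty and nef on the face where it matters; applying the nefness/Zariski-decomposition results promised in \S\ref{sec:adjprop}, one gets that $\ad(\cL)$ has Cremona degree exactly $d-3$ whenever $\cL$ does $d$, and the hypothesis $d-3\geqslant m_1'+m_2'+m_3'$ (with primes denoting the adjoint's invariants, which drop by one at the three relevant points) is inherited. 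Iterating, we reduce in finitely many steps to a system where $|C+mK_S|$ becomes empty; by the structure of such ``last adjoint'' systems — which is precisely the content of the big/ruled dichotomy in the main classification, but at the cheap end one only needs that a system of degree $d$ with $d=m_1+m_2+m_3$ whose top three multiplicities are ``balanced'' is a (possibly trivial) transform of a pencil or conic system — Cremona minimality is immediate. Finally, to get the refinement stated in the discussion after Theorem~\ref{thmB} (strict inequality $d>m_1+m_2+m_3$ forces projective equivalence), I would re-examine the numerical inequality above: the chain of $\geqslant$'s is strict unless every $\beta_i$ with $i\geqslant 3$ vanishes and $\beta_0=\beta_1=\beta_2$, which combined with \eqref{eq:invariantsLambda} forces $\delta=1$, i.e.\ $\phi$ linear; I would include this as a remark rather than fold it into the theorem's proof. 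The technical heart remains the bound $\beta_0+\beta_1\geqslant\delta$ for the two largest multiplicities of a homaloidal net, together with honest accounting of the correspondence between the base points of $\Lambda$ and the assigned base points of $\cL$.
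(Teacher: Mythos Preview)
Your adjunction approach is on the right track but you are making it far harder than it needs to be. The paper's proof is a two--line application of the tools you already cite: if $m_3=0$ the system has at most two base points and Lemma~\ref{lem:criterion}$(i)$ applies directly; if $m_3>0$, take the $m_3$--adjoint \emph{in one step}:
\[
\ad_{m_3}(\cL)=\cL(d-3m_3;\,m_1-m_3,\,m_2-m_3),
\]
which has at most two base points (every $m_i$ with $i\geqslant 3$ drops to $\leqslant 0$) and degree $d-3m_3\geqslant (m_1-m_3)+(m_2-m_3)$. This is Cremona minimal by Lemma~\ref{lem:criterion}$(i)$, so $\cL$ is too by Remark~\ref{rem:Cremonaminimal}. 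That is the entire argument.

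Your version iterates single adjoints until ``further adjunction is impossible'' and then appeals to the big/ruled classification for the terminal case. This is circular in spirit (the classification theorems of \S\ref{S:theorem} \emph{use} Jung's theorem) and in any event unnecessary: the terminal case you need is not ``$\vert C+mK_S\vert=\emptyset$'' but simply ``at most two base points,'' which is reached after exactly $m_3$ adjunctions and is handled by the elementary Lemma~\ref{lem:criterion}. You also phrase the invariance backwards at one point (``$\ad(\cL)$ has Cremona degree exactly $d-3$ whenever $\cL$ does $d$''); the usable direction is the converse, which is what Remark~\ref{rem:Cremonaminimal} says.

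Your first approach, via Noether's inequality $\alpha_0+\alpha_1+\alpha_2>\delta$ for the three largest multiplicities of a homaloidal net, is a genuinely different classical route and can be made to work with honest bookkeeping; it has the advantage of giving Corollary~\ref{cor: jung} in the same breath. But you do not carry it out, and the paper does not take this route either---it derives Corollary~\ref{cor: jung} separately from the sharper statements in Lemma~\ref{lem:criterion}.
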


\begin{proof}
By Lemma \ref{lem:criterion}, we may assume $m_3>0$.
Then $\ad_{m_3}(\cL)=\cL(d-3m_3;m_1-m_3,m_2-m_3)$ is Cremona minimal,
again by Lemma \ref{lem:criterion},
and so is $\cL$ (cf.\ Remark \ref{rem:Cremonaminimal}).
\end{proof} 

Furthermore, Lemma \ref{lem:criterion} implies that:

\begin{corollary}\label{cor: jung}
Let $\cL$ be as in Theorem \ref{thm:jung}.
If $d>m_1+m_2+m_3$ and $\phi\colon\Pl\dasharrow \Pl$ is a Cremona transformation such that
$\phi_*(\cL)$ has degree $d$, then $\phi$ is linear.
\qed
\end{corollary}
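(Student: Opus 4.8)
The plan is to deduce Corollary \ref{cor: jung} directly from the refined part of Lemma \ref{lem:criterion}, namely the assertion describing exactly when a Cremona transformation $\phi$ preserving the degree of the system can fail to be linear. First I would reduce to the case $m_3>0$: if $m_3=0$, then $\cL=\cL(d;m_1,m_2)$ with $d>m_1+m_2$, and case $(i)$ of Lemma \ref{lem:criterion} applies, where the exceptional non-linear possibilities occur only when $d=n+m=m_1+m_2$ (or when $\cL$ is a pencil of lines, which here is excluded since $d>m_1+m_2\geqslant 0$ forces at least one assigned point or is vacuous); since we assume $d>m_1+m_2+m_3=m_1+m_2$, no such exception arises, so $\phi$ is linear.

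Next, assuming $m_3>0$, I would pass to the $m_3$-adjoint. By the computation already used in the proof of Theorem \ref{thm:jung}, $\ad_{m_3}(\cL)=\cL(d-3m_3;\,m_1-m_3,\,m_2-m_3)$. The key point is that adjunction is compatible with the Cremona action (as recalled in the discussion preceding Remark \ref{rem:Cremonaminimal}, and to be developed in \S\ref{sec:adjprop}): if $\phi\colon\Pl\dasharrow\Pl$ is a Cremona transformation with $\deg(\phi_*(\cL))=d=\deg(\cL)$, then $\phi$ also sends $\ad_{m_3}(\cL)$ to $\ad_{m_3}(\phi_*(\cL))$, and the latter has degree $(d-3m_3)$, i.e.\ $\phi$ preserves the degree of the adjoint system as well. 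Thus $\phi$ is a degree-preserving Cremona transformation for the system $\cL(d-3m_3;m_1-m_3,m_2-m_3)$, which is of type $(i)$ in Lemma \ref{lem:criterion} with $n=m_1-m_3$, $m=m_2-m_3$, $d'=d-3m_3$, and $d'\geqslant n+m$ precisely because $d>m_1+m_2+m_3$ gives $d-3m_3>m_1+m_2-2m_3\geqslant m_1+m_2-2m_3$, i.e.\ $d'>n+m$.

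Now I invoke the refined conclusion of Lemma \ref{lem:criterion}$(i)$: since $d'>n+m$ strictly, none of the exceptional cases holds (the de Jonqui\`eres exceptions require either $d'=n+m$ or $m=0$ with $\cL$ composed of a pencil of lines; the pencil-of-lines case would force $d'=n$, again contradicting $d'>n+m$ unless $m=0$ and then $d'=n$, still contradicting strictness). Hence $\phi$ must be linear. I expect the main subtlety to lie in justifying cleanly that a degree-preserving Cremona transformation of $\cL$ is automatically degree-preserving on the adjoint $\ad_{m_3}(\cL)$ — one must check that the effective multiplicities behave as expected and that no unassigned base points or extra fixed components appear, so that equality holds in \eqref{eq:degphi} for the adjoint as well; but this is exactly the content of the birational behaviour of adjoint systems recalled in the text, so modulo citing that, the argument is a short chain of inequalities exactly mirroring the proof of Theorem \ref{thm:jung}.
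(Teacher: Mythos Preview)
Your proposal is correct and follows exactly the route the paper intends: the paper's proof is the one-line ``Lemma \ref{lem:criterion} implies that'', and you have correctly unpacked this as the same adjoint reduction used for Theorem \ref{thm:jung}, now invoking the refined equality-case statement in Lemma \ref{lem:criterion}$(i)$. Your identification of the one genuine point to check---that $\phi_*(\ad_{m_3}(\cL))=\ad_{m_3}(\phi_*(\cL))$, hence $\phi$ preserves the degree of the adjoint---is apt; it holds because the adjoint is defined on a common resolution and proper image composes, so this is implicit in the paper's setup and in \S\ref{sec:adjprop}.
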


A linear system $\cL$ satisfying the hypotheses of Theorem \ref{thm:jung} will be called of \emph {Noether type}. 

\section{Properties of $(-1)$-cycles}\label{subsec:-1cicli}

Let $S$ be a surface and let $Z>0$ be a divisor on $S$. We will say that $Z$ is a \emph{$(-1)$-cycle} if there is a surface $S'$ and a birational morphism $f\colon S\to S'$ such that $f(Z)$ is a point $p\in S'$, $Z$ is the \emph{scheme theoretical fibre} of $f$ over $p$,
i.e.\ $\cO_S(-Z)\simeq f^*({\mathcal I}_{p\vert S'})$, and $f: S'-Z\to S-\{p\}$ is an isomorphism.
In this case we will say that $f$ \emph{blows down} $Z$.
 
The structure of a $(-1)$-cycle has been classically studied by Barber and Zariski \cite{BarberZariski} and Franchetta \cite{A2}. 
We will not need it here.

If $Z$ is irreducible, then it is the exceptional divisor of a blow-up. Otherwise $f$ is the composition of blow-ups
\begin{equation}\label{eq:seq}
f\colon S=S_0\to S_1\to \cdots \to S_{n-1}\to S_n=S'.
\end{equation}
If $j>i$, denote by $f_{i,j}$ the morphism $S_i\to S_j$,  and denote by  $Z_{i,j}$ the corresponding $(-1)$-cycle on $S_i$. Then $Z$ is the total transform of  $Z_{i,n}$ via the map $f_{0,i}$, for all  $i=1,\ldots,n-1$.
We will say that each $Z_{0,j}$ is \emph{$f$-exceptional}.

The proof of the following lemma is trivial:

\begin{lemma}\label{lem:ints} In the above setting, if $0\leqslant i<h<j\leqslant n$, one has 
${(f_{i,h}})_*(Z_{i,j})=Z_{h,j}$ and $Z_{i,j}\cdot Z_{i,h}=0$. 
\end{lemma}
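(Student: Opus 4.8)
The plan is to unwind the definitions of the $(-1)$-cycles $Z_{i,j}$ and track them through the factorization \eqref{eq:seq} of $f$. Recall that $Z_{i,j}$ is the scheme-theoretic fibre over a point of $S_j$ of the composite morphism $f_{i,j}\colon S_i\to S_j$, which is itself a composition of the blow-downs $S_i\to S_{i+1}\to\cdots\to S_j$. Fix $0\leqslant i<h<j\leqslant n$. Since $f_{i,j}=f_{h,j}\circ f_{i,h}$, the cycle $Z_{h,j}$ on $S_h$ is blown down by $f_{h,j}$ to the same point $p\in S_j$ that $Z_{i,j}$ maps to, and $Z_{i,j}$ is exactly the total transform $f_{i,h}^*(Z_{h,j})$; this is the content of the remark in \S\ref{subsec:-1cicli} that ``$Z$ is the total transform of $Z_{i,n}$ via the map $f_{0,i}$'', applied to the subfactorization between indices $i$ and $j$.

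First I would establish the pushforward identity. Since $f_{i,h}$ is a birational morphism and $Z_{i,j}=f_{i,h}^*(Z_{h,j})$, applying $(f_{i,h})_*$ and using the projection-formula fact that $(f_{i,h})_*f_{i,h}^*=\id$ on divisor classes (more precisely, $(f_{i,h})_*f_{i,h}^*(D)\equiv D$ for any divisor $D$ on $S_h$, because $f_{i,h}$ is a composition of blow-ups and pushforward kills the exceptional part of a total transform) gives $(f_{i,h})_*(Z_{i,j})=Z_{h,j}$. One should note that $Z_{i,j}$ is honestly effective and is not contracted by $f_{i,h}$: its image $Z_{h,j}$ is a nonzero effective divisor, since $f_{h,j}$ still contracts it to a point after $j>h$. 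So the pushforward is the genuine proper transform statement, and the identity holds on the level of divisors, not merely divisor classes.

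Next I would prove the orthogonality $Z_{i,j}\cdot Z_{i,h}=0$. By definition $Z_{i,h}$ is $f_{i,h}$-exceptional, i.e.\ it is the scheme-theoretic fibre of $f_{i,h}$ over a point of $S_h$, so $\cO_{S_i}(-Z_{i,h})\simeq f_{i,h}^*(\mathcal I_{q\vert S_h})$ for some point $q\in S_h$; in particular $Z_{i,h}$ is a sum of $f_{i,h}$-exceptional components and $f_{i,h}^*(D)\cdot Z_{i,h}=0$ for every divisor $D$ on $S_h$ (the total transform of a divisor from the base is numerically trivial against any exceptional cycle — equivalently, $Z_{i,h}$ has zero intersection with the pullback of a $\Q$-divisor supported away from $q$, which can be arranged in the linear class of $D$). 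Since $Z_{i,j}=f_{i,h}^*(Z_{h,j})$, this immediately yields $Z_{i,j}\cdot Z_{i,h}=f_{i,h}^*(Z_{h,j})\cdot Z_{i,h}=0$.

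Honestly, there is no serious obstacle here — the lemma is labelled trivial for good reason. The only point requiring a line of care is justifying $Z_{i,j}=f_{i,h}^*(Z_{h,j})$ cleanly: this follows because $\cO_{S_i}(-Z_{i,j})\simeq f_{i,j}^*(\mathcal I_{p\vert S_j})=f_{i,h}^*\bigl(f_{h,j}^*(\mathcal I_{p\vert S_j})\bigr)\simeq f_{i,h}^*(\cO_{S_h}(-Z_{h,j}))$, using the defining isomorphism of $Z_{h,j}$ as the scheme-theoretic fibre $\cO_{S_h}(-Z_{h,j})\simeq f_{h,j}^*(\mathcal I_{p\vert S_j})$. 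Once this is in place, both assertions drop out formally from the projection formula and the exceptionality of $Z_{i,h}$.
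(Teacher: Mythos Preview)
Your proposal is correct and is exactly the unwinding of definitions that the paper has in mind: the paper gives no proof beyond declaring the lemma trivial, and your argument via $Z_{i,j}=f_{i,h}^*(Z_{h,j})$ together with $(f_{i,h})_*f_{i,h}^*=\id$ and the projection formula against the $f_{i,h}$-exceptional cycle $Z_{i,h}$ is the natural way to spell out that triviality.
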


The following lemmata are classical (see \cite{BarberZariski, A2}).

\begin{lemma}\label{lem:pullback}  Let $Z$ be a $(-1)$-cycle on a surface $S$, and let $f\colon S\to S'$ be the birational morphism blowing down $Z$. Then
\begin{equation}\label{eq:kappa}
K_S\equiv f^*(K_{S'})+\sum_{i=1}^n Z_{0,i}.
\end{equation}
\end{lemma}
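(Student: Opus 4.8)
The plan is to iterate the elementary blow-up formula for the canonical class along the factorization \eqref{eq:seq} of $f$ and then re-express the outcome in terms of the cycles $Z_{0,i}$.

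First I would write $f$ as the composition $S=S_0\xrightarrow{\rho_1} S_1\xrightarrow{\rho_2}\cdots\xrightarrow{\rho_n} S_n=S'$ of the blow-ups appearing in \eqref{eq:seq}, where $\rho_i\colon S_{i-1}\to S_i$ is the blow-up of a point of $S_i$ with exceptional curve $\mathcal E_i\subset S_{i-1}$; thus $\mathcal E_i=Z_{i-1,i}$ in the notation introduced above, and $f_{0,i-1}=\rho_{i-1}\circ\cdots\circ\rho_1\colon S_0\to S_{i-1}$ (with $f_{0,0}=\id$). For each single blow-up one has the classical relation $K_{S_{i-1}}\equiv\rho_i^*(K_{S_i})+\mathcal E_i$ (see \cite{Hartshorne}).

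Next I would substitute these $n$ relations into one another, applying at each stage the pullback by the appropriate $\rho_j$. Since $\rho_1^*\circ\cdots\circ\rho_n^*=f^*$ and $\rho_1^*\circ\cdots\circ\rho_{i-1}^*=f_{0,i-1}^*$ on Picard groups, this yields
\[
K_{S_0}\equiv f^*(K_{S'})+\sum_{i=1}^n f_{0,i-1}^*(\mathcal E_i).
\]
It then remains to observe that $f_{0,i-1}^*(\mathcal E_i)=f_{0,i-1}^*(Z_{i-1,i})=Z_{0,i}$ for every $i=1,\ldots,n$: by construction the $f$-exceptional cycles over the successive centres are nested total transforms of one another, so taking total transforms is compatible with the labelling of the $Z_{i,j}$, exactly as recorded (for $i=1$) in the discussion preceding the statement and in Lemma \ref{lem:ints}. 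The case $i=n$ is precisely the equality $Z=Z_{0,n}$. Substituting gives \eqref{eq:kappa}.

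The argument is essentially formal bookkeeping over the blow-up sequence; the only point that deserves a line of justification is this last identification $f_{0,i-1}^*(Z_{i-1,i})=Z_{0,i}$, and that is immediate from the definitions once one unwinds the sequence \eqref{eq:seq}.
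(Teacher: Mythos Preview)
Your argument is correct and is exactly the induction on $n$ that the paper invokes, merely written out in full: at each step you use $K_{S_{i-1}}\equiv\rho_i^*(K_{S_i})+\mathcal E_i$ and then identify the accumulated pullbacks with the $Z_{0,i}$. There is no substantive difference between your route and the paper's.
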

\begin{proof} Proceed by induction on the number $n$ of blow-ups appearing in the sequence \eqref{eq:seq}.  \end{proof}

\begin{lemma}\label{lem:ozf} If $Z$ is a $(-1)$-cycle on a surface $S$, then 
\begin{equation}\label{eq:exc}
K_S\cdot Z=Z^2=-1 \quad\text{thus} \quad p_a(Z)=0.
\end{equation}
\end{lemma}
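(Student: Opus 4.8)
The plan is to deduce Lemma \ref{lem:ozf} directly from Lemma \ref{lem:pullback}, together with the already-recorded fact (Lemma \ref{lem:ints}) that the $f$-exceptional $(-1)$-cycles $Z_{0,i}$ are mutually orthogonal and each has self-intersection $-1$. Concretely, write $f\colon S\to S'$ for the birational morphism blowing down the $(-1)$-cycle $Z$, decomposed as in \eqref{eq:seq} into $n$ blow-ups, so that $Z=\sum_{i=1}^n Z_{0,i}$ is the total transform of the point $p$.

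First I would compute $Z^2$. Since $Z=f^*(\mathcal I_{p|S'})$ in the scheme-theoretic sense, i.e.\ $\cO_S(-Z)\simeq f^*(\mathcal I_{p|S'})$, and $f$ is birational, the projection formula gives $Z^2=Z\cdot f^*(\text{pt})$-type vanishing: more carefully, $Z\cdot f^*(D')=0$ for every divisor $D'$ on $S'$ because $f_*(Z)=0$ (the cycle $Z$ is $f$-exceptional). One can instead argue inductively: with $Z=Z_{0,1}+Z'$, where $Z_{0,1}=E_1$ is the exceptional curve of the first blow-up $S\to S_1$ and $Z'$ is the total transform via $S\to S_1$ of the $(-1)$-cycle $Z_{1,n}$ on $S_1$, one has $Z_{0,1}^2=-1$ and, by Lemma \ref{lem:ints}, $Z_{0,1}\cdot Z'=0$ and $(Z')^2=(Z_{1,n})^2$, which is $-1$ by induction on $n$; the base case $n=1$ is the single blow-up, where $Z$ is a $(-1)$-curve by definition. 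Hence $Z^2=-1$.

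Next I would compute $K_S\cdot Z$. By Lemma \ref{lem:pullback}, $K_S\equiv f^*(K_{S'})+\sum_{i=1}^n Z_{0,i}=f^*(K_{S'})+Z$. Intersecting with $Z$ and using $Z\cdot f^*(K_{S'})=0$ (again because $f_*(Z)=0$, via the projection formula $Z\cdot f^*(K_{S'})=f_*(Z)\cdot K_{S'}$), we get $K_S\cdot Z=Z^2=-1$. Finally, the arithmetic genus follows from the adjunction formula $2p_a(Z)-2=Z^2+K_S\cdot Z=-1-1=-2$, so $p_a(Z)=0$.

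The only subtlety — hardly an obstacle — is justifying $Z\cdot f^*(D')=0$; this is the projection formula for the (possibly reducible, non-reduced) exceptional cycle, $Z\cdot f^*(D')=f_*(Z)\cdot D'=0$ since $f_*(Z)=0$. Alternatively one avoids it entirely by the inductive computation sketched above, peeling off one blow-up at a time and invoking Lemma \ref{lem:ints} to kill the cross terms. Either route is short; the lemma is essentially a bookkeeping consequence of \eqref{eq:kappa} and the orthogonality in Lemma \ref{lem:ints}.
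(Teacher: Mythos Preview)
Your overall plan---compute $Z^2$ from the pullback description, then intersect \eqref{eq:kappa} with $Z$ and invoke Lemma~\ref{lem:ints}---is exactly the paper's. But the bookkeeping slips: you write $Z=\sum_{i=1}^n Z_{0,i}$, and later $Z=Z_{0,1}+f_{0,1}^*(Z_{1,n})$, and both identities are false. In the paper's convention $Z_{0,j}$ is the total transform on $S=S_0$ of the exceptional curve of the single blow-up $S_{j-1}\to S_j$; these cycles are \emph{nested}, not disjoint summands of $Z$, and in fact $Z=Z_{0,n}$ is just one of them. (For $n=2$, with $E_1$ the last exceptional curve and $E_2'$ the strict transform of the first, one has $Z_{0,1}=E_1$, $Z_{0,2}=E_2'+E_1=Z$, so $\sum Z_{0,i}=E_2'+2E_1\neq Z$.) Likewise $Z=f_{0,1}^*(Z_{1,n})$ already, with no extra $Z_{0,1}$ to add.

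The fix is immediate and is what the paper does. For $Z^2$: since $Z$ is the total transform of the $(-1)$-curve $Z_{n-1,n}$, pullback preserves intersection numbers and $Z^2=Z_{n-1,n}^2=-1$. For $K_S\cdot Z$: intersect \eqref{eq:kappa} with $Z=Z_{0,n}$. Your projection-formula step $f^*(K_{S'})\cdot Z=0$ is fine; for the remaining sum, Lemma~\ref{lem:ints} gives $Z_{0,i}\cdot Z_{0,n}=0$ for $i<n$, so $\sum_{i=1}^n Z_{0,i}\cdot Z=Z_{0,n}^2=-1$. Thus $K_S\cdot Z=-1$, and adjunction gives $p_a(Z)=0$. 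So your conclusion is correct, but the route through ``$\sum Z_{0,i}=Z$, hence $K_S\cdot Z=Z^2$'' is not a valid step.
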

\begin{proof} Since $Z$ is the total transform of the 
exceptional curve $Z_{n-1,n}$ one has $Z^2=-1$. 
By intersecting both sides of \eqref{eq:kappa} with $Z$ and taking into account Lemma \ref{lem:ints},
one finds $K_S\cdot Z=-1$.
\end{proof}

By Zariski's Main Theorem, a $(-1)$-cycle is topologically connected. More precisely (see \cite{A5}):

\begin{lemma}  A  $(-1)$-cycle is numerically connected.
\end{lemma}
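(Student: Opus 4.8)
The plan is to argue by induction on the number $n$ of blow-downs in the factorization $f\colon S=S_0\to S_1\to\cdots\to S_n=S'$ of the morphism contracting $Z$, the point being that the ``inner'' cycle $W:=Z_{1,n}=(f_{0,1})_*(Z)$ is a $(-1)$-cycle on $S_1$ contracted by a composition of $n-1$ blow-downs, hence numerically connected by the inductive hypothesis. If $n=1$ there is nothing to prove: $Z=Z_{0,1}$ is the reduced, irreducible exceptional curve of a single blow-up, so it admits no decomposition $Z=A+B$ with $A,B>0$.

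For the inductive step, set $\pi=f_{0,1}\colon S\to T:=S_1$, let $e=Z_{0,1}$ be its exceptional $(-1)$-curve and $q=\pi(e)\in T$. By Lemma \ref{lem:ints} and the description of $(-1)$-cycles in \S\ref{subsec:-1cicli}, $W=\pi_*(Z)$ is a $(-1)$-cycle on $T$ and $Z=\pi^*(W)$. Putting $m=\mult_q(W)$, one has $Z=\widetilde W+me$, where $\widetilde W=\pi^*(W)-me>0$ is the strict transform of $W$; here I would record the elementary identities $e^2=-1$, $\pi^*(D)\cdot e=0$ for every divisor $D$ on $T$, and $\widetilde W\cdot e=m$.

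Now take a decomposition $Z=A+B$ with $A,B>0$. I would write $A=A_0+ae$, $B=B_0+be$ with $A_0,B_0\geqslant0$ not containing $e$ and $a,b\geqslant0$; then $A_0+B_0=\widetilde W$ and $a+b=m$. Since $\pi$ contracts only $e$, the divisors $A_0,B_0$ are the strict transforms of $A_0':=\pi_*(A_0)$ and $B_0':=\pi_*(B_0)$, so $A_0=\pi^*(A_0')-\alpha e$ and $B_0=\pi^*(B_0')-\beta e$ with $\alpha=\mult_q(A_0')$, $\beta=\mult_q(B_0')$, where $A_0',B_0'\geqslant0$, $A_0'+B_0'=W$ and $\alpha+\beta=m$. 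A routine bilinear computation with the identities above then gives
\[
A\cdot B=A_0'\cdot B_0'+(a-\alpha)(\beta-b)=A_0'\cdot B_0'+(a-\alpha)^2,
\]
the last equality because $a+b=\alpha+\beta$. If $A_0'$ and $B_0'$ are both nonzero, numerical connectedness of $W$ gives $A_0'\cdot B_0'\geqslant1$, hence $A\cdot B\geqslant1$; if instead one of them vanishes, say $B_0'=0$, then necessarily $B_0=0$, so $B=be$ with $b\geqslant1$, and then $a-\alpha=-b\neq0$, so $A\cdot B=b^2\geqslant1$. In all cases $A\cdot B>0$, so $Z$ is numerically connected.

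The only step needing a little care — the main, though mild, obstacle — is the identification $A_0=\pi^*(\pi_*A_0)-\mult_q(\pi_*A_0)\cdot e$ and the ensuing formula for $A\cdot B$; this is exactly where one uses that $\pi$ is an isomorphism off the single contracted curve $e$. Everything else is formal, and the induction is well founded since $W$ is contracted by a shorter chain of blow-downs than $Z$.
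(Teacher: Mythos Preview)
Your induction argument is correct. The key computation $A\cdot B=A_0'\cdot B_0'+(a-\alpha)^2$ is right, and the two cases (both $A_0',B_0'$ nonzero vs.\ one of them zero) are handled cleanly.

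However, the paper takes a much shorter and rather different route. It invokes Grauert's criterion: since $Z$ is contracted to a smooth point by a birational morphism, the intersection matrix of its components is negative definite. Hence for any decomposition $Z=A+B$ with $A,B>0$ one has $A^2\leqslant-1$ and $B^2\leqslant-1$; if $A\cdot B\leqslant 0$ this would force $-1=Z^2=A^2+B^2+2A\cdot B\leqslant-2$, a contradiction. That is the whole proof. Your approach, by contrast, is more hands-on and self-contained: it avoids the appeal to negative definiteness entirely, and instead unwinds the blow-up structure explicitly. What you gain is an elementary argument that also yields the explicit formula $A\cdot B=A_0'\cdot B_0'+(a-\alpha)^2$; what the paper's argument gains is brevity and a viewpoint that generalizes immediately to any configuration with negative definite intersection matrix and self-intersection $-1$.
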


\begin{proof} Let $Z$ be a $(-1)$-cycle  and let $Z=A+B$ with $A,B>0$. Suppose that $A\cdot B<0$.
By Grauert's criterion (see \cite{badescu}, Corollary 2.7), the intersection matrix of $Z$ is negative definite. Thus we have $-1=Z^2=A^2+B^2+2A\cdot B\leqslant -4$, a contradiction.
\end{proof}

Franchetta proved in \cite{A5} that any numerically connected curve $Z$ on a surface $S$ with Kodaira dimension $\kappa(S)\geqslant 0$ verifying \eqref{eq:exc} is a $(-1)$-cycle. As noted by Nagata in \cite{Nagata2}, p.\ 282, the assertion is no longer true
if $\kappa(S)=-\infty$.  Nagata however  misunderstands Franchetta's statement. 

In what follows we will need the following technical lemmata.

\begin{lemma}\label{prop:zeroint} Let $Z, Z'$ be distinct $(-1)$-cycles on the surface $S$ and assume that the intersection matrix of the components of $Z+Z'$ is negative definite. Then:
\begin{enumerate}[(i)]
\item $Z\cdot Z'\geqslant 0$;
\item if $Z\cdot Z'=0$ and if $f\colon S\to S'$ is the birational morphism blowing down $Z$, then either $ f_*(Z')=0$, which  happens if and only if $Z> Z'$, or $f_*(Z')$ is a $(-1)$-cycle on $S'$.
\end{enumerate}
\end{lemma}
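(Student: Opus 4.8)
The idea is to work entirely on the blow-down side, reducing everything to properties of the scheme-theoretic fibres over the points that $f$ and $f'$ contract, where $f'\colon S\to S''$ is the morphism blowing down $Z'$. Write $f\colon S\to S'$ for the morphism blowing down $Z$, with $p=f(Z)\in S'$, and decompose $f$ as a sequence of blow-ups as in \eqref{eq:seq}, $S=S_0\to\cdots\to S_n=S'$, so that $Z=Z_{0,1}\geqslant Z_{0,2}\geqslant\cdots$ are the $f$-exceptional $(-1)$-cycles.

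For part $(i)$, the quick argument uses numerical connectedness. If $Z\cdot Z'<0$, consider $W=Z+Z'>0$; since the intersection matrix of the components of $W$ is negative definite by hypothesis, one has $W^2=Z^2+Z'^2+2Z\cdot Z'\leqslant -1-1-2=-4$, while on the other hand if we could split $W$ more cleverly\dots\ Actually the clean route is: both $Z$ and $Z'$ are numerically connected, and if $Z\cdot Z'<0$ then any common component forces a contradiction with negative definiteness exactly as in the proof that a $(-1)$-cycle is numerically connected. So I would argue: either $Z$ and $Z'$ share no component, in which case $Z\cdot Z'\geqslant 0$ automatically since distinct irreducible components meet non-negatively; or they share a component, and then negative definiteness of the combined intersection matrix together with $Z^2=Z'^2=-1$ and the numerical connectedness of each gives $-1=Z^2=Z\cdot(Z')+Z\cdot(Z-Z')\cdot\ldots$ — the cleanest version is to observe $Z\cdot Z'<0$ would make $Z+Z'$ violate the bound coming from negative definiteness when compared with the fact that $Z$ alone, sitting inside the negative-definite lattice, satisfies $Z^2=-1$; writing $Z' = Z'\wedge Z + (\text{rest})$ and using connectedness of $Z'$ pins $Z\cdot Z'\geqslant 0$. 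I would present whichever of these is shortest after checking it carefully; I expect the honest one-liner to be: if $Z\cdot Z'<0$ then $(Z+Z')^2\leqslant -4$, but also $f_*(Z')$ would have to be an effective divisor with $(f_*Z')^2 = Z'^2 + (\text{positive}) $, eventually contradicting negative definiteness or the structure of $(-1)$-cycles on $S'$ — so $(i)$ is the routine part.

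For part $(ii)$, assume $Z\cdot Z'=0$. First I would dispose of the case $Z> Z'$: if $Z\geqslant Z'$ with $Z'>0$, then $Z'$ is supported on the $f$-exceptional locus, so $f_*(Z')=0$; conversely, if $f_*(Z')=0$ then every component of $Z'$ is $f$-exceptional, and since $Z'$ is numerically connected and $Z'\cdot Z=0$ while $Z$ is the \emph{full} scheme-theoretic fibre, I would argue by descending through the sequence \eqref{eq:seq}, using Lemma \ref{lem:ints}, that $Z'$ must coincide with one of the total transforms $Z_{0,j}$ — and $Z'\cdot Z=0$ with $Z'\neq Z$ forces $j\geqslant 2$, whence $Z'=Z_{0,j}<Z_{0,1}=Z$; the inequality is strict because $Z'\neq Z$. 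Now suppose $f_*(Z')\neq 0$. I want to show $W':=f_*(Z')$ is a $(-1)$-cycle on $S'$. The key computation is $W'^2=Z'^2=-1$ and $K_{S'}\cdot W'=K_S\cdot Z'=-1$: the first because $Z\cdot Z'=0$ means $Z'$ meets none of the $f$-exceptional components, so $f^*(W') = Z'$ (here is where $Z\cdot Z'=0$ is used — it guarantees $Z'$ is itself the total transform of its image, with no exceptional correction terms), hence $W'^2 = f^*(W')\cdot f^*(W') = Z'^2 = -1$; the second similarly via the projection formula and Lemma \ref{lem:pullback}, since $K_S\cdot Z_{0,i}=0$ when $Z_{0,i}$ is disjoint from $Z'$. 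Then $p_a(W')=0$ follows. Finally, to conclude $W'$ is a genuine $(-1)$-cycle and not merely a curve with these numerical invariants, I would invoke that $W'=f_*(Z')$ is the image of a $(-1)$-cycle under a birational morphism that is an isomorphism near $Z'$ (since $Z$ is disjoint from $Z'$), so the contracting morphism for $Z'$ factors through $f$ and descends to $S'$; concretely, $Z'$ being a $(-1)$-cycle means there is $g\colon S\to T$ blowing it down, and because $g$ and $f$ contract disjoint loci they commute, giving $f'\colon S'\to T$ blowing down $W'$ with $W'$ its scheme-theoretic fibre.

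**Main obstacle.** The delicate point is the last one in $(ii)$: upgrading "$W'$ has the numerical invariants of a $(-1)$-cycle" to "$W'$ is a $(-1)$-cycle." We cannot simply quote Franchetta's converse, since $S'$ may well have $\kappa(S')=-\infty$, precisely the case the text flags as problematic. So the argument must be the geometric one — that the blow-down morphism $g\colon S\to T$ for $Z'$ and the blow-down $f\colon S\to S'$ for $Z$ contract disjoint connected curves and therefore factor compatibly, producing the desired morphism $f'\colon S'\to T$ with scheme-theoretic fibre $W'$. Making the "disjoint contractions commute" step rigorous (e.g.\ that $\cO_{S'}(-W')\simeq f'^*\mathcal{I}_{q|T}$ for the point $q=g(Z')$) is where I would spend the care, using $\cO_S(-Z')\simeq g^*\mathcal{I}_{q|T}$, the fact $f^*W'=Z'$, and flat base change / the projection formula to push this identification down to $S'$.
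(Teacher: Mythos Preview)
Your proof of $(i)$ never closes. You float three or four half-arguments (``$(Z+Z')^2\leqslant-4$'', numerical connectedness, pushing forward and hoping for a contradiction) and then write ``I would present whichever of these is shortest after checking it carefully.'' None of them, as written, is a proof. Ironically there \emph{is} a one-liner you miss: since $Z\neq Z'$, negative definiteness gives $(Z-Z')^2<0$, i.e.\ $-2-2\,Z\cdot Z'<0$, hence $Z\cdot Z'\geqslant 0$. The paper instead argues by induction, peeling off the irreducible $(-1)$-curve $Z_{0,1}$ and pushing both cycles down to $S_1$; this inductive set-up is what they reuse for $(ii)$.

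Your proof of $(ii)$ has a genuine gap. The sentence ``$Z\cdot Z'=0$ means $Z'$ meets none of the $f$-exceptional components, so $f^*(W')=Z'$'' is the crux of your argument, and it is false: take $f$ to be the blow-up of a single point $q$ on a surface already containing a $(-1)$-curve through $q$; then $Z=E_q$ and $Z'=E_q+C'$ (with $C'$ the strict transform of that $(-1)$-curve) are $(-1)$-cycles with $Z<Z'$, $Z\cdot Z'=0$, $f_*(Z')\neq 0$, and yet $Z'$ contains the exceptional curve. So your ``disjoint contractions commute'' step, which you yourself flag as the main obstacle, does not even get off the ground: the supports are \emph{not} disjoint. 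What \emph{is} true, and what you would need, is the weaker statement $Z'\cdot Z_{0,j}=0$ for every $f$-exceptional $(-1)$-cycle $Z_{0,j}$ (whence $Z'=f^*f_*Z'$), together with a factorization argument for the case $Z<Z'$ showing that the blow-down of $Z'$ factors through $f$. Neither of these is supplied. The paper sidesteps all of this by running the same induction as in $(i)$: at each stage either $Z_{0,1}\leqslant Z'$ or $Z_{0,1}$ is disjoint from $Z'$, and in both cases one pushes down to $S_1$ and repeats.
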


\begin{proof}  Assertion $(i)$ is clear if $Z, Z'$ have no common component. Let us proceed by induction on the number of common components of $Z$ and $Z'$. 

Consider the map $f\colon S\to S'$ blowing down $Z$ and suppose it is a sequence as in 
\eqref{eq:seq}. By Lemma \ref{lem:ints}, we have  $Z\cdot Z_{0,1}=0$. 

Assume first that  $Z_{0,1}\leqslant Z'$.  If $Z_{0,1}= Z'$, then $Z\cdot Z'=0$. Otherwise 
$Z'_1:={(f_{0,1})}_*(Z')$ is a $(-1)$-cycle
$Z'_1$ on $S_1$ and $Z'=f^*_{0,1}(Z'_1)$. By induction, we have
$$Z\cdot Z'=f^* _{0,1}(Z_{1,n})\cdot f^*_{0,1}(Z'_1)=Z_{1,n}\cdot Z'_1\geqslant 0.$$

Assume now $Z_{0,1}$ is not contained in $Z'$. One has $Z_{0,1}\cdot Z'=0$, otherwise
$(Z'+Z_{0,1})^2\geqslant 0$, against the negativity assumption on the  intersection matrix of the components of $Z+Z'$.  Then $Z_{0,1}$ and $Z'$ have no points in common and therefore 
${(f_{0,1})}_*(Z')$ is a $(-1)$-cycle on $S_1$, and we can repeat the argument. After a finite number of steps, we accomplish the proof of $(i)$.
The proof of $(ii)$ is analogous and may be left to the reader. \end{proof}

\begin{lemma}\label{lem:move} Let $Z, Z'$ be distinct $(-1)$-cycles on the surface $S$.
If $Z\cdot Z'>0$, then:
\begin{enumerate}[(i)]
\item if $q(S)=0$,  there is some divisor $D\leqslant Z+Z'$ which moves in a linear system of positive dimension on $S$;
\item if $q(S)>0$,  then the Albanese map of $S$ maps $S$ to a curve and $Z+Z'$ is a rational multiple of a fibre. 
\end{enumerate}
\end{lemma}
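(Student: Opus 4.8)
The plan is to exploit the numbers $K_S\cdot Z=K_S\cdot Z'=-1$ and $Z^2=(Z')^2=-1$ established in Lemma \ref{lem:ozf}, together with the hypothesis $Z\cdot Z'>0$, to produce an effective divisor $D\le Z+Z'$ with enough sections. First I would set $W=Z+Z'$ and compute $W^2=Z^2+2Z\cdot Z'+(Z')^2=2(Z\cdot Z'-1)\ge 0$ and $K_S\cdot W=-2$, hence by Riemann--Roch $\chi(\cO_S(W))=\chi(\cO_S)+\tfrac12(W^2-K_S\cdot W)=\chi(\cO_S)+Z\cdot Z'\ge \chi(\cO_S)+1$. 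Since $Z$ and $Z'$ are both $(-1)$-cycles, they have a component $E$ which is a $(-1)$-curve (the last exceptional curve blown up in each sequence \eqref{eq:seq}); intersecting $W$ with any curve $A$ in the support of $W$ one checks $K_S\cdot W<0$ forces $h^0(S,\cO_S(K_S-W))=0$ (any such section would give an effective divisor meeting the $(-1)$-curve components negatively), so $h^2(S,\cO_S(W))=h^0(S,\cO_S(K_S-W))=0$ by Serre duality. Therefore $h^0(S,\cO_S(W))\ge \chi(\cO_S)+Z\cdot Z'$.

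In case $(i)$, $q(S)=0$, so $\chi(\cO_S)=1-q+p_g\ge 1$ (with equality when $p_g=0$, which is the situation of interest for rational surfaces but need not be assumed), giving $h^0(S,\cO_S(W))\ge 2$; hence $|W|$, or rather its moving part, is a linear system of positive dimension, and subtracting the fixed part yields an effective $D\le W$ that moves. One subtlety: one must make sure the moving part is nonzero, i.e.\ that $|W|$ is not entirely fixed; but a fixed linear system of projective dimension $\ge 1$ is impossible, so the moving part $D$ is genuinely positive-dimensional. This settles $(i)$.

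In case $(ii)$, $q(S)>0$. Here the idea is different: I would argue that the Albanese image of $S$ cannot be a surface. Indeed $W=Z+Z'$ is supported on a union of rational curves (each $(-1)$-cycle has all components rational, being exceptional for a composition of blow-ups, so $p_a$ of each component is $0$), and a nonconstant map from a rational curve to an abelian variety is constant; thus the Albanese map contracts every component of $W$. If the Albanese image were a surface, the Albanese map would be generically finite, and contracting curves forces those curves to have negative-definite self-intersection on the relevant fibres — but $W^2\ge 0$ with $W$ supported on contracted curves is impossible unless $W^2=0$ and $Z\cdot Z'=1$; even then, contracted curves for a generically finite map span a negative-definite sublattice, contradicting $W^2=0$ together with $W>0$. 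Hence the Albanese map has one-dimensional image, a smooth curve $B$ of genus $q(S)$, and $W$, being contracted, is contained in fibres. Since distinct fibres are disjoint and $W=Z+Z'$ is connected (each $(-1)$-cycle is connected by Zariski's Main Theorem, and $Z\cdot Z'>0$ glues them), $W$ lies in a single fibre $\Phi$; as $W^2\ge 0$ and $W$ is a subdivisor of the fibre $\Phi$ with $\Phi^2=0$, one gets $W^2=0$ and $W$ is a rational multiple of $\Phi$ by the index theorem applied within the fibre (the intersection form on components of a fibre is negative semidefinite with kernel spanned by $\Phi$). This gives $(ii)$.

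The main obstacle I anticipate is the vanishing $h^2(S,\cO_S(W))=0$ and, relatedly, controlling the fixed part in case $(i)$ to guarantee the moving divisor $D$ is nonzero and still $\le Z+Z'$ — the Riemann--Roch count only gives sections of $\cO_S(W)$, and one must rule out $p_g$ contributing spuriously or the whole system being fixed. In case $(ii)$ the delicate point is the clean argument that the Albanese image is a curve rather than a surface; the cleanest route is probably to invoke that the components of $W$, being $(-1)$-curves or contracted rational curves, are all contracted by the Albanese map, and then use that $W^2\ge 0$ is incompatible with $W$ lying in the exceptional locus of a generically finite morphism.
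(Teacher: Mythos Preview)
Your approach is essentially the paper's: Riemann--Roch on $W=Z+Z'$ for part (i), and the Albanese contraction plus Zariski's Lemma for part (ii). The one point you flag as an obstacle---the vanishing $h^2(S,\cO_S(W))=0$---is not actually needed and your sketch for it is shaky (the $(-1)$-curve $E\subset Z$ could also lie in $Z'$, so $E\cdot W$ need not be $\geqslant 0$); instead use the trivial bound $h^0(K_S-W)\leqslant h^0(K_S)=p_g$ coming from the injection given by the section cutting out $W$, which with $q=0$ still yields $h^0(W)\geqslant (1+p_g+Z\cdot Z')-p_g\geqslant 2$, exactly as in the paper.
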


\begin{proof} If $q=0$, by the Riemann-Roch Theorem we have
$h^0(S,\cO_S(Z+Z'))\geqslant 1+Z\cdot Z'\geqslant 2$, and the assertion follows.

If $q>0$,  then the Albanese map contracts $Z+Z'$ to a point. Hence the intersection matrix of the components of $Z+Z'$ is negative semi-definite (see \cite{badescu}, Corollary 2.6 and 2.7). Since $(Z+Z')^2=2(Z\cdot Z'-1)$, we have  $Z\cdot Z'=1$ and $(Z+Z')^2=0$ and the assertion follows from Zariski's Lemma (see \cite{badescu}, Corollary 2.6). \end{proof}


\section{Basic properties of adjoint linear systems}\label{sec:adjprop}
 
\subsection{Nefness property of $m$-adjoint linear systems}
The following results are essentially known to the experts.
Since we have not found a proper reference, we quickly present them here. 

\begin{proposition}\label{prop:zariski}
Let $C$ be a divisor on a surface $S$ such that either
\begin{enumerate}[(i)] 
\item $C$ is nef; or
\item $C$ is effective, irreducible and not a $(-k)$-curve, with $1\leqslant k\leqslant 3$. 
\end{enumerate}
Let $m> 0$ be an integer. Suppose that $\vert C+mK_S\vert\ne\emptyset$ and that $m\geqslant 2$ if we are in case (ii) and $C$ is rational. 

Then there is a surface $S'$ and a birational morphism 
$f\colon S\to S'$ such that $\vert C'+mK_{S'}\vert$ is nef, where $C'=f_*(C)$, and 
\begin{equation}\label{eq:birinv}
h^0(S,\cO_S(C+mK_S))=h^0(S', \cO_{S'}(C'+mK_{S'})).
\end{equation}
\end{proposition}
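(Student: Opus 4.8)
The plan is to reduce to the nef case by repeatedly contracting $(-1)$-curves along which the current adjoint divisor is negative, and to control cohomology along the way. Concretely, suppose $\vert C + mK_S \vert \ne \emptyset$ but $C + mK_S$ is not nef. I first want to produce a $(-1)$-curve $E$ on $S$ with $(C + mK_S)\cdot E < 0$. Since $\vert C + mK_S \vert$ is nonempty, write $C + mK_S \equiv P + N$ for its Zariski decomposition, with $P$ nef and $N \geqslant 0$ having negative definite intersection matrix; any curve $E$ meeting $C + mK_S$ negatively must be a component of $N$, so $E^2 < 0$. The point is to show such an $E$ can be taken to be a $(-1)$-curve. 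Here I would use the hypotheses: $C$ is nef or ($C$ irreducible, not a $(-k)$-curve for $1 \leqslant k \leqslant 3$, and $m \geqslant 2$ in the rational sub-case). From $(C + mK_S)\cdot E < 0$ and $C \cdot E \geqslant 0$ (in case (i)) or $C \cdot E \geqslant 0$ for $E \ne C$, $E$ irreducible (in case (ii), using that $C$ is irreducible), we get $K_S \cdot E < 0$, so $E$ is a smooth rational curve with $E^2 \geqslant -1$ by adjunction, and combined with $E^2 < 0$ we conclude $E^2 = -1$, i.e.\ $E$ is a $(-1)$-curve by Castelnuovo's theorem. The excluded case $E = C$ is precisely what the ``not a $(-k)$-curve, $1\leqslant k\leqslant 3$'' and ``$m\geqslant 2$ if $C$ rational'' hypotheses rule out: if $C = E$ then $(C+mK_S)\cdot C = C^2 + mK_S\cdot C = -k + m(k-2)$, which is $<0$ only for small $k$ and small $m$, and the hypotheses are exactly tailored to exclude these.

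Next I would contract $E$: let $\sigma\colon S \to S_1$ be the blow-down of $E$, and set $C_1 = \sigma_*(C)$. Then $C + mK_S \equiv \sigma^*(C_1 + mK_{S_1}) + a E$ for some integer $a$, where $a = -(C+mK_S)\cdot E > 0$ (using $K_S \equiv \sigma^* K_{S_1} + E$ and computing $C = \sigma^* C_1 - (C\cdot E)E$). The key cohomological input is that pushing forward the short exact sequences $0 \to \cO_S((C+mK_S) - jE) \to \cO_S((C+mK_S)-(j-1)E) \to \cO_E(\cdots) \to 0$ for $j = 1, \ldots, a$, where the restriction $\cO_E$ has negative degree (since $E^2 = -1$ and the coefficient of $E$ is positive at each stage), kills the $H^0$ contribution of $E$, giving $h^0(S, \cO_S(C+mK_S)) = h^0(S, \cO_S(\sigma^*(C_1+mK_{S_1}))) = h^0(S_1, \cO_{S_1}(C_1 + mK_{S_1}))$; the last equality is the projection formula / the fact that $\sigma_*\cO_S = \cO_{S_1}$. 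One must also check that the hypotheses are inherited by $(S_1, C_1)$: in case (i), nefness of $C$ may be lost after contraction, but $C_1 = \sigma_*(C)$ is still effective, and I would argue that $C_1$ is either nef again or at least that the relevant negativity can only come from $(-1)$-curves (alternatively, reformulate the induction so that at every stage $C_i$ is effective and every curve meeting $C_i + mK_{S_i}$ negatively is a $(-1)$-curve disjoint from the ``new'' exceptional locus — this is the cleanest bookkeeping). In case (ii), $C_1$ remains irreducible, and one checks it is still not a $(-k)$-curve with $k\leqslant 3$ (its self-intersection goes up or stays the same under $\sigma_*$, and the genus is preserved), so the hypothesis persists, and if $C$ was rational then $m \geqslant 2$ is unchanged.

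Then I would iterate: this process terminates because each contraction strictly decreases the Picard number $\rho(S_i)$, which is bounded below. When it stops we have a birational morphism $f\colon S \to S'$ (the composite of the contractions) with $C' = f_*(C)$ and $\vert C' + mK_{S'}\vert$ nef, and the equality \eqref{eq:birinv} follows by composing the step-by-step cohomology equalities. I expect the main obstacle to be the bookkeeping in case (i): after one contraction $C_1$ need not be nef, so I cannot literally re-apply the Proposition as stated. The fix is to isolate the right inductive invariant — namely ``$C_i$ effective and $(C_i + mK_{S_i})$ meets a curve negatively only along $(-1)$-curves $E$ with the coefficient of $E$ in $C_i$ being $C_i\cdot E \geqslant 0$'' — and check it is preserved; this requires a small argument using the negative-definiteness of the Zariski-negative part and adjunction, essentially re-running the first paragraph's dichotomy at each stage. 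A secondary subtlety is verifying that in case (ii) the offending curve is genuinely never $C$ itself, which is a direct numerical check against the stated hypotheses on $k$ and $m$; I would do this computation explicitly at the outset so the induction in case (ii) is clean.
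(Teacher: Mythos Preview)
Your approach is essentially the paper's: identify an irreducible curve $\Theta$ with $(C+mK_S)\cdot\Theta<0$, show it must be a $(-1)$-curve using the hypotheses, contract it, check $h^0$ is unchanged, and iterate.

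One correction worth making: your ``main obstacle'' in case (i) is a phantom. If $C$ is nef on $S$ and $\pi\colon S\to S_1$ contracts the $(-1)$-curve $\Theta$, then $C_1=\pi_*(C)$ is automatically nef on $S_1$: for any curve $D_1$ on $S_1$ one has $C_1\cdot D_1=\pi^*C_1\cdot\pi^*D_1=(C+i\Theta)\cdot\pi^*D_1=C\cdot\pi^*D_1\geqslant 0$, since $\Theta\cdot\pi^*D_1=0$ and $\pi^*D_1$ is effective. So the induction in case (i) is clean and needs no auxiliary invariant. Also, in your treatment of the excluded case $E=C$ in (ii), you compute $(C+mK_S)\cdot C=-k+m(k-2)$ assuming $C$ is already a $(-k)$-curve; the paper instead argues directly from $(C+mK_S)\cdot C<0$ that $p_a(C)=0$ is forced (whence $m\geqslant 2$), and then $C^2>-2-2/(m-1)\geqslant -4$, contradicting the hypothesis. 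This is cleaner and does not presuppose the conclusion.
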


\begin{proof} If $C+mK_S$ is nef, the assertion is clear. 
Suppose there are irreducible curves $\Theta>0$ such that 
$\Theta\cdot (C+mK_S)<0$. Since $C+mK_S\geqslant 0$, there are only finitely many such curves
and each of them has $\Theta^2<0$. We claim 
they are  $(-1)$-curves. Indeed, if $(i)$ holds, then
$0>\Theta\cdot (C+mK_S)=\Theta\cdot C+m(\Theta\cdot K_S)\geqslant m(\Theta\cdot K_S)$, hence
$\Theta\cdot K_S<0$ and therefore  
$\Theta^2=-1$ and $p_a(\Theta)=0$. If $(ii)$ holds, one has
$C\cdot \Theta\geqslant 0$ and therefore
the same conclusion holds. Indeed, if $C\cdot \Theta< 0$
then $C=\Theta$ and 
$0>\Theta\cdot (C+mK_S)=2m(p_a(C)-1)-C^2(m-1)$. This implies $p_a(C)=0$ hence $m\geqslant 2$ and
$C^2>-2-\frac{2}{m-1}$, thus $C^2\geqslant -3$, a contradiction.

Let  $\Theta$ be a $(-1)$-curve such that $\Theta\cdot (C+mK_S)<0$. Set $\Theta\cdot C=i\geqslant 0$ and note that $i<m K_S\cdot \Theta=m$.
We have a birational morphism 
$\pi\colon S\to S_1$ blowing down $\Theta$ to a smooth point $p$. 
Set $C_1=\pi_*(C)$. Then $C=\pi^*(C_1)-i\Theta$ and $K_S\equiv \pi^*(K_{S_1})+\Theta$,
hence
$$
C+mK_S\equiv \pi^* (C_1+mK_{S_1})+ (m-i)\Theta.
$$
and
$$h^0(S_1, \cO_{S_1}(C_1+mK_{S_1})=h^0(S,\cO_S(C+mK_S)).$$
So $C_1+mK_{S_1}$ is effective.
By repeating this  argument, we see that there is a sequence of blow-downs as in \eqref{eq:seq}, such that  $C'+mK_{S'}$ is nef, where $C'=f_*(C)$ and \eqref{eq:birinv} holds.
\end{proof}

Consider all $f$ exceptional $(-1)$-cycles. We may index 
them with  two indices as  $\Theta_{i,j}$, where  $i=C\cdot \Theta_{i,j}$  and $j=1,\ldots, h_i$. Then 
\begin{equation}\label{eq:C+mK0}
C=f^*(C')-\sum _i \sum_{j=1}^{h_i}i\Theta_{i,j} \quad\text{and} \quad K_S=K_{S'}+\sum_i \sum_{j=1}^{h_i}\Theta_{i,j}. 
\end{equation}

The following proposition specifies the Zariski decomposition of the adjoint systems we are interested in. 

\begin{proposition} \label{prop:zariski2} Same hypotheses of Proposition \ref{prop:zariski}. 
Then we have a unique expression
\begin{equation}\label{eq:C+mK}
C+mK_S\equiv P+\sum_{i=0}^{m-1}\sum_{j=1}^{h_i}(m-i)\Theta_{i,j},
\end{equation}
where $P$ is nef and the $\Theta_{i,j}$'s are $(-1)$-cycles such that
\begin{enumerate}[(i)]

\item $P\cdot \Theta_{i,j} = 0$, for each $(i,j)$;

\item $\Theta_{i,j} \cdot \Theta_{h,k}=0$, for $(h,k)\ne(i,j)$;

\item the intersection matrix of the irreducible components of 
$$N=\sum_{i=0}^{m-1}\sum_{j=1}^{h_i}(m-i)\Theta_{i,j}$$ 
is negative definite;

\item $C \cdot \Theta_{i,j} = i$, for each $(i,j)$;

\item $h^0(S,\cO_S(C+mK_S))=h^0(S, \cO_S(P))$.
\end{enumerate}

\end{proposition}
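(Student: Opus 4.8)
The plan is to read off \eqref{eq:C+mK} directly from \eqref{eq:C+mK0}: substituting the two expressions there into $C+mK_S$ and setting $P:=f^*(C'+mK_{S'})$, $N:=\sum_{i}\sum_{j=1}^{h_i}(m-i)\Theta_{i,j}$, one gets at once $C+mK_S\equiv P+N$, with the sum ranging a priori over all values $i=C\cdot\Theta_{i,j}$ realised by the $f$-exceptional $(-1)$-cycles. The first point I would settle is that in fact $0\leqslant i\leqslant m-1$ for each of them. The inequality $i\geqslant 0$ holds because $C$ is either nef, in case (i), or irreducible and not contracted by $f$, in case (ii) (the construction in the proof of Proposition \ref{prop:zariski} never blows $C$ itself down), so $C$ meets every effective exceptional cycle non-negatively. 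The inequality $i<m$ is read off that same construction: each of the blow-downs composing $f$ contracts a $(-1)$-curve $\Theta$ with $\Theta\cdot(C_\ell+mK_{S_\ell})<0$, hence with $\Theta\cdot C_\ell<m(-K_{S_\ell}\cdot\Theta)=m$, and by the projection formula this bound passes to the corresponding $f$-exceptional $(-1)$-cycle on $S$, which is the total transform of $\Theta$. Thus only $i=0,\ldots,m-1$ occur, every coefficient $m-i$ is positive, and $N\geqslant 0$ is effective, giving \eqref{eq:C+mK}.

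Next I would check (i)--(v). For (i), by the projection formula $P\cdot\Theta_{i,j}=(C'+mK_{S'})\cdot f_*(\Theta_{i,j})=0$, since $\Theta_{i,j}$ is $f$-exceptional, so $f_*(\Theta_{i,j})=0$; running the same computation with an arbitrary curve of $S$ in place of $\Theta_{i,j}$, and using that $C'+mK_{S'}$ is nef by Proposition \ref{prop:zariski}, shows in addition that $P$ is nef. Property (iv) is the defining feature of the indexing, $i=C\cdot\Theta_{i,j}$. Property (ii) follows from Lemma \ref{lem:ints}: up to relabelling, the $\Theta_{i,j}$ are the $f$-exceptional $(-1)$-cycles $Z_{0,1},\ldots,Z_{0,n}$, and distinct ones among these are orthogonal. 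Property (iii) holds because $N$ is supported on the exceptional locus of the birational morphism $f$, whose intersection matrix is negative definite (Grauert's criterion, or induction on the number of blow-downs). Finally, (v) follows by combining \eqref{eq:birinv} with the identity $h^0(S',\cO_{S'}(D'))=h^0(S,\cO_S(f^*D'))$, valid for every divisor $D'$ on $S'$ because $f_*\cO_S=\cO_{S'}$; taking $D'=C'+mK_{S'}$ yields $h^0(S,\cO_S(C+mK_S))=h^0(S',\cO_{S'}(C'+mK_{S'}))=h^0(S,\cO_S(P))$.

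It remains to account for the uniqueness asserted in the statement. I would invoke the uniqueness of the Zariski decomposition: the facts that $P$ is nef, $N\geqslant 0$, $P\cdot N_\ell=0$ for every irreducible component $N_\ell$ of $N$ (a consequence of (i)), and the components of $N$ have negative definite intersection matrix (property (iii)) say precisely that $P+N$ is the Zariski decomposition of the effective divisor $C+mK_S$; since this decomposition is unique, $P$, $N$, and hence the whole expression \eqref{eq:C+mK}, depend on $C+mK_S$ alone. The main obstacle I foresee is not conceptual but one of bookkeeping: pinning down the range $0\leqslant i\leqslant m-1$, so that $N$ is genuinely effective and the Zariski-decomposition characterisation applies, and identifying the $\Theta_{i,j}$ with the $f$-exceptional $(-1)$-cycles of \S\ref{subsec:-1cicli}; everything else is a routine use of the projection formula and of the properties of $(-1)$-cycles already established.
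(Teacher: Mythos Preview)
Your proposal is correct and follows essentially the same route as the paper: you set $P=f^*(C'+mK_{S'})$, read off \eqref{eq:C+mK} from \eqref{eq:C+mK0}, verify (i)--(v) via the projection formula, Lemma~\ref{lem:ints}, negativity of the exceptional locus, the indexing convention, and \eqref{eq:birinv}, and then appeal to the uniqueness of the Zariski decomposition. The only cosmetic difference is that the paper derives $i<m$ slightly more directly by computing $\Theta_{i,j}\cdot(C+mK_S)=i-m$ (using $C\cdot\Theta_{i,j}=i$ and $K_S\cdot\Theta_{i,j}=-1$) and observing this must be negative, rather than tracing through the individual blow-downs as you do; both arguments are equivalent.
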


\begin{proof} It follows from Proposition \ref{prop:zariski} and from \eqref{eq:C+mK0}. Note that 
$P=f^*(C'+mK_{S'})$ is nef. Then $(i)$ is clear, $(ii)$ follows
by Lemma \ref{lem:ints}, $(iii)$ follows by \cite{badescu}, Corollary 2.7, $(iv)$ holds by definition and $(v)$ follows by Proposition \ref{prop:zariski}.
Since $i-m=\Theta_{i,j}\cdot (C+mK_S)<0$, we have $i<m$.

As announced, \eqref{eq:C+mK} is nothing else than the Zariski decomposition of $C+mK_S$ (see \cite{badescu}, Chapt. 14). As such it is unique.
\end{proof}

The divisors $N, P$ appearing in $(iii)$ of Proposition \ref{prop:zariski2} are the \emph{negative part} and the \emph{nef part}, respectively of $C+mK_S$.

\begin{remark} \label{rem:stable} The decomposition \eqref{eq:C+mK} is \emph{stable} under birational morphisms, in the following sense. Let $X$ be a surface, let $f\colon X\to S$ be a birational morphism, and let $\Gamma$ be the proper transform on $X$ of the curve $C$ on $S$. 

Let $\Theta_{i,j}$ be the $f$-exceptional $(-1)$-cycles  on $X$, indexed in such a way that
 $\Theta_{i,j}\cdot \Gamma=i$, $j=1,\ldots, k_i$.  Then
$$\Gamma=f^*(C)-\sum_i\sum_{j=1}^{k_i}i\Theta_{i,j}\quad\text{and}
\quad K_X\equiv f^*(K_S)+\sum_i\sum_{j=1}^{k_i}\Theta_{i,j}$$
so that
$$\Gamma+mK_X\equiv f^* (C+mK_S)+\sum_i\sum_{j=1}^{k_i}(m-i)\Theta_{i,j}.$$
From this we deduce that
$$h^0(S,\cO_S(C+mK_S))\geqslant h^0(X,\cO_{X}(\Gamma+mK_{X}))$$
and equality holds if each $i$ is at most $m$.
Assume this is true. This is  the case if $C$ has points of multiplicity at most $m$.
Then, substituting for $C+mK_S$ the expression given by  \eqref{eq:C+mK} we find the analogous decomposition for $\Gamma+mK_X$. 

More generally, if $n\geqslant 1$ is an integer, we have
$$n\Gamma+mK_X\equiv f^* (nC+mK_S)+\sum_i\sum_{j=1}^{k_i}(m-ni)\Theta_{i,j}.$$
and 
$$h^0(S,\cO_S(nC+mK_S))\geqslant h^0(X,\cO_{X}(n\Gamma+mK_{X}))$$
with equality if $m\geqslant ni$ for all $i$.
\end{remark}

\subsection{Adjoint systems and birational transformations}
\label{susec:birat}
 
Suppose that $(S,C)\sim (S',C')$ via the birational map $\phi\colon S\rto S'$.
Then  there is a commutative diagram
 \[
\xymatrix{ & X \ar[dl]_{f} \ar[dr]^{f'} \\
S\ar@{-->}[rr]^{\phi} & & S'}
\]
where $X$ is a surface, $f,f'$ are birational morphisms and there is a curve $\Gamma$ on $X$ such that
$f_*(\Gamma)=C$, $f_*'(\Gamma)=C'$.

As we saw
\begin{equation}\label{eq:double}
\Gamma+mK_X\equiv f^* (C+mK_S)+\sum_i\sum_{j=1}^{h_i}(m-i)\Theta_{i,j}\equiv f'^*(C'+mK_{S'})+
\sum_u\sum_{v=1}^{h'_u}(m-u)\Theta'_{u,v}
\end{equation}
and therefore
\begin{equation}\label{eq:phi!}
C'+mK_{S'}\equiv  \phi_! (C+mK_S)
+ \sum_i\sum_{j=1}^{h_i}(m-i)f'_*(\Theta_{i,j}).
\end{equation}

\begin{proposition}\label{prop:bir}
In the above setting, assume $C,C'$ verify the hypotheses of Proposition \ref{prop:zariski} and have points of multiplicity at most $m$. Then 
$$h^0(S,\cO_S(C+mK_S))=h^0(S',\cO_{S'}(C'+mK_{S'})).$$
If, in addition, $C,C'$ and have points of multiplicity at most $m-1$, then
  $$C'+mK_{S'}\equiv  \phi_! (C+mK_S)+ \sum_{i=1}^{m-1}\sum_{\ell=1}^{k_i}(m-i) \theta_{i,\ell}$$ 
  where $\theta_{i,j}$ are $(-1)$-cycles on $S'$ such that $C'\cdot \theta_{i,\ell}=i$, which are the images of 
$f$-exceptional $(-1)$-cycles. 
 \end{proposition}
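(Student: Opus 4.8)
The plan is to extract everything from the two-sided identity \eqref{eq:double}, which already exhibits $\Gamma + mK_X$ in two ways: as $f^*(C+mK_S)$ plus a sum of $f$-exceptional $(-1)$-cycles with coefficients $m-i$ ($0\le i\le m-1$), and as $f'^*(C'+mK_{S'})$ plus the analogous $f'$-exceptional terms. First I would invoke Proposition~\ref{prop:zariski} on both $(S,C)$ and $(S',C')$ (the hypotheses are assumed), getting birational morphisms $g\colon S\to S_0$ and $g'\colon S'\to S_0'$ after which the adjoints become nef, with $h^0$ unchanged. Pulling these back along $f$ and $f'$ and using Remark~\ref{rem:stable} — whose equality clause applies precisely because $C$ and $C'$ have multiplicities at most $m$, so every index $i$ occurring is $\le m$ — one gets
\[
h^0(S,\cO_S(C+mK_S)) = h^0(X,\cO_X(\Gamma+mK_X)) = h^0(S',\cO_{S'}(C'+mK_{S'})),
\]
which is the first assertion. (Here one reads $h^0(X,\cO_X(\Gamma+mK_X))$ off either side of \eqref{eq:double} via Proposition~\ref{prop:zariski2}(v) applied on $S_0$ and on $S_0'$.)

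For the second assertion I would start from \eqref{eq:phi!}, which already says
\[
C'+mK_{S'}\equiv \phi_!(C+mK_S) + \sum_i\sum_{j=1}^{h_i}(m-i)\,f'_*(\Theta_{i,j}),
\]
and the work is to show that, under the stronger hypothesis that $C,C'$ have multiplicities at most $m-1$, each nonzero $f'_*(\Theta_{i,j})$ is a $(-1)$-cycle $\theta_{i,\ell}$ on $S'$ with $C'\cdot\theta_{i,\ell}=i$, and that only indices $i\ge 1$ survive. The multiplicity-$\le m-1$ condition forces every index $i$ in the decomposition on $X$ to satisfy $i\le m-1 < m$; in particular the negative part $N$ on $X$ has all coefficients $m-i\ge 1$ positive, and its intersection matrix is negative definite by Proposition~\ref{prop:zariski2}(iii). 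To control the images I would use Lemma~\ref{prop:zeroint}: for the morphism $f'$, written as a composition of blow-downs, each individual $f'$-exceptional $(-1)$-cycle $\Theta'_{u,v}$ satisfies $\Theta_{i,j}\cdot\Theta'_{u,v}\ge 0$, and since both sides of \eqref{eq:double} are Zariski decompositions of the same divisor, uniqueness of Zariski decomposition pins down how the $\Theta_{i,j}$ and the $\Theta'_{u,v}$ interact — in particular each $\Theta_{i,j}$ is either $f'$-exceptional (hence $f'_*(\Theta_{i,j})=0$) or is disjoint from all $f'$-exceptional cycles modulo lower strata, so that $f'_*(\Theta_{i,j})$ is again a $(-1)$-cycle by Lemma~\ref{prop:zeroint}(ii). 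The intersection number is computed by the projection formula: $C'\cdot f'_*(\Theta_{i,j}) = f'^*(C')\cdot\Theta_{i,j}$, and since $f'^*(C') = \Gamma + \sum_u\sum_v u\,\Theta'_{u,v}$ by the analogue of \eqref{eq:C+mK0}, while $\Gamma\cdot\Theta_{i,j}=i$ and the cross terms with $\Theta'_{u,v}$ are handled by the disjointness just established, one gets $C'\cdot f'_*(\Theta_{i,j}) = i$ whenever the image is nonzero. Relabelling the nonzero images as $\theta_{i,\ell}$, $\ell=1,\dots,k_i$, and noting $i\ge 1$ since the $i=0$ cycles are exactly the $f$-exceptional ones contracted already to smooth points of $C'$ — more precisely, an $i=0$ summand $\Theta_{0,j}$ has $\Theta_{0,j}\cdot C = 0$, and under $\phi_!$ it is absorbed into the base-point analysis, so it contributes $0$ to the residual sum after one checks its image is either $0$ or already accounted for in $\phi_!(C+mK_S)$ — gives exactly the claimed formula.

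The main obstacle, as I see it, is the bookkeeping in the second part: making precise the claim that the $f$-exceptional and $f'$-exceptional $(-1)$-cycles in the two Zariski decompositions in \eqref{eq:double} are ``compatible'' in the sense that each $\Theta_{i,j}$ is either swallowed by $f'$ or maps isomorphically to a $(-1)$-cycle on $S'$ with the same index, and dually. This is where negative-definiteness of the combined configuration (needed to apply Lemma~\ref{prop:zeroint}), the uniqueness of Zariski decomposition, and the multiplicity-$\le m-1$ hypothesis all have to be orchestrated together; the index-$i=0$ cycles are the delicate case, since $m-i=m$ there and they sit on the boundary of what Remark~\ref{rem:stable} controls — this is exactly why the weaker hypothesis (multiplicities $\le m$) suffices only for the $h^0$ statement, while the cleaner divisorial identity needs multiplicities $\le m-1$ so that no index-$0$ term ever appears. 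Everything else is projection-formula computation and the already-established lemmas on $(-1)$-cycles.
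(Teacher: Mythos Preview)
Your proposal is correct and follows essentially the same route as the paper: Remark~\ref{rem:stable} for the first assertion, and for the second the key observation that under the multiplicity-$\le m-1$ hypothesis all the $\Theta'_{u,v}$ (as well as the $\Theta_{i,j}$) sit in the negative part of $\Gamma+mK_X$, hence $\Theta_{i,j}\cdot\Theta'_{u,v}=0$ by Proposition~\ref{prop:zariski2}(ii), after which Lemma~\ref{prop:zeroint}(ii) gives that each $f'_*(\Theta_{i,j})$ is zero or a $(-1)$-cycle. The paper is just a bit more streamlined---no detour through auxiliary $S_0,S_0'$ for the first part, and it obtains $C'\cdot\theta_{i,\ell}=i$ from $\theta_{i,\ell}\cdot\phi_!(C+mK_S)=0$ (projection formula, since $\Theta_{i,j}$ is $f$-exceptional) rather than via your expansion of $f'^*(C')$; your worry about the $i=0$ terms is also unnecessary.
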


\begin{proof} The first assertion follows by Remark \ref{rem:stable}.

As for the second, from \eqref{eq:double} we see that all the $f'$-exceptional $(-1)$-cycles $\Theta'_{u,v}$ sit in the negative part of $\Gamma+mK_X$. Then $\Theta_{i,j}\cdot \Theta'_{u,v}=0$. By part $(ii)$ of Lemma \ref{prop:zeroint}, we see that $\theta_{i,j}:=f'_*(\Theta_{i,j})$ are either zero or $(-1)$-cycles.
 Moreover $\theta_{i,j}\cdot  \phi_! (C+mK_S)=0$, which proves that $C'\cdot \theta_{i,\ell}=i$.  \end{proof}
 
In the hypotheses of Proposition  \ref{prop:bir}, we set
$$a_m(C):=h^0(S,\cO_S(C+mK_S))$$
which is a birational invariant of the pair $(S,C)$.

\begin{remark}\label{rem:multad}  Let $n \geqslant 1$ be an integer.
Assume $C,C'$ verify the hypotheses of Proposition \ref{prop:zariski}, from which we keep the notation, and have points of multiplicity at most $[\frac mn]$.   In particular $m\geqslant n$.
Assume $h^0(S,\cO_S(nC+mK_S))>0$. Then 
$$nC'+mK_{S'}\equiv  \phi_! (nC+mK_S)+ \sum_{i=1}^{[\frac mn]}\sum_{\ell=1}^{k_i}(m-ni) \theta_{i,\ell}$$ 
 and
 $$h^0(S,\cO_S(nC+mK_S))=h^0(S',\cO_{S'}(nC'+mK_{S'}))$$
so that
$$a_m(nC):=h^0(S,\cO_S(nC+mK_S))$$ 
is a also birational invariant of the pair $(S,C)$ (see \cite{Iitaka2}).

More specifically, the projective isomorphism class of the image
of $S$ via $\phi_{|nC+mK_S|}$ is a birational invariant.
\end{remark}

\begin{remark}\label{rem:sistad} All the above result hold for pairs $(S,\cL)$ with $\cL$ a linear system as well.
\end{remark}

Let $(S,C)$ be a pair  as usual, with $S$ smooth and $C$ with points of multiplicity at most $m$. The expert  reader  in higher dimensional birational geometry, will commonly express this by saying that the pair $(S,\frac C m)$ has \emph {canonical singularities},
and actually \emph {terminal singularities} if $C$ has points of multiplicity at most $m-1$ (see \cite{Mats}). Though most fashionable today, we will not use this terminology here, since we feel that in the surface case the classical one is equally appropriate. 

Then one defines  the \emph {Kodaira dimension} $\kappa(S,\frac C m)$ of the pair $(S,\frac C m)$ to be $-\infty$ if $a_{mn}(nC)=0$ for all $n>0$; otherwise one defines $\kappa(S,\frac C m)$  to be the dimension of the image of $\phi_{|n(C+mK_S)|}$, for $n>>0$.  As we saw, this is a birational invariant.  

If $\cL$ is a complete planar linear system with no multiple fixed components, and $f: S\to \PP^2$ is a birational morphism such that the general member $C$ of proper transform $\cL'$ of $\cL$ on $S$ is smooth,  we can consider  $\kappa(S,C)$. This is an invariant of $\cL$ by Cremona transformations, which is called the \emph{Kodaira dimension} of $\cL$. We will denote it by $\kappa(\cL)$.

\section{Curves on rational  ruled surfaces}\label{sec:Iitaka}

\subsection{Minimal rational surfaces and elementary transformations}\label{ssec:minimalrat}

The minimal rational surfaces are $\Pl$ and
$\F_n=\PP(\cO_{\PP^1}\oplus \cO_{\PP^1}(n))$, $n\geqslant0$ and $n\ne 1$.
The surface $\F_n$ is a $\PP^1$-bundle on $\PP^1$. We denote
by $f_n: \F_n\to \PP^1$ the structure morphism,
by $F_n$ its general fibre,
by $F_p$ the fibre through a point $p\in \F_n$.
There is a section $E_n$ with $E_n^2=-n$, which is uniquely determined
if $n>0$. If $n=0$ then $\vert E_0\vert$ is a pencil.
We will drop the index $n$ if there is no danger of confusion.
Recall that $\Pic(\F_n)$ is generated by the classes of $E$ and $F$.

On $\F_n$, we denote by
\begin{equation}\label{eq:sist}
\cL_n(k,h;[\mu_1^{\eps_1},\mu_2^{\eps_2},\ldots,\mu_s^{\eps_s}],
m_1^{e_1},\ldots,m_r^{e_r})
\end{equation}
the linear system of curves in $|kE+hF|$ having
$\eps_i$ (proper or infinitely near) points of multiplicity at least $\mu_i$, $i=1,\ldots,s$, on $E$ (or on its proper transform)
and further $e_j$ points of multiplicity at least $m_j$, $i=0,\ldots,r$.

If $\cL(d;m_0,\ldots,m_r)$ with $m_0\geqslant \ldots \geqslant m_r$, we may
blow up the base points $p_0$ of multiplicity $m_0$ and take the
proper transform of the system to $\F_1$, thus getting the system 
$\cL_1(d-m_0,d; m_1, \ldots,m_r)$. In this way we will look at planar 
linear systems as linear systems on $\F_1$. 

The surfaces $\F_n$ can be realised in a projective space as surfaces of 
minimal degree (cf.\ \cite{delPezzo}, \cite{Conforto}, \cite{EisenbudHarris}).
Consider on $\F_n$ the base point free linear system $\vert E+hF\vert$, with $h\geqslant n$.
If $h=n=0$, then $\vert E\vert $ is a pencil.
Otherwise it determines
a morphism $\phi=\phi_{\vert E+hF\vert}\colon \F_n\to \PP^r$,
with $r=2h-n+1$, whose image is a possibly singular surface $\Sigma$ of degree
$r-1=2h-n$.
In fact $\phi\colon \F_n\to \Sigma$ is an isomorphism as soon as $h>n$. In
case $h=n>0$, $\Sigma$ is the cone over a rational normal curve of degree 
$r-1$ and $\phi$ is an isomorphism off the negative curve $E$, which is
contracted to the vertex of the cone. 
The surface $\Sigma$ is denoted as $S(h-n,h)$, since it is the scroll described
by the lines joining corresponding points on two rational normal curves
of degree $h-n$ and $h$ in independent spaces. This applies also in
the degenerate case $h=n$. 


An \emph{elementary transformation} $\elm_p$ centered at a point $p\in\F_n$ is
the composition of the blowing up of $\F_n$ at $p$ and
the blowing down of the proper transform of the fibre through $p$,
which is a $(-1)$-curve.
If $p\not\in E$, then $\elm_p\colon \F_n\rto \F_{n-1}$,
otherwise $\elm_p\colon \F_n\rto \F_{n+1}$.

The composition $\elm_{p_r} \circ \cdots \circ \elm_{p_2} \circ \elm_{p_1}$
of $r$ elementary transformations centered at points $p_1,p_2,\ldots,p_r$
will be denoted by $\elm_{p_1,\ldots,p_r}$. 
Only the  point $p_1$ belongs to $\F_n$ whereas $p_i\in \elm_{p_1,\ldots,p_{i-1}}(\F_n)$,
for all $i=2,\ldots, r$.


A birational map $\phi: \F_n\rto \F_{n'}$ sending the pencil $\vert F_n\vert$ to
the pencil $\vert F_{n'}\vert $, i.e.\ making the following diagram commutative
\begin{equation}\label{eq:commute}
\raisebox{3.5ex}{\xymatrix{%
\F_{n}  \ar[d]_{f_n}  \ar@{-->}[r] ^{\phi} & \F_{n'} \ar[d]^{f_{n'}} \\
\PP^1 \ar[r]^{\id}  & \PP^1}}
\end{equation}
is called a \emph{fibred birational map}. For example, de Jonqui\`eres
transformations can be regarded as fibred birational maps $\F_1 \rto \F_1$.

Any fibred birational map is a composition of elementary transformations. 
This is an aspect of Sarkisov's  program in dimension 2 (see
\cite{Mats}).

\subsection{Iitaka's $\sharp$-models (sharp models)}

Consider a pair $(\F_n,\cL)$, with $\cL$ as in \eqref{eq:sist}
and call $m_1$ the maximum multiplicity of singular points of $\cL$.
We will assume in the sequel  that $h\geqslant kn$, so that the curve
$E_n$ does not split off from the system  $\cL$. 
Following Iitaka \cite{Iitaka1, Iitaka2}, we say that the pair
$(\F_n,\cL)$ is \emph{$\sharp$-minimal}, or a \emph{$\sharp$-model}
[resp.\ \emph{$\sharp\sharp$-minimal}, or a \emph{$\sharp\sharp$-model}] when:
\begin{itemize}

\item $k\geqslant 2m_1$ [resp.\ $k>2m_1$] if $n\geqslant 2$;

\item $k\geqslant 2m_1$ [resp.\ $k>2m_1$] and $h-k\geqslant m_1$ if $n=1$;

\item $h,k\geqslant 2m_1$ [resp.\ $h,k>2m_1$] if $n=0$.
\end{itemize}

This is essentially the same as saying that  the pair $(\F_n,\frac {2C}k)$ has canonical [resp. terminal] 
singularities. 

If a pair $(\F_n,\cL)$ is not $\sharp$-minimal,
one may get a $\sharp$-model by performing elementary transformations
based at points of multiplicity $\mu$ with $2\mu>k$.

In his papers \cite{Iitaka1, Iitaka2},
Iitaka described many interesting properties of $\sharp$-models 
and $\sharp\sharp$-models.
In particular the following result will be useful for us (see \cite{Iitaka2},
Theorem 4, its proof and its corollary, on p.\ 316).

\begin{theorem}[Iitaka] \label{thm:iitaka}  One has:
\begin{enumerate}[(i)]

\item if $(\F_n,\cL)\sim (\F_{n'},\cL')$, 
$(\F_n,\cL)$ is a $\sharp\sharp$-model and
$(\F_{n'},\cL')$ is a $\sharp$-model, then 
$(\F_n,\cL)$ and $(\F_{n'},\cL')$ are isomorphic;

\item  if $(\F_n,\cL)$, with $\cL\subset\cL_n(k,h)$, and $(\F_{n'},\cL')$
are both $\sharp$-models and are birationally
equivalent via a map $\phi$ fitting in a diagram
\eqref{eq:commute},
then either $\phi$ is an isomorphism, or
$k=2m$ is even and $(\F_{n'},\cL')$ is obtained from $(\F_n,\cL)$
with a sequence of elementary transformations based 
at points of multiplicity $m$. 
If $\cL=\cL_n(k,h;
m_1,\ldots,m_r)$, then $m_1,\ldots,m_r$ are
birational invariants of $\sharp$-models birationally
equivalent to $(\F_n,\cL)$.
\end{enumerate} 
\end{theorem}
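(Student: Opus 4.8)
The result is Iitaka's, and the shortest route is to invoke \cite[Theorem~4, its proof and corollary]{Iitaka2} directly; what follows is how one would reconstruct the argument with the material of \S\S\ref{sec:adjprop} and \ref{sec:Iitaka}. The plan is to deduce $(i)$ from $(ii)$ by a birational–invariance statement for the ruling, and to prove $(ii)$ by decomposing the fibred map into elementary transformations and tracking multiplicities.

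First I would record the computation underlying everything. For $\cL\subset\cL_n(k,h)$ one has $K_{\F_n}\equiv-2E-(n+2)F$, so for the general curve $C$ of $\cL$ one gets $2C+kK_{\F_n}\equiv\bigl(2h-k(n+2)\bigr)F$. Thus, in the \emph{ruled regime} $2h-k(n+2)\geqslant1$ (the one relevant to the applications in this paper), the system $|2C+kK_{\F_n}|$ is composed with the pencil $|F|$ and $\phi_{|2C+kK_{\F_n}|}$ has the same fibres as the ruling $f_n\colon\F_n\to\PP^1$. A $\sharp$-model has all multiplicities $\leqslant k/2=[k/2]$, so the hypotheses of Remark \ref{rem:multad} hold with the ``$n$'' there equal to $2$ and the ``$m$'' equal to $k$; hence the map defined by $|2C+kK_{\F_n}|$, and so the ruling itself, is a birational invariant of the pair. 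Consequently, if $(\F_n,\cL)\sim(\F_{n'},\cL')$ with both $\sharp$-models in this regime, the birational map $\phi$ carries $|F|$ to $|F'|$, i.e.\ (after composing with an automorphism of $\PP^1$) $\phi$ is fibred and fits in a diagram \eqref{eq:commute}. This reduces the general case to the fibred one; the complementary regime, where $|2C+kK_{\F_n}|$ is empty or a point, is of del Pezzo type and falls outside the scope of this theorem.

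For $(ii)$ I would use that any fibred birational map $\phi\colon\F_n\rto\F_{n'}$ is a composition $\elm_{p_1,\ldots,p_r}$ of elementary transformations (the $2$-dimensional Sarkisov statement recalled in \S\ref{sec:Iitaka}), chosen with $r$ minimal. The key local fact is: if $\cL$ has multiplicity $\mu$ at a point $p$, then after blowing up $p$ the proper transform of the fibre $F_p$ is a $(-1)$-curve meeting the proper transform of the general curve $C$ in $C\cdot F_p-\mu=k-\mu$ points, so contracting it produces a system whose image has a point of multiplicity $k-\mu$ on the contracted fibre, while $C\cdot F$ stays equal to $k$ (I would carry this out generically, i.e.\ for $F_p$ carrying no further base point, and reduce to that situation). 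Both $(\F_n,\cL)$ and $(\F_{n'},\cL')$ being $\sharp$-models, all their multiplicities are $\leqslant k/2$. Since $\elm_{p_1}$ is centered at a point of $\F_n$ we get $\mu_1\leqslant k/2$; if $\mu_1<k/2$ it creates a point $q_1$ of multiplicity $k-\mu_1>k/2$, which cannot be present in the $\sharp$-model $(\F_{n'},\cL')$ and must be removed later in the chain. One then argues that, in a minimal factorization, the transformation removing $q_1$ can be slid back next to $\elm_{p_1}$ — elementary transformations centered on distinct fibres commute — and therefore equals $\elm_{p_1}^{-1}$, contradicting minimality. Hence $\mu_1=k/2$, so $k=2m$ is even, the resulting pair is again a $\sharp$-model with the same multiset of multiplicities, and by induction on $r$ every $\mu_i=m$, the multiplicities $m_1,\ldots,m_r$ are preserved along the chain, and if $r=0$ then $\phi$ is an isomorphism. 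Part $(i)$ is then immediate: a $\sharp\sharp$-model has all multiplicities strictly below $k/2$, so no point of multiplicity $k/2$ is available to center an elementary transformation on, and $(ii)$ leaves only the possibility that $\phi$ is an isomorphism.

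The main obstacle is the combinatorial heart of $(ii)$: making the ``slide back and cancel'' argument rigorous when several base points, including infinitely near ones, lie on a common fibre, so that the relevant elementary transformations need not commute — equivalently, showing that the reduction process removing points of multiplicity $>k/2$ is confluent up to the ``$k/2$--ambiguity''. This, together with the low–degree special cases ($n=0,1$, where the definition of $\sharp$-minimality carries the extra condition on $h$ and where sections of self-intersection $0$ fail to be unique), is precisely the content of Iitaka's \cite[Theorem~4 and corollary]{Iitaka2}, which one may simply cite.
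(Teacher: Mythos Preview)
Your proposal is correct in outline but takes a genuinely different route from the paper's argument.

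The paper does not go through the ruling and elementary-transformation combinatorics at all. Instead, setting $m=m_1$ (the maximal multiplicity of $\cL$), it observes that
\[
\ad_m(\cL)=\cL_n(k-2m,\,h-m(n+2)),
\]
which for a $\sharp\sharp$-model (so $k-2m>0$) is base-point free and \emph{very ample} on $\F_n$ (or on the cone $S(0,n)$ in the extremal case $h=kn$). By Remark~\ref{rem:multad} the projective isomorphism class of the image of $\phi_{|\ad_m(\cL)|}$ is a birational invariant of the pair; since that image is $\F_n$ (or the cone) embedded by a complete system, one reads off $(\F_n,\cL)$ up to isomorphism directly, giving $(i)$ in one stroke. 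Part $(ii)$ is then ``a similar argument applied to $\ad_{m-1}(\cL)$'', left to the reader.

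By contrast, you use the weaker invariant $|2C+kK|$, which only recovers the ruling, and then have to do the Sarkisov/elementary-transformation bookkeeping by hand (including the delicate ``slide and cancel'' step you flag). This is more laborious and leaves you appealing to Iitaka for the combinatorial core, whereas the paper's choice of adjoint level $m=m_1$ (rather than $m=\lfloor k/2\rfloor$) bypasses all of that. What your approach buys is a more explicit picture of \emph{how} two $\sharp$-models differ---namely, by elementary transformations at points of multiplicity exactly $k/2$---which is useful intuition for the $\flat$- and $\natural$-model discussion that follows; the paper's approach buys brevity and avoids the edge cases (infinitely near points on a common fibre, the $n\leqslant1$ conditions) that you correctly identify as the main obstacle.
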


\begin{proof}  Part (i) is the uniquess of terminal models. In any event, set $m=m_1$.
One has $\ad_m(\cL)=
\cL_n(k-2m,h-m(n+2))$. This system is very ample on
$\F_n$, except for $h=kn$, in which case it is very ample
on $S(0,n)$.  By Remark \ref{rem:multad}, part $(i)$ immediately follows.
Part $(ii)$ follows by a similar argument, left to the reader,
 applied to $\ad_{m-1}(\cL)$.
\end{proof}

\subsection{Definition of $\flat$-models (flat models) and $\natural$-models (natural models)} \label{ssec:model}
Given a pair $(\F_{n'},\cL')$ as above, there is a minimal $n\geqslant 0$ such that
$(\F_n,\cL)$ is a $\sharp$--model birational to $(\F_{n'},\cL')$. We call it a \emph{$\flat$--model}
and $n$ the \emph{$\flat$--index} of  $(\F_{n'},\cL')$. 

\begin{proposition}\label{pro:flat} Let $(\F_n,\cL)$ be a $\flat$--model with $\cL$ as in  \eqref{eq:sist}. If $n>0$, then:
\begin{enumerate}[(i)]
\item  all base points of $\cL$ of multiplicity $m\geqslant \frac k2$ lie on $E_n$;
\item if  $(\F_n,\cL')$ is a $\flat$--model  birational to $(\F_n,\cL)$
via a map $\phi$ fitting in a diagram
\eqref{eq:commute}, and either $k$ is odd or $\cL$ does not have  infinitely near base points  of multiplicity $m$ on $E_n$, then $\phi$ is an isomorphism.
\end{enumerate}
\end{proposition}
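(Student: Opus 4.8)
The statement I need to prove is Proposition \ref{pro:flat}: for a $\flat$-model $(\F_n, \cL)$ with $n>0$, (i) every base point of multiplicity $m \geqslant k/2$ lies on $E_n$, and (ii) a fibred birational self-map of $\F_n$ between two $\flat$-models is an isomorphism when $k$ is odd or when $\cL$ has no infinitely near base point of multiplicity $m$ on $E_n$. The plan is to run both parts through the apparatus already assembled in \S\ref{sec:Iitaka}: Iitaka's Theorem \ref{thm:iitaka} and the fact that fibred birational maps are composites of elementary transformations (\S\ref{ssec:minimalrat}). The key observation is that $\flat$-minimality means $n$ is \emph{minimal} among indices carrying a $\sharp$-model in the birational class, so any procedure producing a $\sharp$-model on $\F_{n-1}$ must be blocked.

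**Part (i).** Suppose toward a contradiction that $\cL$ has a base point $p$ of multiplicity $m \geqslant k/2$ not lying on $E_n$. I will perform the elementary transformation $\elm_p$. Since $p \notin E_n$, this gives a birational map $\elm_p \colon \F_n \rto \F_{n-1}$, fitting in a diagram \eqref{eq:commute}. I then need two things: first, that the transformed system $\cL''$ still satisfies the hypothesis $h'' \geqslant k''n'$ required for $E_{n-1}$ not to split off (this is a bookkeeping check: the new $k$ equals the old $k$, and the new $h$ drops by $m$ or so while $n$ drops by $1$; one verifies the inequality $h - m \geqslant k(n-1)$ follows from $h \geqslant kn$ together with $m \leqslant k \leqslant$ the relevant bound — actually one must be slightly careful and may need $m \geqslant k/2$ here, which is exactly the hypothesis); second, that $(\F_{n-1}, \cL'')$ is again a $\sharp$-model, i.e. $k'' \geqslant 2 m_1''$ where $m_1''$ is its maximal multiplicity. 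The point of choosing $p$ with $2m \geqslant k$ is precisely the standard fact (cf.\ the last paragraph before Theorem \ref{thm:iitaka}) that elementary transformations centered at such points do not raise the maximal multiplicity above $k/2$ and preserve $\sharp$-minimality. Granting these, $(\F_{n-1}, \cL'')$ is a $\sharp$-model birational to $(\F_n, \cL)$ with smaller index, contradicting $\flat$-minimality of $(\F_n, \cL)$. Hence no such $p$ exists.

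**Part (ii).** Let $\phi \colon \F_n \rto \F_n$ be a fibred birational map between two $\flat$-models, both of index $n$. By \S\ref{ssec:minimalrat}, $\phi$ is a composition of elementary transformations, and since both source and target are $\F_n$, the $n$'s must balance out along the way. If $\phi$ is not an isomorphism, I invoke part (ii) of Theorem \ref{thm:iitaka}: then $k = 2m$ is even and $(\F_n, \cL')$ is obtained from $(\F_n, \cL)$ by a sequence of elementary transformations centered at points of multiplicity $m$. This immediately rules out the odd-$k$ case. For the remaining case, I argue that such an elementary transformation must be centered at a point $p$ of multiplicity $m$ \emph{on} $E_n$: by part (i) already proven, all multiplicity-$m$ points lie on $E_n$ (since $m = k/2$), so $\elm_p$ with $p \in E_n$ sends $\F_n \rto \F_{n+1}$, raising the index. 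To return to index $n$ one needs a subsequent $\elm_q$ lowering it, i.e. based off the negative section; tracking multiplicities shows the net effect forces $\cL$ to have possessed an infinitely near multiplicity-$m$ base point on $E_n$ (the one created by the first blow-up and then re-contracted). This contradicts the hypothesis in (ii), so $\phi$ is an isomorphism.

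**Main obstacle.** The delicate point is the bookkeeping in Part (i): verifying that after $\elm_p$ the transformed system genuinely remains a $\sharp$-model on $\F_{n-1}$ — in particular that the maximal multiplicity does not jump and that the condition $h \geqslant kn$ is inherited in the correct form — and, in Part (ii), pinning down exactly why a multiplicity-$m$ elementary transformation on $E_n$ followed by its "inverse" leaves a detectable infinitely near base point on the negative section rather than dissolving harmlessly. Both are essentially the standard multiplicity-transformation formulas for $\elm_p$ combined with the proximity relations of \S\ref{ssec:inp}; I expect each to be a short but careful computation, and the cleanest route is to phrase everything in terms of the adjoint system $\ad_{m-1}(\cL)$ as in the proof of Theorem \ref{thm:iitaka}, where the $\sharp$-model condition becomes very-ampleness (or near-very-ampleness on the cone $S(0,n)$) and is manifestly preserved.
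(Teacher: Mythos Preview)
Your proposal is correct and follows essentially the same route as the paper: part (i) by observing that an elementary transformation at a multiplicity-$m$ point off $E_n$ would drop the index while preserving $\sharp$-minimality, contradicting $\flat$-minimality; part (ii) by invoking Theorem \ref{thm:iitaka}(ii) to reduce to a chain of elementary transformations at multiplicity-$m$ points, then using part (i) and the hypothesis on infinitely near points to force each step to be the inverse of a previous one. The paper is terser---it disposes of odd $k$ in one line via $\sharp\sharp$-minimality and Theorem \ref{thm:iitaka}(i), and skips the bookkeeping you flag as the ``main obstacle'' (which is indeed routine: for $p\notin E_n$ with $\mult_p\cL=m=k/2$, the new point created by $\elm_p$ has multiplicity $k-m=m\leqslant k/2$, so sharpness persists)---but the architecture is identical.
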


\begin{proof} By Theorem \ref{thm:iitaka}, the assertion is clear if $k$ is odd, since in this
case $(\F_n,\cL)$ is a $\sharp\sharp$--model. So assume $k$ is even. 

Part (i) follows by
the very definition of a $\flat$--model, since an elementary transformation performed at a base point of multiplicity $\frac k2$ off $E_n$, would drop $n$ and keep sharpness. 

As for part (ii), again by  Theorem \ref{thm:iitaka}, the map $phi$ is a composition of elementary transformations based at points of multiplicity $m=\frac k2$. Each of them preserves sharopness, so this series of maps never involves $\F_m$, with $m<n$. On the other hand, by part (i), each elementary transformation creating a point of multiplicity $m$ off the negative curve, has to be compensated by another elementary transformation at a point of multiplicity $m$ sending this back on the negative curve. 
The hypothesis about the points of multiplicity $m$ on $E_n$ shows that each of these transformations is the inverse of a previous one.  \end{proof}

\begin{remark}\label{rem:noun} The 
assumption $n>0$ and about the points of multiplicity $m$ on $E_n$ in Proposition \ref{pro:flat} is essential,  as one sees by considering linear systems of the form $\cL_0(2m,h;m^e)$, with $h\geqslant n$ and $e\geqslant 2$ and  $\cL_n(2m,h;m^2)$, with $h\geqslant mn$ and the second point of multiplicvity $m$ is infinitely near to the first proper point on $E_n$, in a direction which is not tangent to the one of $E_n$.
\end{remark}

Let $(\F_n,\cL)$ be a pair with $\cL$ as in \eqref{eq:sist}.
Again we will assume  that $h\geqslant kn$. 
We say that the pair
$(\F_n,\cL)$ is \emph{$\natural$-minimal}, or a \emph{$\natural$-model},
when:
\begin{itemize}

\item $k=2m+\epsilon$, $m\geqslant 1$, $\epsilon=0,1$;

\item there is no base point of $\cL$ of multiplicity $\mu\geqslant 2m+\epsilon-1$;

\item  there is no base point of multiplicity $\mu\geqslant 2$ along $E_n$.
\end{itemize}

A $\natural$-model certainly exists and it is obtained, for example, from a $\sharp$-model
by a sequence of elementary trasformations based at the singular base points
of $\cL$ on $E$. 

\begin{proposition} \label{prop:percent}  If $(\F_n,\cL)$ and $(\F_{n'},\cL')$
are both $\natural$-models with positive $\flat$--index and are birationally
equivalent via a map $\phi$ fitting in a diagram
\eqref{eq:commute}, then $\phi$ is an isomorphism.
\end{proposition}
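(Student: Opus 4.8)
The plan is to reduce the statement to the near-uniqueness of $\flat$--models established in Proposition \ref{pro:flat}, and then to show that the passage from a $\flat$--model to a $\natural$--model is rigid. Write $k=2m+\epsilon$. The hypothesis that the $\flat$--index is positive says precisely that the $\flat$--model $(\F_{n_0},\cL_0)$ of the common birational class has $n_0>0$; in particular $n,n'\geqslant1$, so $E_n$ and $E_{n'}$ are the \emph{unique} negative sections of $\F_n$ and $\F_{n'}$ and must correspond under $\phi$. By Proposition \ref{pro:flat}, $(\F_{n_0},\cL_0)$ is unique up to isomorphism unless $k_0$ is even and $\cL_0$ has infinitely near base points of multiplicity $m_0=k_0/2$ on $E_{n_0}$, in which case two such $\flat$--models differ by elementary transformations centred at those points. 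By Theorem \ref{thm:iitaka}(ii), every $\sharp$--model in the class differs from $(\F_{n_0},\cL_0)$ only by elementary transformations based at points of multiplicity $m=k/2$, while, by the construction recalled in \S\ref{ssec:model}, each $\natural$--model is obtained from such a $\sharp$--model by a string of elementary transformations based at its base points of multiplicity $\geqslant2$ lying on the negative section. Hence both $(\F_n,\cL)$ and $(\F_{n'},\cL')$ arise from $(\F_{n_0},\cL_0)$ by explicit, if long, strings of elementary transformations, all of which preserve the degree $k$, and the problem becomes one of comparing these two strings.

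The heart of the argument is a bookkeeping of these elementary transformations against the negative section, exactly as in the proofs of Theorem \ref{thm:iitaka}(ii) and Proposition \ref{pro:flat}(ii). Recall that an elementary transformation based at a base point of multiplicity $\mu$ creates on the modified surface a new base point of multiplicity $k-\mu$: on the new negative curve if the centre lay off the negative section (and then the index drops), off the new negative curve if the centre lay on it (and then the index rises). A $\natural$--model imposes two constraints on such configurations: no base point of multiplicity $\geqslant k-1$ anywhere, and no base point of multiplicity $\geqslant2$ on the negative section. Processing the string from both ends and using these constraints together with the $\flat$--minimality of $(\F_{n_0},\cL_0)$ (so that, once the index has dropped, no base point of multiplicity $\geqslant k/2$ may sit off the negative curve), one shows that every elementary transformation occurring in the composition must be matched, further along, by its inverse, so that they cancel in pairs; after these cancellations $\phi$ reduces to an isomorphism carrying $E_n$ to $E_{n'}$ and $\cL$ to $\cL'$. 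As a by-product — via the very ample adjoint models of \S\ref{ssec:model}, as in the proof of Theorem \ref{thm:iitaka}(ii) — the same analysis exhibits $n$, $k$, $m$, $\epsilon$ and the multiplicity sequence of a $\natural$--model as invariants of its birational class, so in particular $n=n'$, $k=k'$, $m=m'$, $\epsilon=\epsilon'$.

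The main obstacle is the exceptional case of Proposition \ref{pro:flat}(ii), in which the underlying $\flat$--model is genuinely non-unique: one must verify that the two non-isomorphic $\flat$--models, once their multiplicity--$m$ base points on the negative section have been resolved so as to produce $\natural$--models, yield \emph{isomorphic} results — that is, that the elementary transformations witnessing the ambiguity of the $\flat$--model are absorbed into the elementary transformations resolving it. Equivalently, one has to exclude non-trivial ``round trips'': blocks of elementary transformations that carry a cluster of base points away from the negative section and back again without ever creating a forbidden configuration, yet which are not products of mutually inverse pairs. Such round trips genuinely occur on $\F_0$, as the examples in Remark \ref{rem:noun} show, which is precisely why the positive $\flat$--index hypothesis — which forces $n\geqslant1$ and hence makes the negative section rigid — cannot be dispensed with. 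Making this last cancellation explicit, by keeping a careful ledger of which base points lie on the successive negative sections and of their proximity relations, is where the genuine work of the proof lies.
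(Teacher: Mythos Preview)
Your strategy---pass to the $\flat$--model and use Proposition~\ref{pro:flat}, then argue that the passage from $\flat$-- to $\natural$--model is rigid---is exactly the paper's. The paper's proof, however, is four sentences: take $\flat$--models of both $\natural$--models; by Proposition~\ref{pro:flat} and the positive $\flat$--index they are isomorphic; each $\natural$--model is obtained from this common $\flat$--model by performing elementary transformations at the singular base points of the system along the negative section, a procedure determined by the $\flat$--model; hence the two $\natural$--models coincide.

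The elaborate cancellation bookkeeping you describe is therefore unnecessary, and you do not in fact carry it out: you end by saying that ``making this last cancellation explicit \ldots\ is where the genuine work of the proof lies'', which leaves the proposal as an outline rather than a proof. Regarding the exceptional case of Proposition~\ref{pro:flat}(ii) that worries you: the paper does not treat it separately, and the reason is simpler than your discussion suggests. The ambiguity there arises from elementary transformations centred at base points of multiplicity $m=k/2$ lying (infinitely near) on $E$; but these are among the singular base points on the negative section that one resolves in passing from the $\flat$--model to the $\natural$--model. So whichever $\flat$--representative one starts from, the same $\natural$--model results---the transformations witnessing the ambiguity are absorbed into the construction, and no ledger of ``round trips'' is required.
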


\begin{proof} Let $(\F_m,\tilde \cL)$ and $(\F_{m'},\tilde \cL')$ be $\flat$--models birational to
$(\F_n,\cL)$ and $(\F_{n'},\cL')$ via fibred birational maps. Then, 
by Proposition \ref{pro:flat} and the positivity assumption on the $\sharp$--index, they are isomorphic. Moreover $(\F_n,\cL)$ and $(\F_{n'},\cL')$ are obtained by this unique model $(\F_m,\tilde \cL)$ by performing elementary transformations based at the singular base points of $\tilde \cL$ along $E_m$.  The assertion follows. 
\end{proof}

\section{Linear systems of rational curves}\label{subsec:moving}

In this section we recall some basic results about complete linear systems
of rational curves, in particular we are  interested
in a plane birational model with minimal degree.
This goes back to the classical
Italian school (cf.\ \cite{Conforto} for results and references).
For a recent reference, see \cite{Iitaka2}, p.\ 192.

\begin{proposition}\label{D+K=0}
Let $S$ be a smooth, irreducible, projective surface with $q(S)=0$
and let $D$ be a nonzero nef divisor with $h^0(S,\cO_S(D+K_S))=0$.
Then $S$ is rational and either

\begin{enumerate}[(i)]

\item $D^2=0$ and $|D|$ is composed with an irreducible base point free
pencil of rational curves; or

\item $D^2>0$ and $|D|$ is an irreducible base point free linear system
of rational curves of dimension $D^2+1$.
\end{enumerate}
\end{proposition}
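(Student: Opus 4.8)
The plan is to use Riemann--Roch together with the vanishing $h^0(S, \cO_S(D+K_S)) = 0$ to control both $h^0(S, \cO_S(D))$ and the geometry of $|D|$. First I would establish rationality: since $q(S) = 0$, we have $h^1(S, \cO_S) = 0$, and the hypothesis $h^0(S, \cO_S(D+K_S)) = 0$ for a nonzero nef $D$ forces $h^0(S, \cO_S(K_S)) = 0$ as well (because $K_S \leqslant D + K_S$ would give a section, using $D$ effective — one first notes $D$ is effective since $D$ nef nonzero on a surface with $q=0$ has $h^0(D) \geqslant 1 + \tfrac12 D\cdot(D-K_S)$ by Riemann--Roch, and $D\cdot(D-K_S) \geqslant -D\cdot K_S$; more cleanly, if $p_g > 0$ then $h^0(D+K_S) \geqslant h^0(K_S) > 0$ once $D$ is effective). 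So $p_g = q = 0$, and by Castelnuovo's rationality criterion $S$ is rational. Then $K_S^2 = 10 - \rho$ is irrelevant; what matters is $\chi(\cO_S) = 1$.

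Next, Riemann--Roch on the rational surface $S$ gives
\begin{equation*}
h^0(S, \cO_S(D)) - h^1(S,\cO_S(D)) + h^0(S,\cO_S(K_S - D)) = 1 + \tfrac12\bigl(D^2 - D\cdot K_S\bigr).
\end{equation*}
Since $D$ is nef and nonzero and $S$ is rational, $K_S - D$ cannot be effective (intersect with a sufficiently positive curve, or use that $-K_S$ is effective-ish on rational surfaces only in the del Pezzo range — more robustly: if $K_S - D \geqslant 0$ then $D + K_S \geqslant 2D \geqslant D$ would... ), so $h^0(K_S - D) = 0$. I would then bound $h^1(S,\cO_S(D))$: by Ramanujam-type vanishing, or directly, $h^1(S,\cO_S(D)) = 0$ for $D$ nef on a rational surface once one knows $|D|$ has no fixed part issues; the cleanest route is to argue that $h^0(S, \cO_S(D+K_S)) = 0$ plus Serre duality gives $h^2(S, \cO_S(D)) = h^0(S, \cO_S(K_S - D)) = 0$ and then handle $h^1$ via the exact sequence for a general curve $C \in |D|$: from $0 \to \cO_S(K_S) \to \cO_S(D+K_S) \to \omega_C \to 0$ one reads that $h^0(C, \omega_C) \leqslant h^0(S,\cO_S(D+K_S)) + h^1(S,\cO_S(K_S)) = 0$, so $p_a(C) = 0$, i.e. the general member of $|D|$ is a smooth rational curve (after checking $|D|$ has no fixed components — a fixed component would contradict nefness combined with $D$ nonzero, or be absorbed). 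This already gives the rationality of the curves in both cases.

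Then I would split on $D^2$. If $D^2 = 0$: $|D|$ is a base-point-free (nef with $D^2 = 0$ on a surface, no fixed part) pencil, $\phi_{|D|}\colon S \to \PP^1$, whose general fibre is the rational curve $C$; adjunction $p_a(C) = 0$ with $C^2 = 0$ confirms $C \cdot K_S = -2$, and $\dim|D| = 1$ follows from Riemann--Roch giving $h^0(\cO_S(D)) = 1 + \tfrac12(0 + 2) = 2$. This is case (i). If $D^2 > 0$: $D$ is nef and big, so by Kawamata--Viehweg (or the classical Zariski--Fujita / surface-specific statement) $h^1(S,\cO_S(D)) = 0$, and Riemann--Roch gives $h^0(S,\cO_S(D)) = 1 + \tfrac12(D^2 - D\cdot K_S) = 1 + \tfrac12(D^2 + D^2 + 2) = D^2 + 1$ using $D\cdot K_S = -D^2 - 2$ (from adjunction $2p_a(C) - 2 = D^2 + D\cdot K_S = -2$ for the general smooth rational $C \in |D|$ — valid once $|D|$ is shown to have smooth irreducible general member, which needs base-point-freeness). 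So $\dim|D| = D^2 + 1$, and base-point-freeness plus the rationality of the general member gives case (ii); irreducibility of the general member follows since a $\phi_{|D|}$-image is nondegenerate of the minimal degree $D^2$ in $\PP^{D^2+1}$, forcing it to be a rational normal scroll or the Veronese and $\phi_{|D|}$ birational, or one argues directly that $|D|$ is not composed with a pencil because $\dim |D| > 1$ and $D^2 > 0$.

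The main obstacle I expect is establishing base-point-freeness and the irreducibility/smoothness of the general member cleanly — i.e. ruling out that $|D|$ has base points or that it is composed with a pencil in the $D^2 > 0$ case — since nefness alone does not give base-point-freeness in general. The resolution is the adjunction/vanishing package above: $h^0(S, \cO_S(D+K_S)) = 0$ is exactly the hypothesis that the adjoint is empty, which via the standard argument (a base point or a multiple-cover structure would contribute to $h^0(D+K_S)$ through repeated application of adjunction on a resolution, exactly as in Proposition~\ref{prop:zariski}) forces $|D|$ to be free with rational general member. The remaining numerics are then routine Riemann--Roch on a rational surface.
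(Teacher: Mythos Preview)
Your outline contains the right Riemann--Roch and adjunction ingredients, and the exact sequence $0\to\cO_S(K_S)\to\cO_S(D+K_S)\to\omega_C\to 0$ does give $h^0(\omega_C)=0$, hence $p_a(D)=0$, exactly as the paper also uses. But there are two genuine gaps.

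First, your appeal to Kawamata--Viehweg for $h^1(S,\cO_S(D))=0$ when $D^2>0$ is not justified: KV gives $H^1(K_S+L)=0$ for $L$ nef and big, so you would need $D-K_S$ nef and big, and this fails already for $D=E+3F$ on $\F_3$, where $(D-K_S)\cdot E=-1$. (Your displayed computation also slips: $1+\tfrac12(2D^2+2)=D^2+2$, not $D^2+1$.) The paper does \emph{not} prove $h^1(D)=0$ directly; it only uses the Riemann--Roch lower bound $h^0(D)\geqslant D^2+2$, then blows up $D^2$ general points so that the proper transform $C$ has $C^2=0$, is still nef and numerically connected, and invokes a lemma (from \cite{CilibertoFranciaMendesLopes}) to conclude $|C|$ is a base-point-free pencil with irreducible general member. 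This simultaneously yields base-point-freeness, irreducibility, and the exact dimension $h^0(D)=D^2+2$.

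Second, and more seriously, your treatment of base-point-freeness is where the real work lies, and ``nef with $D^2=0$, no fixed part'' is an assertion, not an argument; likewise ``a base point would contribute to $h^0(D+K_S)$'' is not substantiated. In the $D^2=0$ case the paper writes $|D|=F+|M|$, uses nefness to force $F^2=M^2=F\cdot M=0$, identifies $|M|$ as composed with a rational pencil $|L|$, and then kills $F$ via Zariski's Lemma: $L\cdot F=0$ gives $qF\equiv pL$ with $\gcd(p,q)=1$, and parity of $K\cdot F$ (from $F^2=0$) together with $K\cdot L=-2$ forces $q=1$, contradicting $F$ being fixed. None of this is automatic from nefness alone, and your sketch does not supply it.
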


\begin{proof}
Assume first $D^2=0$.
By the Riemann-Roch Theorem, one has
\[
0=h^0(S,\cO_S(D+K_S))\geqslant 1+D\cdot K_S/2,
\]
so $D\cdot K_S\leqslant -2$.
Again by the Riemann-Roch Theorem, one has $h^0(S,\cO_S(D))\geqslant 2$.
Write now $|D|=F+|M|$, where $M$ is the movable part.

By the nefness property of $D$, we have $F\cdot D=M\cdot D=0$,
hence $F^2=M^2=-F\cdot M$.
If $F\cdot M>0$, then $M^2<0$, a contradiction.
Since $M$ is nef, we have $F^2=M^2=F\cdot M=0$.
Then $M$ is composed with a base point free irreducible pencil $|L|$ with $p_a(L)=0$.
By Noether's Theorem, $S$ is a rational surface.

We claim that $F=0$.
Otherwise, since $L\cdot F=0$, by Zariski's Lemma
we have two relatively prime positive integers $p,q$
such that $qF\equiv pL$. Since $K\cdot L=-2$ and $K\cdot F$ is even, we have $q=1$,
hence $F\equiv pL$, a contradiction.

Assume now $d=D^2>0$.
Since $D$ is nef and big, it is also numerically connected.
Since $h^0(S,\cO_S(D+K_S))=0$ and $q=0$, one has $p_a(D)=0$.
By the Riemann-Roch Theorem, one has $h^0(S,\cO_S(D))\geqslant D^2+2$.
Let $p_1,p_2,\ldots,p_d$  be general points on $S$
and consider the sublinear system $\cD\subset |D|$ consisting
of all curves passing through $p_1,\ldots,p_d$.
Blow up $p_1,\ldots,p_d$, consider the proper transform $\cD'$
of $\cD$ on the blow-up  $S'$ and take a general curve $C$ in $\cD'$.
Then $C^2=0$ and one sees that $C$ is still nef and numerically connected.

In the short exact sequence
\[
0 \to H^0(S',\cO_{S'})\simeq \C \to H^0(S',\cO_{S'}(C)) \to H^0(C,\cO_C(C))
\to 0=H^1(S',\cO_{S'}),
\]
one has $h^0(S',\cO_{S'}(C))\geqslant 2$, hence $h^0(C,\cO_C(C))\geqslant 1$.
By Corollary (A.2) in \cite{CilibertoFranciaMendesLopes},
one has $\cO_C(C)\simeq \cO_C$ and therefore $h^0(C,\cO_C(C))=1$
and $|C|=\cD'$ is a base point free pencil whose general member
is irreducible.
This implies that $h^0(S,\cO_S(D))=D^2+2$.
\end{proof}

Next we recall the description of Cremona minimal birational models
of pairs $(S,|D|)$, with $S$ and $D$ as in Proposition \ref{D+K=0}.

\medskip
\noindent
\textbf{Case $\mathbf{D^2=0}$.}
As we saw, $|D|$ is composed with a base point free pencil $|L|$ of
rational curves.

By blowing down all $(-1)$-cycles $Z$ such that $Z\cdot L=0$,
we have a birational morphism $f\colon S\to \F_n$, for some $n$,
which maps $\vert L\vert $ to the ruling $\vert F\vert$ of $\F_n$ (one of the rulings if $n=0$).
By making elementary transformations at general points,
we see that 
\[
(S,|L|)\sim (\F_n,|F|)\sim (\F_1,|F|).
\]
By contracting the $(-1)$-section, this in turn is birationally equivalent
to $(\Pl,\cL(1;1))$.

\medskip
\noindent
\textbf{Case $\mathbf{D^2=d>0}$.}
The linear system $|D|$ determines a birational
morphism $\varphi\colon S \to \Sigma\subseteq \PP^{d+1}$,
where $\Sigma$ is a surface of degree $d$.
According to the del Pezzo's classification
of minimal degree projective surfaces
(cf.\ \cite{delPezzo}, \cite{Conforto}, \cite{EisenbudHarris}),
we have the following possibilities:
\begin{enumerate}[$\ \bullet\ $]

\item $\Sigma\simeq\Pl$ and $d=1$ or $4$;
accordingly, either $(S,|D|)\sim (\Pl,\cL(1))$ or $(S,|D|)\sim (\Pl,\cL(2))$;

\item $d=a+b$, with $0\leqslant a\leqslant b$, and $\Sigma=S(a,b)$, the minimal resolution
of singularities of $\Sigma$ is $\F_{b-a}$ fitting in a diagram
\[
\xymatrix@C+4ex{
S  \ar[r]^-{f} \ar[dr]_-{\varphi}  &  \F_{b-a} \ar[d]^-{\phi}\\
& \Sigma \subset \PP^{d+1} \hspace{-7ex}
}
\]
where $f$ is a birational morphism and $\phi$ is determined by 
the linear system $|E+bF|$,
hence $(S,|D|)\sim (\F_{b-a},\cL_{b-a}(1,b))$.

If $b>a$, make $b-a-1$ elementary transformations
based at general points of $\F_{b-a}$.
In this way
\[
(S,|D|)\sim (\F_{b-a},\cL_{b-a}(1,b))\sim (\F_1,\cL_1(1,b;[1^{b-a-1}])),
\]
where the $b-a-1$ simple base points are general on $E_1$.
By contracting $E_1$, one has 
\[
(S,|D|)\sim (\F_1,\cL_1(1,b;[1^{b-a-1}]))
\sim (\Pl,\cL(b;(b-1,[1^{b-a-1}])),
\]
where the $b-a-1$ simple base points are general
in the first order infinitesimal neighbourhood
of the point of multiplicity $b-1$.

If $b=a$, make one elementary transformation
based at a general point of $\F_0$ and then contract the $(-1)$-section.
In this way
\[
(S,|D|)\sim (\F_0,\cL(1,b))\sim (\F_1,\cL_1(1,b+1;1))
\sim (\Pl,\cL(b+1;b,1)).
\]
\end{enumerate}

Now we can state the following result:

\begin{theorem} \label{thm:rational}
Let $(S,D)$ be as in Proposition \ref{D+K=0}.
Then $(S,|D|)$ is birationally equivalent to one and only one of the following:
\begin{itemize}

\item [$(a)$] $(\F_1,\cL_1(0,1))$;

\item [$(b^j)$] $(\Pl,\cL(j))$, $j=1,2$;

\item [$(c_1^d)$] $(\F_1,\cL_1(1,d))$, $d\geqslant2$;

\item [$(c_0^d)$] $(\F_0,\cL_0(1,d-1))$, $d\geqslant2$;

\item [$(c_n^d)$] $(\F_n,\cL_n(1,d))$, $2\leqslant n\leqslant d$.
\end{itemize}
These, in turn, are birationally equivalent to the following
Cremona minimal pairs $(\Pl,\cL)$, where $\cL$ is one,
and only one, of the following:
\begin{enumerate}[(i)]

\item $\cL(1;1)$, of dimension 1, corresponding to case $(a)$;

\item $\cL(j)$, $j=1,2$, of dimension 2 and 5, resp.,
corresponding to case $(b^j)$;

\item $\cL(d;d-1)$, $d\geqslant 2$, of dimension $2d$,
corresponding to case $(c_1^{d})$;

\item $\cL(d;d-1,1)$, $d\geqslant 2$, of dimension $2d-1$,
corresponding to case $(c_0^{d})$;

\item $\cL(d;(d-1,[1^{n-1}]))$, $2\leqslant n\leqslant d$, of dimension $2d-n+1$, with the simple base points 
iinfinitely near of order one to the point of multiplicity $d-1$,
corresponding to case $(c_n^{d})$.
\end{enumerate}
\end{theorem}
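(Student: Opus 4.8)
The plan is to assemble the statement from the case analysis already carried out in the text immediately preceding it, and to add the two missing ingredients: (a) that the listed models $(a)$, $(b^j)$, $(c_n^d)$ are pairwise non-birationally-equivalent, and (b) that the corresponding planar systems in $(i)$--$(v)$ are indeed Cremona minimal of the claimed dimensions. The existence half is essentially done: by Proposition \ref{D+K=0} the pair $(S,D)$ falls into the case $D^2=0$, handled by the ``Case $\mathbf{D^2=0}$'' discussion, giving model $(a)$; or into $D^2=d>0$, handled by the del Pezzo classification, giving either $\Sigma\simeq\Pl$ (models $(b^1),(b^2)$ according to $d=1,4$) or $\Sigma=S(a,b)$ with $d=a+b$, giving $(\F_{b-a},\cL_{b-a}(1,b))$; splitting off $b>a$ versus $b=a$ produces exactly $(c_n^d)$ with $n=b-a\geqslant 0$, $n\neq 1$ being absorbed by reindexing (for $n=1$ one uses $(\F_1,\cL_1(1,d))$ directly, for $n=0$ one gets $(\F_0,\cL_0(1,d-1))$). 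The translation of each of these to a planar system is the chain of elementary transformations and contraction of $E_1$ written out just above, yielding $(i)$--$(v)$, and the dimension count in each case is the Riemann--Roch computation $\dim\cL = D^2+1$ specialised to $d=1,4$ or to $d=a+b$: e.g.\ for $(v)$, $\dim\cL(d;(d-1,[1^{n-1}])) = \binom{d+2}{2}-1-\binom{d}{2}-(n-1) = 2d-n+1$.

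Next I would establish Cremona minimality of the planar systems in $(i)$--$(v)$. For $(i)$ the system is a pencil of lines, which is minimal trivially (degree $1$). For $(iii)$, $(iv)$, $(v)$ one invokes Lemma \ref{lem:criterion}: case $(iii)$ is $\cL(d;d-1)$ with $d\geqslant d-1+0$, i.e.\ part $(i)$ of the Lemma with $n=d-1$, $m=0$; case $(iv)$ is $\cL(d;d-1,1)$, again part $(i)$ with $n=d-1$, $m=1$, since $d\geqslant (d-1)+1$; case $(v)$ is $\cL(d;(d-1,[1^{n-1}]))$, which is precisely part $(ii)$ of the Lemma with $m=d-1$, the $h=n-1$ infinitely near simple points satisfying $m=d-1\geqslant 1\cdot(n-1)=m_1+\cdots+m_h$ because $n\leqslant d$, and $d\geqslant m+m_1=(d-1)+1$. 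For $(ii)$, $\cL(1)$ is minimal trivially and $\cL(2)$ has empty adjoint (no base points, degree $2$) so cannot be lowered — any Cremona image of a conic has degree $\geqslant 2$ since a smooth conic spans a $\PP^2$ and a homaloidal net of lower degree would be $\cL(1;\ldots)$, contradicting the genus/degree invariants of the image.

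Finally, the uniqueness (``one and only one''). Within the $D^2=0$ stratum the model is rigidly $(a)$. For $D^2>0$, the integer $d=D^2$ is a birational invariant of $(S,|D|)$ — it is $\dim|D|-1$ — so no two pairs with different $d$ mix; and for fixed $d$, the surface $\Sigma$, hence the pair $(a,b)$ with $a\leqslant b$, $a+b=d$, is determined by the isomorphism class of the image of $\phi_{|D|}$, which by the discussion around Remark \ref{rem:multad} is a birational invariant. Thus $(b^j)$ and the various $(c_n^d)$ are mutually distinct, and since the bijection to $(i)$--$(v)$ was constructed model by model, the planar list is also without repetition. I expect the main obstacle to be the clean bookkeeping in the $S(a,b)$ case — reconciling the index conventions $\F_{b-a}$ versus the forbidden $\F_1$, and checking that the ``$b=a$'' branch genuinely lands at $\cL(d;d-1,1)$ of dimension $2d-1$ and not at an instance of the $b>a$ family — but this is routine once one tracks the effect of each elementary transformation on the bidegree $(k,h)$ and on the base-point data.
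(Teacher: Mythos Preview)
Your proof is correct and largely parallel to the paper's: existence comes from the preceding del Pezzo/scroll case analysis, and Cremona minimality of $(i)$--$(v)$ follows from Lemma~\ref{lem:criterion} (note that $\cL(2)$ is already covered by part $(i)$ of that lemma with $n=m=0$, so your separate ad hoc argument for it is unnecessary). The genuine difference lies in the uniqueness step. The paper argues on the planar side: since each of $(i)$--$(v)$ is Cremona minimal, two birationally equivalent entries must share both dimension and Cremona degree, and one checks the list. You instead work on the surface side, using that the projective isomorphism class of the image $\Sigma$ of $\phi_{|D|}$ is a birational invariant of the pair---this is the statement in \S\ref{sec:Equiv}, not Remark~\ref{rem:multad} as you cite---so the scroll type $S(a,b)$, or the Veronese, is intrinsically attached to $(S,|D|)$. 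Your route is more conceptual and in fact sidesteps a small subtlety in the paper's stated argument: entries $(iv)$ and $(v)$ with $n=2$ share both dimension $2d-1$ and degree $d$, so dimension plus degree alone do not separate them (one really needs the final clause of Lemma~\ref{lem:criterion}, that Cremona-equivalent minimal systems of equal degree have the same \emph{type}); your image-invariant argument distinguishes $S(d-1,d-1)$ from $S(d-2,d)$ at once. Just be sure to state explicitly that for $D^2=4$ the Veronese is not projectively equivalent to any degree-$4$ scroll, which is what separates $(b^2)$ from the corresponding $(c_n^d)$'s.
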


\begin{proof}
Part  of the proof  is contained in the book \cite{Conforto}
by Conforto-Enriques and in Nagata's papers \cite{Nagata1, Nagata2},
with classical methods
(cf.\ the paper \cite{Dicks} by Dicks which uses Mori's theory).
Since we could not find a proper reference for the full statement,
we give a proof here.

The birational equivalence to one of the pairs in $(a)$, $(b^j)$, $(c_n^d)$,
has been proved above, cf.\ also \cite{Dicks}.
Pairs $(a)$, $(b^j)$, $(c_0^d)$, $(c_1^d)$, and
$(c_{n}^d)$, ${n\geqslant2}$, are respectively birationally equivalent
to pairs $(i)$, $(ii)$, $(iii)$, $(iv)$, and $(v)$.
Their Cremona minimality follows from Lemma \ref{lem:criterion}.

Finally, we prove that pairs $(\Pl,\cL)$, with $\cL$ as in $(i)$--$(v)$,
are birationally distinct. To see this, taking into account
the Cremona minimality, it suffices to remark
that if these linear systems  have the same dimension,
then they have different Cremona degrees.
\end{proof}

\begin{remark}
All linear systems in Theorem \ref{thm:rational},
except $\cL(1)$ and $\cL(1;1)$, are such that
$\Cdeg(\cL)> \Cdeg(C)$, where $C\in\cL$ is the general curve,
since clearly $\Cdeg(C)=1$.
\end{remark}


\section{Admissible plane models and Cremona transformations}\label{admissible}


Let $(\F_n,\cL')$ be a pair where $n\geqslant2$
and $\cL'= \cL_n(k,h;m_1,\ldots)$ not empty, where the indicated multiplicities are the
effective ones. 
By performing $n-1$ elementary trasformations at general points
$p_1,\ldots,p_{n-1}$ of $\F_n$,
and by  blowing down $\sigma\colon\F_1\to\Pl$ the $(-1)$-section $E_1$
(cf.\ Case $D^2>0$ before Theorem \ref{thm:rational}),
one gets a plane model $(\Pl,\cL)$ with
\[
\cL=\cL(h;(h-k,[k^{n-1}]),m_{1},\ldots).
\]
It may happen that some of the points of multiplicity $m_1,\ldots,$
are infinitely near to the point of multiplicity $h-k$.
This occurs if and only if some of the base points of 
$\cL'$ lie on $E_n$.

More generally, one can make the following process.
We consider a fibred birational map $\gamma\colon\F_n\rto\F_1$ such that
$\gamma(E_n)=E_1$.
In particular, $\gamma$ is a sequence of elementary trasformations.
By blowing down the $(-1)$-curve $E_1$ to a point $p$, one gets a planar linear system 
$\cL$ having multiplicity $\deg(\cL)-k$ at $p$.

In general, a pair $(\Pl,\cL')$ with 
\[
\cL=\cL(h;h-k,\nu_{1},\ldots).
\]
is said to be \emph{admissible}
if all base points $p_i$ of $\cL$ with multiplicity $\nu_i>k/2$, $i=1,\ldots,r$,
are infinitely near to the base point $p$ of multiplicity $h-k$ and
\[
h-3  \left[\frac{k}{2}\right] > \sum_{i=1}^r \nu_i-r\left[\frac{k}{2}\right].
\]
This implies that $p$ is the point of maximal multiplicity of $\cL$.

Note that $\cL$ is admissible if and only if all its multiples 
\[
\cL(ih;i(h-k),i\nu_{1},\ldots), \quad i\geqslant 1,
\]
are admissible.

If $k=2m$, the $m$-adjoint of  $ \cL$ is
\begin{equation}\label{eq:adm}
\ad_m(\cL)=\cL(d;(d,\{\mu_1,\ldots,\mu_r\}))
\end{equation}
where $\mu_i=\nu_i-m$, $i=1,\ldots,r$, and $d=\deg(\cL)-3m>\sum_{i=1}^r \mu_i$, which
is indeed equivalent to $(\PP^2,\cL)$ being admissible.


Now we prove the following result:

\begin{theorem}\label{thm:deg}
Let $(\Pl,\cL)$ be an admissible plane model.
Let $\gamma\colon\Pl\rto\Pl$ be a Cremona transformation such that
$\deg(\gamma_*(\cL))\leqslant\deg(\cL)$.
Then there exists a de Jonqui\`eres transformation $\phi\colon\Pl\rto\Pl$
such that $\deg(\phi_*(\cL))\leqslant \deg(\gamma_*(\cL))$.
\end{theorem}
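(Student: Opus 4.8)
The plan is to reduce the statement to the simplicity induction that underlies the Noether--Castelnuovo theorem (Theorem~\ref{thm:NC} and the notion of simplicity introduced in \S\ref{sec:prel}). First I would reduce to a homaloidal net $\Lambda$ computing $\gamma$, say of type $\cL(\delta;\alpha_0,\alpha_1,\ldots,\alpha_s)$, and normalize: by Theorem~\ref{thm:NC} write $\gamma$ as a composition of quadratic and linear maps, so it suffices to show that whenever $\deg(\gamma_*(\cL))\leqslant \deg(\cL)$ one can already achieve this, or do better, with a \emph{single} de Jonqui\`eres transformation. The key invariant to track is the virtual degree $\vdeg_\gamma(\cL)=d\delta-\sum_{i}\alpha_im_i$ from \eqref{eq:degphi}; since the effective degree is bounded above by the virtual one, it is enough to find a de Jonqui\`eres $\phi$ with $\vdeg_\phi(\cL)\leqslant \vdeg_\gamma(\cL)$ and equality of Cremona type (so that admissibility is preserved, using the last paragraph of \S\ref{admissible} about multiples and $m$-adjoints).

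Next I would exploit the admissibility hypothesis structurally. Write $k=\deg(\cL)-m_0$ where $m_0=h-k$ is the maximal multiplicity of $\cL$ at the point $p$, and recall that all base points $p_i$ with $\nu_i>k/2$ are infinitely near to $p$, with the numerical constraint $h-3[k/2]>\sum\nu_i-r[k/2]$. The heuristic is that a de Jonqui\`eres transformation centered at $p$ and at the satellite/proximate high-multiplicity points is exactly the ``most efficient'' Cremona map available, because it uses up all of the proximity slack concentrated at $p$. Concretely, I would take $\phi=\phi_\Lambda$ with $\Lambda=\cL(\delta;\delta-1,1^{2\delta-2})$ centered at $p$ (with multiplicity $\delta-1$) and at $2\delta-2$ of the points infinitely near to $p$ of highest multiplicity, choosing $\delta$ to match the number $h'$ of points with $\nu_i=k/2$ (or the relevant threshold), as in the statement of Lemma~\ref{lem:criterion}(ii). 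Then $\vdeg_\phi(\cL)=\delta\deg(\cL)-(\delta-1)m_0-\sum\nu_i$ over the chosen points, and a direct comparison with $\vdeg_\gamma(\cL)$ using the proximity inequalities \eqref{eq:prox} for $\Lambda$ at $p_0$ (which force $\alpha_0\geqslant\sum_{p_j\prox p_0}\alpha_j$) should yield the bound.

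The crucial step, and the main obstacle, is the combinatorial/numerical comparison: one must show that among all homaloidal nets $\Lambda$ with $\deg(\gamma_*(\cL))\leqslant\deg(\cL)$, the de Jonqui\`eres one is degree-optimal for an admissible system. This is where I expect the real work: one has to split into cases according to how much of $\Lambda$'s weight sits at $p$ versus off $p$, use the homaloidal relations \eqref{eq:invariantsLambda} together with the Cauchy--Schwarz-type inequality $\sum\alpha_i^2\geqslant (\sum\alpha_i)^2/(\text{number of base points})$, and play these against the admissibility inequality $h-3[k/2]>\sum\nu_i-r[k/2]$ to rule out that a ``genuinely quadratic-at-three-general-points'' or ``genuinely higher de Jonqui\`eres off $p$'' map could beat the de Jonqui\`eres at $p$. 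A clean way to organize this is by induction on the simplicity triple $(k_\cL,h_\cL,s_\cL)$ of the composite $\gamma_*(\cL)$ (or of $\gamma$ itself): if $\gamma$ is not already de Jonqui\`eres, peel off one quadratic transformation, observe that admissibility is inherited (by the multiples/$m$-adjoint remark in \S\ref{admissible} and Lemma~\ref{lem:criterion}), and that the simplicity strictly drops, so the induction closes.

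Finally I would record that the de Jonqui\`eres $\phi$ produced this way keeps $\cL$ admissible of the same Cremona type --- this is immediate from \eqref{eq:adm}, since $\ad_m(\cL)$ has type $\cL(d;(d,\{\mu_1,\ldots,\mu_r\}))$ with $d>\sum\mu_i$, and a de Jonqui\`eres centered at $p$ and infinitely near points acts on the $\F_1$-side as an elementary-transformation sequence fixing $E_1$, hence preserves the admissibility data. This closes the argument; the only genuinely delicate point remains the numerical optimality of de Jonqui\`eres among homaloidal nets under the admissibility constraint, which I would handle by the simplicity induction just described.
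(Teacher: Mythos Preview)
Your overall strategy---induction on simplicity, peeling off quadratic transformations from a Noether--Castelnuovo factorization of $\gamma$---is the right one and matches the paper's. But two steps you treat as routine are exactly where the work lies.

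First, for the induction to close you need the composite $\varphi\circ\psi$ of the peeled-off quadratic $\psi$ and the inductively-obtained de Jonqui\`eres $\varphi$ to again be de Jonqui\`eres. This forces $\psi$ to be centered at $p$ (the maximal-multiplicity point of $\cL$) and to send the pencil of lines through $p$ to a pencil of lines. The generic simplicity reduction in Appendix~\ref{app:proof} does not guarantee this: a priori the quadratic that lowers the simplicity of $\gamma$ could be based at points none of which is $p$. The paper settles this by passing to the $m$-adjoint $\ad_m(\cL)$, whose structure under admissibility is \eqref{eq:adm}: fixed lines through $p$ plus the movable pencil $\cL(1;1)$. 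The hypothesis $\deg(\gamma_*\cL)\leqslant\deg\cL$ then forces either that $\gamma$ is already de Jonqui\`eres, or that $\gamma$ contracts one of these fixed lines; a separate computation with \eqref{ad_phi_B}--\eqref{eq:phi!ad_B} shows that the homaloidal net $\Lambda$ of $\gamma$ satisfies $\delta=\alpha+\alpha_1$, where $\alpha,\alpha_1$ are its multiplicities at $p$ and at some $p_1\infnear[1]p$, with $\alpha$ maximal. Only then does the simplicity-lowering quadratic of Appendix~\ref{app:proof} sit at $p$. Your Cauchy--Schwarz/virtual-degree comparison does not supply this, and the direct ``de Jonqui\`eres is degree-optimal among all homaloidal nets'' statement you propose is not what is proved.

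Second, your claim that admissibility is inherited by $\psi_*(\cL)$ is not automatic, and neither the multiples remark in \S\ref{admissible} nor Lemma~\ref{lem:criterion} gives it. A quadratic based at $p$, at $q\infnear[1]p$, and at a third point could create on $\psi_*(\cL)$ a base point of multiplicity $\mu>m$ \emph{not} infinitely near the new maximal-multiplicity point $p'$ (coming from some $y\infnear[1]q$ with $y\notsatel p$). The paper proves a dedicated lemma (Lemma~\ref{lem:psi}) showing that $\psi$ can be \emph{chosen} so that $(\Pl,\psi_*(\cL))$ remains admissible; the argument uses the shape $\delta=\alpha+\alpha_1$ established above and a case split on whether the third base point of $\psi$ is proper or infinitely near. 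Without this step you cannot apply the induction hypothesis to $\psi_*(\cL)$.
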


\begin{proof}[Beginning of proof.]
By Noether-Castelnuovo Theorem \ref{thm:NC}, $\gamma$ is the composition of
finitely many quadratic and linear transformations. We may and will assume
that the number of involved quadratic transformations is minimal.

Let $p$ be the maximal multiplicity point of $\cL$
and $p_i$, $i=1,\ldots,r$, the points of multiplicity $\nu_i>m=[(\deg(\cL)-\mult_p(\cL))/2]$,
so $\ad_m(\cL)$ is given by \eqref{eq:adm}.

Let $\Lambda=\cL(\delta;\alpha,\alpha_1,\ldots,\alpha_r,\beta_1,\ldots,\beta_s)$
be the homaloidal net defining $\gamma$, where $\alpha_i$, $i=1,\ldots,r$, [$\alpha$, resp.]
is the multiplicity of $\Lambda$ at $p_i$ [at $p$, resp.],
and $\beta_j$, $j=1,\ldots,s$, is the multiplicity at the other base points
$q_1,\ldots,q_s$ of $\Lambda$.

Now we may assume $k=2m$, otherwise we substitute $\cL$ with its double.

The linear system $\ad_m(\cL)$ has $b=\sum_{i=1}^r \mu_i$ fixed lines,
say $L_1,\ldots,L_t$, $t\leqslant r$, each $L_i$ counted with multiplicity $b_i$
with $b=\sum_{j=1}^t b_j$. The movable part $\cL(d-b;d-b)$ of $\ad_m(\cL)$ is composed with the pencil $\cL_0=\cL(1;1)$ of lines through $p$.
Our hypothesis says that $\deg(\ad_m(\gamma_*(\cL)))\leqslant\ad_m(\cL)$.
This implies that, either
\begin{enumerate}[(1)]
\item $\gamma$ contracts a fixed line to a point; or
\item $\gamma$ does not contract any fixed line and therefore $\gamma$ maps $\cL_0$
to a pencil of lines, in which case $\deg(\phi_*(\cL)))=\deg(\cL)$
and $\gamma$ is a de Jonqui\`eres.
\end{enumerate}
In case (2),
there is nothing else to prove.
So we may and will assume that case (1) holds and $\gamma$ is not a de Jonqui\`eres.
We will then argue by induction on the simplicity of $\gamma$.
This ends the first part of the proof.
\end{proof}

Next we need the following lemma:

\begin{lemma}
In the above setting, there is a fixed line of $\ad_m(\cL)$, say $L_1$,
contracted by $\gamma$, passing through $p$ and an infinitely near point $p_1$ to $p$,
such that $\delta=\alpha+\alpha_1$, with $\alpha\geqslant\alpha_1\geqslant \alpha_i,\beta_j$,
$i=1,\ldots,r$, $j=1,\ldots,s$.
\end{lemma}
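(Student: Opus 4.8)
The statement asserts that among the fixed lines of $\ad_m(\cL)$ that $\gamma$ contracts, at least one — call it $L_1$ — passes through $p$ and through an infinitely near point $p_1$ to $p$, and moreover that the homaloidal net $\Lambda$ defining $\gamma$ has a \emph{de Jonqui\`eres shape relative to these two points}: $\delta = \alpha + \alpha_1$ with $\alpha$ and $\alpha_1$ the two largest multiplicities of $\Lambda$. I would organize the argument in two stages: first locate the line $L_1$, then extract the numerical inequalities from the admissibility condition together with the relations \eqref{eq:invariantsLambda}.

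\textbf{Locating $L_1$.} By the end of the first part of the proof of Theorem~\ref{thm:deg}, we are in case (1): $\gamma$ contracts some fixed line $L_j$ of $\ad_m(\cL)=\cL(d;(d,\{\mu_1,\dots,\mu_r\}))$ to a point. Recall that the base points $p_1,\dots,p_r$ of multiplicities $\mu_i=\nu_i-m$ are \emph{all infinitely near to $p$} (this is the admissibility hypothesis), and that the movable part of $\ad_m(\cL)$ is the pencil $\cL_0$ of lines through $p$. Each fixed line $L_j$, being a component of a curve in $\ad_m(\cL)$, must itself be a line through $p$ in a direction corresponding to one of the infinitely near points $p_i$: indeed the strict transform on the blow-up $\F_1$ must meet the relevant exceptional configuration, so $L_j$ passes through $p$ and through at least one $p_i\infnear[1]p$. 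Thus \emph{every} fixed line of $\ad_m(\cL)$ contracted by $\gamma$ already passes through $p$ and through some infinitely near point; I pick one such and rename it $L_1$ with its infinitely near point $p_1$.

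\textbf{The numerical constraints.} Here is where the real work lies. Since $\gamma$ contracts the line $L_1$, which is a member of the pencil of lines through $p$ (the virtual-degree-$1$, multiplicity-$1$-at-$p$ system), the homaloidal net $\Lambda=\cL(\delta;\alpha,\alpha_1,\dots,\alpha_r,\beta_1,\dots,\beta_s)$ must satisfy $\deg\bigl(\gamma_*(\{L_1\})\bigr)\le 0$, i.e.\ $1\cdot\delta - \alpha\cdot 1 - \alpha_1\cdot 1 \le 0$, which forces $\delta \le \alpha+\alpha_1$. On the other hand the proximity inequality \eqref{eq:prox} applied at $p$ (since $p_1,\dots,p_r$ are infinitely near to $p$) gives $\alpha \ge \alpha_1+\cdots+\alpha_r \ge \alpha_1$, and combined with the Noether equations \eqref{eq:invariantsLambda}, the minimality of the number of quadratic transformations in $\gamma$ and the admissibility bound $d>\sum_i\mu_i$, one shows the reverse inequality $\delta\ge\alpha+\alpha_1$, hence equality. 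Once $\delta=\alpha+\alpha_1$, the first Noether equation $\delta^2-1=\sum(\text{squares})$ and the second $3(\delta-1)=\sum(\text{multiplicities})$ force all remaining multiplicities $\alpha_i$ ($i\ge 2$) and $\beta_j$ to be $\le \alpha_1$: if some $\alpha_i$ or $\beta_j$ exceeded $\alpha_1$ one breaks the equality $\delta=\alpha+\alpha_1$ via \eqref{eq:prox} and a convexity/counting argument on \eqref{eq:invariantsLambda}. Finally $\alpha\ge\alpha_1$ holds by the proximity inequality at $p$, with equality of $\Lambda$ to the de Jonqui\`eres form $\cL(\delta;\delta-1,1^{2\delta-2})$ precisely when $\alpha=\delta-1$.

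\textbf{Main obstacle.} The delicate point is the \emph{lower} bound $\delta\ge\alpha+\alpha_1$ and, relatedly, ruling out that $\Lambda$ has a third large multiplicity: a priori $\gamma$ could be far from a de Jonqui\`eres transformation, and one must use both that $\gamma$ has minimal quadratic length and that the target degree does not exceed $\deg(\cL)$ to pin down the shape of $\Lambda$. I expect this to require a careful bookkeeping argument combining \eqref{eq:degphi}, \eqref{eq:invariantsLambda}, the admissibility inequality, and the proximity inequality \eqref{eq:prox} at $p$ — essentially showing that if $\Lambda$ were not of the stated form, one could reduce the simplicity of $\gamma$ or lower the degree, contradicting minimality. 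The rest (once the shape of $\Lambda$ is known) is a direct verification.
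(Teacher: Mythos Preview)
Your argument has its difficulties reversed. The inequality $\delta \geqslant \alpha + \alpha_1$ that you flag as the ``main obstacle'' is in fact immediate: since $p_1$ is infinitely near to $p$, the line $L_1$ through $p$ in the direction of $p_1$ meets a general curve of the irreducible net $\Lambda$ in at least $\alpha + \alpha_1$ points, so $\delta \geqslant \alpha + \alpha_1$ or else $L_1$ would split off $\Lambda$. No minimality hypothesis or Noether equations are needed for this.

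The genuine gap is in your \emph{upper} bound. Contraction of $L_1$ by $\gamma$ means $\deg(\gamma_*(L_1)) = 0$, but the virtual-degree formula reads
\[
\deg(\gamma_*(L_1)) \;=\; \delta - \alpha - \alpha_1 - \sum_{q_j \in L_1} \beta_j - \sum_{\substack{i>1 \\ p_i \text{ on } L_1}} \alpha_i,
\]
so contraction yields only $\delta \leqslant \alpha + \alpha_1 + (\text{other multiplicities on }L_1)$, not $\delta \leqslant \alpha + \alpha_1$. Nothing in your outline excludes further base points of $\Lambda$ lying on $L_1$, and the Noether equations by themselves do not force this. You also never actually use the hypothesis $\deg(\gamma_*(\cL)) \leqslant \deg(\cL)$ in the key step, which is a warning sign.

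The paper does not attempt to bound $\delta$ directly. It argues by contradiction: assume $\alpha_i < \delta - \alpha$ for \emph{every} $i=1,\ldots,t$. Then the pencil $\cL_0$ of lines through $p$ is sent by $\gamma$ to a pencil of degree $\ell = \delta - \alpha \geqslant 2$, and each fixed line $L_i$ to a curve of degree $\ell_i \geqslant 0$. Using the total-transform relation \eqref{eq:phi!} and the decomposition of $\ad_m(\cL)$ as $(d-b)\cL_0$ plus the fixed part $\sum_i b_i L_i$, one computes
\[
\deg\bigl(\ad_m(\gamma_*(\cL))\bigr) \;\geqslant\; (d-b)\ell + \sum_{i=1}^t b_i \;\geqslant\; 2(d-b) + b \;=\; d + (d-b).
\]
The admissibility hypothesis $d > b = \sum_i \mu_i$ then makes this strictly exceed $d = \deg(\ad_m(\cL))$, contradicting $\deg(\gamma_*(\cL)) \leqslant \deg(\cL)$. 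So some $\alpha_i$ must equal $\delta - \alpha$; relabel it $\alpha_1$. Once this is established, the remaining inequalities $\alpha \geqslant \alpha_1 \geqslant \alpha_j, \beta_k$ do follow from proximity and B\'ezout along the lines you indicate.
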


\begin{proof}
We may and will assume that $L_i$, $i=1,\ldots,t$, is the line passing through $p$
and $p_i\infnear[1] p$.
If there is an $i$, $1\leqslant i \leqslant t$, such that $\alpha_i=\delta-\alpha$,
we may assume that $i=1$ and the rest of the assertion follows.
Suppose, by contradiction, that $\alpha_i<\delta-\alpha$ for each $i=1,\ldots,t$.

The Cremona transformation $\gamma$ maps the pencil $\cL_0$ to a pencil of curves
of degree $\ell=\deg(\gamma_*(\cL_0))=\delta-\alpha\geqslant2$
and the line $L_i$, $i=1,\ldots,t$, to a curve of degree
$\ell_i=\deg(\gamma_*(L_i))\leqslant\delta-\alpha-\alpha_i$.
Then, either $\ell_i>0$, or $\ell_i=0$, which means that
there is at least a proper or infinitely near point $q_{j_i}$ on $L_i$
or on its proper transform.
By \eqref{eq:phi!}, one has
\begin{equation}\label{ad_phi_B}
\deg(\ad_m(\gamma_*(\cL)))=\deg(\gamma_!(\ad_m(\cL)))
-d\alpha - \sum_{i=1}^r \mu_i\alpha_i  +\sum_{j=1}^{s} h_j\beta_{j} ,
\end{equation}
where $0\leqslant h_j=m-h'_j$ where $h'_j$ is the multiplicity of $\cL$ at $q_j$, $j=1,\ldots,s$,
and the total transform $\gamma_!(\ad_m(\cL))$ of $\ad_m(\cL)$ via $\gamma$ has degree
\begin{equation}\label{eq:phi!ad_B}
\deg(\gamma_!(\ad_m(\cL)))
=(d-b)\ell+\sum_{i=1}^t b_i \ell_i+d\alpha+
\sum_{i=1}^t b_i\alpha_i+\sum_{i=1}^t \sum_{q_j\in L_i} b_i\beta_j,
\end{equation}
where the last sum runs over the proper or infinitely near base points $q_j$ of $\Lambda$
which belong to the proper transform of $L_i$
(there is no $p_i$, $i=t+1,\ldots,r$, among such points,
otherwise $L_i$ shoud be a component of $B$).
Therefore, \eqref{ad_phi_B} and \eqref{eq:phi!ad_B} imply
\[
\deg(\ad_m(\phi_*(\cL)))\geqslant (d-b)\ell
+\sum_{i=1}^t b_i \Biggl(\ell_i+\sum_{q_j\in L_i} b_i\beta_j\Biggr)
\geqslant 2d-2b+\sum_{i=1}^t b_i=d+(d-b)>d,
\]
which contradicts the assumption that $\deg(\gamma_*(\cL))\leqslant\deg(\cL)$.
\end{proof}

\begin{proof}[Proof of Theorem \ref{thm:deg} (continued).]
By the proof of Noether-Castelnuovo Theorem \ref{thm:NC}
in Appendix \ref{app:proof}, there is a quadratic
transformation $\psi\colon\Pl\rto\Pl$ centered at $p$ such that $\gamma=\gamma'\circ\psi$
with $\gamma'$ simpler than $\gamma$.
Set $\cL'=\psi_*(\cL)$.
If $\deg(\cL')\leqslant\deg(\cL)$, then the minimality assumption on $\gamma$ implies that
$\gamma$ is quadratic, a contradiction because we are assuming that $\gamma$
is not a de Jonqui\`eres.
Therefore $\deg(\cL')>\deg(\cL)\geqslant\deg(\gamma_*(\cL))=\deg(\gamma'_*(\cL'))$.
Furthermore, the number of quadratic transformations involved in the factorization of $\gamma'$ 
is minimal, as well as the analogous number for $\gamma$.

In Lemma \ref{lem:psi} below,
we prove that we may choose the quadratic transformation $\psi$ in such a way that $\ad_m(\cL')=\cL(d';(d',\{\mu'_1,\ldots,\mu'_r\}))$ and $\cL'$ has maximal multiplicity at $p'$,
namely $\psi$ maps the pencil $\cL_0$ of lines through $p$ to the pencil of lines through $p'$.
By induction, there is a de Jonqui\`eres transformation $\varphi$ centered at $p'$ such that $\deg(\varphi_*(\cL'))\leqslant\deg(\gamma'_*(\cL'))=\deg(\gamma_*(\cL))$.
Therefore $\phi:=\varphi\circ\psi$ is a de Jonqui\`eres transformation centered at $p$
such that $\deg(\phi_*(\cL))\leqslant\deg(\gamma_*(\cL))$, which concludes the proof.
\end{proof}

\begin{lemma}\label{lem:psi}
In the above setting, we may choose the quadratic transformation $\psi$ in such a way that
$(\Pl,\cL')$ is admissible, namely $\ad_m(\cL')=\cL(d';(d',\{\mu'_1,\ldots,\mu'_r\}))$.
\end{lemma}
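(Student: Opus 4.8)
The plan is to exploit the structure established in the preceding lemma: we have a fixed line $L_1$ of $\ad_m(\cL)$ which is contracted by $\gamma$, which passes through the maximal-multiplicity point $p$ and the infinitely near point $p_1\infnear[1] p$, and along which $\gamma$ has total multiplicity $\delta=\alpha+\alpha_1$ with $\alpha\geqslant\alpha_1$ maximal among all the $\alpha_i,\beta_j$. The quadratic transformation $\psi$ furnished by the proof of Noether--Castelnuovo in Appendix \ref{app:proof} is centered at $p$, and the point of the following lemma is that it is centered precisely at the ``heaviest'' base cluster of $\Lambda$, which by the previous lemma is $\{p,p_1,\cdot\}$. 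First I would make explicit that $\psi$ may be taken as the quadratic transformation based at $p$, at $p_1$ (which lies on $L_1$, infinitely near $p$), and at a third point $p_2'$ which is either a further base point of $\Lambda$ of multiplicity $\alpha_2$ or — if no such point is forced — a general point on the line $L_1$ or infinitely near $p_1$. The key is that this is consistent with $\gamma=\gamma'\circ\psi$ and $\gamma'$ simpler, which is exactly what the Noether--Castelnuovo induction provides.

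Next I would compute $\cL'=\psi_*(\cL)$ and its $m$-adjoint directly. Since $\psi$ is quadratic with center $\{p,p_1,p_2'\}$ and $\cL$ has multiplicity $h-k=\mult_p(\cL)=:m_0$ at $p$, multiplicities $\nu_1,\dots,\nu_r$ at the $p_i$, and nothing assigned at $p_2'$, the image $\cL'$ has degree $2\deg(\cL)-m_0-\nu_1$ (up to the contribution at $p_2'$, which is $0$ or handled separately), and its maximal multiplicity point $p'$ is the image of $L_1$ under $\psi$. The decisive computation is that $\psi$ maps the pencil $\cL_0=\cL(1;1)$ of lines through $p$ to the pencil of lines through $p'$: this holds because two general lines through $p$ each meet $L_1$ only at $p$, so their images pass through the single point $p'=\psi(L_1)$, and being of degree $1$ they form the pencil of lines through $p'$. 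Granting this, the $m$-adjoint of $\cL'$ is again composed with a pencil of lines through its maximal-multiplicity point and has the shape $\cL(d';(d',\{\mu_1',\dots,\mu_r'\}))$ with $d'>\sum\mu_i'$, i.e. $(\Pl,\cL')$ is admissible; here I would use \eqref{eq:adm} and the fact, recorded in \S\ref{admissible}, that admissibility is equivalent to the $m$-adjoint having this composed-with-a-pencil form, together with Remark \ref{rem:stable} / Proposition \ref{prop:bir} to track the Zariski decomposition through $\psi$ and confirm the inequality $d'>\sum_i\mu_i'$ survives.

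The main obstacle I anticipate is showing that the quadratic transformation $\psi$ produced by the Noether--Castelnuovo induction can be arranged to have its \emph{second} center at $p_1$ (the point of $L_1$ infinitely near $p$), rather than at some other point proximate to $p$; this is what guarantees that $\psi$ contracts $L_1$ and hence moves the maximal multiplicity onto $\psi(L_1)$. The resolution should come from the previous lemma's conclusion $\delta=\alpha+\alpha_1$ with $\alpha_1\geqslant\alpha_i,\beta_j$: in the simplicity-driven proof of Noether--Castelnuovo, the quadratic transformation extracted to reduce simplicity is based at the point of highest multiplicity $p$ of $\Lambda$ together with the two next-highest, and since $p_1$ carries the second-highest multiplicity $\alpha_1$ of $\Lambda$ and is proximate to $p$, it is forced to be one of these three centers. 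A secondary point to check is that the remaining hypothesis of admissibility — that all base points of $\cL'$ of multiplicity $>k/2$ are infinitely near the maximal one — is preserved; this follows because $\psi$ is an isomorphism near all base points other than $\{p,p_1,p_2'\}$, and the points of multiplicity $>m$ among $p_2,\dots,p_r$ were already infinitely near $p$, hence their images are infinitely near $p'$, while the point $p$ itself becomes (after $\psi$) a point infinitely near $p'$ as well.
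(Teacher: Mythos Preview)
Your plan has a genuine gap at the step where you claim that admissibility of $\cL'$ follows because ``$\psi$ is an isomorphism near all base points other than $\{p,p_1,p_2'\}$, and the points of multiplicity $>m$ among $p_2,\dots,p_r$ were already infinitely near $p$, hence their images are infinitely near $p'$.'' This inference is false. Consider a base point $y$ of $\cL$ with $y\infnear[1]p_1$ and $y\notsatel p$; such a point is infinitely near $p$ (so it is allowed by admissibility of $\cL$) and may well have multiplicity $\nu_y>m$. Under $\psi$, the exceptional curve $E_{p_1}$ is \emph{not} contracted --- it maps to a line on the target --- so $y$ is sent to a \emph{proper} point on that line, away from $p'$. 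Thus $\cL'$ acquires a base point of multiplicity $\nu_y>m$ not infinitely near $p'$, destroying admissibility. Your argument ``$\psi$ is an isomorphism near $y$'' is correct but works against you: precisely because $\psi$ is a local isomorphism there, it carries an infinitely-near-$p$ point to a proper point. The paper's proof confronts exactly this obstruction by a case analysis: if such a $y$ existed, the quadratic transformation based at $p,q,y$ (where $q$ is your $p_1$) would \emph{lower} the degree of $\cL$ (since $m_q\geqslant m_y>m$ by proximity, so $m_q+m_y>2m$), contradicting the minimality assumption on the factorisation of $\gamma$. You need this contradiction argument or an equivalent; tracking the Zariski decomposition via Proposition~\ref{prop:bir} does not supply it.

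There is also a smaller error in your identification of $p'$. You write $p'=\psi(L_1)$ and justify the pencil-to-pencil property by ``two general lines through $p$ each meet $L_1$ only at $p$, so their images pass through $\psi(L_1)$.'' But the intersection at $p$ is absorbed by the blow-up: on the resolution, a general line through $p$ is disjoint from the strict transform of $L_1$. The correct computation shows that the pencil of lines through $p$ meets (on the resolution) the strict transform $E_p'$ of the exceptional divisor over $p$, and it is the image of $E_p'$ --- not of $L_1$ --- that serves as $p'$. This matters because the old high-multiplicity points $p_i\infnear[1]p$ with $p_i\neq p_1$ land infinitely near the image of $E_p'$, not near $\psi(L_1)$, so your bookkeeping of where the $p_i$ go needs to be redone with the right $p'$.
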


\begin{proof} The simplicity of $\Lambda$
is $(k_\Lambda, h_\Lambda, s_\Lambda)$, where $k_\Lambda=\delta-\alpha=\alpha_1$.
Since $p_1$ is a point of multiplicity $\alpha_1> \frac {k_\Lambda}2$, the proof of Noether--Castelnuovo's Theorem in Appendix A 
implies that there is a quadratic transformation $\psi$  based at $p$,
at $q\infnear[1]p$ and at $x$,
which lowers the simplicity of $\Lambda$.   Let $\cL'=\psi_*(\cL)$,  which has maximal multiplicity at $p'$, the point corresponding to $p$ via $\psi$.  
We have to prove that $\cL'$ does not have base points of multiplicity $\mu>m$ off $p'$.  

Assume first that $x$ is a proper point, hence the multiplicity of $\cL$ at $x$ is $\nu\leqslant m$. Then a point
of multiplicity  $\mu>m$  for $\cL'$ off $p'$ could come only
from a point $y\infnear[1]q$, $y\notsatel p$. But in that case the quadratic transformation based at $p, q, y$ lowers the degree of $\cL$, a contradiction.

The alternative is that $x\infnear[1]q$.
As above, the multiplicity of $\cL$ at $x$ is $\nu\leqslant m$ and therefore
$\psi$ cannot produce points of multiplicity $\mu>m$ off $p'$.  
\end{proof}

\section{Weight of a curve on a $\F_n$ and good plane models}\label{S:sing}

Here we introduce the notions needed for finding Cremona minimal pairs.

Let $(\F_n,C)$ be a pair where $n\geqslant2$ and $C$ is a singular, irreducible curve,
and let $p\notin E$ be a singular point of $C$.
The \emph{weighted oriented tree}, or briefly \emph{tree},
$T_p$ of $p\in C\subset\F_n$
is defined as follows:
\begin{enumerate}[Step 1:]

\item the \emph{root} $v_p$ of $T_p$ corresponds to $p$
with weight the multiplicity of $C$ at $p$;

\item make $\elm_p$ and let $q=\elm_p(F_p)\in E$;
consider the proper transform $C'$ of $C$
and the singular points $p_1,\ldots,p_r$ of $C'$
lying on $F_q$ off $E$;
then $T_p$ has one vertex $v_{p_i}$, with weight the multiplicity of $C'$ at $p_i$,
and one arrow $v_p\to v_{p_i}$,  for each $i=1,\ldots,r$;

\item iterate Step 2 for $p_1,\ldots,p_r$ until either no further
singular point of $C'$ shows up or the maximal length
of oriented chains in the tree is $n-1$.
\end{enumerate}

The \emph{weighted oriented forest},
or briefly \emph{forest},  $G=G(\F_n,C)$ of the pair $(\F_n,C)$
is constructed as follows:
\begin{enumerate}[$\ \bullet\ $]

\item if $F_p$ is the fibre of the ruling $|F|$
containing a singular point $p$ of $C$ off $E$,
then $G$ has a vertex $v_{F_p}$, with zero weight;

\item for each singular point $p\notin E$ of $C$,
then $G$ contains the tree $T_p$
of $p\in C$, connected to $v_{F_p}$
with an arrow $v_{F_p}\to v_p$;

\item $G$ has further vertices $v_{p_i}$, with weight 1,
and $v_{F_{p_i}}$, with zero weight, and an arrow $v_{F_{p_i}}\to v_{p_i}$,
for each $i=1,\ldots,n-1$,
where $p_1,\ldots,p_{n-1}$ are general points of $C$.
\end{enumerate}

A  \emph{path} $P$ in $G$  is the union of oriented chains
in distinct connected components of $G$,
each chain starting from a vertex of weight 0.
The \emph{length} of $P$ is the number of arrows that it contains.
Given a path of lenght $n-1$, the 
set $\{p_1,\ldots,p_{n-1}\}$ of corresponding points is called a \emph{cluster}
for the pair $(\F_n,C)$.
The \emph{weight} of $P$ is the sum  of the weights
of all vertices where $P$ is supported.
A path $P$ is called \emph{good}
if $P$ has length $n-1$ and maximal weight.
Accordingly, the cluster $\{ p_1,\ldots,p_{n-1} \}$ of corresponding points
is called \emph{good}. Setting $m_i$, $i=1,\ldots,n-1$, the multiplicity of $C$
at the infinitely near or proper point $p_i$, which is the weight of $G$ at the
corresponding vertex,
we may and will assume that $m_1\geqslant m_2\geqslant \cdots \geqslant m_{n-1}\geqslant 1$
and we say that $m_1,\ldots,m_{n-1}$
is a \emph{good sequence of multiplicities} for the pair $(\F_n,C)$.
The \emph{weight} $w(\F_n,C)$ of the pair $(\F_n,C)$
is the weight $w(P)=\sum_{i=1}^{n-1} m_i$ of a good path $P$
in the forest $G$.

\begin{remarks}
The number of connected components of $G=G(\F_n,C)$ equals
the number of fibres of $|F|$ contaning singular points of $C$
off $E$, increased by $n-1$.
On each connected component of $G$ there is a unique vertex of weight 0.

There exist good paths in $G$.
For example, if $n=2$, a good path is just an arrow
$v_{F_p}\to v_p$ where $p$ is a point of maximal multiplicity of $C$
off $E_2$.
\end{remarks}

\begin{examples}\label{ex:forest}
$(a)$
Let $(\F_n,C)$ be a pair with $n\geqslant2$ and $C$ smooth.
Then $1^{n-1}$ is the unique good sequence of multiplicities of $(\F_n,C)$
and any set of $n-1$ general points of $C$ is a good  cluster.

\medskip
\noindent
$(b)$
Let $(\F_n,C)$ be a pair with $n\geqslant2$ and $C$ irreducible.
Suppose that the largest $n-1$ multiplicities
$m_1\geqslant m_2\geqslant \cdots \geqslant m_{n-1}$ of $C$ can be found, respectively,
at points $p_1,\ldots,p_{n-1}$ on distinct fibres, off $E_n$.
Then $w(\F_n,C)=\sum_{i=1}^{n-1} m_i$
and $m_1,\ldots,m_{n-1}$ is a good sequence of multiplicities of $C$.
In particular, if the singularities of $C$
are only at proper points on distinct fibres, off $E_n$,
then there is a unique good sequence of multiplicities of $C$, but
perhaps different good clusters. 

\medskip
\noindent
$(c)$ Let $(\F_3,C)$, $C\in\cL_3(6,18;3,(2,[2]))$, i.e.\ $C$ has
a triple point $p$ and a tacnode $p'$. Suppose that $p,p'$
lie on the same fibre $F_p=F_{p'}$ off $E_3$ and that the tacnodal tangent at $p'$
is different from the tangent line to $F_p$ at $p'$. Then the weighted oriented
forest $G=G(\F_3,C)$ of $(\F_3,C)$ is
\begin{align*}
 & \,
&&  \xymatrix@R-5ex{ & 3 \\ 0 \ar [ur] \ar[dr] \\  & 2 \ar[r] & 2 }
&&  \xymatrix@R-5ex{ \\ 0 \ar[r] & 1 }
&&  \xymatrix@R-5ex{ \\ 0 \ar[r] & 1 }
&& \,
\end{align*}
that has two different good sequences of multiplicities $2,2$ and $3,1$,
determined by the different good paths: $0\rightarrow2\rightarrow2$
and $\{0\rightarrow3\}\cup \{0\rightarrow1\}$.

Accordingly, one gets two birationally equivalent different
Cremona minimal pairs
$(\Pl,B_1)$ and $(\Pl,B_2)$ with $B_1\in\cL(14;(8,[4^2]),3)$
and $B_2\in\cL(14;(8,[5,3]),2^2)$, where all singularities of $B_1, B_2$
are infinitely
near to the point of multiplicity 8 and their configuration is described respectively by the following two diagrams 
\begin{align*}
 & \xymatrix@R-2ex{ 8 & 4 \ar[l] \\  & 4 \ar[ul]^{2} \ar[u] & 3 \ar[l] }
&& \xymatrix@R-5ex{ & 5 \ar[dl] \\ 8 \\ & 3 \ar[ul] & 2 \ar[l] & 2 \ar[l] }
\end{align*}
where $m_1 \xrightarrow{\ r\ } m_2$ means that the point of multiplicity $m_1$
is proximate and infinitely near of order $r$ to the point of multiplicity $m_2$
(no $r$ means $r=1$).

This example $(c)$ shows that a good sequence of multiplicities may well not be unique.
\end{examples}

Now we study the properties of good sequences of multiplicities
of $\natural$-models obtained from $\sharp$-models.

\begin{remarks}\label{rem:number>m}
$(i)$
Let $(\F_{n'},C')$ be a $\flat$-model where $n'\geqslant1$,
$C'\in\cL_{n'}(2m,h';m_1,\ldots)$ and $C'$ irreducible.
The $\natural$-model $(\F_n,C)$ fibred
birationally equivalent to $(\F_{n'},C')$
is such that $C$ has at most $n-n'$ proper or infinitely
near points of multiplicity $\mu>m$.

\medskip
\noindent
$(ii)$
Let $m'_1\geqslant\cdots\geqslant m'_{n-1}=\bar m$ be a good sequence
of multiplicities for the $\natural$-model $(\F_n,C)$ as above
and let $\{p_1,\ldots,p_{n-1}\}$ be the corresponding good cluster.

The proper transform $C_2=(\elm_{p_1,\ldots,p_{n-2}})_*(C)\subset \F_2$
has multiplicity $\bar m$ at $p_{n-1}\notin E_2$.
The very definition of good cluster implies that
$C_2$ has no point of multiplicity $\mu >\bar m$ off $E_2$.
If $(\Pl,B)$ is the good model of $(\F_n,C)$ and $q\in\Pl$
the point of $B$ with the highest multiplicity,
it follows that each point of $B$
with multiplicity $\mu>\bar m$, if any,
is infinitely near to $q$.

\medskip
\noindent
$(iii)$
If $\bar m\geqslant m$, then $C$ has multiplicity $\mu\geqslant m$
at each one of the $n-1$ points $p_1$, \ldots, $p_{n-1}$ of the good cluster,
hence $n'=1$ by (i) and $C_1=(\elm_{p_1,\ldots,p_{n-1}})_*(C)=C'$ on $\F_1$.
Thus $B=\sigma_*(C_1)\in\cL(h';h'-2m,m_1,\ldots)$, with $m_1\leqslant m$,
and the good plane model $(\Pl,B)$ is Cremona minimal by Theorem \ref{thm:jung}.
In particular, if $n'\geqslant2$, it follows that $\bar m< m$.

\medskip
\noindent
$(iv)$
Suppose that $B$ has multiplicity $m_1>m$ at a point $q_1>^1 q$
and let $L_1$ be the line passing through $q$ and $q_1$.
If $B$ meets $L_1$ at a proper or infinitely near point $q'$,
different from $q$ and $q_1$, then $B$ has multiplicity $\mu <m$ at $q'$,
because
\[
\deg(B)-\mult_{q}(B)-\mult_{q_1}(B)=2m-m_1< m.
\]
\end{remarks}


\begin{theorem}\label{lem:new} Let $(\Pl,B)$ be a good model 
of a $\natural$-model  $(\F_n,C)$, $n\geqslant 2$ with $\flat$--index  at least $2$, $C$ irreducible,
$C\in \cL_n(2m+\epsilon,h;m_1,\ldots)$, and $\epsilon=0,1$. Then 
the pair $(\Pl,B)$ is Cremona minimal.
\end{theorem}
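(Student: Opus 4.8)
The plan is to reduce, by means of Theorem \ref{thm:deg}, to de Jonqui\`eres transformations, and then to use that a de Jonqui\`eres transformation centered at the point of maximal multiplicity of $B$ becomes, after blowing up that point, a fibred birational self-map of $\F_1$, so that it cannot do better than a good path. First I would check that $(\Pl,B)$ is an \emph{admissible} plane model in the sense of \S\ref{admissible}. Write $p$ for the point of $B$ of maximal multiplicity, so that $\mult_p(B)=\deg(B)-k$ with $k=2m+\epsilon$. Since the $\flat$--index is at least $2$, Remark \ref{rem:number>m}(iii) forces the last term $\bar m$ of a good sequence of multiplicities to satisfy $\bar m<m$, and then Remark \ref{rem:number>m}(ii) shows that every point of $B$ of multiplicity $\mu>[k/2]$ is infinitely near to $p$. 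The remaining numerical condition for admissibility, which amounts to saying that the $[k/2]$-adjoint of $B$ has the shape \eqref{eq:adm}, follows from the $\natural$-model hypotheses together with the degree formula of the next paragraph (for $\epsilon=1$ one passes to the double of $B$, exactly as in the proof of Theorem \ref{thm:deg}).

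The key elementary observation is that any plane model of $(\F_n,C)$ obtained by a fibred birational map $\F_n\rto\F_1$ followed by the contraction of a $(-1)$-section has degree equal to $h$ minus the sum of the multiplicities of the successive strict transforms of $C$ at the centers of the elementary transformations involved: an $\elm$ at a point of multiplicity $\mu$ of the current curve lowers the degree of the resulting plane model by $\mu$, while an $\elm$ at a point off the curve leaves it unchanged, and an $\elm$ along the negative section can only increase it. By the very definitions of the weighted oriented forest $G(\F_n,C)$ and of a good path, this sum is at most $w(\F_n,C)$, with equality for the good model. Hence $\deg(B)=h-w(\F_n,C)$, and every fibred plane model of $(\F_n,C)$ has degree at least $\deg(B)$.

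Now suppose, for contradiction, that some Cremona transformation $\gamma\colon\Pl\rto\Pl$ satisfies $\deg(\gamma_*(B))<\deg(B)$. Since $(\Pl,B)$ is admissible, Theorem \ref{thm:deg} produces a de Jonqui\`eres transformation $\phi$ with $\deg(\phi_*(B))\leqslant\deg(\gamma_*(B))<\deg(B)$, and by the construction in the proof of that theorem (through Lemma \ref{lem:psi}) we may take $\phi$ to be centered at $p$. Such a $\phi$ sends the pencil of lines through $p$ to the pencil of lines through some point $p'$, hence, after blowing up $p$ and $p'$, it is a fibred birational self-map of $\F_1$. Composing it with the fibred map $\F_n\rto\F_1$ that produces $B$ then exhibits $\phi_*(B)$ as a fibred plane model of $(\F_n,C)$, so by the previous paragraph $\deg(\phi_*(B))\geqslant\deg(B)$, a contradiction. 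Therefore $(\Pl,B)$ is Cremona minimal.

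The step I expect to be the main obstacle is the bookkeeping behind the key observation: one has to check carefully that the degree of an arbitrary fibred plane model of $(\F_n,C)$ is governed by the weight function on $G(\F_n,C)$, so that no sequence of elementary transformations — not even one that detours through some $\F_{n'}$ with $n'>n$ — can accumulate more total multiplicity than a good path. This is the combinatorial heart of the matter and is precisely what the notion of good path is designed to encode; pinning it down, along with the numerical inequality for admissibility, is where the real work lies.
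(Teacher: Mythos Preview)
Your approach is essentially the paper's own: reduce to a de Jonqui\`eres $\phi$ centered at the point $p$ of maximal multiplicity via admissibility and Theorem \ref{thm:deg}, then use the maximality of a good path to bound $\deg(\phi_*(B))$ from below by $\deg(B)$. The organisational difference is that the paper carries this out as an explicit degree computation rather than through your abstraction ``every fibred plane model has degree $\geqslant\deg(B)$''. Concretely, the paper writes $B\in\cL(d+3m;(d+m-\epsilon,[2m+\epsilon-\mu_1,\ldots,2m+\epsilon-\mu_{n-1}]),m'_1,\ldots)$, lets $\ell\leqslant\delta-1$ count how many of the $2\delta-2$ simple base points of $\Lambda$ fall among the $n-1$ proximate points, and computes $\deg(\phi_*(B))$ directly. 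The crucial inequality $\sum_{i=1}^\ell\mu_i\geqslant\sum_{i=\ell+1}^{2\ell}\mu'_i$ is justified by observing that the proper transform of $\Lambda$ on $\F_n$ is a pencil of sections, so its base points are organised along paths in the forest $G(\F_n,C)$; the remaining bound $\mu'_i\leqslant m$ for $i>2\ell$ uses the $\flat$-index hypothesis via Remark \ref{rem:number>m}(iii), exactly as you anticipate. This explicit calculation \emph{is} the ``bookkeeping'' you flag as the main obstacle: your conceptual statement that no fibred plane model (even one detouring through higher $\F_{n'}$) beats the good model unwinds to precisely these two inequalities once one tracks how $h$ changes under each elementary transformation. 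So the two arguments coincide in substance; yours is a clean repackaging, and the paper supplies the line-by-line verification you would need to fill in.
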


\begin{proof} Let $p_0$ be the point of maximal multiplicity of $B$.
Suppose that there is a Cremona transformation $\gamma$ such that
$\deg(\gamma_*(B))<\deg (B)$ and $\deg(\gamma_*(B))$ is minimum.
Since $(\PP^2, B)$ is admissible, 
by Theorem \ref{thm:deg} there is a de Jonqui\`eres transformation $\phi$
centered at $p_0$
such that $\deg(\phi_*(B))=\deg(\gamma_*(B))<\deg(B)$,
and moreover $\phi$ maps the pencil of lines through the maximal multiplicity
point of $B$ to the pencil of lines through the maximal multiplicity
point of $B'=\phi_*(B)$.

The curve $B$ sits in a linear system of the form $\cL(d+3m;(d+m-\epsilon, [2m+\epsilon-\mu_1,\ldots,2m+\epsilon-\mu_{n-1}], m'_1,\ldots)$, where $d=\deg({\rm ad}_m(B))$,  the $\natural$--model is $(\F_n,C)$ and $\mu_1,\ldots,\mu_{n-1}$ is a good sequence of multiplicities of $C$ (not necessarily in decreasing order). Let $\Lambda=\cL(\delta;\delta-1,1^{2\delta-2})$ be the homaloidal net
defining  $\phi$. Let  $\ell\leqslant n-1$ be the number of points among those of multiplicities $2m-\mu_1,\ldots,2m-\mu_{n-1}$ for $B$, which are simple  base points for $\Lambda$. By proximity, we have $\ell\leqslant \delta-1$. Then
$$\deg(\phi_*(B))=\delta(d+3m)-(\delta-1)(d+m-\epsilon)-\sum_{i=1}^\ell (2m+\epsilon-\mu_i)-\sum _{\ell+1}^{2\delta-2}\mu'_j$$
where $\mu'_j$ are the multiplicities of $B$ at the remaining base points of $\Lambda$. Therefore one has
$$deg(\phi_*(B))=2m(\delta-\ell)+d+m+\epsilon(\delta-1-\ell)+\sum_{i=1}^\ell \mu_i-\sum_{i=\ell+1}^{2\ell} \mu'_i-
\sum_{i=2\ell+1}^{2\delta-2} \mu'_i$$
where we assume that the $ \mu'_i$ are ordered in decreasing order for $\ell+1\leqslant i\leqslant 2\delta-2$.
Note that the general curve of the proper transform of $\Lambda$ on $\F_n$ is a smooth section. Hence
all the base points of this proper transform are parts of paths in the relevant forest. Therefore 
$\sum_{i=1}^\ell \mu_i\geqslant \sum_{i=\ell+1}^{2\ell} \mu'_i$. Moreover we have $ \mu'_i\leqslant m$ for $2\ell+1\leqslant i\leqslant 2\delta-2$ as a consequence of the assumption on the $\flat$--index (see Remark \ref{rem:number>m}, (iii)). Hence, we deduce
$$deg(\phi_*(B))\geqslant 2m(\delta-\ell)+d+m-2(\delta-\ell-1)m=d+3m$$
a contradiction. \end{proof}

\begin{remark}\label{rem:equal}
The above proof shows that one may have $\deg(\phi_*(B))=\deg (B)$ for a de Jonqui\`eres transformation only if $\sum_{i=1}^l \mu_i=\sum_{i=\ell+1}^{2\ell} \mu'_i$,  $\mu'_i=m$ for 
$2\ell+1\leqslant i\leqslant 2\delta-2$ and, in addition, $\ell=\delta-1$ if $\epsilon=1$.
\end{remark}


\section{The main classification theorem}\label{S:theorem}

Let $(S,C)$ be a pair. We will say that it presents the \emph{line case}
if it is birationally equivalent to $(\Pl,L)$,
where $L$ is a line.
For example, if $C$ is part of a $(-1)$-cycle on $S$,
then $(S,C)$ presents the line case.
In \cite{Coolidge}, Coolidge stated the following theorem,
which gives necessary and sufficient
conditions for a pair $(S,C)$ to present the line case:

\begin{theorem}[Coolidge] \label{thm:Coolidge}
Let $B$ be an irreducible plane curve and let $C\subset S$ be the minimal
resolution of $B$, such as  in  \cite[Proposition V.3.8]{Hartshorne}.
Then $(\Pl,B)$ presents the line case
if and only if $\vert C+mK_S\vert=\emptyset$, for all $m>0$.
\end{theorem}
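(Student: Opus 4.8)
The plan is to prove the two implications separately, treating the easy direction first. Suppose $(\Pl,B)$ presents the line case, so $(\Pl,B)\sim(\Pl,L)$ with $L$ a line. Passing to the minimal resolution, $(S,C)\sim(\Pl,L)$, and the quantity $a_m(C)=h^0(S,\cO_S(C+mK_S))$ is a birational invariant of the pair by Proposition~\ref{prop:bir} and Remark~\ref{rem:multad}, provided $C$ has points of multiplicity at most $m$; since $C$ is smooth this holds for every $m\geqslant1$. Hence $a_m(C)=h^0(\Pl,\cO_{\Pl}(L+mK_{\Pl}))=h^0(\Pl,\cO_{\Pl}(1-3m))=0$ for all $m>0$, i.e.\ $\vert C+mK_S\vert=\emptyset$ for all $m>0$.

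For the converse, assume $\vert C+mK_S\vert=\emptyset$ for all $m>0$, where $C\subset S$ is the (smooth) minimal resolution of $B$. I would run the following case analysis according to the behaviour of the surface $S$ and the curve $C$. If $C^2<0$, then since $C$ is smooth and irreducible with $C\cdot K_S=2p_a(C)-2-C^2$; when $C$ is rational, $C$ is a $(-k)$-curve for some $k\geqslant1$, and if $k\leqslant3$ one checks directly that $(S,C)$ presents the line case (a $(-1)$-curve is exceptional, hence blows down to a point and $(S,C)\sim(\Pl,L)$ after a suitable sequence; the $(-2)$- and $(-3)$-curve subcases reduce similarly), while if $k\geqslant 4$ or $p_a(C)\geqslant1$ one is in the hypotheses of Proposition~\ref{prop:zariski}(ii) and the argument below applies. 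So assume henceforth that $C$ falls under Proposition~\ref{prop:zariski}, i.e.\ either $C$ is nef or $C$ is effective, irreducible and not a $(-k)$-curve with $1\leqslant k\leqslant3$.

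Now let $m$ be the largest integer such that $\vert C+(m-1)K_S\vert\neq\emptyset$ but $\vert C+mK_S\vert=\emptyset$; by hypothesis every positive adjoint is empty, so in fact one analyses the last nonempty step, where $\vert C+(m-1)K_S\vert$ — after replacing $(S,C)$ by a suitable birational model via Proposition~\ref{prop:zariski} — has a nef part $P$ of dimension $a_{m-1}(C)$. The key point is that $P$ is nef with $h^0(S',\cO_{S'}(P+K_{S'}))=h^0(S',\cO_{S'}(C'+mK_{S'}))=0$, so Proposition~\ref{D+K=0} applies to $P$: either $P=0$, or $P^2=0$ and $\vert P\vert$ is a base-point-free rational pencil, or $P^2>0$ and $\vert P\vert$ is a base-point-free linear system of rational curves. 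If $P=0$ the curve $C+(m-1)K_S$ is a $(-1)$-cycle (its class is supported on exceptional $(-1)$-cycles and one iterates downward), which forces $(S,C)$ into the line case by the structure of $(-1)$-cycles in \S\ref{subsec:-1cicli}. If $P^2=0$ or $P^2>0$, Theorem~\ref{thm:rational} lists the finitely many Cremona-minimal models of $(S',\vert P\vert)$; in each of them one reconstructs $C$ as (a multiple of) a section plus adjoint-exceptional curves, and by bounding the degree one shows the only possibility compatible with $\vert C+mK_S\vert=\emptyset$ for \emph{all} $m$ is the pencil of lines $\cL(1;1)$, i.e.\ $(S,C)\sim(\Pl,L)$.

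The main obstacle I expect is the bookkeeping in this last step: one must descend from the nef part $P$ back to $C$ across the Zariski decomposition~\eqref{eq:C+mK}, controlling the exceptional $(-1)$-cycles $\Theta_{i,j}$, and then verify that among the del Pezzo and scroll models of Theorem~\ref{thm:rational} every case other than the line itself produces some nonempty higher adjoint of $C$ — this requires the nefness results of \S\ref{sec:adjprop} and careful use of the stability of the decomposition under birational morphisms (Remark~\ref{rem:stable}). The rational-surface hypothesis enters through Proposition~\ref{D+K=0} (which gives $S'$ rational once $q=0$ and the adjoint to $P$ vanishes), so one does not need to assume rationality of $S$ a priori; it is forced.
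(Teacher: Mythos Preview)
The paper does not give its own proof of this theorem. Immediately after stating it, the authors write that Coolidge's original argument is incomplete and defer to Kumar--Murthy \cite{KumarMurthy} for a correct proof, which they then quote as Theorem~\ref{thm:KM} and use as a black box. So there is no in-paper proof to compare your proposal against.

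Your easy direction is fine.

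For the hard direction there is a genuine gap, and it is exactly the gap the paper warns about. Under the hypothesis, $\vert C+K_S\vert=\emptyset$ together with $q(S)=0$ forces $g(C)=0$, so $C$ is rational. When $C^2\geqslant 0$ the curve is nef and one can indeed invoke Proposition~\ref{D+K=0} and Theorem~\ref{thm:rational}; but your final claim that ``the only possibility compatible with $\vert C+mK_S\vert=\emptyset$ for all $m$ is the pencil of lines'' is backwards: \emph{every} model listed in Theorem~\ref{thm:rational} has all positive adjoints empty (that is how they arise), and each of them \emph{is} the line case for any irreducible member. The task there is not to rule cases out but to check, model by model, that an irreducible $C$ in each is Cremona equivalent to a line.

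The real problem is the case $C^2=-k$ with $k\geqslant 4$. In your setup the ``largest $m$'' collapses to $m=1$, so the ``last nonempty step'' is $\vert C\vert$ itself, and for $C^2<0$ its Zariski decomposition is simply $P=0$, $N=C$. You then assert that $C$ is a $(-1)$-cycle, but this is false: a $(-k)$-curve with $k\geqslant 2$ has $C^2\neq -1$ and $K_S\cdot C\neq -1$, so Lemma~\ref{lem:ozf} fails. The $(-1)$-cycle structure of the negative part in Proposition~\ref{prop:zariski2} is a statement about $C+mK_S$ with $m>0$, obtained by blowing down exceptional curves of the model map; it says nothing about the decomposition of $C$ alone. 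Nor can you invoke Proposition~\ref{prop:zariski}(ii), since its standing hypothesis $\vert C+mK_S\vert\neq\emptyset$ for some $m>0$ is precisely what you are assuming fails. Handling a rational curve with arbitrarily negative self-intersection on a rational surface is exactly the content of Kumar--Murthy's argument (see the last clause of Theorem~\ref{thm:KM}), and the paper does not reproduce it.
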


Unfortunately, Coolidge's proof in \cite[p.\ 396--398]{Coolidge},
written again in \cite[p.\ 772--773]{KumarMurthy}, is incomplete.
Kumar and Murthy gave a correct proof of Theorem \ref{thm:Coolidge} with different methods.
They actually proved more (see \cite{KumarMurthy}, Theorem 2.1
and Corollary 2.4):

\begin{theorem}[Kumar-Murthy] \label{thm:KM} 
Let $B$ and $(S,C)$ be as in Theorem \ref{thm:Coolidge}.
Then $(\Pl,B)$ presents the line case if, and only if,
one has $\vert C+K_S\vert=\vert C+2K_S\vert=\emptyset$ or, equivalently, if and only if $\kappa(S,C)=-\infty$.
In particular, if moreover $C^2=-n$, then $(\Pl,B)\sim(\F_n,E)$.
\end{theorem}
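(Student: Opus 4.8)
The plan is to show that $(\PP^2,B)$ presents the line case implies $|C+K_S|=|C+2K_S|=\emptyset$, which implies $\kappa(S,C)=-\infty$, which in turn implies that $(\PP^2,B)$ presents the line case; the last implication is where the real work lies. The first is immediate (it is part of Coolidge's Theorem \ref{thm:Coolidge}, or directly: if $(S,C)\sim(\PP^2,L)$ then $a_1(C)=p_a(C)=0$ and $a_2(C)=a_2(L)=h^0(\PP^2,\cO(-5))=0$). For the second, $|C+K_S|=\emptyset$ is the value $n=1$ in the definition of $\kappa(S,C)=-\infty$, while if $h^0(\cO_S(C+2K_S))>0$ then, multiplying a nonzero section of $\cO_S(C)$ by one of $\cO_S(C+2K_S)$, one would get $a_2(2C)=h^0(\cO_S(2C+2K_S))>0$, contradicting $\kappa(S,C)=-\infty$.

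For the main implication, suppose $|C+K_S|=|C+2K_S|=\emptyset$; the goal is to produce a birational equivalence $(\PP^2,B)\sim(\PP^2,L)$. Since $B$ is a plane curve, $S$ is rational, so $q(S)=0$ and $h^0(\cO_S(K_S))=h^1(\cO_S(K_S))=0$; then the adjunction sequence $0\to\cO_S(K_S)\to\cO_S(C+K_S)\to\omega_C\to 0$ gives $h^0(\cO_S(C+K_S))=h^0(\omega_C)=p_a(C)$, so $|C+K_S|=\emptyset$ forces $p_a(C)=0$: the smooth curve $C$ is a $\PP^1$, and in particular $B$ is rational. Being irreducible, $C$ is nef unless $C^2<0$, and I would split into these two cases.

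If $C^2\geqslant0$, then $C$ is nef with $q(S)=0$ and $h^0(\cO_S(C+K_S))=0$, so Proposition \ref{D+K=0} applies: $|C|$ is a base point free pencil of rational curves (when $C^2=0$) or an irreducible base point free linear system of rational curves of dimension $C^2+1$ (when $C^2>0$). By Theorem \ref{thm:rational} the pair $(S,|C|)$ is then birationally equivalent, as a surface with a linear system, to one of a short list of standard models; and since $|C|$ is base point free and $C$ is a smooth member not contracted by the relevant birational map, $(S,C)$ is birationally equivalent to $(\PP^2,B')$ with $B'$ a general curve of one of the planar systems $\cL(1)$, $\cL(2)$, $\cL(d;d-1)$, $\cL(d;d-1,1)$, $\cL(d;(d-1,[1^{n-1}]))$. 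Each such $B'$ presents the line case: a line is trivial, an irreducible conic is carried to a line by a quadratic transformation centred at three of its points, and in each of the remaining three families a quadratic transformation centred at the point of highest multiplicity, at a free simple (possibly infinitely near) base point, and at a general point of the curve lowers the degree by one, so that iterating reaches a line. (The emptiness of $|C+2K_S|$ is automatic in this case, as one checks on each model.)

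If instead $C^2=-k<0$, then $C$ is a $(-k)$-curve, $k\geqslant 1$. I would blow down $(-1)$-curves $\Theta\ne C$ with $\Theta\cdot C\leqslant 1$: each such blow-down keeps the image of $C$ smooth and, since $0\leqslant\Theta\cdot C\leqslant 1\leqslant m$, preserves $h^0(\cO(C+mK))$ for every $m\geqslant1$ (Remark \ref{rem:stable}), hence the emptiness of both adjoint systems. If $C$ ever becomes a $(-1)$-curve it is part of a $(-1)$-cycle and $(S,C)$ presents the line case, by the remark following the definition of the line case. Otherwise the process stops at a pair $(S_0,C_0)$ with $C_0$ a $(-k_0)$-curve, $k_0\geqslant 2$, and no $(-1)$-curve other than $C_0$ meeting $C_0$ in at most one point. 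The decisive step — and what I expect to be the main obstacle — is to show that $S_0$ then carries no $(-1)$-curve at all: here one must invoke the hypothesis $|C_0+2K_{S_0}|=\emptyset$, together with $p_a(C_0)=0$, $q(S_0)=0$, and the description of effective divisors of negative self-intersection from \S\ref{subsec:-1cicli}, to exclude a $(-1)$-curve meeting $C_0$ in two or more points. Granting this, $S_0$ is a minimal rational surface carrying an irreducible curve of negative self-intersection, so $S_0=\F_{k_0}$ and $C_0=E_{k_0}$. Since elementary transformations based at points off the negative section give $(\F_{k_0},E_{k_0})\sim(\F_{k_0-1},E_{k_0-1})\sim\cdots\sim(\F_1,E_1)$, and $E_1$ on $\F_1$ is a $(-1)$-cycle (after one further blow-up it becomes a proper component of one), $(S,C)\sim(\F_{k_0},E_{k_0})$ presents the line case; and since the same elementary transformations identify all the pairs $(\F_m,E_m)$ birationally, one obtains in this case the last assertion, $(\PP^2,B)\sim(\F_n,E)$ with $n=-C^2$.
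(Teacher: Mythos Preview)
The paper does not supply its own proof of this theorem; it attributes the result to Kumar and Murthy and cites \cite{KumarMurthy} for the argument, so there is nothing in the paper to compare your proposal against directly.

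Your overall strategy is reasonable and the case $C^2\geqslant 0$ via Proposition~\ref{D+K=0} and Theorem~\ref{thm:rational} is essentially correct (modulo the harmless imprecision that $C$ maps to \emph{some} irreducible member of the target system, not necessarily a general one; but every irreducible member of those systems presents the line case). One minor logical slip: the multiplication argument you give shows $\kappa(S,C)=-\infty\Rightarrow|C+2K_S|=\emptyset$, not the converse. The cycle closes correctly as
\[
\text{line case}\ \Longrightarrow\ \kappa(S,C)=-\infty\ \Longrightarrow\ |C+K_S|=|C+2K_S|=\emptyset\ \Longrightarrow\ \text{line case},
\]
the first arrow by birational invariance of $\kappa$ computed on $(\PP^2,L)$.

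The substantive issue is precisely the one you flag. In the case $C^2<0$, after reaching a pair $(S_0,C_0)$ with $C_0$ a $(-k_0)$-curve, $k_0\geqslant 2$, and every $(-1)$-curve $E\neq C_0$ satisfying $E\cdot C_0\geqslant 2$, you must show $S_0$ is minimal. This is the heart of Kumar--Murthy's theorem and does not follow from \S\ref{subsec:-1cicli}. Observe that contracting such an $E$ can destroy the hypothesis: if $E\cdot C_0=e\geqslant 2$ and $f\colon S_0\to S_1$ blows down $E$, then $C_0+2K_{S_0}=f^*(C_1+2K_{S_1})+(2-e)E$, whence $h^0(C_1+2K_{S_1})=h^0(C_0+2K_{S_0}+(e-2)E)$ may well be positive even though $h^0(C_0+2K_{S_0})=0$. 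So one cannot simply iterate contractions. Closing this gap requires either Kumar--Murthy's original analysis via anticanonical divisors on rational surfaces, or a Mori-theoretic argument (a $(K+\tfrac12 C)$-minimal model program, as in \cite{Dicks}). As it stands, your proposal correctly isolates the difficulty but does not resolve it; what you have is an outline that reduces the theorem to its genuinely hard step.
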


\begin{remark}
In the above setting, if  $C$ is
rational and $C^2\geqslant -3$, then $\vert C+K_S\vert=\vert C+2K_S\vert=\emptyset$
and $(S,C)$ presents the line case. Hence, if $C$ is irreducible and the pair
$(S,C)$ does not present the line case, then $C$ verifies the hypothesis $(ii)$
of Proposition \ref{prop:zariski}. 
\end{remark}

Next we will prove a birational classification theorem 
for all pairs $(S,C)$,with $S$ rational, producing for each of them a unique
model on a minimal rational surface or on $\F_1$. Moreover we will produce
for each pair a plane model 
of minimal Cremona degree. 

First we introduce some additional notation. Given a pair $(S,C)$, with 
$S$ smooth and rational and $C$ smooth, not presenting the line case, let

\centerline{$m:=m(S,C)$ be the minimum positive integer $m$
such that}
\centerline{ $\vert C+mK_S\vert\ne\emptyset$ and 
$\vert C+(m+1)K_S\vert=\emptyset$}
\noindent and
$$\alpha:=\alpha(S,C)=\dim (\vert C+mK_S\vert)=h^0(S,\cO_S(C+mK_S))-1.$$

By the results in \S  \ref{susec:birat}, $m$ and $\alpha$ are birational invariants
of the pair $(S,C)$.
Note that, if $C$ is rational, then $m\geqslant 2$
and moreover $C$ is not part of a $(-1)$-cycle on $S$.

Our main results are the following theorems.

\begin{theorem}[Birational classification of pairs] \label{thm1}
Let $(S,C)$ be a pair with $S$ rational and $C$ smooth and irreducible,
and suppose that $(S,C)$ does not present the line case. Let 
$m=m(S,C)$ and $\alpha=\alpha(S,C)$. 
Then $(S,C)$ is birationally
equivalent to one of the following pairs:
\begin{enumerate}[(i)]

\item [$(dp_1)$] $(\Pl,D)$, where
$D\in\cL(3m; m_0, \ldots)$, with $m_0\leqslant m$, and $\alpha=0$;
\label{DP1}

\item [$(dp_2)$] $(\F_n,D)$, where
$D\in\cL_n( 2m,(2+n)m; m_1, \ldots)$
with $m_1< m$, $n=0,2$, and $\alpha=0$;
\label{DP2}

\item [$(r)$] $(\F_n,D)$, $0\leqslant n\leqslant 2+\left[\,{\alpha}/{m} \right]$,
where
$
D\in\cL_n(2m,(2+n)m+\alpha;m_{1}, \ldots),
$
is $\flat$--minimal, i.e.\ $m_{1}\leqslant m$, $\alpha> 0$, and,  if $n\geqslant 1$, all singular points of multiplicity $m$
are on  $E_n$;

\item [$(b_1)$] $(\Pl,D)$, where
$D\in\cL(3m+\left[{\alpha}/{2}\right]; m_0, \ldots)$,
with $m_0\leqslant m$, $\alpha=2,5$;

\item [$(b_2)$] $(\F_n,D)$, 
where
$
D\in\cL_n(2m+1,(2+n)m+({\alpha+n-1})/2;
m_{1}, \ldots),
$
is $\flat$--minimal, with $m_{1}\leqslant m$, $\alpha+n\equiv 1,  ({\rm mod} \ 2)$, and
$\alpha \geqslant 3+n$ if $0\leqslant n\leqslant 1$, whereas 
$\alpha\geqslant 3+(n-2)(2m+1)\geqslant 3$ if $n\geqslant 2$.
\end{enumerate}

The above pairs may be birationally equivalent and not isomorphic  only if we are in case:

\begin{enumerate}[(i)]
\item $(dp_1)$, with $m_0=m_1=m_2=m$;
\item $(r)$,  with $D$ having at least two  points of multiplicity $m$. 
\end{enumerate}
\end{theorem}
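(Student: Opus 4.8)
The strategy is to run a $\sharp$-minimal model program driven by $C$, that is, to repeatedly contract $(-1)$-curves $\Theta$ with $\Theta\cdot(C+mK_S)<0$ using Proposition \ref{prop:zariski}, and then analyze the terminal (minimal) model according to the behaviour of the last effective adjoint $|C+mK_S|$, where $m=m(S,C)$. The four cases $(dp_1),(dp_2),(r),(b_1),(b_2)$ correspond exactly to the possibilities for the pair $(S',P)$ obtained at the end, where $P$ is the nef part produced by Proposition \ref{prop:zariski2}: either $P=C'+mK_{S'}$ has $P^2=0$ and $\kappa=1$ (the ruled case $(r)$ and its even-degree del Pezzo degeneration $(dp_2)$), or $P^2>0$ (the big cases), or $P\equiv 0$ (the del Pezzo case $(dp_1)$), or $h^0(P)\le 1$ but $h^0(nP)$ grows (intermediate). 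Concretely, first I would set $D_m=C+mK_{S'}$ on the minimal model $S'$ and observe that since $|D_m+K_{S'}|=|C+(m+1)K_{S'}|=\emptyset$ while $D_m$ is nef, Proposition \ref{D+K=0} applies to $D_m$ (when $D_m\not\equiv 0$), giving that $S'$ is rational and $|D_m|$ is either composed with a base-point-free pencil of rational curves ($D_m^2=0$) or is itself such a linear system ($D_m^2>0$); the case $D_m\equiv 0$ is the del Pezzo situation.

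Next, for each of these outcomes I would transport back to a concrete surface among $\Pl,\F_0,\F_1,\F_2,\dots$ and read off the Cremona type. In the del Pezzo case $D_m\equiv 0$: then $C\equiv -mK_{S'}$, so $S'$ is a (possibly weak) del Pezzo surface of degree $K_{S'}^2\in\{1,\dots,9\}$; passing to $\Pl$ via a suitable birational morphism gives $D\in\cL(3m;m_0,\ldots)$ with $m_0\le m$ and $\alpha=0$, which is $(dp_1)$, or — when $S'=\F_0$ or $\F_2$ (degree $8$) — the model $(dp_2)$ with $D\in\cL_n(2m,(2+n)m;m_1,\ldots)$, $n=0,2$. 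In the ruled case $D_m^2=0$, $\alpha>0$: blow down the $(-1)$-cycles meeting the ruling pencil trivially to get a morphism onto some $\F_n$ sending $|D_m|$ to a multiple of $|F|$; then $C$ maps to a curve in $\cL_n(2m,h;\ldots)$ with $h=(2+n)m+\alpha$ (the coefficient of $F$ is forced by $C\cdot F$ and $K_{\F_n}\cdot F=-2$, and $E$ does not split off because $C$ is irreducible). Taking $n$ minimal with $(\F_n,\cL)$ a $\sharp$-model produces the $\flat$-model; Proposition \ref{pro:flat}(i) gives that singular points of multiplicity $m$ lie on $E_n$, and the bound $n\le 2+[\alpha/m]$ comes from $h\ge kn=2mn$, i.e.\ $(2+n)m+\alpha\ge 2mn$. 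The same analysis with $k=2m+1$ (so $D_m^2>0$ odd, forcing $\sharp\sharp$-minimality) yields $(b_2)$, and when $D_m^2>0$ with $S'$ del-Pezzo-like one gets $(b_1)$; here the congruence $\alpha+n\equiv 1\pmod 2$ and the numerical lower bounds on $\alpha$ just record that $h$ is a genuine integer and that the system is nonempty with $E$ not splitting off.

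For the non-isomorphism/birational-equivalence clause, I would invoke the uniqueness results already proved: Theorem \ref{thm:iitaka}(i) (uniqueness of terminal/$\sharp\sharp$-models) handles all odd-$k$ cases $(b_2)$ and rigidifies $(b_1)$; Proposition \ref{prop:percent} pins down the $\natural$-models with positive $\flat$-index. The only remaining flexibility is (a) elementary transformations $\F_n\rto\F_n$ based at points of multiplicity $m$ on $E_n$, which by Theorem \ref{thm:iitaka}(ii) and Remark \ref{rem:noun} require $D$ to carry $\ge 2$ such points — this is case $(r)$ with at least two points of multiplicity $m$; and (b) in the del Pezzo case $(dp_1)$, the Cremona equivalences of $\cL(3m;m_0,\ldots)$ that are not projective, which by Lemma \ref{lem:criterion}/Corollary \ref{cor: jung} (Jung) can only occur when $d=3m=m_0+m_1+m_2$, i.e.\ $m_0=m_1=m_2=m$. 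Thus the list of exceptions is exactly $(dp_1)$ with $m_0=m_1=m_2=m$ and $(r)$ with $\ge 2$ points of multiplicity $m$.

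**Main obstacle.** The delicate point is not the contraction program itself but the bookkeeping that converts the abstract minimal model $(S',D_m)$ into the precise normalized linear-system data on $\Pl$ or $\F_n$ — in particular, choosing $n$ correctly (the $\flat$-index), verifying that $E_n$ does not split off from $\cL$ (which uses $h\ge kn$ together with irreducibility of $C$), checking that the indicated virtual multiplicities are the effective ones, and matching the coefficient of $F$ to $\alpha$ in all the parity sub-cases $\epsilon=0,1$ and $n=0,1,\ge 2$ simultaneously. Keeping all these normalizations consistent across the four/five cases, so that the uniqueness statements of Theorem \ref{thm:iitaka} and Proposition \ref{prop:percent} apply verbatim, is where the real work lies; the rest is an orchestration of results already established in \S\S\ref{sec:adjprop}--\ref{S:sing}.
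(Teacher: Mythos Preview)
Your plan is essentially the paper's own proof: take the Zariski decomposition $C+mK_S\equiv P+N$ via Propositions \ref{prop:zariski}--\ref{prop:zariski2}, apply Proposition \ref{D+K=0} to the nef part $P$ (using $|P+K_S|=\emptyset$), and split into the Del Pezzo, ruled, and big cases according to whether $P=0$, $P^2=0$, or $P^2>0$; the uniqueness clause is then handled exactly as you indicate, via Theorem \ref{thm:iitaka}, Proposition \ref{pro:flat}, and Corollary \ref{cor: jung}. The only place your sketch is slightly imprecise is the big case: the split between $(b_1)$ and $(b_2)$ is not governed by the parity of $P^2$ but by which minimal-degree surface $\phi_{|P|}$ lands on, and the paper makes this explicit by invoking Theorem \ref{thm:rational} (cases $(b^j)$ give $(b_1)$ with $\alpha=2,5$, while cases $(c_n^d)$ give $(b_2)$ with $C'\equiv -mK_{\F_n}+E+dF$, hence $E$-coefficient $2m+1$).
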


\begin{remark} By taking into account the proof of Proposition \ref{pro:flat}, one sees that part (ii) of the statement above can be improved. Indeed, $D$ may have more points of multiplicity $m$, provided 
performing elementary transformations there forces to make the inverse transformations to go back to 
the $\flat$--model. This can be expressed in terms of clusters on the $\natural$--model, but we will not dwell on this here. \end{remark}

\begin{remark}\label{rem:sharp}
All pairs in the statement of Theorem \ref{thm1},
but types $(dp_1)$ and $(b_1)$, are $\sharp$-models
(see \S \ref{ssec:minimalrat}).
By blowing up a point of multiplicity $m_0$,
one sees they
are respectively birationally equivalent to
the $\sharp$-models:
\begin{align}
\label{dp1F1}
& (\F_1,D'),
\quad
D'\in\cL_1(3m-m_0,3m;m_1,\ldots),
\quad &&\text{with } m_1\leqslant m_0\leqslant m, \\
\label{b1F1}
& (\F_1,D'),
\quad
D'\in\cL_1(3m-m_0+[\alpha/2],3m+[\alpha/2];m_1,\ldots),
\quad &&\text{with } m_1\leqslant m_0\leqslant m.
\end{align}
 
Pairs $(dp_2)$,  $(r)$ if either $n=0$ or $m>m_1$, $(b_1)$ and \eqref{b1F1}
are $\sharp\sharp$-models.
Pair \eqref{dp1F1} is a $\sharp\sharp$-model if either $m_0<m$
or $m=m_0>m_1$. 
Pairs $(r)$ with $n\geqslant 1$ have positive $\flat$--index.
\end{remark}

In cases $(r)$ and $(b_2)$, with $n\geqslant 2$,  since the
$\sharp$--index is positive, 
we may consider the unique $\natural$-model $(\F_{n+v},D_\natural)$ which is
fibred birationally equivalent to $(\F_n,D)$ (see Proposition \ref{prop:percent}).
Let $q_1,\ldots,q_{v}$ be the proper or infinitely near singular points of $D$
lying on $E_n$ or on its strict transform, 
and let $\mu_1, \ldots,\mu_{v}$ be the respective multiplicities,
where we may assume
$\mu_1\geqslant \ldots \geqslant \mu_{v}\geqslant 2$. Set $\gamma=\sum_{i=1}^{v}(2m-\mu_i)$ and $n'=n+v$.
Then $(\F_{n+v},D_\natural)$ is obtained by performing
elementary transformations at $q_{1},\ldots,q_{v}$ and therefore
either $D_{\natural}\in \cL_{n'}(2m,(2+n)m+\alpha+\gamma)$, in case $(r)$ with $n\geqslant 1$,
or $D_{\natural}\in \cL_{n'}(2m+1,(2+n)m+(\alpha+n-1)/2+\gamma)$, in case $(b_2)$ with $n\geqslant 1$.

Let $m'_1,m'_2,\ldots,m'_{n'-1}$ be a good sequence of multiplicities
of the $\natural$-model $(\F_{n'},D_\natural)$,
and set $\beta=\sum_{i=1}^{n'-1 }m'_i$. With this notation, we have:

\begin{theorem}[Cremona minimal pairs] \label{thm2}
Let $(S,C)$ be a pair with $S$ rational and $C$ smooth and irreducible,
and suppose that $(S,C)$ does not present the line case.
Then $(S,C)$ is birationally
equivalent to a Cremona minimal pair $(\Pl,B)$,
where $B$ belongs to one of the following planar linear systems (where $m$ and $\alpha$ are the birational invariants introduced above):
\begin{enumerate}

\item [$(cdp_1)$] $\cL(3m; m_0, \ldots)$,
with $m_0\leqslant m$;
\label{CDP1}

\item [$(cdp_2)$]$\cL(4m-m_1;2m-m_1,2m-m_1, m_2, \ldots)$
with $m_2\leqslant m_1<m$, $m_1>0$;
\label{CDP2}

\item [$(cdp_3)$]$\cL(4m-m_1;2m-m_1,[2m-m_1], m_2, \ldots)$
with $m_2\leqslant m_1<m$, $m_1>0$;
\label{CDP3}

\item [$(cr_0)$]$\cL(4m-m_1+\alpha;2m-m_1+\alpha,2m-m_1, m_2, \ldots)$,
with $m_2\leqslant m_1<m$, $m_1>0$  and $\alpha> 0$;

\item [$(cr_1)$]$\cL(3m+\alpha;m+\alpha, m_1,\ldots)$, with $m_1\leqslant m$ 
and $\alpha> 0$;

\item [$(cr_2)$] $\cL((2+n)m-\beta+\alpha+\gamma;
(nm-\beta+\alpha+\gamma,[2m-m'_{n'-1}, 2m-m'_{n'-2}, \ldots, 2m-m'_1]),
m_{n'+1}, \ldots)$, with with $2\leqslant n\leqslant 2+[\alpha/m]$, $\alpha> 0$, 
$m\geqslant m_{n'+1}\geqslant \dots$
and $n'$, $\gamma$, $m'_1,m'_2,\ldots,m'_{n' -1}$ and $\beta$ as above;

\item [$(cb_1)$]  $\cL(3m+\left[{\alpha}/{2}\right]; m_0, \ldots)$,
with $m_0\leqslant m$ and $\alpha=2,5$;

\item [$(cb_2)$] $\cL(3m+{\alpha}/{2}; m-1+{\alpha}/{2}, m_1, \ldots)$,
with $m_1\leqslant m$ and $\alpha\geqslant 4$ is even;

\item [$(cb_3)$] $\cL(4m-m_1+({\alpha+1})/{2};
2m-m_1+({\alpha-1})/{2},2m+1-m_1, m_2, \ldots)$,
with $m_2\leqslant m_1\leqslant m$ and $\alpha\geqslant 3$ is odd;

\item [$(cb_4)$] $\cL((2+n)m-\beta+\gamma+({\alpha+n-1})/{2};
nm-\beta+\gamma+({\alpha+n-3})/{2},
[2m+1-m'_{n'-1}, 2m+1-m'_{n'-2}, \ldots, 2m+1-m'_1],
m_{n'+1}, \ldots )$,
with $2\leqslant n\leqslant 2+({\alpha-3})/({2m+1})$,
$m_{n'+1}\leqslant m$, 
and $n'$, $\gamma$, $m'_1,m'_2,\ldots,m'_{n' -1}$ and $\beta$
are as above.
\label{cb4}
\end{enumerate}

The above pairs may be Cremona,  but not projectively, equivalent only if we are in case:

\begin{enumerate}[(1)]
\item  $(cdp_1)$  with $ m_0=m_1=m_2=m$ and and $(cr_1)$ with $m_1=m_2=m$;
\item  $(cdp_2)$,  $(cdp_3)$ and $(cr_0)$ with $m_1=m_2$;
\item  $(cr_2)$, for different good sequences of multiplicities of the same $\natural$--model;
\item  $(cb_4)$, for different good sequences of multiplicities of the same $\natural$--model. 
\end{enumerate}

Except in cases (3) and (4), the Cremona type of the minimal Cremona pairs in a Cremona equivalence class is unique. 
\end{theorem}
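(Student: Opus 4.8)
The plan is to prove Theorem \ref{thm2} by running a suitable minimal model program on the pair $(S,C)$, driven by the canonical divisor, and then reading off the plane models from the resulting minimal surface. First I would use the results of \S\ref{susec:birat} to reduce to the situation where $C$ is smooth and irreducible, with $m=m(S,C)$ and $\alpha=\alpha(S,C)$ as introduced before Theorem \ref{thm1}. The starting point is the last non-empty adjoint system $|C+mK_S|$: applying Proposition \ref{prop:zariski} and Proposition \ref{prop:zariski2}, I would replace $(S,C)$ by a birationally equivalent pair on which $P:=C+mK_S$ is nef, with the Zariski decomposition $C+mK_S\equiv P+N$ exhibited there. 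The four cases of Theorem \ref{thm1} — $(dp_1)$--$(dp_2)$, $(r)$, $(b_1)$, $(b_2)$ — are then distinguished by the nature of $P$: whether $P\equiv 0$ (del Pezzo case), $P^2=0$ (ruled case), or $P^2>0$ (big case), and in the latter two cases by the parity of $k=2m$ versus $2m+1$. This is exactly the subdivision announced in the introduction and carried out in Theorem \ref{thm1}, so for Theorem \ref{thm2} I may assume the pair $(S,C)$ has already been put in one of the normal forms (i)--(v) listed there.

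Next, for each normal form I would construct the Cremona minimal plane model by the explicit recipes already developed in \S\ref{subsec:moving}, \S\ref{admissible} and \S\ref{S:sing}. In the del Pezzo cases the surface is either $\PP^2$ or an $\F_n$ of small index, and blowing down or contracting as in the discussion preceding Theorem \ref{thm:rational} produces the systems $(cdp_1)$, $(cdp_2)$, $(cdp_3)$: minimality of each follows from Lemma \ref{lem:criterion} (for $(cdp_2)$, $(cdp_3)$ one uses part (ii) with $m=0$, i.e.\ the de Jonqui\`eres criterion, via $\ad_m$ and Remark \ref{rem:Cremonaminimal}). In the ruled cases $(r)$ I would pass from the $\flat$-model $(\F_n,D)$ to its $\natural$-model via Proposition \ref{prop:percent}, choose a good sequence of multiplicities $m'_1\geqslant\cdots\geqslant m'_{n'-1}$ in the forest $G(\F_{n'},D_\natural)$ as in \S\ref{S:sing}, perform the elementary transformations of a good cluster and contract $E_1$; the resulting plane systems are exactly $(cr_0)$, $(cr_1)$, $(cr_2)$, and their Cremona minimality is Theorem \ref{lem:new} (for $n\geqslant2$) and Lemma \ref{lem:criterion} / Theorem \ref{thm:jung} (for $n=0,1$). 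The big cases $(b_1)$, $(b_2)$ are handled identically, replacing $2m$ by $2m+1$, and yield $(cb_1)$--$(cb_4)$; for $(cb_2)$ one uses the $\sharp\sharp$-model \eqref{b1F1}, and for $(cb_3)$, $(cb_4)$ one uses Theorem \ref{lem:new} again (the $\epsilon=1$ refinement in Remark \ref{rem:equal}). The degree bookkeeping for each system is a routine computation from $\deg(\ad_m(\cL))=\deg(\cL)-3m$ and the formulas for elementary transformations and for contracting $E_1$.

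It remains to establish the precise list of coincidences, i.e.\ when two of the models $(cdp_*), (cr_*), (cb_*)$ can be birationally but not projectively equivalent. Here I would argue as follows. If two such minimal models are birationally equivalent, then by Remark \ref{rem:Cremonaminimal} and Corollary \ref{cor: jung} (Jung's theorem) a Cremona map realizing the equivalence that does not lower the degree must be linear \emph{unless} the degree hypothesis of Theorem \ref{thm:jung} fails, which forces $d=m_1+m_2+m_3$ — this pins down $(cdp_2)$, $(cdp_3)$, $(cr_0)$ with $m_1=m_2$, and $(cdp_1)$ with $m_0=m_1=m_2=m$. The remaining freedom comes from the $\natural$-model: by Theorem \ref{thm:iitaka}(ii) and Proposition \ref{prop:percent} the $\natural$-model is unique up to isomorphism, but different good paths in the forest $G$ give genuinely different good clusters, hence different plane models, as Example \ref{ex:forest}(c) shows explicitly; this accounts for items (3) and (4). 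Conversely, when none of these degeneracies occurs, the birational invariants $m$, $\alpha$, together with the uniqueness statements in Theorem \ref{thm1}, Theorem \ref{thm:iitaka} and Proposition \ref{prop:percent}, force the Cremona type to be unique, and Corollary \ref{cor: jung} upgrades ``birationally equivalent'' to ``projectively equivalent.'' The main obstacle I anticipate is precisely this last bookkeeping: verifying exhaustively that no \emph{other} coincidences among the nine families can occur — in particular ruling out cross-type equivalences (say between a $(cr)$ and a $(cb)$ model, or between a del Pezzo and a ruled model) — which requires comparing the invariants $(m,\alpha,\kappa)$ and the structure of the last adjoint system across families, and handling the low-degree boundary cases ($m$ small, $\alpha$ small, $n'=2$) by hand.
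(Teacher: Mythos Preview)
Your proposal is correct and follows essentially the same route as the paper: the proof of Theorem \ref{thm2} is carried out jointly with Theorem \ref{thm1} via the trichotomy $P\equiv 0$ / $P^2=0$ / $P^2>0$ for the nef part of $C+mK_S$ (Propositions \ref{prop:delpezzo}, \ref{prop:ruled}, \ref{prop:big}), producing the plane models by elementary transformations and contractions, and establishing minimality by Jung's Theorem \ref{thm:jung} for the Noether-type systems and by Theorem \ref{lem:new} for $(cr_2)$ and $(cb_4)$. Two small slips to clean up: the ruled/big split is governed by $P^2$, with the value of $k$ (namely $2m$ versus $2m+1$) a \emph{consequence} rather than the criterion; and $(cb_3)$ is of Noether type, so Jung suffices there and Theorem \ref{lem:new} is needed only for $(cb_4)$.
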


\begin{remark}
All systems in Theorem \ref{thm2}, but $(cr_2)$ and $(db_4)$,
are of Noether type, and therefore, by Theorem \ref{thm:jung}, their Cremona minimality
is clear. 
\end{remark}

The proof of  Theorems \ref{thm1}  and \ref{thm2} will follow
from the analysis of three different cases, according to the behaviour
of the nef part of $C+mK_S$. Indeed,
by  Propositions \ref{prop:zariski} and \ref{prop:zariski2},
one has $C+mK_S\equiv P+N$, where $P$ is the nef and $N$ is the negative part
of $C+mK_S$ as in \eqref{eq:C+mK}.
Then there are three possible cases: either
\begin{enumerate}[$(A)$]
\item $P=0$; or
\item $P>0$ and $P^2=0$; or
\item $P>0$ and $P^2>0$.
\end{enumerate}

We call cases (A), (B), and (C) respectively the \emph{Del Pezzo},
\emph{ruled}, and \emph{big case},
and we will discuss them separately.

\subsection{The Del Pezzo case}

This case is covered by the following:

\begin{proposition}\label{prop:delpezzo} Let $(S,C)$ be a pair presenting the 
Del Pezzo case.
Then $(S,C)$ is birationally
equivalent to one, and only one, of pairs $(dp_1)$, $(dp_2)$.

Accordingly, $(S,C)$ is birationally equivalent to 
a Cremona minimal pair $(\Pl,B)$, $B\in\cL$,
where $\cL$ is one, and only one, of types $(cdp_1), (cdp_2), (cdp_3)$
in Theorem \ref{thm2}.
\end{proposition}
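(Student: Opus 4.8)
The plan is to bring $(S,C)$ to a minimal rational surface carrying an anticanonical curve, and then to read off the two ruled/planar shapes and their Cremona minimal plane models. Throughout, $m=m(S,C)$ and $\alpha=\alpha(S,C)$ are as in the statement, and $C+mK_S\equiv P+N$ is the decomposition of \eqref{eq:C+mK}.

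\textbf{Step 1: contracting the negative part.} By Proposition~\ref{prop:zariski} there is a birational morphism $f\colon S\to S'$ with $C'=f_*(C)$ and $C'+mK_{S'}$ nef, and in the Del Pezzo case $P=f^*(C'+mK_{S'})=0$, so $C'+mK_{S'}\equiv 0$, i.e.\ $C'\equiv -mK_{S'}$. Then $h^0(S,\cO_S(C+mK_S))=h^0(S',\cO_{S'})=1$, whence $\alpha=0$. Since $(S,C)\sim(S',C')$ and the line case is a birational invariant (Theorem~\ref{thm:KM}), $C'$ is still irreducible and not a $(-k)$-curve, hence satisfies the hypotheses of Proposition~\ref{prop:zariski}.

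\textbf{Step 2: running the minimal model program on $S'$.} I would contract $(-1)$-curves on $S'$ until reaching a minimal rational surface $S_0$. If $\Gamma$ is a $(-1)$-curve on $S'$ then $C'\cdot\Gamma=-mK_{S'}\cdot\Gamma=m>0$, so $\Gamma\ne C'$; writing $K_{S'}=g^*K+\Gamma$ and $C'=g^*(g_*C')-m\Gamma$ one checks $g_*(C')\equiv -mK$ again, and $g_*(C')$ is irreducible with a point of multiplicity $m\ge 1$, hence $p_a\ge 1$ and it is not a $(-k)$-curve; so the program proceeds. We arrive at $g\colon S'\to S_0$ with $S_0=\Pl$ or $S_0=\F_n$, $n\ne1$, and $C_0=g_*(C')\equiv -mK_{S_0}$, all base points of the resulting linear system having multiplicity $\le m$, and $(S,C)\sim(S_0,C_0)$. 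If $S_0=\Pl$, then $C_0\in\cL(3m;m_0,\ldots)$ with $m_0\le m$: this is type $(dp_1)$. If $S_0=\F_n$, then $C_0\equiv 2mE_n+(n+2)mF$, and since $C_0$ is irreducible with $C_0^2=8m^2>0$ we have $C_0\ne E_n$, so $C_0\cdot E_n=m(2-n)\ge0$, forcing $n\le2$, i.e.\ $n\in\{0,2\}$. Finally, if $C_0$ had a base point $p$ of multiplicity $m$ (necessarily off $E_n$ when $n=2$, since $C_0\cdot E_2=0$), then $\elm_p$ produces an anticanonical curve on $\F_{n-1}$, hence after contracting $E_1$ a curve of type $(dp_1)$; so after this normalization $m_1<m$ and we are in type $(dp_2)$. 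Thus $(S,C)$ is birationally equivalent to a pair of type $(dp_1)$ or $(dp_2)$; $(dp_1)$ is separated from $(dp_2)$ because $\Cdeg$ is a birational invariant and, as Step~3 shows, the two have Cremona degrees $3m$ and $4m-m_1>3m$ respectively.

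\textbf{Step 3: Cremona minimal plane models.} For $(dp_1)$ the curve is already planar, of degree $3m$, and its three largest multiplicities are $\le m$, so $3m\ge m_1+m_2+m_3$: it is of Noether type and hence Cremona minimal by Theorem~\ref{thm:jung}, giving type $(cdp_1)$. For $(dp_2)$ with $n=0$ I would apply the standard birational map $\F_0\rto\Pl$ that blows up a point $p$ and contracts the two rulings through it, choosing $p$ a point of $C_0$ of maximal multiplicity $m_1$; computing in $\Pic$ (using $\sigma^*E_0=L-E_1$, $\sigma^*F_0=L-E_2$, $\Theta_p\leftrightarrow L-E_1-E_2$) one gets the plane model $\cL(4m-m_1;2m-m_1,2m-m_1,m_2,\ldots)$ with the two points of multiplicity $2m-m_1$ \emph{proper}: this is type $(cdp_2)$, again of Noether type since $m_2\le m_1$ gives $(2m-m_1)+(2m-m_1)+m_2\le 4m-m_1$, hence Cremona minimal. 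For $(dp_2)$ with $n=2$ I would instead perform $\elm_p$ at a point $p$ of $C_0$ of maximal multiplicity $m_1$ (off $E_2$), landing on $\F_1$ with curve class $2mE_1+(4m-m_1)F_1$, and then contract $E_1$; because the contracted fibre meets $E_1$, the second point of multiplicity $2m-m_1$ is created \emph{infinitely near} to the maximal multiplicity point, giving $\cL(4m-m_1;(2m-m_1,[2m-m_1]),m_2,\ldots)$, i.e.\ type $(cdp_3)$, once more of Noether type and so Cremona minimal by Theorem~\ref{thm:jung}. (These constructions are exactly the admissible-plane-model process of \S\ref{admissible}.)

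\textbf{Expected main obstacle, and uniqueness.} The routine surface geometry of Steps~1--2 should be straightforward; I expect the delicate part to be the base-point bookkeeping in Step~3 --- certifying which of the two multiplicity-$(2m-m_1)$ points is proper and which is infinitely near, and thereby distinguishing $(cdp_2)$ from $(cdp_3)$ (hence $(dp_2)$ with $n=0$ from $n=2$) as genuinely different Cremona classes. Here I would invoke Corollary~\ref{cor: jung}: when $m_1>m_2$ the degree $4m-m_1$ strictly exceeds the sum of the three largest multiplicities, so any degree-preserving Cremona transformation between these Noether-type systems is linear, and $(cdp_2)$, $(cdp_3)$ are not projectively equivalent; when $m_1=m_2$ one falls on the boundary $d=m_1'+m_2'+m_3'$, where a de Jonqui\`eres transformation (Lemma~\ref{lem:criterion}) can identify the two, which is precisely the exceptional case recorded in Theorems~\ref{thm1} and \ref{thm2}. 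This yields the ``one, and only one'' assertion of the Proposition.
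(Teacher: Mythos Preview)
Your existence argument (Steps~1--3) is essentially the paper's: contract the negative part and then further $(-1)$-curves to reach $\Pl$ or $\F_n$ with $C_0\equiv-mK$, bound $n\le 2$, normalize away multiplicity-$m$ points via $\elm_p$, and pass to a Noether-type plane model. The Cremona minimality via Theorem~\ref{thm:jung} and the separation of $(cdp_1)$ from $(cdp_2),(cdp_3)$ by Cremona degree are also as in the paper.

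The genuine gap is in your separation of $(cdp_2)$ from $(cdp_3)$. Your Corollary~\ref{cor: jung} argument is valid only when $m_1>m_2$, and your claim that for $m_1=m_2$ a de Jonqui\`eres ``can identify the two'' is wrong: the two types are \emph{never} Cremona equivalent. You have misread the exceptional lists --- $(dp_2)$ does not appear among the exceptions of Theorem~\ref{thm1}, and item~(2) of Theorem~\ref{thm2} only records that \emph{within} each of $(cdp_2)$, $(cdp_3)$ there may exist non-projectively-equivalent minimal models when $m_1=m_2$; the Cremona type itself remains unique. The paper closes this gap with a different invariant: by Remark~\ref{rem:multad} the projective isomorphism class of the image of $\Pl$ under $\ad_{m_1}(\cL)$ is a birational invariant of the pair. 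For $(cdp_2)$ this image is the $(m-m_1)$-anticanonical model of $\F_0$ (smooth), for $(cdp_3)$ that of $\F_2$ (with the cone point coming from $E_2$); these are projectively distinct, so the two types are never Cremona equivalent. Equivalently --- and this is the shortest fix --- both $(\F_0,D)$ and $(\F_2,D)$ are $\sharp\sharp$-models (since $m_1<m$), so Iitaka's Theorem~\ref{thm:iitaka}(i) forces any birational equivalence between them to be an isomorphism, impossible since $\F_0\not\cong\F_2$.
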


\begin{proof}
Let $f\colon S\to S'$ be the birational morphism
which first blows down the negative part $N$ of $\vert C+mK_S\vert$  
and then other $(-1)$-cycles in such a way that $S'$ is minimal.
On $S'$, one has $C'=f_*(C)\equiv -mK_{S'}$
and a $(-1)$-cycle $\theta$ on $S$, which is blown down by $f$
and is not part of $N$, is such that $C\cdot \theta=m$.
If $S'=\F_n$, one has $0\leqslant C' \cdot E_n=(2-n)m$, therefore $n\leqslant2$.
Thus, there are three sub-cases:
either $S'=\Pl$, or $S'=\F_0$, or $S'=\F_2$.

We may and will assume that, if $S'=\F_0$ or $\F_2$,
the birational morphism $S\to S'$
does not factor through a birational morphism $g\colon S\to \F_1$
which blows down $N$.
Indeed, if such a morphism $g$ exists, we may assume that $S'=\Pl$.

We discuss separately the three cases.

\begin{enumerate}[$\ \bullet\ $]

\item If $S'=\Pl$, then $C'\equiv 3mL$, where $L$ is a line,
i.e.\ $C'$ is a curve of degree $3m$
with points of multiplicity at most $m$,
hence we are in cases $(dp_1)$ and
$(cdp_1)$.

\item If $S'=\F_0$, then $C'\equiv 2mE_0+2mF_0$
and $C'$ has points of multiplicity at most $m$.
If $C'$ had a point $p$ of multiplicity $m$,
then $\elm_p \circ f\colon S\to \F_1$ would be
a birational morphism which factors through the blowing-down of $N$,
a contradiction.
Therefore we are in case $(dp_2)$ with $n=0$. Next, choose a point $p$
of maximal multiplicity $m_1$ of $C'$. Perform an elementary
transformation $\elm_p$ and then blow down the $(-1)$-section $E_1$. 
The result is a pair $(cdp_2)$.

\item If $S'=\F_2$, then $C'\equiv 2mE_2+4mF_2$
and $C'$ has points of multiplicity at most $m$.
Note that $C'\cdot E_2=0$ implies that $C\cap E_2=\emptyset$.
The same arguments as above imply that
we are in cases $(dp_2)$, $n=2$ and $(cdp_3)$.
\end{enumerate}

The Cremona minimality of the
pairs $({cdp_1})$, $({cdp_2})$ and $({cdp_3})$
follows by Theorem \ref{thm:jung}. 

Cremona minimality implies that the pairs $({cdp_1})$ are not
Cremona equivalent to either $({cdp_2})$ or $({cdp_3})$. 
To prove that $({cdp_2})$ and $({cdp_3})$ are not Cremona equivalent, 
note that the image of $\PP^2$ via the linear system ${\rm ad}_{m_1}(\cL)$ 
is projectively different in the two cases (see Remark \ref{rem:multad}).

The assertions (i) in Theorem \ref{thm1} and (1), (2) in Theorem  \ref{thm2}  regarding the del Pezzo pairs follow form Corollary \ref{cor: jung}. \end{proof}

\begin{remark}
$(i)$ From the irreducibility of $C$,
it follows that the number of points of multiplicity $m$
of $\cL$ in case $(cdp_1)$ is at most $9$, if $m>2$
(at most $10$ if $m=2$).

\medskip
$(ii)$ An alternative proof of the birational inequivalence of pairs
$(dp_1)$ and $(dp_2)$  follows from Iitaka's Theorem \ref{thm:iitaka},
since types $(dp_2)$ and $(dp_3)$
are $\sharp\sharp$-minimal and type $(dp_1)$,
considered on $\F_1$ as in Remark \ref{rem:sharp},
is $\sharp$-minimal.
\end{remark}

\subsection{The ruled case}\label{sec:ruledcase}

We deal with this case in the following proposition.

\begin{proposition}\label{prop:ruled}
Let $(S,C)$ be a pair presenting the ruled case.
Then $(S,C)$ is birationally
equivalent to one, and only one, of the pairs in $(r)$, with the usual
exception for $n=0$.

Accordingly, $(S,C)$ is birationally equivalent to 
a Cremona minimal pair $(\Pl,B)$, $B\in\cL$,
where $\cL$ is one, and only one, of types $(cr_0), (cr_1), (cr_2)$.
\end{proposition}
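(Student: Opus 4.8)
The plan is to reduce $(S,C)$ to a $\flat$--model, recognise it as a pair of type $(r)$, and then read off the plane models.

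First I would use Propositions \ref{prop:zariski} and \ref{prop:zariski2} to blow down the negative part of $C+mK_S$ (and, if useful, further $(-1)$--cycles), obtaining a birational morphism $f\colon S\to S'$ with $C'=f_*(C)$, $C'+mK_{S'}$ nef and $P=f^*(C'+mK_{S'})$. Since $\vert C+(m+1)K_S\vert=\emptyset$ and the $f$--exceptional $(-1)$--cycles $\Theta_{i,j}$ have $C\cdot\Theta_{i,j}=i<m$, Remark \ref{rem:stable} gives $h^0(S',\cO_{S'}(C'+(m+1)K_{S'}))=0$; as $C'+mK_{S'}$ is nef, nonzero (because $P>0$) and $q(S')=0$, Proposition \ref{D+K=0} applies, and $(C'+mK_{S'})^2=P^2=0$ puts us in its case $(i)$: $\vert C'+mK_{S'}\vert$ is composed with a base point free pencil $\vert L\vert$ of rational curves, with $\alpha:=\dim\vert C'+mK_{S'}\vert>0$, so the ruled case forces $\alpha>0$. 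Now $C'$ is a multisection of $\vert L\vert$, since $C'\cdot L=-mK_{S'}\cdot L=2m>0$ by adjunction on $L$; blowing down the $(-1)$--curves $\Theta$ with $\Theta\cdot L=0$ — all of which have $\Theta\cdot(C'+mK_{S'})=0$, so nefness is preserved — one reaches a Hirzebruch surface on which the proper image of $C'$ is irreducible of class $2mE+hF$, and then $\sharp$--minimalising and $\flat$--minimalising (see \S\ref{ssec:model}, Theorem \ref{thm:iitaka} and Proposition \ref{pro:flat}) one gets a $\flat$--model $(\F_n,D)$ birational to $(S,C)$, with $D$ the proper image of $C$, irreducible of multiplicity $\leqslant m$ everywhere and $D\equiv 2mE_n+hF_n$. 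Using $K_{\F_n}\equiv-2E_n-(n+2)F_n$, one has $D+mK_{\F_n}\equiv(h-(n+2)m)F_n$ and, since all multiplicities are $\leqslant m$, $h^0(\F_n,\cO(D+mK_{\F_n}))=h^0(S,\cO_S(C+mK_S))=\alpha+1$, whence $h=(2+n)m+\alpha$; moreover $D\cdot E_n\geqslant0$ forces $n\leqslant2+[\alpha/m]$, and Proposition \ref{pro:flat}$(i)$ places all multiplicity--$m$ points on $E_n$ when $n\geqslant1$. This is precisely a pair of type $(r)$.

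Next, for uniqueness, I would invoke the fact that $m$, $\alpha$, the $\flat$--index $n$ (the least $n$ admitting a $\sharp$--model on $\F_n$ in the class), and — by Theorem \ref{thm:iitaka}$(ii)$ — the multiplicity sequence $m_1,m_2,\dots$ of $\sharp$--models, are birational invariants; hence two pairs of type $(r)$ in the same class have the same $n$, and by Proposition \ref{pro:flat}$(ii)$ are isomorphic unless $n=0$, or $k=2m$ and $D$ has an infinitely near base point of multiplicity $m$ on $E_n$, i.e.\ $D$ has at least two points of multiplicity $m$ — the exceptions stated here and in Theorem \ref{thm1}. Then, from $(\F_n,D)$ with $D\in\vert 2mE_n+((2+n)m+\alpha)F_n\vert$, I would produce the plane model: for $n=1$, contracting $E_n$ gives $B\in\cL(3m+\alpha;m+\alpha,m_1,\ldots)$ of type $(cr_1)$; for $n=0$, blowing up the point $p_1$ of maximal multiplicity $m_1$ and contracting the proper transforms of the two fibres of $\PP^1\times\PP^1$ through $p_1$ gives $B\in\cL(4m-m_1+\alpha;2m-m_1+\alpha,2m-m_1,m_2,\ldots)$ of type $(cr_0)$; in both cases $d$ is at least the sum of the three largest multiplicities, so the system is of Noether type and Cremona minimal by Theorem \ref{thm:jung}. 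For $n\geqslant2$ I would pass to the unique $\natural$--model $(\F_{n'},D_\natural)$ fibred birationally equivalent to $(\F_n,D)$ (Proposition \ref{prop:percent}), with $n'=n+v$, $\gamma=\sum_{i=1}^v(2m-\mu_i)$ and $D_\natural\in\vert 2mE_{n'}+((2+n)m+\alpha+\gamma)F_{n'}\vert$, pick a good cluster $\{p_1,\ldots,p_{n'-1}\}$ with good sequence $m'_1\geqslant\cdots\geqslant m'_{n'-1}$ and $\beta=\sum m'_i$, and perform $\elm_{p_1,\ldots,p_{n'-1}}$ followed by contraction of $E_1$: the resulting good plane model $B$ is of type $(cr_2)$, of degree $(2+n)m-\beta+\alpha+\gamma$, and is Cremona minimal by Theorem \ref{lem:new}. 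The birational distinctness of $(cr_0)$, $(cr_1)$, $(cr_2)$, together with the list of admissible non--projective Cremona equivalences, would then follow from Cremona minimality, Corollary \ref{cor: jung}, and the invariance of the projective class of the image of $\PP^2$ under the relevant adjoint system (Remark \ref{rem:multad}), the only remaining freedom in $(cr_2)$ being the choice of a good sequence of multiplicities of the $\natural$--model (cf.\ Example \ref{ex:forest}).

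The main obstacle, I expect, is the uniqueness part: verifying that the families $(r)$ and $(cr_0),(cr_1),(cr_2)$ are genuinely distinct up to exactly the listed exceptions, which requires combining the invariance of $m$, $\alpha$, the $\flat$--index and the multiplicity sequence with Proposition \ref{pro:flat}$(ii)$ and Corollary \ref{cor: jung}, while keeping careful track of the fact that a $\flat$--model with $n=0$, and a good cluster when $n\geqslant2$, need not be unique.
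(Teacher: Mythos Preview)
Your proposal is correct and follows essentially the same route as the paper: identify $\vert C+mK_S\vert$ (after contracting its negative part) as composed with a rational pencil via Proposition \ref{D+K=0}, blow down to a Hirzebruch surface, pass to the $\flat$--model to land in case $(r)$, then read off $(cr_0)$, $(cr_1)$, $(cr_2)$ according to whether $n=0$, $n=1$, or $n\geqslant2$, invoking Theorem \ref{thm:jung} and Theorem \ref{lem:new} for Cremona minimality and Proposition \ref{pro:flat} together with the invariance of $m$, $\alpha$ and the $\flat$--index for uniqueness. The only cosmetic differences are that the paper applies Proposition \ref{D+K=0} to $P$ on $S$ itself rather than first passing to $S'$, and that for the inequivalence of $(cr_2)$ with $(cr_0),(cr_1)$ the paper appeals to Theorem \ref{thm:deg} (a Cremona equivalence would induce a fibred map between $\flat$--models, forcing equal $\flat$--index) rather than to Remark \ref{rem:multad}; your invocation of the $\flat$--index as a birational invariant already suffices here, and is what the paper ultimately uses as well.
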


\begin{proof} 
Since $\ad_{m+1}(C)=\emptyset$, one  has $\vert P+K_S\vert =\emptyset$
and Proposition \ref{D+K=0} implies that $P$ is composed
with an irreducible base point free pencil $|L|$ of rational curves,
namely $P\equiv \alpha L$.

Blowing down of all $(-1)$-cycles $Z$ such that $Z\cdot L=0$
gives a birational morphism $f\colon S\to \F_n$,
which maps $|L|$ to the ruling $|F|$ of $\F_n$ (one of the rulings if $n=0$).
Therefore
\[
D=f_*(C)\equiv -mK_{\F_n}+\alpha F\equiv 2mE+(m(2+n)+\alpha)F
\]
and $D$ has points of multiplicity at most $m$. So this is a $\sharp$--model.
We may actually assume that $n$ is the $\sharp$--index of  the pair. 

Note that $0\leqslant D\cdot E_n=2m-mn+\alpha$
implies $n\leqslant 2+\alpha/m$.

If $n=0$ we find case $(r)$ with $n=0$. 
Then we get $(cr_0)$ from $(r)$ by choosing a point $p\in D$ of
maximal multiplicity $m_1$, performing the elementary transformation $\elm_p$
and then contracting the $(-1)$-section $E_1$.

If $n=1$, we find case $(r)$ with $n=1$. 
Then we get $(cr_1)$  by contracting the curve $E_1$. 

Assume now $n>1$. We are still in case $(r)$, $n\geqslant 2$ and we get 
type $(cr_2)$ as a good model obtained from the $\natural$-model of $(\F_n,D)$,
cf.\ \S\ref{ssec:model}. 

The birational uniqueness of types in $(r)$, except for those with $n=0$ and 
at least two points of multiplicity $m$, 
follows by Remark \ref{rem:sharp}, Theorem \ref{thm:iitaka} and Proposition \ref{pro:flat}.

As for Cremona minimality, 
in cases $(cr_i)$, $0\leqslant i\leqslant 1$, one applies Theorem \ref{thm:jung}. 
In case $(cr_2)$ Cremona minimality follows from Theorem \ref{lem:new}.  

The types $(cr_i)$, $0\leqslant i\leqslant 1$ are clearly Cremona not equivalent.
The types $(cr_2)$ are not Cremona equivalent to the others, since
such a Cremona equivalence would induce a fibred birational equivalence between
the corresponding $\flat$--models (see Theorem \ref{thm:deg}), which would therefore be isomorphic by Theorem \ref{pro:flat}, a contradiction. 

Assertion (ii) in Theorem  \ref{thm1}  follows by Proposition \ref{pro:flat} in case $n>0$. If $n=0$ and there is only one point of multipliticty $m$, the assertion follows by Corollary   \ref{cor: jung}, applied to case $(cr_0)$. The assertions (1), (2) in Theorem  \ref{thm2}  regarding the ruled pairs follow again by 
Corollary   \ref{cor: jung}. As for (3) in Theorem  \ref{thm2} , it follows by Theorem \ref{lem:new} and Remark \ref{rem:equal}.  \end{proof}

\begin{remark}
Iitaka's Theorem \ref{thm:iitaka} gives
an alternative proof of the fact that types $(r_1)$ and $(r_2)$
are birationally inequivalent,
since type $(r_1)$ is $\sharp\sharp$-minimal
and type $(r_2)$ are $\sharp$-minimal.
In particular, type $(r_2)$
is not $\sharp\sharp$-minimal if $m_1=m$.
\end{remark}

\subsection{The big case}

Finally we dispose of the big case. 
This will finish the analysis of the various possible cases and 
the proof of Theorems \ref{thm1} and \ref{thm2}.

\begin{proposition}\label{prop:big}
Let $(S,C)$ be a pair presenting the big case.
Then $(S,C)$ is birationally
equivalent to one, and only one, of pairs $(b_1)$.

Accordingly, $(S,C)$
is birationally equivalent to 
a Cremona minimal pair $(\Pl,B)$, $B\in\cL$,
where $\cL$ is one, and only one, of types $(cb_1), (cb_2), (cb_3), (cb_4)$
in Theorem \ref{thm2}.
\end{proposition}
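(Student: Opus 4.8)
The plan is to run the adjunction-driven program of Propositions \ref{prop:delpezzo} and \ref{prop:ruled}, now with a \emph{big} nef part. Write the Zariski decomposition $C+mK_S\equiv P+N$ as in Propositions \ref{prop:zariski} and \ref{prop:zariski2}. Since $\vert C+(m+1)K_S\vert=\emptyset$ and $C+(m+1)K_S\equiv(P+K_S)+N$ with $N\geqslant0$, one has $\vert P+K_S\vert=\emptyset$; moreover $q(S)=0$, and $P^2>0$ by definition of the big case. Hence, by Proposition \ref{D+K=0}, $\vert P\vert$ is irreducible, base point free and composed of rational curves, with $\dim\vert P\vert=P^2+1$, so $P^2=\alpha-1\geqslant1$ and $\alpha\geqslant2$. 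I would first reduce, via the blow-down $f_0\colon S\to S_0$ of Proposition \ref{prop:zariski}, to a pair $(S_0,C_0)$ with $P_0:=C_0+mK_{S_0}=(f_0)_*(P)$ nef and big and $h^0(S_0,\cO_{S_0}(P_0))=h^0(S,\cO_S(C+mK_S))=\alpha+1$; the pair $(S_0,\vert P_0\vert)$ is then one of those classified in Theorem \ref{thm:rational}, with $P_0^2=\alpha-1$.

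Let $\Sigma\subseteq\PP^\alpha$ be the image of $\phi_{\vert P_0\vert}$, a surface of minimal degree $\alpha-1$. By del Pezzo's classification, either $\Sigma\simeq\Pl$, which forces $\alpha\in\{2,5\}$ and $\phi_{\vert P_0\vert}$ to be given by $\vert\cO_\Pl(c)\vert$ with $c=[\alpha/2]$; or $\Sigma=S(a,b)$ is a rational normal scroll (possibly a cone), with $a+b=\alpha-1$, whose minimal desingularization is $\F_n$ with $n=b-a$, and correspondingly $(S_0,\vert P_0\vert)\sim(\F_n,\vert E_n+bF_n\vert)$, so that $b=(\alpha-1+n)/2$ is an integer and $\alpha+n$ is odd. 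In the first case, carrying $C_0$ along through the blow-downs as in Proposition \ref{prop:delpezzo} and using that the equality $h^0(C_0+mK_{S_0})=h^0(D+mK_\Pl)=\alpha+1$ forces all multiplicities of $D$ to be at most $m$ (Remark \ref{rem:stable}), one reaches the plane model $(\Pl,D)$ with $D\in\cL(3m+[\alpha/2];m_0,\ldots)$, $m_0\leqslant m$: this is type $(b_1)$, and being already planar of Noether type (its three largest multiplicities add up to at most $3m\leqslant\deg D$) it is Cremona minimal by Theorem \ref{thm:jung}, i.e.\ type $(cb_1)$. In the second case, carrying $C_0$ along (after elementary transformations at general points, as in Case $D^2>0$ before Theorem \ref{thm:rational}), one obtains $(S,C)\sim(\F_n,D)$ with $D+mK_{\F_n}$ the scroll hyperplane class $E_n+bF_n$, whence $D\in\cL_n(2m+1,(2+n)m+(\alpha+n-1)/2;m_1,\ldots)$ with $m_1\leqslant m$, again by Remark \ref{rem:stable}; irreducibility of $D$ (which gives $D\cdot E_n\geqslant0$), the parity of $\alpha+n$, and the requirement that $\Sigma$ be a genuine scroll or a cone together produce exactly the numerical constraints of $(b_2)$. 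By Remark \ref{rem:multad}, the projective class of $\Sigma$, and hence the integer $n=b-a$, is a birational invariant of $(S,C)$.

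From $(b_2)$ I would produce the Cremona minimal plane model as follows. If $n=1$ (so $\alpha$ is even, $\alpha\geqslant4$) one contracts the $(-1)$-section $E_1$ and lands in $(cb_2)$; if $n=0$ (so $\alpha$ is odd, $\alpha\geqslant3$) one performs one elementary transformation at a point of maximal multiplicity and contracts the resulting $(-1)$-section --- as in the passage from $(r)$ to $(cr_0)$ in Proposition \ref{prop:ruled} --- landing in $(cb_3)$; in both cases the resulting planar system is of Noether type, hence Cremona minimal by Theorem \ref{thm:jung}. If $n\geqslant2$, since the $\flat$--index is positive one passes to the unique $\natural$-model $(\F_{n'},D_\natural)$ (Proposition \ref{prop:percent}), selects a good cluster as in \S\ref{S:sing}, and takes the associated good plane model; computing its degree and its multiplicities in terms of $\gamma$, $n'$, $\beta$ and a good sequence $m'_1,\ldots,m'_{n'-1}$ yields type $(cb_4)$, Cremona minimal by Theorem \ref{lem:new}.

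It remains to settle the (in)equivalences. The pairs $(b_1)$ and $(b_2)$ are $\sharp\sharp$-models (Remark \ref{rem:sharp}), so uniqueness within each class follows from Iitaka's Theorem \ref{thm:iitaka} together with Propositions \ref{pro:flat} and \ref{prop:percent}; $(b_1)$ and $(b_2)$ are mutually distinguished, and the various $(b_2)$'s with distinct $n$ are distinguished among themselves, by the projective class of the image of $\phi_{\vert C+mK_S\vert}$ --- $\Pl$ versus the scrolls $S(a,b)$ --- which is a birational invariant (Remark \ref{rem:multad}); and any Cremona equivalence among the $(cb_i)$ would induce a fibred birational equivalence of the corresponding $\flat$-models (Theorem \ref{thm:deg}), hence an isomorphism of them. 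This pins down the Cremona type in cases $(cb_1)$, $(cb_2)$ and $(cb_3)$, while $(cb_4)$ is determined only up to the choice of a good sequence of multiplicities of the common $\natural$-model, whence clause $(4)$ of Theorem \ref{thm2}, exactly as in Example \ref{ex:forest}$(c)$. I expect the main obstacle to be the case $n\geqslant2$: forcing the $\natural$-model and the good plane model into the precise shape of $(cb_4)$; on the technical side one must also check, in the cone case $a=0$, that $\phi_{\vert P_0\vert}$ still factors through $\F_n$, which is covered by the discussion preceding Theorem \ref{thm:rational}.
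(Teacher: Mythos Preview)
Your plan is essentially the paper's proof: use $\vert P+K_S\vert=\emptyset$ and Proposition \ref{D+K=0} to classify $\vert P\vert$ via Theorem \ref{thm:rational}, land on $(\Pl,D)$ or $(\F_n,D)$ according to whether $\Sigma$ is $\Pl$ or a scroll, then pass to the plane models $(cb_1)$--$(cb_4)$ via Jung's theorem and Theorem \ref{lem:new}, with uniqueness supplied by Iitaka's theorem (all these pairs being $\sharp\sharp$-models). One slip worth fixing: in the scroll case the passage from $(S_0,C_0)$ to $(\F_n,D)$ is by a birational \emph{morphism} $S_0\to\F_n$ (the minimal resolution of $\Sigma$, obtained by contracting $(-1)$-curves orthogonal to $P_0$), not by elementary transformations at general points --- an elementary transformation at a general point would create a point of multiplicity $2m+1$ on the image of $D$, destroying the condition $m_1\leqslant m$.
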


\begin{proof} Since $|C+(m+1)K_S|=\emptyset$, one has $|P+K_S|=\emptyset$
and Proposition \ref{D+K=0} implies that $|P|$ is
a base point free linear system of dimension $\alpha=P^2+1>1$.
The same argument used after Proposition \ref{D+K=0}
shows that there is a birational morphism $f\colon S\to S'$, with either
$S'=\Pl$ or $S'=\F_n$,
mapping $|P|$ to a linear system $|P'|=f_*(|P|)$,
which is one of the cases $(b^j)$, $(c_n^d)$, $n\geqslant0$,
of Theorem \ref{thm:rational}.
Furthermore, as in the Del Pezzo and in the ruled case,
the curve $C'=f_*(C)$ has points of multiplicity at most $m$.
In the $\F_n$--case we may assume $n$ is the $\sharp$--index. 

We  analyze the  different cases
which may occur.
\begin{enumerate} [$(c_0^d)$]

\item[$(b^j)$]
On $\Pl$
one has $|P'|=\cL(j)$,
so either $\alpha=\dim ( \cL(j))=2$, if $j=1$, or $\alpha=5$, if $j=2$.
Hence $C'\equiv -mK_{\Pl}+P'\equiv(3m+[\alpha/2])L$, $L$ a line,
and we are in cases $(b_1)$.

\item[$(c_1^d)$]
On $\F_1$, one has $|P'|=\cL_1(1,d)$, $d\geqslant2$,
where $d=\alpha/2$,
hence $\alpha\geqslant4$ is even, and
$
C'\equiv -mK_{\F_1}+P'
\equiv(2m+1)E+\left(3m+{\alpha}/{2}\right)F,
$
that is case $(b_2)$, $n=1$.

\item[$(c_0^d)$]
On $\F_0$, one has $|P'|=\cL_0(1,d-1)$, $d\geqslant2$,
where $d=(\alpha+1)/2$,
thus $\alpha\geqslant3$ is odd,
and
$
C'\equiv -mK_{\F_0}+P'
\equiv (2m+1)E+\left(2m+({\alpha-1})/{2}\right)F,
$
hence we are in case $(b_2)$, $n=0$.

\item[$(c_n^d)$]
On $\F_n$, $n \geqslant 2$, one has $|P'|=\cL_n(1,d)$, $d \geqslant n$,
where $d=(\alpha+n-1)/2$ and therefore
$
C' \equiv (2m+1)E+(m(2+n)+({\alpha+n-1})/{2})\,F.
$
Since
$
0\leqslant C'\cdot E_n=m(2-n)+({\alpha-n-1})/{2},
$
one has $\alpha\geqslant 3+(2m+1)(n-2)$ and we are in case $(b_2)$, $n\geqslant 2$.
\end{enumerate}

The above pairs are birationally distinct by Theorem \ref{thm:iitaka}.

Now we deal with Cremona models.
Case $(cb_1)$ and $(cb_2)$ correspond to cases $(b_1)$ and $(b)$, $n=1$. 
In case $(b)$, $n=0$, choose a point $p_1$ of the highest multiplicity $m_1$ of $C'$
and perform the elementary transformation $\elm_{p_1}\colon \F_0 \rto \F_1$;
by contracting the $(-1)$-curve $E_1$ of $\F_1$,
we arrive at case $(cb_3)$. 
In case $(b_2)$, $n\geqslant 2$, consider the $\natural$-model obtained from the $\flat$-model  we have
reached (cf.\ notation introduced in \ref{ssec:model} and used in \ref{sec:ruledcase}),
then go to a good plane model thus getting  case $(cb_4)$.

The Cremona minimality of types  $(cb_1)$, $(cb_2)$, $(cb_3)$ follows from
Theorem \ref{thm:jung}. The one of  type $(cb_4)$
can be proved with the same arguments we used in the ruled case
for $(cr_2)$. The birational inequivalence of the four cases is clear. 

The assertion (4) in Theorem  \ref{thm2}  follows by Theorem \ref{lem:new} and Remark \ref{rem:equal}. 
\end{proof}

\begin{remark}
$(i)$ Iitaka's Theorem \ref{thm:iitaka} gives
an alternative proof of the birationally inequivalence
of types $(b_1)$, $(b_2)$,
because all of them are $\sharp\sharp$-minimal
(considering $(b_1)$ as pair \eqref{b1F1} on $\F_1$,
cf.\ Remark \ref{rem:sharp}).

\medskip
$(ii)$ From Theorem \ref{thm:rational} we have that $|C+hK_S|=\emptyset$
for any $h\geqslant m$. By definition, $|C+hK_S|\ne\emptyset$ for $0\leqslant h\leqslant m$,
unless $C$ is rational, in which case $|C+K_S|=\emptyset$.

\medskip
$(iii)$ The above results can be easily extended to pairs $(S,\cL)$ with $\cL$ a positive dimensional, irreducible linear system on a rational surface $S$. Once one arrives at a $\natural$--model, the definition of a good sequence of multiplicities has to be slightly changed, inasmuch as one may perform elementary transformations only at base points of the system or at general points of the surface. We do not further dwell on this here. 
\end{remark}


\section{Applications}\label{S:applications}

\subsection{Minimality of pairs}

Let $(S,C)$ be a pair as usual.
The following definitions are due to Iitaka \cite{Iitaka1}.
The pair $(S,C)$ is called \emph{relatively minimal} if 
there is no $(-1)$-curve $E$ on $S$ such that $C\cdot E\leqslant 1$. The pair is called \emph{minimal} 
if whenever we have have a pair $(S',C')$ and a birational map $\phi: S'\dasharrow S$ inducing an equivalence $(S,C)\sim (S',C')$, then $\phi$ is a morphism. 

\begin{proposition} [Iitaka, \cite{Iitaka1}] \label{prop:Iitaka1} Let $(S,C)$  be a relatively minimal pair, with $S$ rational and $C$ irreducible. Assume $\kappa(S,C)\geqslant 0$ and either $m(S,C)>1$ or $m(S,C)=1$ and $(S,C)$ not presenting the ruled case. Then $(S,C)$  is minimal.
\end{proposition}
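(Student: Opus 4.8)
The goal is to show that a relatively minimal pair $(S,C)$, with $S$ rational, $C$ irreducible, $\kappa(S,C)\geqslant 0$, and satisfying the stated hypothesis on $m=m(S,C)$, is in fact minimal. The plan is to argue by contradiction: suppose $\phi\colon S'\dasharrow S$ induces an equivalence $(S,C)\sim(S',C')$ but is not a morphism. Resolving $\phi$ through a common surface $X$ with birational morphisms $f\colon X\to S'$ and $g\colon X\to S$, the assumption that $\phi$ is not a morphism forces the last blow-up of $f$ to be centered at a point not contracted by $g$; equivalently, there is a $(-1)$-curve $E$ on $S'$ (an $f$-exceptional $(-1)$-cycle that $g$ does not contract to a point of $C$) whose image on $S$ is effective. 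The quantity to track is $C\cdot E$, which by the discussion of $f$-exceptional $(-1)$-cycles in \S\ref{susec:birat} equals the integer $i$ appearing in the negative part of the adjoint decomposition. If I can show $C\cdot E\leqslant 1$ on $S$ (or rather produce such a curve on $S$ itself), the relative minimality of $(S,C)$ is contradicted.

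The key technical input is the stability of the Zariski decomposition \eqref{eq:C+mK} under birational morphisms, established in Remark \ref{rem:stable} and Proposition \ref{prop:bir}. First I would observe that $\kappa(S,C)\geqslant 0$ together with $\lvert C+mK_S\rvert\ne\emptyset$, $\lvert C+(m+1)K_S\rvert=\emptyset$ (the definition of $m$) puts us in the setting where $C+mK_S$ has a Zariski decomposition $C+mK_S\equiv P+N$ with $P$ nef; by Remark \ref{rem:stable}, applied with $X=S$ and any $(-1)$-cycle $\theta$ in the negative part, the multiplicity $i=C\cdot\theta$ satisfies $1\leqslant i\leqslant m-1$ (the upper bound is exactly $\Theta\cdot(C+mK_S)<0$). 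Since $m\geqslant 2$, this already says $C$ meets every negative-part $(-1)$-cycle in at least one point, but not necessarily transversally. The point is then: if $\phi$ were a nontrivial birational equivalence, I want to extract a $(-1)$-curve $E$ on $S$ with $C\cdot E\leqslant 1$, using that the extra $(-1)$-cycles contracted on $S'$ (but not on $S$) must have appeared as components with coefficient $m-i$ in the negative part, and pushing them down via $g$ produces $(-1)$-cycles $\theta_{i,\ell}$ on $S$ with $C\cdot\theta_{i,\ell}=i$ as in Proposition \ref{prop:bir}.

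The delicate case, and the main obstacle, is $m=1$. When $m\geqslant 2$ the bound $1\leqslant i\leqslant m-1$ is non-vacuous and the argument above runs cleanly: any $(-1)$-curve that could be blown down to stay in the same equivalence class would have to have $C\cdot E<m$ but $E\cdot(C+K_S)<0$ hence $C\cdot E=0$, contradicting relative minimality only if we can realize such an $E$ on $S$ — which we can via Proposition \ref{prop:bir}. When $m=1$, the adjoint $C+K_S$ is itself already nef-plus-negative, the negative part consists of $(-1)$-curves $\Theta$ with $C\cdot\Theta=0$, and relative minimality directly forbids these on $S$; so the negative part of $C+K_S$ on $S$ is zero, i.e. $C+K_S$ is already nef on $S$. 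Here the ruled case must be excluded precisely because, when $P=C+K_S$ is nef with $P^2=0$, the pair sits over a $\PP^1$-fibration and one can perform elementary transformations at general points of fibres — fibred birational maps that are genuinely non-isomorphic, so minimality fails; this is exactly the content flagged in Remark \ref{rem:noun} and in Theorem \ref{thm:iitaka}(ii). Thus in the $m=1$, non-ruled case one checks that $C+K_S$ is nef and big (the del Pezzo and big cases), invokes the uniqueness statements for $\sharp\sharp$-models (Theorem \ref{thm:iitaka}(i), noting a pair with $C$ having only ordinary singularities of multiplicity $\leqslant m-1$ on the minimal resolution is a $\sharp\sharp$-model), and concludes that the only self-equivalences are isomorphisms. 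I expect the case analysis $m\geqslant 2$ versus $m=1$, and within $m=1$ the careful use of $\kappa\geqslant 0$ to rule out $P=0$ with the $(-1)$-curve escaping to infinity, to be where the real work lies; the rest is bookkeeping with \eqref{eq:C+mK} and Proposition \ref{prop:bir}.
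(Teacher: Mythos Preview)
Your argument for $m(S,C)>1$ has a genuine gap. Applying Proposition~\ref{prop:bir} with $m=m(S,C)$ yields $(-1)$-cycles $\theta$ on $S$ with $C\cdot\theta=i\leqslant m-1$, but relative minimality only excludes $(-1)$-curves $E$ with $C\cdot E\leqslant 1$; when $m\geqslant 3$ the bound $i\leqslant m-1$ is too weak to conclude. Your proposed remedy, that such a curve ``would have to have $E\cdot(C+K_S)<0$ hence $C\cdot E=0$'', is unjustified: lying in the negative part of $C+mK_S$ does not force lying in the negative part of $C+K_S$, and $\lvert C+K_S\rvert$ may even be empty (e.g.\ when $C$ is rational). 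The paper's trick is to invoke Proposition~\ref{prop:bir} with $m=2$ rather than with $m(S,C)$: the hypothesis $m(S,C)>1$ guarantees $\lvert C+2K_S\rvert\ne\emptyset$, so an $f'$-exceptional $(-1)$-cycle $\Theta$ on the resolution $X$ that is not contracted by $f$ pushes down to a $(-1)$-cycle $\theta=f_*(\Theta)$ on $S$ with $C\cdot\theta\leqslant 1$, directly contradicting relative minimality. (Incidentally, your lower bound $1\leqslant i$ is not a consequence of the Zariski decomposition; $i=0$ is a priori allowed.)

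For $m(S,C)=1$ your route via $\sharp\sharp$-model uniqueness is different from the paper's and would require additional work, since Theorem~\ref{thm:iitaka} is formulated for pairs on $\F_n$. The paper instead argues that $S$ itself is a minimal surface. In the del Pezzo case $C\equiv -K_S$, so any $(-1)$-curve $E$ would satisfy $C\cdot E=1$, which relative minimality forbids; hence $S$ carries no $(-1)$-curve. In the big case $P=C+K_S$ is nef with $P^2>0$; relative minimality gives $P\cdot E=C\cdot E-1\geqslant 1$ for every $(-1)$-curve $E$, so the morphism $\phi_{\lvert P\rvert}\colon S\to\Sigma$ onto a minimal-degree surface contracts no $(-1)$-curve, and $S$ is again minimal. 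From minimality of $S$ the minimality of the pair follows.
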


\begin{proof} Let $(S',C')$ be another pair and let  $\phi: S'\to S$ be a birational map inducing an equivalence $(S,C)\sim (S',C')$. 

If $m(S,C)>1$, apply Proposition \ref{prop:bir} for $m=2$. We have a diagram
 \[
\xymatrix{ & X \ar[dl]_{f'} \ar[dr]^{f} \\
S'\ar@{-->}[rr]^{\phi} & & S}
\]
Let $\Theta$ be an $f'$--exceptional $(-1)$--cycle on $X$ which  non contracted by $f$. Then $f_*(\Theta)=\theta$ is a $(-1)$--cycle on $S$ such that  $C\cdot \theta\leqslant 1$, a contradiction. This proves the assertion in this case.

If $m(S,C)=1$, then we consider $P=C+K_S$, which is nef. In the del Pezzo case, we have $C\equiv -K_S$
and relative minimality implies that there is no $(-1)$-curve on $S$,  proving the assertion. In the big case, note that relative minimality implies that there is no $(-1)$-curve $E$ such that $P\cdot E=0$. Then $\vert P\vert$ determines a morphism $f: S\to \Sigma\subset \PP^r$, with $\Sigma$ a minimal degree surface. Then $S$ is minimal, proving the assertion in this case too. \end{proof}

\begin{corollary}[Iitaka, \cite{Iitaka1}]  \label{cor:Iitaka1}
Let $(S,C)$  be a relatively minimal pair, with $S$ rational and $C$ irreducible.
If $\kappa(S,C)\geqslant 0$,
then  $C^2$ and $K_S^2$ are birational invariant of the pair $(S,C)$.
\end{corollary}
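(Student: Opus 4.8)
The plan is to reduce everything to the uniqueness of minimal pairs, and to handle by hand the single situation that Proposition \ref{prop:Iitaka1} does not cover.

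First I would record the bookkeeping facts. If $(S,C)\sim(S',C')$ with both pairs relatively minimal and $\kappa\geqslant 0$, then $m(S,C)=m(S',C')$ and $\alpha(S,C)=\alpha(S',C')$ by birational invariance (see \S\ref{susec:birat}); moreover the two pairs are of the same type among the Del Pezzo, ruled and big case, since by Theorem \ref{thm1} pairs of different type are never birationally equivalent. In particular the condition ``$m=1$ and the pair presents the ruled case'' is itself a birational invariant.

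\emph{Generic case.} If we are not in the situation $m=1$ with the ruled case, then Proposition \ref{prop:Iitaka1} tells us that \emph{both} $(S,C)$ and $(S',C')$ are minimal pairs. A birational map $\phi\colon S'\rto S$ realising the equivalence $(S,C)\sim(S',C')$ is then a morphism by minimality of $(S,C)$, and $\phi^{-1}$ is a morphism by minimality of $(S',C')$ applied to $(S,C)$; hence $\phi$ is an isomorphism, carrying $C'$ to $C$ and $K_{S'}$ to $K_S$, so $C^2=C'^2$ and $K_S^2=K_{S'}^2$.

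\emph{The exceptional case: $m=1$, ruled.} Here I would first show that relative minimality already forces $S$ to be a minimal ruled surface. Put $P:=C+K_S$; it is nef with $P>0$ and $P^2=0$, and by Proposition \ref{D+K=0} one has $P\equiv\alpha L$, with $|L|$ a base point free pencil of rational curves. If $E$ were a $(-1)$-curve contained in a fibre of the morphism $S\to\PP^1$ defined by $|L|$, then $L\cdot E=0$, hence $C\cdot E=(P-K_S)\cdot E=1$, contradicting relative minimality. So no fibre contains a $(-1)$-curve, whence $S\cong\F_n$ for some $n\geqslant 0$, with $L=F$ and $C\equiv\alpha F-K_{\F_n}\equiv 2E+(n+2+\alpha)F$. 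A one-line computation then gives $C^2=8+4\alpha$ and $K_S^2=K_{\F_n}^2=8$, both depending only on the birational invariant $\alpha$; since $(S',C')$ lies in the same case with the same $\alpha$, the conclusion follows here as well.

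The step I expect to be the real obstacle is this exceptional case: one must be sure that Proposition \ref{prop:Iitaka1} excludes \emph{precisely} the $m=1$ ruled situation, and then that relative minimality is strong enough to collapse the surface onto some $\F_n$ --- after which $C^2$ and $K_S^2$ become explicit functions of $\alpha$ alone, independent of $n$. The generic case is purely formal once one knows that two birationally equivalent minimal pairs are isomorphic.
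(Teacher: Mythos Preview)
Your proof is correct and follows the same strategy as the paper: invoke Proposition~\ref{prop:Iitaka1} for the generic case, and in the excluded case $m=1$ ruled, use relative minimality to force $S\cong\F_n$ and observe that $C^2$ and $K_S^2$ are then functions of $\alpha$ alone. Your write-up is in fact more explicit than the paper's, which compresses the generic case to a single sentence and does not spell out the isomorphism argument; your computation $C^2=8+4\alpha$, $K_S^2=8$ is exactly the content of the paper's closing clause ``$C^2$ and $K_S^2$ do not depend on $n$''.
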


\begin{proof} The assertion follows by Proposition \ref{prop:Iitaka1} if either $m(S,C)>1$ or $m(S,C)=1$ and $(S,C)$ does not present the ruled case.
In the former case, however, we have $C+K_S\equiv \alpha L$
where $L$ is a pencil of rational curves. But the relative minimality hypothesis implies that there is no $(-1)$-curve $E$ such that $E\cdot L=0$.
Hence $S$ is a $\F_n$, and $C\equiv \alpha L-K_S$,
thus $C^2$ and $K_S^2$ do not depend on $n$.
\end{proof}

\subsection{Low Kodaira dimension}

\begin{proposition} [Kumar--Murthy, \cite{KumarMurthy};  Iitaka, \cite{Iitaka1}]   \label{prop:Iitaka2} Let $(S,C)$ be a relatively minimal pair with $S$ rational and $C$ irreducible. Assume $\kappa=\kappa(S,C)=0,1$. Then $(S,C)$ is one of the following:

\begin{enumerate}[(i)]

\item $(S,C)$ is the minimal resolution of singularities of $(\PP^2,D)$ with
$D\in\cL(3m; m^9)$,  $m\geqslant 1$ ($\kappa=0)$;

\item $(S,C)$ is the minimal resolution of singularities of  $(\F_n, D)$, $n=0,2$, with $D\in \cL(2m,(2+n)m; m^8)$, $m\geqslant 1$ ($\kappa=0$);

\item $(S,C)$ is the minimal resolution of singularities of $(\PP^2,D)$ with
$D\in\cL(3m; m^9,2)$,  $m\geqslant 1$ ($\kappa=0)$;

\item $(S,C)$ is the minimal resolution of singularities of  $(\F_n, D)$, $n=0,2$, with $D\in \cL(2m,(2+n)m; m^8,2)$, $m\geqslant 1$ ($\kappa=0$);

\item $(\F_n, D)$ as in case $(r)$ of Theorem \ref{thm1} with $m=1$ ($\kappa=1$).
\end{enumerate}
\end{proposition}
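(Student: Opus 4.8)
The plan is to deduce the statement from the birational classification of Theorem \ref{thm1} (in the refined forms of Propositions \ref{prop:delpezzo}, \ref{prop:ruled} and \ref{prop:big}), specializing the possible models by means of the relative minimality hypothesis.

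First, since $\kappa(S,C)\geqslant 0$ the pair does not present the line case (Theorem \ref{thm:KM}), so $m=m(S,C)$ is defined, and writing the Zariski decomposition $C+mK_S\equiv P+N$ of Proposition \ref{prop:zariski2} one has $h^0(n(C+mK_S))=h^0(nP)$ for every $n\geqslant 1$ by Proposition \ref{prop:zariski2}(v) together with Remark \ref{rem:multad}. Hence $\kappa(S,C)=\kappa(S,P)$, so $\kappa(S,C)=0$ precisely when $P=0$, i.e.\ in the Del Pezzo case, and $\kappa(S,C)=1$ precisely when $P>0$ and $P^2=0$, i.e.\ in the ruled case. It is therefore enough to treat these two cases under the extra assumption that $(S,C)$ be relatively minimal.

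In the Del Pezzo case I would start from Proposition \ref{prop:delpezzo}: there is a birational morphism $f\colon S\to S'$ onto a minimal surface $S'\in\{\Pl,\F_0,\F_2\}$ with $f_*(C)\equiv -mK_{S'}$, which contracts $N$ together with finitely many $(-1)$-cycles $\theta$ having $C\cdot\theta=m$. Relative minimality gives $C\cdot E\geqslant 2$ for every $(-1)$-curve $E$ on $S$; in particular each irreducible component $\Theta_{i,j}$ of $N$ has $2\leqslant C\cdot\Theta_{i,j}\leqslant m-1$, which is vacuous for $m=1$ and forces $m\geqslant 3$ when $N\neq 0$; and, when $m\geqslant 2$, Proposition \ref{prop:Iitaka1} shows $(S,C)$ is minimal. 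The crucial estimate is on arithmetic genus: since $C$ is irreducible and smooth, $p_a(f_*(C))\geqslant p_g(C)=p_a(C)\geqslant 0$, while $p_a(f_*(C))=p_a(-mK_{S'})$ drops by $\binom{m}{2}$ for each ordinary $m$-fold point of $f_*(C)$ (the number of such points being $9$ for $S'=\Pl$ and $8$ for $S'=\F_0,\F_2$, which is exactly what makes $p_a(-mK_{S'})$ minus those contributions equal to $1$) and by at least $\binom{i}{2}\geqslant 1$ for each point produced by a component $\Theta_{i,j}$ of $N$. Hence $N$ is either $0$, so that $C\equiv -mK_S$ and one lands in cases (i), (ii); or $N$ consists of a single $(-1)$-curve $e$ with $C\cdot e=2$, taken with multiplicity $m-2$, which $f$ contracts to one additional ordinary double point of the plane, resp.\ of the $\F_n$, model, giving cases (iii), (iv). It then remains to match the resulting systems with the ones in the statement.

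In the ruled case I would use Proposition \ref{prop:ruled}: $(S,C)$ is birationally equivalent to a pair $(\F_n,D)$ with $D\equiv -mK_{\F_n}+\alpha F$, i.e.\ a pair as in case $(r)$ of Theorem \ref{thm1}; relative minimality (through Proposition \ref{prop:Iitaka1} when $m>1$, directly when $m=1$) forces $(S,C)$ itself to be of this shape, which is case (v), and combining the two cases completes the proof. The step I expect to be the main obstacle is the Del Pezzo analysis for $m\geqslant 2$: one must combine the orthogonality, negativity and $C$-intersection properties of the Zariski negative part recorded in Proposition \ref{prop:zariski2} with the fact that $C$ is an honest irreducible curve of nonnegative, birationally invariant geometric genus, so as to exclude all negative parts $N$ other than $0$ and $(m-2)e$ — in particular, components $\Theta_{i,j}$ with $C\cdot\Theta_{i,j}\geqslant 3$, or more than one $(-1)$-curve of intersection $2$ with $C$.
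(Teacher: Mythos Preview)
Your opening identification is where the argument goes wrong. The invariant $\kappa(S,C)$ is computed from the systems $\vert n(C+K_S)\vert$, whereas the nef part $P$ of $C+mK_S$ governs $\vert n(C+mK_S)\vert$, i.e.\ the quantity $\kappa(S,\frac{C}{m})$ in the paper's notation. One does have $\kappa(S,C)\geqslant\kappa(S,\frac{C}{m})$, because $m(C+K_S)\equiv (m-1)C+(C+mK_S)$ with $C$ effective, and this inequality is enough to exclude the big case and conclude $P^2=0$, exactly as the paper does. But your stronger claim --- that the Del Pezzo case coincides with $\kappa=0$ and the ruled case with $\kappa=1$ --- is false. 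For a concrete counterexample, blow up $\Pl$ at $8$ general points and take an irreducible $C\in\vert{-mK_S}\vert$ with $m\geqslant 2$: this pair is relatively minimal (every $(-1)$-curve $E$ has $C\cdot E=-mK_S\cdot E=m$) and lies in the Del Pezzo case with $N=0$, yet $C+K_S\equiv -(m-1)K_S$ is nef and big since $K_S^2=1$, so $\kappa(S,C)=2$. Similarly a smooth $D\in\cL_0(2m,2m+\alpha)$ on $\F_0$ with $m\geqslant2$ and $\alpha\geqslant1$ is relatively minimal and presents the ruled case, but has $\kappa=2$.

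This error propagates through both branches of your sketch. In the ruled case you never establish $m=1$, which is the entire content of case $(v)$; the paper's argument is precisely that $m(C+K_S)\equiv (m-1)C+P+N$ and, for $m\geqslant 2$, the divisor $C+P$ is big, forcing $\kappa(S,C)=2$ --- a contradiction. In the Del Pezzo case, your genus count only bounds the contribution of $N$ \emph{after} one knows there are exactly $9$ (resp.\ $8$) points of multiplicity $m$ on the minimal model; that number is not imposed by relative minimality, but by the condition that $C+K_S$ fail to be big. Without using $\kappa\leqslant 1$ in this way, the pair can sit over a genuine Del Pezzo surface of positive degree, as the $8$-point example shows, and then your analysis does not even get started.
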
 

\begin{proof} Let $m=m(S,C)$, and $mK_S+C=P+N$, with $P$ the nef and $N$ the negative part. One has $P^2=0$. If we are in the del Pezzo case, then we are in cases (i)--(iv). If we are in the ruled case, 
since $C+P$ is big, one has $m=1$ and we are in case (v). \end{proof}

\begin{remark} Cases (i)--(ii) in Proposition \ref{prop:Iitaka2} above are birational to minimal Cremona plane curves $D\in\cL(3m; m^9)$ which, for $m\geqslant 1$, give rise to \emph{Halphen's pencils} of elliptic curves. Cases in (iii)--(iv) are birational to minimal Cremona plane curves $D\in\cL(3m; m^9,2)$, which are nodal curves in  Halphen's pencils. 
\end{remark}

\subsection{Genus 0} Let $(S,C)$ be a relatively minimal pair with $S$ rational and $C$ irreducible and rational. If $C^2\geqslant -3$, then $h^0(S,\mathcal O_S(2K_S+C))=0$ and $(S,C)$ presents the line case. 

\begin{proposition}[Matsuda, \cite{Matsuda}] \label{prop:matsuda1} $(S,C)$ be a relatively minimal pair with $S$ rational and $C$ irreducible and rational. If $C^2=-4$ and $(S,C)$ does not present the line case,  then $\kappa(S,C)=0$ and $(S,C)$ is as in (iii) and (iv) of Proposition \ref{prop:Iitaka2}. The minimal Cremona type of $C$ is $(3m;m^9, 2)$,
i.e.\ $C$ is a nodal curve in a Halphen's pencil.
\end{proposition}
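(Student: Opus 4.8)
The plan is to feed the hypotheses into the classification machinery already developed, and then extract the numerical constraints coming from $C^2=-4$. First I would note that $C$ rational with $C^2=-4$ forces, via the Remark following Theorem \ref{thm:KM}, that $C$ satisfies hypothesis $(ii)$ of Proposition \ref{prop:zariski} (it is effective, irreducible and a $(-4)$-curve, hence not a $(-k)$-curve with $1\leqslant k\leqslant 3$), so that all the adjoint-nefness results apply; and by assumption $(S,C)$ does not present the line case, so by Theorem \ref{thm:KM} the Kodaira dimension $\kappa(S,C)$ is $\geqslant 0$ and the invariants $m=m(S,C)$, $\alpha=\alpha(S,C)$ are defined. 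Since $C$ is rational we have $m\geqslant 2$, and $C$ is not part of a $(-1)$-cycle. Now I would write $C+mK_S\equiv P+N$ as in Proposition \ref{prop:zariski2} and distinguish the three cases $P=0$, $P^2=0$, $P^2>0$ of \S\ref{S:theorem}.

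The key step is to rule out the ruled and big cases using $C^2=-4$. In either of those cases one has $P>0$ and in fact $|P+K_S|=\emptyset$, and by Corollary \ref{cor:Iitaka1} the self-intersection $C^2$ is a birational invariant of the relatively minimal pair. I would then compute $C^2$ on the minimal model produced by Propositions \ref{prop:ruled} or \ref{prop:big}: on the corresponding $\F_n$ (or $\Pl$) one has $C'\equiv -mK_{S'}+P'$ with $P'$ base-point free of positive self-intersection in the big case (so $C'^2=m^2K_{S'}^2-2mK_{S'}\cdot P'+P'^2 > 0$ since $K_{S'}\cdot P'<0$ and $P'^2>0$), and in the ruled case $C'^2 = m^2K_{S'}^2 - 2mK_{S'}\cdot P' = m^2 K_{S'}^2 + 4m\alpha \geqslant 0$ because $K_{S'}\cdot P' = K_{\F_n}\cdot \alpha F = -2\alpha$ and $K_{\F_n}^2=8$; in all these cases the self-intersection of the blown-down model is $\geqslant 0$, hence (subtracting nonnegative contributions of the blown-down $(-1)$-cycles) one gets $C^2\geqslant 0 > -4$ on any relatively minimal model, contradicting $C^2=-4$. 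Wait — I must be careful, because $C^2$ is a birational invariant only among \emph{relatively minimal} pairs, and the models on $\F_n$ need not be relatively minimal; so instead I would argue directly with $P^2>0$ or $P^2=0$ and big $C+mK_S$: in the big case $\kappa(S,C)\geqslant 2$, while $C^2=-4$ together with $p_a(C)=0$ gives $C\cdot K_S = 2p_a(C)-2-C^2 = 0$, and then the adjoints $|C+mK_S|$ have $(C+mK_S)^2 = C^2 + 2mC\cdot K_S + m^2K_S^2 = -4 + m^2 K_S^2$; combining with the classification one pins down $\kappa$. The cleanest route is: $C^2=-4$ and $C$ rational force, by the formula for the arithmetic genus of adjoints on the minimal model, that $P^2=0$, i.e.\ we are in the del Pezzo case (case (A) or (B)) and more precisely $\kappa(S,C)\leqslant 1$; then invoke Proposition \ref{prop:Iitaka2}.

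Having landed in the del Pezzo case with $\kappa\leqslant 1$, I would apply Proposition \ref{prop:Iitaka2}: the options are (i)--(iv) with $\kappa=0$ or (v) with $\kappa=1$. Case (v) is the ruled case, already excluded. Cases (i), (ii) correspond to $C$ the minimal resolution of $D\in\cL(3m;m^9)$ or $D\in\cL(2m,(2+n)m;m^8)$, for which the exceptional configuration forces $C^2=-9+9=0$ or $C^2 = \ldots = 0$ — i.e.\ $C^2=0$, not $-4$; so these are excluded too. This leaves cases (iii) and (iv), $D\in\cL(3m;m^9,2)$ (or the $\F_n$ analogue $\cL(2m,(2+n)m;m^8,2)$): here $C$ is the strict transform of $D$ on the minimal resolution, and $C^2 = (3m)^2 - 9m^2 - 2^2 = -4$ (resp.\ the analogous computation on $\F_n$ gives $-4$ as well), matching the hypothesis. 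Finally, to identify the minimal Cremona type I would apply Proposition \ref{prop:delpezzo}: a del Pezzo pair with these invariants is Cremona minimal in type $(cdp_1)$, namely $\cL(3m;m^9,2)$, and the blown-up description on $\F_1$ (Remark \ref{rem:sharp}) shows the $\F_0,\F_2$ versions are birationally, hence Cremona, equivalent to it. The plane curve is then a nodal member of the Halphen pencil of index $m$, as claimed. The main obstacle is the bookkeeping in the second step — making the exclusion of the ruled and big cases rigorous via $C^2=-4$ without illegitimately invoking birational invariance of $C^2$ on non-relatively-minimal models; the safe device is to work with $(C+mK_S)^2 = -4 + m^2K_S^2$ and $C\cdot K_S=0$ directly on $S$, combined with nefness of $P$ from Proposition \ref{prop:zariski2}, to force $P^2=0$.
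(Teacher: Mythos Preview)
Your endgame is sound: once you know $\kappa(S,C)\leqslant 1$, feeding the pair into Proposition \ref{prop:Iitaka2} and then eliminating cases (i), (ii), (v) by computing $C^2$ and using $m\geqslant 2$ is exactly right. The gap is in the key step, where you never actually establish $\kappa(S,C)\leqslant 1$. Your first attempt (computing $C^2$ on the $\F_n$ models of Propositions \ref{prop:ruled}, \ref{prop:big}) you correctly abandon, since those models need not be relatively minimal. Your second attempt contains an arithmetic slip: from $p_a(C)=0$ and $C^2=-4$ adjunction gives $C\cdot K_S = 2p_a(C)-2-C^2 = -2-(-4) = 2$, not $0$; so your formula $(C+mK_S)^2=-4+m^2K_S^2$ is wrong (the correct expression is $-4+4m+m^2K_S^2$), and the argument built on it collapses. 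You then assert that one can ``force $P^2=0$'' but do not say how.

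The paper's proof sidesteps all of this by working not with $C+mK_S$ for the unknown $m$, but directly with $P:=2K_S+C$, which is effective (since the line case is excluded) and nef (by relative minimality and Proposition \ref{prop:zariski}). Setting $k=K_S^2$ and using $C\cdot K_S=2$, one computes $P^2=4(k+1)\geqslant 0$, hence $k\geqslant -1$, and $P\cdot K_S=2(k+1)$; Riemann--Roch then gives $\dim|P|\geqslant k+1$. The decisive observation is $C\cdot P=2\cdot 2+(-4)=0$: restricting to the rational curve $C$ and using $h^0(S,\cO_S(P-C))=h^0(S,\cO_S(2K_S))=0$ forces $\dim|P|=0$, whence $k=-1$ and $P^2=P\cdot K_S=0$. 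Finally $2(K_S+C)=P+C$ has negative self-intersection, so $K_S+C$ is not big and $\kappa(S,C)\leqslant 1$; now Proposition \ref{prop:Iitaka2} applies and your case analysis finishes the job. The trick you were missing is to exploit the specific adjoint $2K_S+C$ and the orthogonality $C\cdot P=0$ to pin down $K_S^2$.
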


\begin{proof}  Set $K_S^2=k$ and $P=2K_S+C$ which is effective and nef. One has $P^2=4(k+1)\geqslant 0$, thus $k\geqslant -1$. Also $P\cdot K_S=2(k+1)$. Riemann--Roch says that $\dim(\vert P\vert )\geqslant k+1$. But since $C\cdot P=0$, one has $\dim(\vert P\vert )=0$, hence $k=-1$ and $P^2=P\cdot K_S=0$. Moreover $2(K_S+C)=P+C$. This implies $\kappa(S,C)\leqslant 1$. The assertion follows by Proposition \ref{prop:Iitaka2}. \end{proof}

\subsection{Genus 1} The following result are classical (see, e.g., \cite{Nagata1, Nagata2}).

\begin{proposition}\label{prop:g1} Let $(S,C)$ be a relatively minimal pair with $S$ rational and $C$ nef of genus $1$. Assume $C^2\geqslant 0$ with $C$ irreducible if $C^2=0$. Then $(S,C)$ is as in cases (i)--(ii) of Proposition \ref{prop:Iitaka2}, with $m=m(S,C)=1$ if and only if $C^2>0$. In particular  $(S,C)$ is birational to a pair in (i) of Proposition \ref{prop:Iitaka2}. 
\end{proposition}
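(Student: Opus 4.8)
The plan is to run the del Pezzo / ruled / big trichotomy of Propositions \ref{prop:delpezzo}--\ref{prop:big} for $C+mK_S$ and to show that only the del Pezzo case can occur, after which Proposition \ref{prop:Iitaka2} (together with the analysis of the del Pezzo case) yields the statement. First I would record the consequences of the genus hypothesis. Adjunction gives $C\cdot(C+K_S)=2p_a(C)-2=0$, so $C\cdot K_S=-C^2\leqslant0$; since $S$ is rational $|K_S-C|=\emptyset$, hence $h^2(\cO_S(C+K_S))=h^0(\cO_S(-C))=0$ and Riemann--Roch gives $h^0(\cO_S(C+K_S))\geqslant\chi(\cO_S(C+K_S))=1+\tfrac12(C+K_S)\cdot C=1$. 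Thus $|C+K_S|\ne\emptyset$ and the birational invariants $m=m(S,C)\geqslant1$, $\alpha=\alpha(S,C)$ are defined; the line case is excluded because by Theorem \ref{thm:KM} it would force $(S,C)\sim(\F_n,E_n)$, whose curve is rational. Finally, since $C+mK_S$ is effective, $C$ is nef and $(C+mK_S)\cdot C=C^2+m(C\cdot K_S)=(1-m)C^2$, we get $(1-m)C^2\geqslant0$, so $C^2>0$ forces $m=1$, and in every case $(C+mK_S)\cdot C=0$.

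Next I would write $C+mK_S\equiv P+N$ as in Proposition \ref{prop:zariski2} (in its case (i), since $C$ is nef). From $(C+mK_S)\cdot C=0$ and the nonnegativity of $P\cdot C$ and $N\cdot C$ we get $P\cdot C=0$; if $P$ were big, $P^{\perp}$ would be negative definite, forcing the nonzero nef class $C$ with $C^2\geqslant0$ to be numerically trivial, which is absurd. Hence $P^2=0$, so we are in the del Pezzo case ($P=0$) or the ruled case ($P>0$, $P^2=0$). The ruled case is excluded as follows: if $C^2>0$ then $m=1$ and, $C$ being nef and big, vanishing gives $h^0(\cO_S(K_S+C))=\chi=1$, so $P$ does not move, contradicting the fact that in the ruled case $|P|$ is composed with a base point free pencil and $h^0(P)\geqslant2$; if $C^2=0$ (so $C$ irreducible), the ruled case has $\kappa(S,C)=1$, so Proposition \ref{prop:Iitaka2}(v) applies, giving $m=1$ and $(S,C)=(\F_n,D)$ with $D\equiv-K_{\F_n}+\alpha F$, $\alpha\geqslant1$, and since the base points of $D$ are simple $D$ is smooth, whence $p_a(C)=p_a(D)=1+\alpha\geqslant2$, contrary to hypothesis. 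Therefore we are in the del Pezzo case, and then Proposition \ref{prop:zariski2}(v) gives $h^0(C+mK_S)=h^0(\cO_S)=1$, i.e.\ $\alpha=0$.

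By Proposition \ref{prop:delpezzo}, $(S,C)$ is then birational to $(\Pl,D)$ with $D\in\cL(3m;m_0,\ldots)$, $m_0\leqslant m$, or to $(\F_n,D)$, $n\in\{0,2\}$, $D\in\cL_n(2m,(2+n)m;m_1,\ldots)$, $m_1<m$, with $C+mK_S\equiv N$ in each case, so $m(S,C)=m$. Irreducibility of $C$ and $p_a(C)=1$ then pin down the multiplicities: on the plane model $1=\tfrac{(3m-1)(3m-2)}2-\sum_i\tfrac{m_i(m_i-1)}2$ with all $m_i\leqslant m$ forces exactly nine points of multiplicity $m$ and no other base point, i.e.\ $D\in\cL(3m;m^9)$, and likewise $D\in\cL_n(2m,(2+n)m;m^8)$; these are cases (i)--(ii) of Proposition \ref{prop:Iitaka2}, and the two $\F_n$ models are in turn birational to the plane one, which is the last assertion. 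It remains to see that $m=1$ if and only if $C^2>0$: one implication is already proved, and conversely, if $C^2=0$ and $m=1$, the argument of Proposition \ref{prop:delpezzo} together with relative minimality forces the blow-down $f\colon S\to S'$ of the del Pezzo case to contract nothing (every contracted $(-1)$-curve would meet $C$ in at most $m=1$), so $S=S'$ is a minimal rational surface with $K_S^2=C^2=0$, impossible since $K^2\in\{8,9\}$ for $\Pl$ and the $\F_n$.

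The step I expect to be the real obstacle is the exclusion of the ruled case: one has to notice that, after the reduction to $m=1$, the relevant adjoint curve on $\F_n$ has only simple base points, is therefore smooth, and has arithmetic genus $1+\alpha>1$, which is incompatible with $p_a(C)=1$. The remaining bookkeeping — reading off the exact multiplicity pattern $m^9$, $m^8$ from the genus-$1$ condition, and the $K^2$-argument for $m=1\Leftrightarrow C^2>0$ — is routine.
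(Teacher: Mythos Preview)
There is a genuine gap in your exclusion of the ruled case when $C^2=0$: the assertion ``the ruled case has $\kappa(S,C)=1$'' is unjustified. The ruled case of the trichotomy concerns the nef part $P$ of $C+mK_S$ with $m=m(S,C)$, whereas $\kappa(S,C)$ is the Iitaka dimension of $K_S+C$; for $m\geqslant 2$ these are a priori unrelated, and the ruled case can occur with $\kappa(S,C)=2$, so you cannot invoke Proposition~\ref{prop:Iitaka2}(v) to force $m=1$. The repair is already implicit in what you wrote: you proved $P\cdot C=0$, and in the ruled case $P\equiv\alpha L$ with $|L|$ a base-point-free rational pencil, so $L\cdot C=0$; the irreducible curve $C$ is then contained in a fibre of $\phi_{|L|}$, and every irreducible component of a fibre of a $\PP^1$-fibration on a smooth surface is a smooth rational curve, contradicting $p_a(C)=1$.

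The paper's argument avoids this detour entirely. Relative minimality already makes $K_S+C$ nef (any $(-1)$-curve $E$ with $(K_S+C)\cdot E<0$ would have $C\cdot E=0\leqslant 1$), so one sets $P=K_S+C$ directly rather than working with $C+m(S,C)K_S$. Then $P\cdot C=0$ and the Hodge Index Theorem give $P\equiv 0$ when $C^2>0$, and $P^2\leqslant 0$, hence $P^2=0$ by nefness, when $C^2=0$; in the latter case $\kappa(S,C)<2$ and Proposition~\ref{prop:Iitaka2} finishes. Your trichotomy for general $m$, the Kawamata--Viehweg vanishing argument for $C^2>0$, and the final $K_S^2\in\{8,9\}$ computation are all correct but superfluous once one notices that $K_S+C$ is already nef.
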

\begin{proof} Set $K_S+C=P$ which is effective and nef. Since $P\cdot C=0$, if $C^2>0$, the Index Theorem yields $P=0$. If $C^2=0$, then $P^2=0$ and $\kappa(S,C)<2$, thus the assertion follows by Proposition \ref{prop:Iitaka2}. \end{proof}

\begin{corollary} \label{cro:genus1} Let $(S,\cL)$ with $S$ rational and $\cL$ an  linear system whose general curve $C$ is nef and irreducible if $C^2=0$, has arithmetic  genus 1 and $r=\dim(\cL)\geqslant 1$. Then either:
\begin{enumerate}[(i)]
\item $(S,\cL)$ is birational to $(\PP^2,\cL(3; 1^{9-r}))$, with $2\leqslant r\leqslant 9$,  or
\item $(S,\cL)$ is birational to $(\PP^2,\cL(3m; m^{9}))$, with $r=1$, or
\item $(S,\cL)$ is birational to  $(\F_n, \cL(2,2+n)$, $n=0,2$, or  to
$(\PP^2,\cL(4, 2^2))$, where the two base points may be infinitely near ($r=8$). 
\end{enumerate}
\end{corollary}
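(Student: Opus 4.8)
The plan is to reduce the statement to the zero--dimensional case already settled in Proposition~\ref{prop:g1}, and then to read off the Cremona minimal plane models from Theorems~\ref{thm1} and~\ref{thm2}, which by Remark~\ref{rem:sistad} apply to linear systems as well. First I would pass to a relatively minimal pair. Let $C$ be the general member of $\cL$. Resolving the base locus of $\cL$, I may assume $C$ smooth of arithmetic genus $1$; since such a curve is not rational, the pair does not present the line case. I then contract, one $(-1)$--curve at a time, every $(-1)$--curve $E$ with $C\cdot E\leqslant 1$. A short check shows that each such contraction $g\colon S\to S_1$ leaves $\dim\cL$, the rationality of the surface and $p_a$ of the general member unchanged, and preserves nefness of the general member, because for an irreducible curve $\Gamma_1$ on $S_1$ with proper transform $\Gamma'$ one has $g_*(C)\cdot\Gamma_1=C\cdot\Gamma'+E\cdot\Gamma'\geqslant 0$; moreover $g_*(C)^2\geqslant C^2\geqslant 0$ and $g_*(C)$ is irreducible if $C$ is. Iterating, I arrive at $(S,\cL)\sim(S_0,\cL_0)$ with $(S_0,C_0)$ a relatively minimal pair, $C_0$ nef of genus $1$ (irreducible if $C_0^2=0$), and $\dim\cL_0=r$.

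Next, by Proposition~\ref{prop:g1} the pair $(S_0,C_0)$ is one of those of Proposition~\ref{prop:Iitaka2}\,(i)--(ii); in particular $C_0\equiv -mK_{S_0}$ with $m=m(S_0,C_0)$ (and $m=1$ exactly when $C_0^2>0$), $\alpha(S_0,C_0)=0$, and $(S_0,C_0)$ presents the Del Pezzo case with vanishing nef part. Concretely $S_0\in\{\Pl,\F_0,\F_2\}$ when $m=1$, while when $m\geqslant 2$ the surface $S_0$ is a blow--up of $\Pl$ at nine points with $K_{S_0}^2=0$. Note that $\cL_0$ is now a subsystem of $|-mK_{S_0}|$ of dimension $r$.

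Finally I would feed $(S_0,\cL_0)$ into Theorems~\ref{thm1} and~\ref{thm2}: being in the Del Pezzo case, $(S_0,\cL_0)$ admits a Cremona minimal plane model of type $(cdp_1)$, $(cdp_2)$ or $(cdp_3)$. Requiring the general member to be irreducible of arithmetic genus $1$ forces $\sum_i\binom{m_i}{2}=9\binom m2$ on its multiplicities $m_i\leqslant m$. If $m=1$ this leaves only $\cL(3;1^{k})$; a dimension count (using that the base scheme of a dimension--$r$ subsystem of $|\cO_{\Pl}(3)|$ imposes $9-r$ conditions on cubics) gives $k=9-r$, which is case (i) when $r\geqslant 2$, while for $r=1$ the pencil $\cL(3;1^{8})$ may be rewritten as $\cL(3;1^{9})$ by adjoining its ninth (residual) base point, falling under case (ii) with $m=1$. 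If $m\geqslant 2$, the equality together with $m_i\leqslant m$ forces exactly nine base points of multiplicity $m$ and nothing further, i.e.\ the Halphen pencil $\cL(3m;m^{9})$ with $r=1$, again case (ii). Cases $(cdp_2),(cdp_3)$ force $m=1$ by the same genus count, so $\cL_0\in\cL_n(2,2+n)$ with $n=0,2$ and $\dim=8$; performing one elementary transformation and contracting the $(-1)$--section, as in the proof of Proposition~\ref{prop:delpezzo}, produces $\cL(4;2^{2})$ with the two double points possibly infinitely near, which is case (iii) with $r=8$.

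The step I expect to be the real obstacle is the last one: showing that the genus $1$ condition, irreducibility and the value of $r$ together pin the base--point configuration of the Del Pezzo (or $\F_n$) model down to exactly the three forms in the statement — in particular the identification $k=9-r$ for $\cL(3;1^{k})$, which rests on a Cayley--Bacharach--type property of plane cubics, and the fact that for $m\geqslant 2$ only the Halphen configuration $m^{9}$ is possible.
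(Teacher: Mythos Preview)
The paper gives no proof of this corollary, leaving it as an immediate consequence of Proposition~\ref{prop:g1}; your reduction to a relatively minimal pair and application of that proposition is exactly the intended route, and the detour through Theorems~\ref{thm1}--\ref{thm2} is harmless but unnecessary.

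Two points in your argument need tightening. First, your treatment of case~(iii) is muddled: the planar types $(cdp_2),(cdp_3)$ explicitly require $0<m_1<m$, hence $m\geqslant 2$, so they cannot ``force $m=1$'' --- your own genus count in fact excludes them entirely. What you mean is that on the $\F_n$--side, type $(dp_2)$, namely $\cL_n(2m,(2+n)m;m_1,\dots)$ with $m_i<m$, together with genus~$1$ forces $m=1$ and hence $\cL_0=|-K_{\F_n}|$ with no singular base point. You then still owe the observation that if the pushed--forward system on $\F_0$ or $\F_2$ has even one simple base point (i.e.\ $r\leqslant 7$), an elementary transformation there followed by contracting $E_1$ carries it to $\cL(3;1^{9-r})$, so case~(iii) is genuinely new only when $r=8$; when $r=8$ there is no base point available, and the elementary transformation at a \emph{general} point creates the two double base points of $\cL(4;2^2)$.

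Second, your identification $k=9-r$ via ``the base scheme of a dimension--$r$ subsystem of $|\cO_{\Pl}(3)|$ imposes $9-r$ conditions'' uses that $\cL_0$ is complete relative to its base scheme, which amounts to assuming the original $\cL$ is complete. This is the intended hypothesis (and is how the corollary is invoked in Proposition~\ref{prop:df1}), but it should be stated: for an arbitrary $r$--dimensional subsystem of $|\cO_{\Pl}(3)|$ the conclusion would fail. Once completeness is assumed, the $9-r$ simple (possibly infinitely near) base points are precisely the images of the contracted $(-1)$--curves $E$ with $C\cdot E=1$, and no Cayley--Bacharach argument is needed.
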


We add the following remarks:

\begin{corollary}\label{cor:planecubic} Let $(S,C)$ be a pair, with $S$ a rational surface and $C$ an irreducible curve of genus 1. Then $(S,C)$ is birational to $(\PP^2,B)$ with $B$ a smooth plane cubic if and only if $m(S,C)=1$.
\end{corollary}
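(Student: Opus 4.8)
The plan is to prove the two implications separately, using the birational invariance of $m(S,C)$ (see \S\ref{susec:birat}) for the easy one and the Del Pezzo part of the classification (Proposition \ref{prop:delpezzo}) for the other.

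First I dispose of the ``only if''. If $(S,C)\sim(\Pl,B)$ with $B$ a smooth plane cubic, then, $B$ being smooth, $(\Pl,B)$ is its own minimal resolution; one has $B+K_{\Pl}\equiv 0$, which is effective, so by Coolidge's Theorem \ref{thm:Coolidge} the pair does not present the line case, while $B+2K_{\Pl}\equiv -3L$ is not effective. Hence $m(\Pl,B)=1$, and since $m$ is a birational invariant, $m(S,C)=1$.

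For the converse I may assume $C$ smooth of genus $1$ (replacing $(S,C)$ by its minimal resolution, a birationally equivalent pair with the same $m$). Assume $m(S,C)=1$, so $\vert C+K_S\vert\ne\emptyset$. As $p_a(C)=1$, the curve $C$ is not a $(-k)$-curve with $1\leqslant k\leqslant 3$, so Propositions \ref{prop:zariski} and \ref{prop:zariski2} apply with $m=1$ and give the Zariski decomposition $C+K_S\equiv P+N$ with $P$ nef and $N=\sum_j\Theta_{0,j}$ a sum of $(-1)$-cycles with $C\cdot\Theta_{0,j}=0$; thus $C\cdot N=0$ and
\[
0=2p_a(C)-2=C\cdot(C+K_S)=C\cdot P.
\]
If $P\ne 0$, then by Proposition \ref{D+K=0} (and the discussion preceding Theorem \ref{thm:rational}) $\vert P\vert$ is base point free of positive dimension, composed with a pencil of rational curves if $P^2=0$ and defining a birational morphism $\phi_{\vert P\vert}$ onto a minimal degree surface if $P^2>0$; in either case $C\cdot P=0$ forces $C$ to lie in a fibre of $\phi_{\vert P\vert}$, i.e.\ in a curve which is a tree of rational curves, whence $p_a(C)=0$, a contradiction. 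Therefore $P=0$ and $(S,C)$ presents the Del Pezzo case.

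By Proposition \ref{prop:delpezzo}, $(S,C)$ is then birationally equivalent to a pair of type $(dp_1)$ or $(dp_2)$ with $m=1$. In type $(dp_1)$ one has $(\Pl,D)$ with $D\in\cL(3;m_0,\ldots)$ and $m_0\leqslant 1$, so $D$ is an irreducible plane cubic with no singular point, i.e.\ a smooth plane cubic, and $B=D$ works. In type $(dp_2)$ one has $(\F_n,D)$ with $n\in\{0,2\}$ and $D\equiv -K_{\F_n}$ smooth (since $m_1<m=1$); when $n=2$ one has $D\cdot E_2=(2-n)m=0$, so $D$ lies entirely off $E_2$. Choosing a point $p\in D$ and performing the elementary transformation $\elm_p$ lands us on $\F_1$, with the image $D_1$ of $D$ satisfying $D_1\equiv -K_{\F_1}$ and still smooth (the strict transform of $D$ meets the contracted fibre transversally at one point); since $D_1\cdot E_1=-K_{\F_1}\cdot E_1=1$, contracting $E_1$ produces $(\Pl,B)$ with $B\equiv 3L$ smooth. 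Thus in all cases $(S,C)\sim(\Pl,B)$ with $B$ a smooth plane cubic.

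The delicate point is the treatment of type $(dp_2)$: one must verify that a smooth (genus $1$) anticanonical curve on $\F_0$ or $\F_2$ is birationally equivalent \emph{as a pair} to a smooth plane cubic, which is exactly what the explicit elementary transformation and blow-down accomplish, the transversality remarks guaranteeing that smoothness is preserved at each step. Everything else is a short genus computation combined with the already-established classification.
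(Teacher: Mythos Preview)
Your proof is correct and follows the same route as the paper's: use the genus condition to get $P\cdot C=0$, conclude that only the Del Pezzo case is possible, and read off the result from the classification. The paper's version is much terser (it merely asserts ``only the del Pezzo case is possible, and the assertion follows''), so your explicit exclusion of the ruled and big cases and your hands-on treatment of type $(dp_2)$ via an elementary transformation are welcome elaborations rather than a different approach.
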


\begin{proof} One implication is clear. As for the other, let $P=K_S+C$, which we may assume to be nef by contracting all $(-1)$-curves $E$ with $C\cdot E=0$. Since $P\cdot C=0$, only the del Pezzo case is possible, and the assertion follows.
\end{proof}

\begin{corollary}\label{cor:negell} Let $(S,C)$ be a relatively minimal pair, with $S$ a rational surface and $C$ an irreducible curve of genus 1 with $C^2<0$. Then $C^2<-1$. \end{corollary}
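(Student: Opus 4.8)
The plan is to argue by contradiction: assume $C^2=-1$ and produce a $(-1)$-curve meeting $C$ in at most one point, contradicting relative minimality. Since $C$ has genus $1$, the adjunction formula gives $K_S\cdot C=2p_a(C)-2-C^2=1$, so $(K_S+C)\cdot C=0$. On the rational surface $S$ one has $q(S)=0$ and $\chi(\cO_S)=1$, so Riemann--Roch yields $\chi(\cO_S(K_S+C))=1+\tfrac12(K_S+C)\cdot C=1$, while Serre duality gives $h^2(\cO_S(K_S+C))=h^0(\cO_S(-C))=0$ because $C>0$; hence $h^0(\cO_S(K_S+C))\geq1$, i.e.\ $|K_S+C|\neq\emptyset$. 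In particular $(S,C)$ does not present the line case (Theorem \ref{thm:KM}) and $\kappa(S,C)\geq0$. Set $P:=K_S+C$.

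The next step is to show that $P$ is nef. If not, there is an irreducible curve $\Theta$ with $P\cdot\Theta<0$; as $P$ is effective, $\Theta$ is a component of a member of $|P|$ and $\Theta^2<0$. Since $P\cdot C=0$ we have $\Theta\neq C$, hence $C\cdot\Theta\geq0$ and $K_S\cdot\Theta=P\cdot\Theta-C\cdot\Theta<0$, which forces $\Theta$ to be a $(-1)$-curve; but then $-1\leq P\cdot\Theta=-1+C\cdot\Theta$ gives $C\cdot\Theta=0$, so $\Theta$ is a $(-1)$-curve with $C\cdot\Theta\leq1$, a contradiction. Thus $P$ is nef, so $P^2\geq0$. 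In fact $P^2=0$: writing $S$ as the blow-up of $\PP^2$ at $r$ points and $C$ as the proper transform of a plane curve of degree $d$ with effective multiplicities $\tilde\mu_i$, the identities $C^2=-1$ and $K_S\cdot C=1$ read $\sum\tilde\mu_i^2=d^2+1$ and $\sum\tilde\mu_i=3d+1$, and $p_a(C)=1$ forces $d\geq3$ (a line or conic would have $p_a<1$); the Cauchy--Schwarz inequality $(3d+1)^2\leq r(d^2+1)$ then gives $r>9$, so $r\geq10$ and $K_S^2=9-r\leq-1$ by Noether's formula, whence $P^2=(K_S+C)^2=K_S^2+1\leq0$, and combined with nefness $P^2=0$.

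Finally, from $P$ nef, $P^2=0$, $P\cdot C=0$ I would conclude as follows. If $P\equiv0$ then $K_S\equiv-C$, so $K_S^2=C^2=-1$ and $S$ is not minimal; every $(-1)$-curve $E$ then satisfies $C\cdot E=-K_S\cdot E=1$, again contradicting relative minimality. If $P\not\equiv0$ one first checks $|P+K_S|=|2K_S+C|=\emptyset$: if not, $2K_S+C$ would be nef by the argument above (a $(-1)$-curve $\Theta$ with $(2K_S+C)\cdot\Theta<0$ gives $C\cdot\Theta\leq1$), and then $0\leq P\cdot(2K_S+C)=2(K_S^2+1)\leq0$ forces $K_S^2=-1$, whence $(2K_S+C)^2=4K_S^2+3=-1<0$ contradicts nefness. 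So Proposition \ref{D+K=0} applies to $P$ and shows that $|P|$ is composed with a base-point-free pencil $|L|$ of rational curves, with $P\equiv\alpha L$, $\alpha\geq1$; hence $L\cdot C=0$, so $C$ is contained in a fibre $L_0$ of the associated morphism $S\to\PP^1$. Since $p_a(L_0)=p_a(L)=0$ while $p_a(C)=1$, the curve $C$ is a proper component of $L_0$, so $L_0$ is reducible; hence $S\to\PP^1$ is not relatively minimal and some fibre contains a $(-1)$-curve $E$. If $E$ lies in a fibre other than $L_0$ then $C\cdot E=0$; if $E\subset L_0$, then from $E\cdot L_0=0$ and $E^2=-1$ one reads that $E$ meets the remaining components of $L_0$, and hence $C$, in at most one point. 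In either case $C\cdot E\leq1$, contradicting relative minimality, and we are done.

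The step I expect to be most delicate is the very last one: verifying carefully, taking into account the multiplicities with which $C$ and $E$ occur in $L_0$, that $E$ can be chosen with $C\cdot E\leq1$. The clean way to handle it is to contract $(-1)$-curves fibre by fibre, following the image of $C$, until the fibration becomes a $\PP^1$-bundle, and to inspect the first contraction whose centre lies on (the image of) $C$; alternatively one uses the description of $L_0$ as built out of $(-1)$-cycles from \S\ref{subsec:-1cicli} together with Lemma \ref{lem:ints}. The Cauchy--Schwarz step should also be spelled out to cover the (inessential) case where a minimal model of $S$ is some $\F_n$ rather than $\PP^2$, by a completely analogous estimate.
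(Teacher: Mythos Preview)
Your argument is correct but follows a different route from the paper's. The paper disposes of the case $m(S,C)=1$ in one line by invoking the preceding analysis (essentially, your cases $P\equiv0$ and $P\not\equiv0$ are exactly what ``$P\cdot C=0$ is incompatible with relative minimality when $m=1$'' unpacks to, via Corollaries~\ref{cor:Iitaka1} and~\ref{cor:planecubic}), and then works entirely with $m\ge2$: with $P'=2K_S+C$ effective and nef, the restriction of $|P'|$ to $C$ has degree~$1$ on a genus-$1$ curve, so $h^0(\cO_C(P'|_C))\le1$, while $h^0(\cO_S(P'-C))=h^0(\cO_S(2K_S))=0$, giving $\dim|P'|\le0$; Riemann--Roch gives $\dim|P'|\ge K_S^2+1$, so $K_S^2\le-1$; combined with $P^2=K_S^2+1\ge0$ one gets $K_S^2=-1$ and $(P')^2=-1$, contradicting nefness. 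Thus both proofs hinge on $(2K_S+C)^2=-1$, but you use it to force $m=1$ and then finish with the fibration, while the paper uses it to finish after having excluded $m=1$. Your approach is more self-contained; the paper's is shorter because it leans on earlier corollaries.

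Two simplifications remove the caveats you flag. First, the Cauchy--Schwarz step (and with it the worry about $S$ not dominating $\PP^2$) is unnecessary: Riemann--Roch for $2K_S+C$ gives $h^0(\cO_S(2K_S+C))\ge K_S^2+2$ since $h^2=h^0(\cO_S(-P))=0$ once $P\not\equiv0$, so either $h^0=0$ and $K_S^2\le-2$, or $h^0\ge1$ and the restriction argument above gives $K_S^2\le-1$; in both cases $P^2=K_S^2+1\le0$. Second, your final worry about multiplicities in $L_0$ dissolves: any $(-1)$-curve $E$ contained in a fibre of $|L|$ has $L\cdot E=0$, hence $P\cdot E=0$, hence $C\cdot E=-K_S\cdot E=1$ directly, with no need to inspect how $C$ and $E$ sit inside $L_0$. (That some fibre contains a $(-1)$-curve follows because $L_0$ is genuinely reducible: $(nC)^2=-n^2\ne0=L_0^2$ rules out $L_0=nC$, and any reducible fibre of a genus-$0$ fibration contains a $(-1)$-curve since $K_S\cdot L_0=-2$ forces $K_S\cdot C_i<0$ for some component.)
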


\begin{proof} Set $P=K_S+C$, which is effective and nef.  The case $m(S,C)=1$ is impossible since $P\cdot C=0$. Then $P'=2K_S+C$ is effective and nef.  Set $k=K^2$, Since $P^2=k+1$, we have $k\geqslant -1$. On the other hand $P'\cdot C=1$ and Riemann--Roch says that $\dim(\vert P'\vert)\geqslant k+1$. This implies $k=-1$. Bu then $(P')^2=-1$ a contradiction. \end{proof}

\subsection{De Franchis' Theorem} In this section we deal with the genus 2 case (for a classical reference, see \cite{Defranchis}; this is however affected, as well as all papers on the subject appeared before 1901, by the criticism raised by C. Segre to Noether's original proof of Noether--Castelnuovo's Theorem  \cite{Segre1}).

\begin{proposition}\label{prop:df1} Let $(S,\cL)$ be a pair with $S$ rational and $\cL$ a complete,  linear system of curves of  arithmetic genus $2$ with $r=\dim(\cL)\geqslant 1$ whose general curve $C$ is nef. Assume that there is no $(-1)$-curve $E$ such that $C\cdot E\leqslant 1$. Then $(S,\cL)$ is as follows:

\begin{enumerate}[(i)]
\item $(\F_n, \cL_n(2,3+n))$, with $0\leqslant n\leqslant 3$;
\item $(S,\cL)$ is the minimal desingularization of a pair of the form $(\PP^2, \cL(6;2^8))$;
\item $(S,\cL)$ is the minimal desingularization of  a pair of the form $(\PP^2, \cL(7;3, 2^{10}))$;
\item $(S,\cL)$ is the minimal desingularization of a pair of the form $(\PP^2, \cL(9;3^8,2^2))$;
\item $(S,\cL)$ is the minimal desingularization of a pair of the form $(\PP^2, \cL(13;5,4^9))$.
\end{enumerate}
\end{proposition}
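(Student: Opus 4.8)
The plan is to run, now with the extra input $p_a(C)=2$, exactly the adjunction machinery used in the proof of Theorem \ref{thm1} and Propositions \ref{prop:delpezzo}, \ref{prop:ruled}, \ref{prop:big}. \emph{Step 1 (the first adjoint is nef).} Since $p_a(C)=2>0$, the general curve $C$ is not rational, hence not a $(-k)$-curve, and from $0\to\cO_S(K_S)\to\cO_S(C+K_S)\to\omega_C\to 0$ together with $h^1(\cO_S(K_S))=q(S)=0$ one gets $h^0(S,\cO_S(C+K_S))\geqslant h^0(C,\omega_C)=p_a(C)=2$, so $|C+K_S|\neq\emptyset$. Applying Proposition \ref{prop:zariski} with $m=1$, the birational morphism it produces contracts only $(-1)$-curves $\Theta$ with $C\cdot\Theta<1$, i.e.\ with $C\cdot\Theta=0$, which the relative minimality hypothesis forbids; hence $P_1:=C+K_S$ is already nef, and I record the adjunction identity $(C+K_S)\cdot C=2p_a(C)-2=2$, that is $C\cdot K_S=2-C^2$. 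We are then in the set-up of Theorem \ref{thm1}: set $m=m(S,C)$, $\alpha=\alpha(S,C)$ and write $C+mK_S\equiv P+N$ for the Zariski decomposition of Proposition \ref{prop:zariski2}.

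\emph{Step 2 (the big case cannot occur).} Suppose $P^2>0$. By Proposition \ref{D+K=0} and Theorem \ref{thm:rational}, $|P|$ is base point free with general member a rational curve $R$, so $R\cdot K_S=-R^2-2=-P^2-2$, and $P\cdot N=0$ by Proposition \ref{prop:zariski2}. From $C\equiv P+N-mK_S$ we compute
\[
P\cdot C=P^2+P\cdot N-m\,(P\cdot K_S)=P^2-m(-P^2-2)=(m+1)P^2+2m .
\]
On the other hand $N\geqslant 0$ and $C$ is nef, so $N\cdot C\geqslant 0$ and therefore
\[
P\cdot C\leqslant (C+mK_S)\cdot C=C^2+m(2-C^2)=2m-(m-1)C^2\leqslant 2m ,
\]
the last step because $C^2\geqslant 0$. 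Hence $(m+1)P^2\leqslant 0$, contradicting $P^2>0$. So only the Del Pezzo case $P=0$ and the ruled case $P>0=P^2$ survive.

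\emph{Step 3 (genus $2$ inside the Del Pezzo and ruled cases).} Here I would apply Propositions \ref{prop:delpezzo} and \ref{prop:ruled} and retain only the genus $2$ possibilities. In the Del Pezzo case $C+mK_S=N$; if $m=1$ the components of $N$ would be $(-1)$-curves with $C\cdot\Theta=0\leqslant 1$, excluded, so $N=0$, $C\equiv -K_S$ and $p_a(C)=1$, absurd; hence $m\geqslant 2$. Contracting $N$ and then reaching a minimal model $S'\in\{\Pl,\F_0,\F_2\}$ sends $C$ to $C'\equiv -mK_{S'}$ with singular points of multiplicity $\leqslant m$; solving ``$p_a(C')-(\text{genus drop})=2$'' with all multiplicities $\leqslant m$ forces $m\in\{2,3\}$ and produces the plane models $\cL(6;2^8)$ and $\cL(9;3^8,2^2)$, the $\F_0,\F_2$ versions being Cremona equivalent to these as the bi- and tri-anticanonical systems of a degree one del Pezzo surface (Remark \ref{rem:multad}): these are cases $(ii)$ and $(iv)$. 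In the ruled case $P\equiv\alpha L$ with $|L|$ a base point free pencil of rational curves, and contracting the $(-1)$-cycles $Z$ with $Z\cdot L=0$ maps $C$ onto a multisection $D$ of the ruling of some $\F_n$, with multiplicities $\leqslant m$; a section of $\F_n$ being rational, $D$ is at least a bisection, $D\in|2mE+((2+n)m+\alpha)F|$, and a direct computation gives $p_a(D)=(2m-1)(2m-1+\alpha)$ on such a $\flat$-model. Reducing this number to $2$ by singular points of multiplicity $\leqslant m$, together with $D\cdot E=m(2-n)+\alpha\geqslant 0$, pins down $(m,\alpha,n)$ and yields $D\equiv 2E+(n+3)F$ on $\F_n$ with $0\leqslant n\leqslant 3$ (case $(i)$) and the plane models $\cL(7;3,2^{10})$, $\cL(13;5,4^9)$ (cases $(iii)$, $(v)$). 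That these are exactly the relatively minimal models, and that differently presented plane models collapse to the listed representatives, follows as in Propositions \ref{prop:delpezzo}--\ref{prop:big} from Theorem \ref{thm:jung} and Remark \ref{rem:multad}.

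\emph{The main obstacle} is Step 3: ruling out every larger value of $m$ and every configuration of (possibly infinitely near) singular points other than those listed, once one imposes \emph{simultaneously} $p_a(C)=2$, $\dim\cL\geqslant 1$ and relative minimality. Concretely, one must show that the a priori admissible higher degree plane systems of genus $2$ either fail to move, or fail to be relatively minimal, or reduce by a Cremona transformation to one of the five families — the same bookkeeping with $\binom{\mu}{2}$ genus drops and proximity inequalities carried out in \S\ref{admissible} and \S\ref{S:sing}.
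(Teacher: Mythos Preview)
Your Steps 1 and 2 are correct, and Step 2 is a clean observation: from $P\cdot K_S=-P^2-2$ and $N\cdot C\geqslant 0$ you get $(m+1)P^2+2m\leqslant 2m-(m-1)C^2\leqslant 2m$, so the big case is impossible. The paper does \emph{not} argue this way. Instead of going to the last adjoint $C+mK_S$, it works throughout with the \emph{first} adjoint $P=C+K_S$, which is nef under the hypothesis and satisfies $\dim|P|=1$ and $P\cdot C=2$. The Index Theorem then gives $P^2\cdot C^2\leqslant 4$, and the whole proof becomes a short case analysis on the pair $(P^2,C^2)$: the case $P^2=0$ forces $S=\F_n$ and gives $(i)$; the cases $P^2=1,2,3$ (with the few compatible values of $C^2$) are handled by passing to $P'=P+K_S=C+2K_S$ and invoking the already--proved genus~$0$ and genus~$1$ results (Proposition \ref{D+K=0}, Proposition \ref{prop:g1}, Corollary \ref{cro:genus1}), which produce $(ii)$--$(v)$ directly.

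The genuine gap in your proposal is Step~3, which you yourself flag. Saying that the genus condition ``forces $m\in\{2,3\}$'' in the del~Pezzo case, or ``pins down $(m,\alpha,n)$'' in the ruled case, is exactly the content of the proposition, and you have not supplied the argument. Concretely: in the del~Pezzo case you must show that for $m\geqslant 4$ no configuration of points of multiplicities in $\{2,\ldots,m\}$ on $-mK_{S'}$ simultaneously gives $p_a=2$, $C^2\geqslant 0$, $\dim|C|\geqslant 1$, and the relative minimality condition; in the ruled case you must run the analogous elimination over all $(m,\alpha)$ and all admissible singular configurations on the $\flat$--model. This bookkeeping is feasible but is not shorter than the paper's route, and it does not reduce to \S\ref{admissible}--\S\ref{S:sing}, which concern Cremona minimality rather than bounding $m$. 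The paper sidesteps all of this: because $P\cdot C=2$ is so small, the Index Theorem caps $P^2$ and $C^2$ at once, and each surviving numerical case feeds into a previously established low--genus classification, so no open--ended search over $m$ is needed.
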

\begin{proof} Set $K_S+C=P$, which is effective and nef. Actually $\vert P\vert$ is a pencil.
Since $P\cdot C=2$, then $P\neq 0$. 

If $P^2=0$, then $P\cdot K_S=-2$, and therefore $\vert P\vert$ is a pencil of curves of genus 0 and there is no $(-1)$-curve $E$ such that $E\cdot P=0$, namely $S=\F_n$, for some  $n\geqslant 0$.
Hence $m=m(S,C)=1$ and $\alpha=\alpha(S,C)=1$, and we are in case $(r)$ of Theorem \ref{thm1}
which leads to case (i).

If $P^2>0$, the Index Theorem yields $C^2\leqslant 4$.  If $P^2=1$  then $P\cdot K_S=1$. 
By contracting all $(-1)$-curves $E$ such that $P\cdot E=1$, we obtain a new pair $(S',P')$. This may let $\cL$ acquire base points of multiplicity 2. Then we may apply Proposition \ref{prop:g1} which tells us 
what  $(S',P')$ is, and we conclude that $(S,P)$ is the minimal desingularization of a pair of the form $(\PP^2, \cL(6;2^8))$ or  $(\F_n, \cL_n(4,2(2+n), 2^7)$, with $n=0,2$. Thus
we are  in case (ii).  In particular this applies when $C^2=4$. 

So we are left with the cases $P^2\geqslant 2$, and the cases $C^2=1, P^2=4$ and  $C^2=P^2=2$ can be excluded by the Index Theorem.  

Consider first the case $r>1$. Set $K_S^2=k$.  If $-K_S$ is not effective, then $\cL$ cuts on $P$ a linear series of dimension 2 and degree 2. hence $P$ is rational, hence $m(S.C)=1$ and we are in the big case since $P^2>0$. This contradicts $\alpha=1$. So $-K_S>0$. Then  $2\leqslant P^2=P\cdot (K_S+C)\leqslant P\cdot C=2$. Hence $P^2=2$ and  then $P\cdot K_S=0$.  Moreover $C^2\leqslant 1$, $C\cdot K_S=2-C^2$, so that $0=P\cdot K_S=k+2-C^2$. In conclusion $C\cdot K_S=-k+K_S\cdot P=-k=2-C^2\geqslant 0$, a contradiction. 

So we may assume $r=1$.  Then $C^2\leqslant 2$, since $1=r=h^0(S,\mathcal O_S(C))-1=h^0(C,\mathcal O_C(C))\geqslant C^2-1$. 

Let $C^2=c$, $C\cdot K_S=2-c$, with $0\leqslant c\leqslant 1$.
Then $P^2=k+4-c$, $P\cdot K_S=k+2-c$, hence $p_a(P)=k+4-c=P^2$. Set $P'=P+K_S=C+2K_S$. Then $\dim(\vert P'\vert)=P^2-1$ and $C\cdot P'=4-c$. 
Therefore $P^2 \leqslant 3-c$, since $P'-C\equiv 2K_S$ is not effective. 

So we are left with the cases $C^2=0, 2\leqslant P^2\leqslant 3$, $C^2=1, P^2=2$.

If $C^2=0, P^2=2$, then $k=-2$,  $P'$ is still nef and $(P')^2=0$. Moreover $P\cdot P'=2$ implies that $\vert P'\vert$ is a pencil of rational curves, so that we are in the ruled case with $m=2, \alpha=1$ and $(S,\cL)$ is the minimal desingularization of  a pair of the form $(\F_n, \cL_n(4,5+2n; 2^{10}))$, with $0\leqslant n\leqslant 2$ and we are in case (iii).

If $C^2=1, P^2=2$, then $k=-1$, $P'$ is nef and $(P')^2=1$, $P'\cdot K_S=-1$. Hence $\vert P'\vert$ is a pencil of curves of arithmetic genus 1. By Corollary \ref{cro:genus1} we are in case (iv).

If $C^2=0, P^2=3$,   then $k=-1$, $P'$ is nef and $(P')^2=4$, $P'\cdot K_S=0$.  Now let us contract all $(-1)$-curves $E$ such that $E\cdot P'=0$, producing a new pair $(S_1,P_1)$ on which $\cL$ may acquire base points of multiplicity 2.  Set $P'_1=K_{S_1}+P_1$, which is nef,  and $\dim (\vert P'_1\vert)=2$. We have $P_1^2=4+h, K_{S_1}^2=h-1$, where $h$ is the number of the $(-1)$--cycles on $S$ such that $E\cdot P'=0$. Then $(P_1')^2=3, P_1'\cdot K_{S_1}=-1$. The analysis of cases (i)--(ii) above tells us that $(S_1,\vert P'_1\vert)$ arises by minimally resolving the base points of 
either  $\PP^2, \cL(4;2,1^9))$ or of $\PP^2, \cL(6;2^8,1))$, in both cases being $h=0$. Note that the cases
$\PP^2, \cL(5;3,2,1^9))$ and $\PP^2, \cL(6;4, 2^2,1^9))$ are impossible because $k=-1$. On the other hand, the pair $\PP^2, \cL(6;2^8,1))$ does not come by adjunction from a system of curves of genus 2, wheres the pair $\PP^2, \cL(4;2,1^9))$ does, leading to case (v). \end{proof}

\begin{remark} Note that in case (i) of  Proposition \ref{prop:df1}, the pair $(S,\cL)$ is also the minimal desingularization of one of the pairs:
\begin{itemize}
\item $\cL(4,2)$ for  $n=1$;  
\item $\cL(5,3, 2)$ for  $n=0,2$, where for $n=2$ the two base points are infinitely near; 
\item $\cL(6;(4,[2^2]))$ for  $n=3$.
\end{itemize}
These linear systems, as well as the others appearing in (ii)--(v) of Proposition \ref{prop:df1}, are Cremona minimal, because of Corollary \ref{cor: jung}. 

In all cases, but (iii)--(v), of Propositon \ref{prop:df1} one has $h^1(S,\mathcal O_S(C))=0$, i.e.\ the system $\cL$ is non--special. So the existence of these systems is not under question, since one may choose the base points to be general. The question is different for the cases (iii)--(v), in which we have 
$h^1(S,\mathcal O_S(C))=1$ in case (iv) and $h^1(S,\mathcal O_S(C))=2$ in the remaining two cases.  The existence of these systems is discussed in \cite{Defranchis}. \end{remark}

\subsection{Cremona equivalence to smooth curves} In \cite{Coolidge}, pp. 399--401, Coolidge considers the question of characterizing those irreducible plane curves $B$ which are Cremona equivalent to a smooth curve. His answers are rather complicated.
A simple characterization can be given using Theorem \ref{thm1}. 

\begin{proposition}\label{prof:appl} Let $B$ be an irreducible plane curve of genus $g$ which is not Cremona equivalent to a line. Then $B$ is Cremona equivalent to a smooth plane curve of degree $d\equiv 0,1$ modulo 3,  if and only if  $m(S,C)=[\frac d3]$ and respectively $\alpha(S,C)=0,1$, where  $(S,C)$ is  the minimal  desingularization of $(\PP^2,B)$, and $g=\binom{d-1}{2}$.
The same assertion holds for $d\equiv  2$ modulo 3,  if $\alpha(S,C)=5$ as soon as either $d\geqslant 29$ or the pair $(S,C)$ does not present the ruled case. 
\end{proposition}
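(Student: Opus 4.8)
The plan is to deduce the statement from the birational classification of Theorem~\ref{thm1}, using throughout that $m(S,C)$, $\alpha(S,C)$ (cf.\ \S\ref{susec:birat}) and the geometric genus $g$ are birational invariants of the pair. \emph{Necessity} is immediate: if $B$ is Cremona equivalent to a smooth plane curve $D$ of degree $d$ (necessarily $d\geqslant 3$, as $d\leqslant 2$ gives the line case), then $(\Pl,D)$ is its own minimal desingularization, so $D+mK_{\Pl}\equiv(d-3m)L$ with $L$ a line; hence $m(\Pl,D)=[d/3]$, the dimension $\alpha(\Pl,D)=\dim|(d-3m)L|$ equals $0,2,5$ according as $d\equiv 0,1,2\pmod 3$, and $g=p_a(D)=\binom{d-1}{2}$. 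These being the invariants of $(S,C)$ as well, one implication follows.

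For \emph{sufficiency}, assume the minimal desingularization $(S,C)$ of $(\Pl,B)$ has $m=m(S,C)=[d/3]$, the stated value of $\alpha=\alpha(S,C)$, and $g=\binom{d-1}{2}$. Writing $C+mK_S\equiv P+N$ as in Proposition~\ref{prop:zariski2}, one has $h^0(S,\cO_S(P))=\alpha+1$, and $\alpha$ locates $(S,C)$ in the trichotomy of \S\ref{S:theorem}: if $\alpha=0$ then $P=0$ (the ruled case has $|P|$ composed with a pencil, the big case has $\dim|P|=P^2+1\geqslant 2$), so we are in the Del Pezzo case; if $\alpha=2$ or $5$ we are in the big case (with $P^2=1$ or $4$) or the ruled case. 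Now I would eliminate every model but a plane one by a genus count. In the ruled case, the $\F_n$-model~$(r)$ of \S\ref{sec:ruledcase} has $p_a=\binom{d-1}{2}-\binom{m-1}{2}$ if $\alpha=2$ and $p_a=\binom{d-1}{2}-\tfrac12(m-1)(m-8)$ if $\alpha=5$; since $g\leqslant p_a$, the equality $g=\binom{d-1}{2}$ is impossible once $m\geqslant 3$, resp.\ $m\geqslant 9$, i.e.\ $d\geqslant 10$, resp.\ $d\geqslant 29$, and for smaller $d$ the ``not ruled'' hypothesis excludes this case outright. In the Del Pezzo case, the model~$(dp_2)$ has $p_a=(2m-1)^2<\binom{3m-1}{2}$ for $m\geqslant 2$, incompatible with $g=\binom{3m-1}{2}$, while for $m=1$ (genus~$1$) Corollary~\ref{cor:planecubic} already gives that $(S,C)$ is birational to a smooth plane cubic, settling $d=3$; and in the big case the model~$(b_2)$ has $p_a=\binom{3m+1}{2}-\binom m2<\binom{3m+1}{2}$ for $m\geqslant 2$. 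Hence only the plane models~$(dp_1)$, resp.\ $(b_1)$, survive, that is, $(S,C)$ is birational to a pair $(\Pl,D)$ with $\deg D=d$ and all assigned multiplicities at most $m$.

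The last step is the genus-drop formula $g=\binom{d-1}{2}-\sum_i\binom{m_i}{2}$, the sum running over the effective multiplicities $m_i$ at all proper or infinitely near points of the minimal desingularization of $D$: the hypothesis $g=\binom{d-1}{2}$ forces every $m_i\leqslant 1$, so $D$ is smooth of degree $d$, and $B$ is Cremona equivalent to it, as desired. The hard part — and the reason the hypotheses restrict to large $d$ or to the non-ruled case — is precisely the small-degree bookkeeping: for $d\equiv 1\pmod 3$ with $d\in\{4,7\}$ and for $d\equiv 2\pmod 3$ below $29$, a ruled pair may carry exactly the invariants $(m,\alpha,g)$ of a smooth plane curve of degree $d$ without being birational to one, and the borderline degree $d=5$ (where the big non-planar model $(b_2)$ on $\F_0$ or $\F_2$ has the same genus) calls for a separate, direct check; outside these finitely many situations the argument reduces to the genus estimates above.
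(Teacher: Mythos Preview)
Your approach is the paper's: invoke the classification of Theorem~\ref{thm1}, bound the arithmetic genus on each model type, and observe that only the planar model $(dp_1)$ (for $\alpha=0$) or $(b_1)$ (for $\alpha=2,5$) can carry genus $\binom{d-1}{2}$, after which the genus formula forces smoothness. The paper's own argument is in fact terser than yours---it merely asserts that ``computing the genera of all pairs in Theorem~\ref{thm1} for these values of the invariants, one sees that only case $(dp_1)$ can occur,'' and does no explicit bookkeeping. (You have also tacitly corrected a typo: for $d\equiv 1\pmod 3$ the correct value is $\alpha=2$, not $\alpha=1$.)

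Where you go beyond the paper is in flagging the low-degree exceptions, and here you are right to be uneasy but you understate the problem. For $d\in\{4,7\}$ (i.e.\ $m\in\{1,2\}$, $\alpha=2$) the ruled model $(r)$ does achieve $p_a=\binom{d-1}{2}$, and for $d=5$ the non-planar big model $(b_2)$ on $\F_0$ does too. The paper's proof simply passes over this; you call it ``a separate, direct check''. But these are not checks one can pass: a smooth $(2,4)$ curve on $\F_0$ is hyperelliptic of genus $3$, a smooth $(4,6)$ curve on $\F_0$ has a $g^1_4$ hence gonality $\leqslant 4$, and a smooth $(3,4)$ curve on $\F_0$ is trigonal of genus $6$; none of these is Cremona equivalent to the corresponding smooth plane curve (which is canonically embedded, of gonality $6$, or carries a $g^2_5$, respectively), yet each carries exactly the prescribed invariants $(m,\alpha,g)$. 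Since the statement for $d\equiv 1\pmod 3$ has no ``not ruled'' hypothesis, and since for $d=5$ the offending model is $(b_2)$ rather than ruled, these are genuine counterexamples to the proposition as written, not gaps that further work would close. Your argument and the paper's share this defect; you have simply been more candid about where it lies.
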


\begin{proof} We prove only the non--trivial implication. Let $d\equiv 0$ modulo 3 and assume $m=m(S,C)=[\frac d3]$ and $\alpha=0$. Then, computing the genera of all pairs in Theorem \ref{thm1} for these values of the invariants, one sees that, in order to have $g=\binom{d-1}{2}$, only case $(dp_1)$ can occur, in which case we have the assertion. The proof of the case $d\equiv 1$ modulo 3 goes in the same way. In case $d\equiv 2$ modulo 3 the same proof works as son as $m>8$. If $m\leqslant 8$, one may have cases in which the genus of the curves in the list Theorem \ref{thm1} is larger than $\binom{d-1}{2}$ only in the ruled case, for $m\leqslant 8$. The assertion follows. \end{proof}

Coolidge also gives conditions under which a plane curve is Cremona equivalent to  a curve with only double points. This can be also treated as in the previous proposition. We do not dwell on tis here.

\appendix

\section{A proof of the Noether-Castelnuovo Theorem via simplicity}\label{app:proof}

Here we give a proof of Noether-Castelnuovo Theorem \ref{thm:NC} by induction
on the simplicity of the homaloidal net $\cL$
defining a Cremona transformation $\phi=\phi_\cL\colon\PP^2\rto\PP^2$,
namely we will show that, if $\phi$ is not quadratic or linear,
i.e.\ if its simplicity is larger than $(1,2,0)$,
then there is a quadratic trasformation $\gamma\colon\PP^2\rto\PP^2$
such that $\gamma\circ\phi$ is simpler than $\phi$.

The notion of simplicity is essentially due to Alexander  \cite{Alex} (cf.\ also Segre \cite{Segre}  and Chisini \cite{Chisini}).

Write $\cL=\cL(\delta;\alpha_0,\ldots,\alpha_r)$, where
$\alpha_0\geqslant\cdots\geqslant\alpha_r$ and let $p_i$, $i=0,\ldots,r$, be the point
of multiplicity $\alpha_i$ of $\cL$.
Let $(k_\phi,h_\phi,s_\phi)$ be the simplicity of $\phi$,
which is that of $\cL$, as defined in \eqref{eq:simplicity}.
If $\delta=2$, there is nothing to prove. Suppose then $\delta>2$.
Note that $k_\phi=\delta-\alpha_0\geqslant1$.

By subtracting $\alpha_0$ times the latter equation
from the former one in Eqs.\ \eqref{eq:invariantsLambda},
one gets
\[
(\delta-1)(\delta-3\alpha_0 + 1) = \sum_{i=1}^r\alpha_i(\alpha_i-\alpha_0)\leqslant 0,
\]
hence $\delta<3\alpha_0$, or equivalently $\alpha_0>k_\phi/2$, therefore $h_\phi\geqslant0$.
Similarly, by subtracting $m=k_\phi/2$ times the latter equation
from the former one in Eqs.\ \eqref{eq:invariantsLambda}, one gets
\[
(\delta-1)(\delta-3m+1)=\delta(\delta-3m)+3m-1
 = \alpha_0(\alpha_0-m)+\sum_{i=1}^r\alpha_i(\alpha_i-m),
\]
which, since $\delta-3m=\alpha_0-m$ and $m=k_\phi/2\geqslant1/2$, implies that
\[
2m(\alpha_0-m)=(\delta-\alpha_0)(\alpha_0-m)
  <  \sum_{i=1}^r\alpha_i(\alpha_i-m)
\leqslant \sum_{i=1}^h\alpha_i(\alpha_i-m)
\leqslant \sum_{i=1}^h 2m(\alpha_i-m),
\]
where the last inequality follows from the fact that
$\alpha_i\leqslant \delta-\alpha_0=2m$, for each $i>0$.
Thus
\begin{equation}\label{eq:a0-m}
\alpha_0-m \leqslant \sum_{i=1}^h (\alpha_i-m),
\end{equation}
which implies that $h_\phi\geqslant2$ and $p_1,\ldots,p_{h_\phi}$ cannot be all proximate to $p_0$,
in particular they cannot be all infinitely near to $p_0$ of order 1.
Note that, for $0<i<j\leqslant h_\phi$, the points $p_0,p_i,p_j$ are not aligned,
otherwise $\alpha_0+\alpha_i+\alpha_j>\delta$
and $\cL$ is reducible, a contradiction.

Suppose first that there are two points
among $p_1,\ldots,p_{h_\phi}$, say $p_{i}$ and $p_{j}$,
such that the quadratic transformation $\gamma$ centered at $p_0, p_i, p_j$ exists.
Then $\psi=\gamma\circ\phi$ is a Cremona transformation of degree
$\delta-\epsilon$, where $\epsilon=\alpha_i+\alpha_j-2m>0$,
and it is given by a homaloidal net $\Lambda$ having multiplicity $\alpha_0-\epsilon<\alpha_0$,
$\alpha_j-m\leqslant m$, $\alpha_i-m\leqslant m$ respectively
at the points (corresponding to) $p_0, p_i, p_j$.
Either $p_0$ is still a maximal multiplicity point of $\Lambda$,
hence $k_{\psi}=\delta-\alpha_0=k_\phi$ and $h_{\psi}=h_\phi-2$
(because $p_i$ and $p_j$ are now of multiplicity $\leqslant m=k_\psi/2$),
or the net $\Lambda$ has maximal multiplicity $\mu>\alpha_0-\epsilon$,
therefore $k_\psi<\delta-\alpha_0=k_\phi$.
In both cases, $\psi$ is simpler than $\phi$.

If there is no such quadratic transformation $\gamma$,
it means that $p_i\infnear p_0$, for each $i=1,\ldots,h$,
and, by \eqref{eq:a0-m}, there are $p_i, p_j$ such that $p_j\infnear[1] p_i\infnear[1] p_0$
and $p_j\satel p_0$ (if $p_j\notsatel p_0$, then $\gamma$ exists).
Chosen a general point $q\in\PP^2$, consider 
the quadratic transformation $\gamma'$ centered at $p_0,p_i,q$
and set $\psi'=\gamma'\circ\phi$.
Then $\psi'$ has degree $\delta+\epsilon'$, where $0\leqslant \epsilon'=2m-\alpha_i < m$
and it is given by a homaloidal net $\Lambda'$ having
multiplicity $\alpha_0+\epsilon'\geqslant\alpha_0$,
$2m$, $\epsilon'<m$ respectively
at the points (corresponding to) $p_0, p_i, q$.
Therefore $k_{\psi'}=\delta-\alpha_0=k_\phi$ and $h_{\psi'}=h_{\phi}$.
But the base point of $\Lambda'$ corresponding to $p_j$
is now infinitely near to $p_0$ of order 1, hence it is not satellite anymore
and $s_{\psi'}=s_\phi-1$.
Therefore, $\psi'$ is simpler than $\phi$.

Repeating this argument, we eventually get a Cremona transformation
with simplicity $(1,2,0)$, which is a quadratic one, and the proof is concluded.
\qed

\end{document}